\documentclass[a4paper,12pt]{amsart} 
\usepackage{amsmath,amssymb,amscd,amsfonts,amsthm,amsrefs}
\usepackage[colorlinks,linkcolor=blue,anchorcolor=blue,citecolor=blue]{hyperref}
\usepackage{enumerate}   
\usepackage{graphicx}
\numberwithin{equation}{section}
\newcommand{\p}{\partial}
\newcommand{\vphi}{\varphi}
\newcommand{\om}{\omega}
\newcommand{\tri}{\triangle}
\newcommand{\eps}{\epsilon}
\newcommand{\thmref}[1]{Theorem~\ref{#1}}

\newcommand{\lemref}[1]{Lemma~\ref{#1}}
\newcommand{\corref}[1]{Corollary~\ref{#1}}
\newcommand{\propref}[1]{Proposition~\ref{#1}}  


\newcommand{\Om}{\Omega}

\def\i{{\rm i}} 
\def\j{{\rm j}} 
\def\p{\partial}
\def\b{\beta}

\def\vol{{\rm vol}} 
\def\Vol{{\rm Vol}} 

\def\Diff{{\rm Diff}}

\def\det{\mathop{\rm det}\nolimits}

\def\a{\alpha}
\DeclareMathOperator{\Tr}{Tr}

\newtheorem{theorem}{Theorem}[section]
\newtheorem{thm}[theorem]{Theorem}
\newtheorem{rem}[theorem]{Remark}
\newtheorem{defn}[theorem]{Definition}

\newtheorem{lem}[theorem]{Lemma}

\newtheorem{prop}[theorem]{Proposition}

\newtheorem{cor}[theorem]{Corollary}

\allowdisplaybreaks[4]  

\title[Formal Riemannian structure]{The space of closed G2-structures. II. concavity and hypersymplectic structures}

\author{Kai Zheng}  
\address{University of Chinese Academy of Sciences, Beijing 100190, P.R. China}
\email{KaiZheng@amss.ac.cn}

\begin{document}
	\maketitle

\begin{abstract}
		In the space of closed $G_2$-structures equipped with Bryant's Dirichlet-type metric, we continue to utilise the geodesic, constructed in our previous article, to show that, under a normalisation condition Hitchin's volume functional is geodesically concave and the $G_2$ Laplacian flow decreases the length. Furthermore, we also construct various examples of hyper-symplectic manifolds on geodesic concavity. 
	\end{abstract}


\section{Introduction}
Let $M$ be a closed $7$-manifold and $[\vphi]$ be a given cohomology class in $H^3(M,\mathbb R)$ containing at least one $G_2$-structure. Torsion-free $G_2$-structures are local maximums of Hitchin's volume functional \cite{MR1863733}.

We denote by $\mathcal{M}$ the set of all closed $G_2$-structures in $[\vphi]$ and by $\Diff_0$ the identity component of the diffeomorphism group. A fundamental question in $G_2$ geometry is the uniqueness question: whether the torsion-free $G_2$-structure is unique up to $\Diff_0$ in each connected component of $\mathcal{M}$.
This question was brought to prominence in Joyce \cite{MR1787733}, Donaldson \cite{MR3966735,MR3932259,MR3702382}.

Another motivation arises from studying the fundamental uniqueness question in the space of symplectic forms.
Two symplectic forms $\om_0$ and $\om_1$ are called
 isotopic, if they are connected by a path of cohomologous symplectic forms; called
diffeomorphic, if there exists a diffeomorphism $\sigma$ such that $\sigma^\ast\om_0=\om_1$.

	A hyper-symplectic structure on a $4$-dimensional manifold $X^4$ refers to a triple of symplectic $2$-forms $\underline\omega:=(\omega_1,\omega_2,\omega_3)$, which spans a maximal positive subspace for the wedge product.
	Donaldson \cite{MR2313334} suggest investigating the following question:
whether $\underline\om$ is diffeomorphic to a hyper-K\"ahler structure $\underline\om^K$?
Thanks to Moser's theorem, it is sufficient to show that they are isotopic.
We direct the reader to \cite{MR3992720,MR3098204} for further recent development.

These questions are geometrically intertwined.
After adding a $\mathbb T^3$, we have a product 7-manifold $M:=\mathbb{T}^3\times X^4$. The hyper-symplectic structure $\underline\om$ determines a closed $G_2$ structure on $M$
	\begin{align}\label{eq:G2}
		\vphi:= d\theta^{123}-d\theta^i\wedge \omega_i.
	\end{align}
When $M$ is closed, the torsion freeness is equivalent to hyper-K\"ahler.
The hyper-symplectic flow \cite{MR3881202}, which is the restriction of the $G_2$ flow on $X^4$, is used to construct the cohomologous path. We refer to \cite{MR3873112,arXiv:2404.15016,MR4085670} for examples and recent progress.

Following the latest progress in K\"ahler geometry, the geodesic approach provides a natural framework for investigating these uniqueness questions among canonical structures.

We equip the infinite dimensional space $\mathcal{M}$ with a Dirichlet-type metric $\mathcal G$ \cite{MR2282011}.
In \cite{MR4732956}, we start to explore the global geometry of the infinite dimensional Riemannian manifold $(\mathcal{M},\mathcal G)$, computing the Levi-Civita connection for $\mathcal G$ and formulating the corresponding geodesic equation, which will be called \textsl{canonical connection} and \textsl{canonical geodesic} in this article. 

In this article, building on previous results, we further develop the global variational analysis of the volume functional, with particular focus on establishing its geodesic concavity and length contraction along gradient flow trajectories.
	
	We recall the formula of the Dirichlet-type metric $\mathcal G$ on $\mathcal{M}$.
	The tangent space of $\mathcal{M}$ at an element $\vphi$ is identified to $d\Om^2$.
	The expression of $\mathcal G$ is 
	\begin{align}\label{metric}
		\mathcal G_\vphi(X,Y)=\int_Mg_\vphi(\delta  u,\delta  v) \cdot \vol_\vphi\text{, for all } X,Y\in T_\vphi\mathcal{M},
	\end{align}
	where, $ u$, $ v$ are two $d$-exact $3$-forms satisfying the Hodge Laplacian equation $\tri u=X$, $\tri v=Y$, with respect to the metric $g_\vphi$ determined by the $G_2$-structure $\vphi$.


In Section~\ref{HS and G2}, we extend the metric~\eqref{metric} to the space of $2$-forms and derive the associated geodesic equation (Theorem~\ref{2 form Levi Civita} and Definition~\ref{$L^2$ geodesic}). Our construction adapts the strategies of~\cite{MR4732956} to this setting. 
Having made these preparations, we are able to derive the Hessian of the volume functional by differentiating twice along geodesics in $(\mathcal{M},\mathcal G)$. The computations in this section are taken in a convectional way, that leads to the natural issues of the solvability and regularity of the geodesics in the future study.

Additionally, we introduce a normalisation condition $\mathcal{N}$ (Definitions~\ref{condition N} and~\ref{N geodesics}) from calculation. It appears that the Ricci curvature and the scale curvature appear in
the expression of $\mathcal{N}$ and the Hessian as well.

\begin{thm}In the infinite-dimensional space $(\mathcal{M},\mathcal G,\Vol)$,
\begin{enumerate}
\item the volume functional is concave, along the normalised geodesic;
\item the gradient flow of the volume functional decreases the length of all the normalised paths.  
\end{enumerate}
\end{thm}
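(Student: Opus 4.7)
The plan is to reduce both statements to a single computation---the second variation of Hitchin's volume functional along a canonical geodesic $\varphi(t)$---and then to read off concavity and the length-contraction property. The $\mathcal G$-gradient of $\Vol$ is identified, via Hitchin's first-variation formula $d\Vol_\varphi(X)=\tfrac{1}{3}\int_M X\wedge *\varphi$ combined with the defining formula \eqref{metric}, with (a constant multiple of) the $G_2$-Laplacian flow, so the statement in (2) is literally the usual fact that the gradient flow of a concave function contracts length.

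For (1), I would differentiate $t\mapsto d\Vol_{\varphi(t)}(\dot\varphi(t))$ once more along a canonical geodesic. This produces two kinds of contributions: an algebraic one coming from the variation of the Hodge star $\tfrac{d}{dt}(*\varphi)$, which decomposes pointwise according to the $G_2$-irreducible splitting of $3$-forms into $\Lambda^3_1$, $\Lambda^3_7$ and $\Lambda^3_{27}$ components; and a second-order piece involving $\ddot\varphi$, which I eliminate using the canonical geodesic equation from \cite{MR4732956} and its extension in \thmref{2 form Levi Civita}. The resulting quadratic form in $\dot\varphi$ has the $\mathcal G$-norm squared as its principal part, corrected by terms involving the Ricci and scalar curvatures of $g_\varphi$. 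This is precisely where the normalisation $\mathcal N$ (Definition~\ref{condition N}) enters: it is tailored so that these curvature contributions cancel or are absorbed, leaving a manifestly non-positive expression and yielding concavity $\frac{d^2}{dt^2}\Vol(\varphi(t))\le 0$.

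For (2), I would use the standard first-variation-of-arc-length formula: for a normalised path $c(s)$ and the $\mathcal G$-gradient flow $\Phi_t$ of $\Vol$,
\begin{equation*}
\frac{d}{dt}\bigg|_{t=0}L_{\mathcal G}(\Phi_t\circ c)=\int_0^1 \frac{\Hess(\Vol)(\dot c,\dot c)}{|\dot c|_{\mathcal G}}\, ds,
\end{equation*}
and invoke the pointwise consequence of (1), namely $\Hess(\Vol)(\dot c,\dot c)\le 0$ for all normalised tangent vectors, to conclude that the right-hand side is non-positive.

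The main obstacle will be the Hessian calculation itself: every ingredient---the Hodge star, the metric $g_\varphi$, the Hodge Laplacian and the canonical connection---depends nonlinearly on $\varphi$, so the Leibniz rule produces many terms that must be reorganised via $G_2$ identities and integration by parts before the Ricci and scalar-curvature contributions become visible. Formulating $\mathcal N$ correctly is exactly what allows these terms to be absorbed into a single sign-definite quantity; regularity of the geodesics is handled formally, as the author flags in the text.
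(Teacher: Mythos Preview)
Your plan for (1) is essentially the paper's approach: differentiate the first variation $3\Vol_t=\int d\alpha_{14}^2\wedge\psi$ once more, substitute the geodesic equation for $\partial_t\alpha$, and reorganise via the $\i,\j$ identities and $g(\vphi,d\tau)=|\tau|^2$ to obtain $3\Vol_{tt}=-\|X^\bot\|_L^2+\mathcal N(\alpha)$. One correction: the paper carries this out for the \emph{$\Om_{14}^2$ geodesic} (\lemref{142 geodesic}), not the canonical geodesic. The paragraph after \thmref{length contraction} says explicitly that along the canonical or gauge-fixing geodesic the torsion terms do not organise so cleanly---this is the reason the paper works in the larger space $\Om^2$ and introduces the normalisation $\mathcal N$ rather than relying on the canonical representative.

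For (2) you diverge from the paper, and there is a gap. The paper does \emph{not} deduce (2) from (1); it performs an independent direct computation (\thmref{length contraction}): along the Laplacian flow one writes $\tfrac12\partial_t\|\vphi_s\|_{\mathcal G}^2=\mathcal G_\vphi(D_t\vphi_s,\vphi_s)=I+II$, expands $I$ using the linearisation formula for $\Delta_s\vphi$ and $II$ using the connection term \eqref{connection P} with $\alpha=\tau$, and obtains $-\|X^\bot\|_L^2+\mathcal N(\beta)$ for \emph{general} $\beta\in\Om^2$. Your shortcut via the first-variation-of-length formula needs $\Hess(\Vol)(Y,Y)\le 0$ for every normalised tangent vector $Y$, but (1) as proved only gives the Hessian formula for $\alpha\in\Om_{14}^2$; extending it to arbitrary $\beta$ is precisely the content of the separate calculation in (2). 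The formal identity $\tfrac12\partial_t\|\vphi_s\|^2=\mathcal G(D_s\nabla\Vol,\vphi_s)$ is indeed the bridge between the two statements, but verifying it here---with the nonlinear dependence of $\mathcal G$, $D$ and $\nabla\Vol$ on $\vphi$---amounts to redoing the paper's direct proof rather than invoking (1).
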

Complete statements with rigorous formulations are presented in \thmref{Hessian of Vol general} for concavity and \thmref{length contraction} for length contraction.


In Section \ref{Hypersymplectic structures}, we move to hypersymplectic manifolds, formulating the tangent space of $\mathcal{M}$, the metrics on $\mathcal{M}$ and the volume functional.


In Section~\ref{Mixed structures}, we specialise to the case where two components of the hyper-symplectic triple are fixed while the third varies. For this setting, we derive explicit formulas for:
 the induced metric,
 the associated connection,
the geodesic equations, and
the Hessian of the volume functional,
which will be useful for subsequent constructions of examples.

In Section~\ref{Torus fibration}, we utilise the calculation in Section~\ref{Mixed structures} to show explicit examples satisfying geodesic concavity. We further assume that:
the underlying manifold $X^4$ is a 4-torus,
and the hyper-symplectic structure $\underline{\omega}$ is homogeneous of degree one.
We then define the space $\mathcal{M}_1$ (Definition~\ref{defn:space 1}) as the collection of all $G_2$ structures induced by such hyper-symplectic structures within a fixed equivalence class.

\begin{thm}\label{theorem main}
In the space $(\mathcal M_1,\mathcal G, \Vol)$,
\begin{enumerate}
\item the volume is geodesically concave along the canonical geodesic, Proposition \ref{geodesic concavity body};
\item the gradient flow of the volume decreases the distance, Proposition \ref{distance HS flow};
\item the sectional curvature is nonnegative, Proposition \ref{Curvature body}.
\end{enumerate}
\end{thm}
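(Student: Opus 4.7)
The plan is to deduce the theorem from the three propositions cited in its statement, each of which becomes tractable once one exploits the rigid structure of $\mathcal{M}_1$. Homogeneity of degree one on the $4$-torus constrains the hyper-symplectic triple to be determined by finitely many coefficients in a translation-invariant frame, so that the Dirichlet metric $\mathcal{G}$, the canonical connection from Section~\ref{Mixed structures}, the geodesic equation, and the relevant curvature quantities reduce to algebraic expressions in these coefficients. This finite-dimensional reduction is the mechanism that makes the Hessian and curvature computations on $\mathcal{M}_1$ feasible.

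For item (1), I would begin with the Hessian formula established in \thmref{Hessian of Vol general} and use the explicit mixed-setting expressions of Section~\ref{Mixed structures} to restrict it to $\mathcal{M}_1$. In this specialised setting the curvature-type terms entering the normalisation condition $\mathcal{N}$ and the Hessian become algebraic expressions in the coefficients of $\underline{\omega}$, so that $\mathcal{N}$ should be automatic along canonical geodesics in $\mathcal{M}_1$; the remaining quadratic form is then to be identified as non-positive by completing squares, yielding Proposition~\ref{geodesic concavity body}. For item (2), the idea is to apply \thmref{length contraction}: the verifications reduce to checking that the gradient flow of $\Vol$ preserves $\mathcal{M}_1$, which follows from the form of the hyper-symplectic flow on homogeneous data, and that the paths of interest are automatically normalised, which uses the same algebraic reduction as in item (1); the contraction estimate then transfers without modification to give Proposition~\ref{distance HS flow}. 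For item (3), I would compute the curvature of $\mathcal{G}|_{\mathcal{M}_1}$ via the Koszul formula applied to the connection of Section~\ref{Mixed structures}; under the homogeneity assumption the cross terms should assemble into $L^2$ pairings of bracket-like expressions on the coefficient data, from which the sectional curvature can be written as a sum of squares, delivering Proposition~\ref{Curvature body}.

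The main obstacle I anticipate is in item (1): even after the algebraic reduction, verifying that the curvature contributions to $\mathcal{N}$ and to the Hessian combine cleanly into a non-positive expression requires a careful identification of the various tensors built from $\underline{\omega}$ and its perturbations, and it is easy to lose sign information when passing between the $G_2$ metric $g_\vphi$ and the flat background of $\mathbb{T}^4$. Closely related is a technical point shared with item (2): one must check that the homogeneity-of-degree-one condition is preserved under both the canonical geodesic equation and the gradient flow of $\Vol$, so that the paths used in the applications of \thmref{Hessian of Vol general} and \thmref{length contraction} genuinely remain inside $\mathcal{M}_1$. Once these invariance and cancellation issues are settled, the nonnegativity of sectional curvature in item (3) should follow from a direct but less delicate computation.
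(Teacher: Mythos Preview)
Your proposal rests on a misconception about what $\mathcal{M}_1$ is. You assert that ``homogeneity of degree one on the $4$-torus constrains the hyper-symplectic triple to be determined by finitely many coefficients in a translation-invariant frame,'' and that this ``finite-dimensional reduction'' is what drives the argument. This is not the case. In the paper (Definition~\ref{defn:space 1} and the surrounding discussion), an element of $\mathcal{M}_1$ is specified by a single positive function $u$ on $S^1$ with $\int_{S^1} u = 1$; the tangent space consists of functions $f\in C_0^\infty(S^1)$. The space is genuinely infinite-dimensional, and the relevant objects are integral operators $L$ and $Q$ on $C_0^\infty(S^1)$ (Definition~\ref{operator L and Q}), not algebraic manipulations of finitely many parameters. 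Indeed, Remark~\ref{optimal transport} identifies $(\mathcal{M}_1,\mathcal{G})$ with the $2$-Wasserstein space of measures on the circle---a paradigmatically infinite-dimensional Riemannian manifold.

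Because of this, your outline does not engage with what the proofs actually require. For item~(1) the paper does not invoke \thmref{Hessian of Vol general} and verify the normalisation $\mathcal{N}$; instead it uses the explicit mixed-setting Hessian formula (\thmref{Vtt diagonal}) together with the fact, established by solving an ODE system on $S^1$ (\propref{lem:Green}), that the canonical $1$-form has the specific shape $\alpha_c = Q(f)\,dx^3$ satisfying the normalisation~\eqref{alpha normalisation}. The concavity then emerges as the explicit square $-2\int_M[\partial_0(\alpha_3/u)]^2\,\vol_\vphi$. For item~(2), the contraction is again obtained by a direct computation with the explicit $1$-dimensional connection of \propref{defn:con} and repeated integration by parts on $S^1$, not by checking that paths are $\mathcal{N}$-normalised in the sense of Section~\ref{HS and G2}. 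For item~(3), the curvature computation is a lengthy manipulation of the operators $Q$, $L$ and the auxiliary quantities $a,b,c$ on $S^1$, culminating in a single square term (\propref{Curvature body}); there is no ``sum of squares'' structure coming from bracket-like pairings on finite coefficient data. Your proposal would need to be rebuilt around the genuine analytic content: solving the potential equation on $S^1$, deriving the explicit connection and geodesic equation \eqref{1d geodesic}, and carrying out the integration-by-parts identities that produce the non-positive and non-negative quantities.
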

	
In this example,
the geodesic concavity of the volume functional yields the uniqueness of the torsion-free $G_2$ structure modulo diffeomorphisms (Theorem~\ref{uniqueness body}), offering a positive evidence to the uniqueness questions posed earlier. Furthermore, Proposition~\ref{diffeomorphismsgeodesic} shows that the diffeomorphism solves the canonical geodesic equation. The geodesic equation takes the form of a Hamilton-Jacobi equation as presented in \lemref{equations geodesic}.

In the proof of \thmref{theorem main}, we apply the $G_2$ formalism developed in the previous sections. Actually, in this example, \thmref{theorem main} revisits the Otto calculus of the geometry of the infinite-dimensional Riemannian manifold $(\mathcal{P}_2, W_2)$ from optimal transport theory \cite{MR1842429,MR2459454}, as discussed in Remark~\ref{optimal transport}. By interpreting the symplectic form as a measure in the $2$-Wasserstein space $\mathcal{P}_2$, we observe that the Dirichlet metric coincides with the Wasserstein metric on velocity fields, thereby inducing the Wasserstein distance $W_2$.

In Section~\ref{concavity}, we extend Theorem~\ref{theorem main} to triples of structures from $\mathcal{M}_1$. Specifically, we introduce the space $\mathcal{M}_3$ (Definition~\ref{defn:space 3}) consisting of all triples of elements from $\mathcal{M}_1$, and prove that the geometric structure $(\mathcal{M}_3, \underline{\mathcal{G}}, \Vol_{\underline{\chi}})$ satisfies properties analogous to those in Theorem~\ref{theorem main}, including the case of weighted volume functionals. The complete formulation appears in Theorem~\ref{M3 structure}.

\begin{rem}
The Dirichlet-type metric on the space of closed $G_2$ structures admits a natural comparison with its counterpart in the space of K\"ahler metrics in \cite{MR3412344}.
\end{rem}


In Section~\ref{non-concavity}, we present an example, demonstrating the failure of geodesic concavity for the weighted volume functional along canonical geodesics. 
We set the space $\widetilde{\mathcal{M}}_3$ of closed $G_2$ structures admitting the form~\eqref{eq:G2} within a fixed cohomology class on $\mathbb{T}^7$. 

\begin{thm}[\thmref{general weight}]
For the space $(\widetilde{\mathcal{M}}_3, \mathcal{G})$, the volume functional fails to be concave along the canonical geodesics. Moreover, this non-concavity persists for all weighted volume functionals.
\end{thm}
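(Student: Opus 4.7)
The strategy is to exhibit one explicit canonical geodesic inside $\widetilde{\mathcal{M}}_3$ along which the second derivative of the volume is positive at some instant, and then to arrange the example so that the \emph{same} geodesic defeats concavity of every weighted volume functional simultaneously. Conceptually, this is the opposite of the situation in \thmref{theorem main}: relaxing the degree-one homogeneity hypothesis destroys the curvature identities that previously forced concavity, and the construction is designed to make this failure visible.

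First, I would parametrise a translation-invariant path $\varphi(t)=d\theta^{123}-d\theta^i\wedge\omega_i(t)$ inside $\widetilde{\mathcal{M}}_3$, where $\underline\omega(t)$ is a family of symplectic triples on $\mathbb{T}^4$ that fixes the cohomology class but is not of the homogeneous form used to define $\mathcal M_1$. Since everything is constant-coefficient, the Hodge Laplacian defining $\mathcal G$ in \eqref{metric} and the canonical connection formulated in Section~\ref{HS and G2} reduce to algebraic operations on the matrix entries of $\underline\omega(t)$. Solving the canonical geodesic equation on this finite-dimensional slice yields an explicit closed-form family of genuine canonical geodesics inside $\widetilde{\mathcal{M}}_3$ that are \emph{not} covered by the subspace analysis of Sections~\ref{Mixed structures}--\ref{concavity}.

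Second, within this slice I would evaluate $\Vol(\varphi(t))=\tfrac{1}{7}\int_M \varphi(t)\wedge \ast_{\varphi(t)}\varphi(t)$. With translation-invariant data the integrand is a homogeneous algebraic function of the matrix entries of $\underline\omega(t)$, and the Hessian becomes a finite-dimensional quadratic form whose coefficients depend only on the $G_2$ volume polynomial. I expect, as foreshadowed by the discussion of the normalisation $\mathcal N$ in Section~\ref{HS and G2}, that once the homogeneity constraint is dropped the Ricci and scalar curvature terms that previously produced a negative definite Hessian lose their sign-control; one can therefore pick initial conditions for which the quadratic form has a positive eigendirection, producing $\tfrac{d^2}{dt^2}\Vol|_{t=0}>0$.

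For a weighted volume $\Vol_f(\varphi)=\int_M f\,\vol_\varphi$ with $f>0$, I would refine the example so that the positive mode in the second variation factors as $\tfrac{d^2}{dt^2}\Vol_f|_{t=0}=\big(\int_M f\,\mu\big)\cdot Q$, where $\mu$ is a fixed strictly positive density on $\mathbb{T}^4$ and $Q>0$ is a numerical quantity depending only on $\underline\omega(0)$ and $\dot{\underline\omega}(0)$; this is achievable precisely because the translation-invariant geodesic distributes its second variation uniformly over $\mathbb{T}^4$. Positivity of $f$ then forces non-concavity regardless of the weight. The main obstacle is the first step: producing a genuine canonical geodesic of $\widetilde{\mathcal{M}}_3$, not merely of a convenient totally geodesic subspace, whose velocity and acceleration are explicit enough to render the volume Hessian computable while being rich enough to escape the concavity mechanism of \thmref{theorem main}. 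Once such a model is in hand, the sign determination and the extension to arbitrary weights are algebraic.
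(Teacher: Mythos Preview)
Your approach has two substantive gaps.

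First, the translation-invariant slice you propose is trivial inside $\widetilde{\mathcal{M}}_3$. Elements of $\widetilde{\mathcal{M}}_3$ have the form $\vphi=\vphi^K-\sum_i(u_i-1)\,d\theta^i\wedge dx^{0i}$ with $u_i$ positive functions on $S^1$ of unit integral, and tangent vectors are $-\sum_i f_i\,d\theta^i\wedge dx^{0i}$ with $f_i\in C_0^\infty(S^1)$. A constant-coefficient (translation-invariant) $u_i$ forces $u_i\equiv 1$, and a constant-coefficient $f_i$ with zero mean forces $f_i\equiv 0$. Equivalently, a translation-invariant $2$-form on $\mathbb{T}^4$ in a fixed cohomology class is unique, so any translation-invariant path of $\underline\omega(t)$ cohomologous to $\underline\omega^K$ is the constant path $\underline\omega^K$. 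Hence the ``explicit closed-form family of genuine canonical geodesics'' you want to produce collapses to the single torsion-free point, and there is nothing to differentiate.

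Second, you have misread what ``weighted volume functional'' means here. In the paper it is $\Vol_\chi(\vphi)=\int_M\chi(u)\,\vol_E$ for a nonnegative concave function $\chi$ of the scalar $u=\det(q_{kl})$ (Definition~\ref{weighted volume functional 1}), not $\int_M f\,\vol_\vphi$ for a fixed positive $f$ on $M$. Your factorisation $\big(\int_M f\,\mu\big)\cdot Q$ is therefore aimed at the wrong object; the dependence of $\chi$ on $u$ reintroduces second-derivative terms $\chi_{uu}u_t^2$ which cannot be absorbed into a positive prefactor.

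The paper's argument proceeds differently: it does not build a single explicit geodesic. Using the general connection and geodesic equation for $\widetilde{\mathcal{M}}_3$, it computes $\p_t^2\Vol_\chi=\int u^2\,Q$ along an arbitrary canonical geodesic, where $Q$ is an explicit quadratic form in the twelve pointwise quantities $a_i=f_i/u_i$ and $b_{ij}=(u_i'/u_i)\,Q_j(f_j)/u_j$, with coefficients $r=\chi_{uu}$ and $s=\chi_{uu}+\chi_u u^{-1}$. The associated $12\times 12$ matrix is then shown to have eigenvalues $\pm\tfrac{r-s}{2}$ in the $A_{22}$ block; negative-semidefiniteness would force $r=s$, i.e.\ $\chi_u\equiv 0$, which is impossible for a positive concave $\chi$. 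Since the variables $a_i,b_i,c_i$ can be realised by suitable initial data $(u_i,f_i)$, the indefiniteness of the matrix yields a geodesic with $\p_t^2\Vol_\chi>0$. The key ingredient you are missing is this general Hessian formula along the canonical geodesic; without it, constructing a single geodesic by hand is both unnecessary and---as your translation-invariant attempt shows---not available in this space.
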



	\noindent {\bf Acknowledgments:}	
The author is partially funded by NSFC grants No. 12171365.
He would like to acknowledge support from the CRM and the Simons Foundation through the Simons CRM scholar program, and the ICTP through the Associates Programme (2020-2025).


\section{$G_2$ structures}\label{HS and G2}

\subsection{Riemannian structure for $\Om^2$}
We will present the Riemannian structure for the space of $2$-forms, including the metric, the Levi-Civita connection and the geodesic equation, extending our previous work \cite{MR4732956}.
\begin{defn}[$L^2$ metric for $2$-forms]
We define the metric
	\begin{align}\label{L2 metric}
		\mathcal G_\vphi(\a,\b):=\int_Mg_\vphi(\alpha,\beta) \cdot \vol_\vphi,\quad \forall \alpha,\beta\in \Om^2.
	\end{align}
\end{defn}
\begin{rem}
When $\a,\b$ are chosen to be the canonical forms $\delta u,\delta v$, respectively, the $L^2$ metric \eqref{L2 metric} coincides with the Dirichlet metric \eqref{metric}. Any $2$-form $\alpha$ and the canonical form differ by a $d$-closed form. 
\end{rem}

\begin{defn}\label{L2 connection}
Let $\vphi(t)$ be a family of closed $G_2$ structures in the space $\mathcal M$ of the closed $G_2$ structures determined by a given class $[\vphi]$. 

We assume that $\alpha(t)$ is a family of $2$-forms satisfying $\p_t\vphi=d\alpha$. We also attach another vector field $d\beta(t)$ along $\vphi(t)$.

We define the covariant derivative $D$ of $\beta$ along the family $\vphi(t)$: $$D_\a\b:=\beta_t+P(\vphi, \a,\b),$$ where $P$ is given by
\begin{equation}\label{connection P}
\begin{split}
		P(\vphi,\a,\b):=&\frac{1}{2}\big[g_\vphi(\vphi,d\a)\beta+g_\vphi(\vphi,d\b)\alpha\big]\\
		&-\frac{1}{2}\delta\big[g_\vphi(\alpha,\beta)\vphi-\i_\vphi \j_{\alpha}\beta\big]
		-\frac{1}{4}\big[\i_{\beta}\j_\vphi d\a+\i_{\alpha}\j_\vphi d\b\big].
\end{split}
\end{equation}
Here the operators $\i,\j$ for a $p$-form $\omega$ were defined in \cite[Section 2.3]{MR4732956}:
\begin{align*}
&\i_\omega: S^2(T^*M)\rightarrow\Omega^p(M),
\quad \j_\omega: \Omega^p(M) \rightarrow S^2(T^*M),\\
& \i_{\omega}h=\frac{1}{(p-1)!} \, h_{i_1l} \, g^{ls} \, \omega_{si_2\cdots i_p} \, dx^{i_1\cdots i_p},\\
	&\j_{\omega}\omega_2=\frac{1}{2}[\omega_{ia_2\cdots a_p}(\omega_2)_j^{\ a_2\cdots a_p} +\omega_{ja_2\cdots a_p}(\omega_2)_i^{\ a_2\cdots a_p}]dx^i\otimes dx^j.
\end{align*}
When $p=3$, they are exactly Bryant's contraction operators \cite{MR2282011}.

\end{defn}
\begin{thm}\label{2 form Levi Civita}
The connection $D$ is a Levi-Civita connection, with respect to the $L^2$ metric \eqref{L2 metric}.
\end{thm}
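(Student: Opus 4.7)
The plan is to establish the two defining properties of a Levi-Civita connection: metric compatibility and torsion-freeness. In the present setting these amount respectively to
\[
\frac{d}{dt}\mathcal G_\varphi(\beta_1,\beta_2) = \mathcal G_\varphi(D_\alpha\beta_1,\beta_2) + \mathcal G_\varphi(\beta_1,D_\alpha\beta_2)
\]
along any family $\varphi(t)\in\mathcal M$ with $\partial_t\varphi=d\alpha$, and to the symmetry $P(\varphi,\alpha,\beta) = P(\varphi,\beta,\alpha)$, which expresses commutativity of mixed covariant derivatives for a two-parameter deformation $\varphi(s,t)$.

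The symmetry of $P$ is immediate term-by-term from~\eqref{connection P} once one verifies $\j_\alpha\beta = \j_\beta\alpha$ for $2$-forms $\alpha,\beta$. This follows by unpacking the definition of $\j$ in degree $2$ and relabelling the single contracted dummy index. The first and third brackets in~\eqref{connection P} are then manifestly symmetric in $(\alpha,\beta)$, and the middle bracket inherits symmetry from this $\j$-identity.

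For metric compatibility I would differentiate
\[
\frac{d}{dt}\int_M g_\varphi(\beta_1,\beta_2)\,\vol_\varphi
 = \int_M g_\varphi(\partial_t\beta_1,\beta_2)\vol_\varphi + \int_M g_\varphi(\beta_1,\partial_t\beta_2)\vol_\varphi + \int_M \mathcal E\,\vol_\varphi,
\]
where $\mathcal E$ gathers the variation of $g_\varphi$ and $\vol_\varphi$ induced by $\partial_t\varphi=d\alpha$. Invoking the Bryant-type variation formulas assembled in~\cite{MR4732956}, the volume variation contributes a piece proportional to $g_\varphi(\varphi,d\alpha)\,g_\varphi(\beta_1,\beta_2)$, while the metric variation is most naturally paired against $\j_{\beta_1}\beta_2$. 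Combining these with the basic pairing adjunction $g_\varphi(\i_\varphi h,\eta) = \langle h,\j_\varphi\eta\rangle$ for symmetric $2$-tensors $h$ and symmetrising in $(\beta_1,\beta_2)$, one recovers exactly the first and third brackets of~\eqref{connection P}. The middle $\delta$-term then emerges from integration by parts: transferring the exterior derivative in $d\alpha$ off of $\alpha$ via $d^\ast=\delta$ converts the remaining algebraic pointwise contribution into the divergence $\delta[g_\varphi(\beta_1,\beta_2)\varphi - \i_\varphi\j_{\beta_1}\beta_2]$, which integrates to zero on closed $M$ and therefore must appear inside $P$ with opposite sign.

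The main obstacle I anticipate is the coefficient bookkeeping: the specific numerical factors $\tfrac{1}{2},\tfrac{1}{2},\tfrac{1}{4}$ in~\eqref{connection P} must fall out correctly after symmetrising across $(\beta_1,\beta_2)$ and applying the exact Bryant constants for $\partial_t g_\varphi$ and $\partial_t\vol_\varphi$. Structurally the proof is a direct adaptation of the $3$-form calculation carried out in~\cite{MR4732956}; the algebraic identities transfer verbatim, and only the degree-dependent numerical constants need to be tracked afresh.
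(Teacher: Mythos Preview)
Your approach is essentially the paper's: verify symmetry of $P$ for torsion-freeness (using $\j_\alpha\beta=\j_\beta\alpha$, as the paper does via Lemma~\ref{i j switch}), then check metric compatibility by differentiating $\mathcal G_\varphi(\beta_1,\beta_2)$ along $\varphi(t)$ and matching against $\mathcal G_\varphi(P(\alpha,\beta_1),\beta_2)+\mathcal G_\varphi(\beta_1,P(\alpha,\beta_2))$, invoking the variation formulas and the $\i/\j$ adjunction from~\cite{MR4732956}.

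One point in your write-up is off and would mislead the actual computation. The $\delta$-term in $P$ does \emph{not} ``integrate to zero on closed $M$'' when paired against $\beta_2$: rather $\int_M g_\varphi(\delta[\cdots],\beta_2)\,\vol_\varphi=\int_M g_\varphi([\cdots],d\beta_2)\,\vol_\varphi$, so it contributes genuine $d\beta_2$ (and symmetrically $d\beta_1$) terms. In the paper's calculation, after forming $\mathcal G_\varphi(P(\alpha,\beta),\gamma)+\mathcal G_\varphi(P(\alpha,\gamma),\beta)$ these $d\beta$, $d\gamma$ contributions coming from the $\delta$-bracket and from the $g_\varphi(\varphi,d\beta)$, $\i_\alpha\j_\varphi d\beta$ pieces are antisymmetric in $(\beta,\gamma)$ and cancel, leaving only
\[
\int_M\Big[g_\varphi(\varphi,d\alpha)\,g_\varphi(\beta,\gamma)-\tfrac12 g_\varphi(\i_\beta\j_\varphi d\alpha,\gamma)\Big]\vol_\varphi,
\]
which is then identified with $\mathcal G_\varphi(\ast\ast_t\beta,\gamma)$ via \cite[Proposition~3.3]{MR4732956}. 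So the role of the $\delta$-term is to engineer this cancellation upon symmetrisation, not to be a discarded boundary term. With that corrected, your plan and the paper's proof coincide.
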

\begin{defn}\label{$L^2$ geodesic}
We say a family of $\alpha(t)$ defined in Definition \ref{2 form Levi Civita} is an 
$L^2$ geodesic if it solves the nonlinear equation 
\begin{align}\label{alpha geodesic}
0=\alpha_t+g(\vphi,d\alpha)\alpha
	-\frac{1}{2}\delta(|\alpha|^2\vphi-\i_\vphi \j_{\alpha}\alpha)
	-\frac{1}{2}\i_{\alpha}\j_\vphi d\alpha.
\end{align}

We say a family of $3$-forms $\vphi(t)\in \mathcal M$ is a geodesic, if there exists an $L^2$ geodesic $\alpha(t)$ such that $\p_t\vphi(t)=d\alpha(t)$. 
\end{defn}

We collect some identities, which will be used later.
\begin{lem}\label{i j switch}
The $3$-form $d\alpha$ is decomposed into three parts 
\begin{align*}
&X_1:=\pi_1^3d\alpha=\frac{1}{7}g(\vphi,d\alpha)\vphi,\quad X_7:=\pi_7^3d\alpha=\frac{1}{4} * [\ast(d\alpha\wedge\vphi)\wedge \vphi],
	\end{align*}
and $X_{27}:=\pi_{27}^3d\alpha$, see \cite[Equation (3.1)]{MR4732956}.

The contraction operators satisfy the identities \cite[Lemma 2.18]{MR4732956}
\begin{align*}
&\int_Mg(d\gamma,\,\i_\vphi\j_\a\b)\vol_\vphi
	=\int_Mg(\j_\vphi d\gamma,\,\j_\a\b)\vol_\vphi
	=\frac{1}{2}\int_Mg(\i_\a\j_\vphi d\gamma,\,\b)\vol_\vphi,
	\end{align*}
	$\j_\a\b=\j_\b\a,\quad j_\vphi X_1=\frac{6}{7}g(\vphi,d\alpha) g,\quad j_\vphi X_7=0,\quad i_\vphi j_\vphi X_{27}=4 X_{27}$ and the norm
$
|j_\vphi X_1|^2=18 |X_1|^2 
$, see \cite[Proposition 2.9]{MR4732956}.

\end{lem}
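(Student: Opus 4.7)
The statement bundles together identities that are essentially bookkeeping consequences of $G_2$ representation theory and of the adjoint properties of Bryant's contraction operators; each line carries an explicit citation to \cite{MR4732956}. My plan is therefore not to reprove these facts from scratch, but to organise them into three groups and verify each group briefly.

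First, I would address the pointwise decomposition $d\alpha = X_1 + X_7 + X_{27}$. Under the $G_2$ action, $\Lambda^3 T^*M$ splits into irreducibles of dimensions $1$, $7$, and $27$, so any $3$-form, in particular $d\alpha$, decomposes accordingly. The orthogonal projection onto the trivial summand is computed by pairing with $\vphi$ (which is normalised so that $|\vphi|^2 = 7$), giving the stated formula for $X_1$. The projection $X_7$ is, by standard $G_2$ formulas, of the form $*(X_7 \wedge \vphi) = -2 X_7$ (or a similar relation depending on convention), from which the displayed expression $X_7 = \tfrac14 *[*(d\alpha \wedge \vphi)\wedge\vphi]$ is recovered by inverting the algebraic operator on this summand. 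All of this appears as \cite[Eq.~(3.1)]{MR4732956}, so the argument reduces to pointing to that reference.

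Second, I would verify the integration-by-parts identity
\begin{equation*}
\int_M g(d\gamma, \i_\vphi \j_\alpha \beta)\,\vol_\vphi
= \int_M g(\j_\vphi d\gamma, \j_\alpha \beta)\,\vol_\vphi
= \tfrac12 \int_M g(\i_\alpha \j_\vphi d\gamma, \beta)\,\vol_\vphi.
\end{equation*}
This is a direct consequence of the pointwise adjoint relations between $\i_\omega$ and $\j_\omega$ established in \cite[Lemma~2.18]{MR4732956}: the first equality uses that $\j_\vphi$ is the adjoint of $\i_\vphi$ applied to symmetric $2$-tensors, and the second equality uses the analogous adjointness for $\alpha$, together with the factor of $\tfrac12$ coming from the normalisation in the definition of $\j$. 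The symmetry $\j_\alpha\beta=\j_\beta\alpha$ is immediate from the symmetrised definition of $\j$.

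Third, the three eigenvalue-type identities for $\j_\vphi$ restricted to $X_1$, $X_7$, $X_{27}$ follow from the standard $G_2$ contraction formulas, which are collected in \cite[Proposition~2.9]{MR4732956}. Specifically, using $\vphi_{ia_2 a_3}\vphi_j{}^{a_2 a_3} = 6 g_{ij}$ and substituting $X_1 = \tfrac17 g(\vphi,d\alpha)\vphi$ yields $\j_\vphi X_1 = \tfrac67 g(\vphi,d\alpha)\, g$. For $X_7$, which has the form $\iota_V\vphi$ for a vector field $V$, one checks $\vphi_{i a_2 a_3}(\iota_V\vphi)_j{}^{a_2 a_3}$ is antisymmetric in $i,j$, so the symmetrisation in the definition of $\j_\vphi$ kills it. For $X_{27}$, the identity $\i_\vphi \j_\vphi X_{27} = 4 X_{27}$ is the statement that $\i_\vphi\j_\vphi$ acts as the scalar $4$ on the $27$-dimensional summand, a standard fact in $G_2$ representation theory. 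Finally, the norm $|\j_\vphi X_1|^2 = 18|X_1|^2$ follows from $\j_\vphi X_1 = \tfrac67 g(\vphi,d\alpha)\, g$ and $|g|^2 = 7$ combined with $|X_1|^2 = \tfrac17 g(\vphi,d\alpha)^2$, giving $|\j_\vphi X_1|^2 = \tfrac{36}{49}\cdot 7 \cdot g(\vphi,d\alpha)^2 = 18|X_1|^2$.

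The only step that is not purely a reference lookup is the last arithmetic check on the normalisations; no real obstacle is anticipated, since each constant is fixed once the convention for $\j$ is chosen. Assembling these three groups completes the verification of the lemma.
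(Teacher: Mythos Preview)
The paper gives no proof of this lemma at all; it simply lists the identities with their citations to \cite{MR4732956} and moves on. Your plan to organise the citations into three groups and verify each is therefore already more than the paper does, and the overall structure is sound.

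Two small corrections, however. First, your description of $X_7$ as ``of the form $\iota_V\vphi$'' is wrong: $\iota_V\vphi$ is a $2$-form, not a $3$-form. The $7$-dimensional summand of $\Omega^3$ consists of forms $\iota_V\psi$ (equivalently $*(V^\flat\wedge\vphi)$), and the vanishing $\j_\vphi X_7=0$ comes from the contraction identities for $\psi$ rather than $\vphi$. Second, your final arithmetic does not close: $\tfrac{36}{49}\cdot 7=\tfrac{36}{7}$, while $18|X_1|^2=\tfrac{18}{7}\,g(\vphi,d\alpha)^2$, so under your assumption $|g|^2=7$ you would obtain $36|X_1|^2$, not $18|X_1|^2$. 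The discrepancy is a convention issue for the norm on $S^2(T^*M)$ in \cite{MR4732956}. A convention-free route is to use the very adjointness you cite: $|\j_\vphi X_1|^2=g(X_1,\i_\vphi\j_\vphi X_1)$, and then compute
\[
\i_\vphi\j_\vphi X_1=\tfrac{6}{7}g(\vphi,d\alpha)\,\i_\vphi g=\tfrac{6}{7}g(\vphi,d\alpha)\cdot 3\vphi=18\,X_1,
\]
which yields the factor $18$ directly and is exactly how the paper uses this identity later (in the proof of \thmref{length contraction}, where $g(\vphi_s,\i_\vphi\j_\vphi\vphi_s)=18|X_1|^2+4|X_{27}|^2$ is invoked).
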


\begin{proof}[Proof of \thmref{2 form Levi Civita}]
Let $\vphi(t,s)$ be a two-parameter family of closed $G_2$ structures such that $\p_s\vphi=d\beta$.
The torsion freeness following from computing $D_\a\b-D_\b\a=0$, since $d \beta_t=d \a_s=\vphi_{st}$ and $P(\vphi,\a,\b)=P(\vphi,\b,\a)$.

For the metric compatibility, we are given another vector field $Z=d\gamma$ attached along $\vphi(t)$. On the one hand, we
differentiate the metric $\mathcal G_\vphi(\beta,\gamma)$ in \eqref{L2 metric},
\begin{equation*}
\begin{split}
LHS&:=\p_t\mathcal G_\vphi(\beta,\gamma)-\mathcal G_\vphi(\beta_t,\gamma)-\mathcal G_\vphi(\beta,\gamma_t)
=\mathcal G_\vphi(\ast\ast_t\beta,\gamma).
\end{split}
\end{equation*}

On the other hand, we insert \eqref{connection P} into $\mathcal G_\vphi(P(\vphi,\alpha,\beta),\gamma)$ 
\begin{equation*}
\begin{split}
&
=\int_M [\frac{1}{2} g(\vphi,d\a)g( \beta,\gamma)
+\frac{1}{2} g(\vphi,d\b)g( \alpha,\gamma)
-\frac{1}{2}g(\vphi, d\gamma)g(\alpha,\beta)\\
		&+\frac{1}{2}g(\i_\vphi \j_{\alpha}\beta, d\gamma)
		-\frac{1}{4}g(\i_{\beta}\j_\vphi d\a,\gamma) -\frac{1}{2}g(d\b,\i_\vphi \j_{\alpha}\gamma) 
]\vol_\vphi.
\end{split}
\end{equation*} 
Here, we apply \lemref{i j switch} to the last term. Removing the anti-symmetric part, we have the expression of the $RHS$ defined to be
\begin{equation*}
\begin{split}
\mathcal G_\vphi(P(\vphi,\alpha,\beta),\gamma)
+\mathcal G_\vphi(P(\vphi,\alpha,\gamma),\beta)
=\int_M [ g(\vphi,d\a)g( \beta,\gamma)
-\frac{1}{2}g(\i_{\beta}\j_\vphi d\a,\gamma) 
]\vol_\vphi.
\end{split}
\end{equation*} 
Therefore, $LHS=RHS$, due to \cite[Proposition 3.3]{MR4732956}, which infers the metric compatibility.
\end{proof}

\subsection{Gauge fixing tangent space}
\begin{lem}\label{14 2 properties}
When $\alpha\in \Om_{14}^2$, we have $g(\vphi,d\a)=g(\tau,\a)$,
$$
 X_1:=\pi_1^3 d\alpha=\frac{1}{7}g(\tau,\a)\vphi, \quad 
 X_7:= \pi_7^3 d\alpha=\frac{1}{4}\ast(\vphi \wedge \delta\alpha).$$
Moreover, the norm $ |X_1|^2=\frac{1}{7}g^2(\tau,\a)$.
\end{lem}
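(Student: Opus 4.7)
The plan is to exploit two equivalent characterisations of $\Om_{14}^2$, namely that $\alpha\in\Om_{14}^2$ is equivalent both to $\alpha\wedge\psi=0$ and to $\alpha\wedge\vphi=-\ast\alpha$, where $\psi=\ast\vphi$. Together with these I would use the standard torsion identity for closed $G_2$ structures: the full torsion reduces to a single form $\tau\in\Om_{14}^2$ satisfying $d\psi=\tau\wedge\vphi$.

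First I would establish the scalar identity $g(\vphi,d\alpha)=g(\tau,\alpha)$ pointwise. Differentiating $\alpha\wedge\psi=0$ gives $d\alpha\wedge\psi=-\alpha\wedge d\psi=-\alpha\wedge\tau\wedge\vphi$. Since $\tau\in\Om_{14}^2$ as well, $\tau\wedge\vphi=-\ast\tau$, hence $\alpha\wedge\tau\wedge\vphi=-\alpha\wedge\ast\tau=-g(\alpha,\tau)\vol_\vphi$. Combining this with the tautology $g(\vphi,d\alpha)\vol_\vphi=d\alpha\wedge\ast\vphi=d\alpha\wedge\psi$ yields the scalar identity. The formula $X_1=\tfrac{1}{7}g(\tau,\alpha)\vphi$ then follows at once from $X_1=\tfrac{1}{7}g(\vphi,d\alpha)\vphi$ in \lemref{i j switch}, and $|X_1|^2=\tfrac{1}{49}g(\tau,\alpha)^2|\vphi|^2=\tfrac{1}{7}g(\tau,\alpha)^2$ using the $G_2$ normalisation $|\vphi|^2=7$.

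For $X_7$, I would differentiate the second characterisation $\alpha\wedge\vphi=-\ast\alpha$ and use $d\vphi=0$ to obtain $d\alpha\wedge\vphi=-d\ast\alpha=-\ast\delta\alpha$, where $\delta=\ast d\ast$ on $2$-forms in dimension seven. Applying $\ast$ gives $\ast(d\alpha\wedge\vphi)=-\delta\alpha$, whereupon the formula $X_7=\tfrac{1}{4}\ast(\ast(d\alpha\wedge\vphi)\wedge\vphi)$ from \lemref{i j switch} produces $X_7=-\tfrac{1}{4}\ast(\delta\alpha\wedge\vphi)=\tfrac{1}{4}\ast(\vphi\wedge\delta\alpha)$ after commuting the $1$-form past the $3$-form.

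The only delicate point is sign bookkeeping under the Hodge-star and graded-commutativity conventions; no serious obstacle is anticipated, since the argument is purely algebraic once the two characterisations of $\Om_{14}^2$ and the closed-$G_2$ torsion identity $d\psi=\tau\wedge\vphi$ are in hand.
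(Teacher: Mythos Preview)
Your argument is correct and follows essentially the same route as the paper: both derive the key identity $\delta\alpha=\ast(\vphi\wedge d\alpha)$ for $\alpha\in\Om_{14}^2$ by differentiating the algebraic characterisation $\alpha\wedge\vphi=-\ast\alpha$, and then substitute into the $X_7$ formula from \lemref{i j switch}. The paper simply cites its prequel \cite[Proposition 2.19]{MR4732956} for this step and for the scalar identity $g(\vphi,d\alpha)=g(\tau,\alpha)$, whereas you spell out the derivations directly from $\alpha\wedge\psi=0$, $\alpha\wedge\vphi=-\ast\alpha$ and $d\psi=\tau\wedge\vphi$; the underlying computation is the same.
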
 
\begin{proof}
Following the proof of \cite[Proposition 2.19]{MR4732956}, we obtain the first two identities and $\delta\alpha=\ast(\vphi\wedge d\alpha)$ for $\alpha\in \Om_{14}^2$.
Then we plug $\delta\alpha$ into the expression $\pi_7^3 d\alpha
=\frac{1}{4}*[\vphi \wedge *(\vphi \wedge d\alpha)]$ and prove the lemma.
\end{proof}

According to the $G_2$-module decomposition, any $2$
-form $\alpha$ is decomposed into $\alpha=\alpha_7^2+\alpha_{14}^2$.
The $\Om_7^2$ part $\alpha_7^2=i_\mathcal V \vphi$ of the $2$-form $\alpha$ is the diffeomorphism action, where $\mathcal V$ is a vector field on $M$. It means there exists a family of diffeomorphisms $\sigma(s)$ such that
$$\p_s[\sigma^\ast(s) \vphi]=di_\mathcal V \vphi.$$

When we work on the gauge fixing space $\mathcal M_0:=\frac{\mathcal M}{\Diff_0}$ and suppose that $H^2=0$. In \cite{MR3959094}, the tangent space of $\mathcal M_0$ is a subspace of $\Om_{14}^2$,
\begin{align}\label{tangent space of gauge fixed space}
T\mathcal M_0=\frac{d\Om^2}{d\Om^2_7}=\frac{\Om^2_{14}}{\pi_{14}^2 d\Om^1}=\{\alpha\in\Om_{14}^2\vert \delta \a=0\}.
\end{align}

Note that the minimiser, denoted by $\chi$, of the $L^2$ norm of $2$-forms within the same class of $\alpha_{14}^2$,
$$\int_M (\alpha_{14}^2+d\eta)^2\vol, \quad \forall \eta\in\Om^1$$  satisfies the critical equation $\delta \chi=0$. We also have $d\alpha_{14}^2=d\chi$. But $\chi$ may not be $\Om_{14}^2$.

We will write $\pi^\bot$ the projection from $\Om^2$ to $T\mathcal M_0$ and $\pi^\parallel$ be the projection to its complementary $d\Om_7^2$. Then, we have $$\alpha^\bot:=\pi^\bot\alpha=\tilde \alpha_{14}^2,\quad \alpha^\parallel=\alpha_7^2,\quad 
d\alpha=d\alpha^\bot+d\alpha^\parallel.$$ We also write
$X^\bot=d\alpha^\bot, \, X^\parallel=d\alpha^\parallel.$ The decomposition is orthogonal under the $L^2$ metric \eqref{L2 metric} for $2$-forms.

\subsection{Variations and linearised operators}\label{Variations and linearised operators}
\begin{defn}\label{condition N}
We define a linear operators for $\alpha\in \Om^2$
\begin{equation*}
\begin{split}L (\alpha):= \delta[\frac{4}{3}X_1+X_7-X_{27}]= \delta[\frac{7}{3}X_1+ 2X_7- d\alpha].
\end{split}
\end{equation*}
Moreover, $L (\alpha)=\frac{7}{3}\delta X_1- \tri \alpha$, $\forall\alpha\in T\mathcal M_0$.
We also write
\begin{equation*}
\begin{split}
\mathcal L(\alpha,\beta):=\int_M g(L (\alpha),\beta) \vol_\vphi,
\quad \mathcal L(\alpha):=\frac{4}{3}\|X_1\|_L^2+\|X_7\|_L^2-\|X_{27}\|_L^2.
\end{split}
\end{equation*}
\end{defn}
The norm $\|X\|^2_L:=\mathcal G^L_\vphi(X,X)$ is measured by the Laplacian metric \cite[Definition 5.24]{MR4732956}
\begin{align*}
\mathcal G^L_\vphi(X,Y)=\int_Mg_\vphi(X,Y) \cdot \vol_\vphi,\quad \forall X,Y\in T_\vphi\mathcal M.
\end{align*}
\subsubsection{First variation}
Differentiate the volume along $\vphi_t=d\alpha$,
\begin{equation*}
\begin{split}
	3\Vol_t&=\int_M g( \vphi_t ,\vphi) \vol_\vphi
	=\int_M g(d\alpha,\vphi) \vol_\vphi
	=\int_M g( \alpha ,\tau) \vol_\vphi.
\end{split}
\end{equation*}
Since $\tau\in\Om_{14}^2$, we see that only the $\Om^2_{14}$ part $\alpha^2_{14}$ is involved. We use $\ast\tau=-d\psi$ and $\psi=\ast\vphi$ is the dual form. Then
\begin{equation}\label{eq:1st_vol general}
\begin{split}
	3\Vol_t
	=\int_M \alpha^2_{14} \wedge\ast\tau
	=-\int_M \alpha^2_{14} \wedge d\psi
	=\int_M d\alpha^2_{14} \wedge\psi.
\end{split}
\end{equation}

The torsion free structure satisfies the critical equation $$\tau=\delta\vphi=0.$$ From \cite[Corollary 3.8]{MR4732956}, the linearisation of $\tau$ is
\begin{align*}
\tau_t=L(\alpha)-g(\vphi,d\alpha)\tau+\frac{1}{2}\i_\tau \j_\vphi d\alpha=L(\alpha)-\frac{1}{7}g(\vphi,d\alpha)\tau+\frac{1}{2}\i_\tau \j_\vphi X_{27}.
\end{align*} 


\subsubsection{Second variation}
We further compute along $\vphi_s=d\beta$
\begin{equation*}
\begin{split}
	3\Vol _{ts}=\int_M \psi_s\wedge d\alpha^2_{14} .
\end{split}
\end{equation*}
Plug variation of $\psi$, $\psi_t=\ast[\frac{4}{3}X_1+X_7-X_{27}]$, see \cite[Corollary 3.5]{MR4732956}, we get the quadratic form
\begin{equation}\label{eq:2nd_vol general part 1} 
\begin{split}
3\Vol _{ts}
=\int_M g(L (\beta), \alpha_{14}^2) \vol_\vphi=\mathcal L(\beta,\alpha_{14}^2).
\end{split}
\end{equation}
\begin{lem} 
The linearised operator $L$ is self-adjoint, vanishes on $\Om_7^2$ and takes values in $\Om_{14}^2$, c.f. \cite[Proposition 2.2]{MR3959094}. 
\end{lem}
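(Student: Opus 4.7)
The plan is to establish (a) self-adjointness of $L$, and (c) the inclusion $L(\Omega^2) \subset \Omega_{14}^2$, and then deduce (b) vanishing on $\Omega_7^2$ as an elementary consequence of the other two.

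For (a), I would integrate by parts. Setting $Y_\alpha := \tfrac{4}{3}X_1(\alpha) + X_7(\alpha) - X_{27}(\alpha)$, so that $L(\alpha) = \delta Y_\alpha$, the pairing becomes
$$\mathcal{L}(\alpha,\beta) = \int_M g(\delta Y_\alpha, \beta)\,\vol_\vphi = \int_M g(Y_\alpha, d\beta)\,\vol_\vphi.$$
Decomposing $d\beta = X_1(\beta) + X_7(\beta) + X_{27}(\beta)$ and using the pointwise $L^2$-orthogonality of the irreducible $G_2$-modules $\Omega_1^3,\Omega_7^3,\Omega_{27}^3$, this reduces to the diagonal form
$$\mathcal{L}(\alpha,\beta) = \int_M \Bigl[\tfrac{4}{3}\langle X_1(\alpha), X_1(\beta)\rangle + \langle X_7(\alpha), X_7(\beta)\rangle - \langle X_{27}(\alpha), X_{27}(\beta)\rangle\Bigr]\vol_\vphi,$$
which is manifestly symmetric in $\alpha$ and $\beta$.

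For (c), I would use the characterisation $\omega \in \Omega_{14}^2 \iff \omega \wedge \psi = 0$ and compute $L(\alpha)\wedge\psi = \delta Y_\alpha \wedge \psi$ directly. The identities from \lemref{i j switch} for the operators $\i,\j$, together with the wedge identities satisfied by each type component $X_1, X_7, X_{27}$, and the closedness $d\vphi = 0$, are the essential inputs. The specific coefficient ratio $\tfrac{4}{3}:1:-1$ is exactly what forces the $\Omega_7^2$-contributions coming from $\delta X_1$, $\delta X_7$, $\delta X_{27}$ to cancel.

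Finally, (b) is forced by (a) and (c): for $\alpha \in \Omega_7^2$ and any $\beta \in \Omega^2$, self-adjointness gives $\int_M g(L(\alpha),\beta)\vol_\vphi = \int_M g(\alpha, L(\beta))\vol_\vphi$, and by (c) the integrand on the right pairs an element of $\Omega_7^2$ with an element of $\Omega_{14}^2$, hence vanishes pointwise. Since this holds for all $\beta$, $L(\alpha) = 0$. The main obstacle is step (c): a clean direct verification that $\delta Y_\alpha$ has no $\Omega_7^2$-component requires careful bookkeeping of how the codifferential interacts with the type decomposition of $d\alpha$, in a setting where $\vphi$ is only closed (not torsion-free), so that $d\psi = \tau \wedge \vphi \neq 0$ introduces torsion terms which must cancel precisely by virtue of the chosen coefficients.
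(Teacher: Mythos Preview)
Your argument for (a) is correct and in fact cleaner than what the paper does: the diagonal form
\[
\mathcal L(\alpha,\beta)=\int_M\Bigl[\tfrac{4}{3}\langle X_1(\alpha),X_1(\beta)\rangle+\langle X_7(\alpha),X_7(\beta)\rangle-\langle X_{27}(\alpha),X_{27}(\beta)\rangle\Bigr]\vol_\vphi
\]
settles self-adjointness immediately. Your deduction of (b) from (a)+(c) is also fine.

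The genuine difference is in (c), and here your proposal is a plan rather than a proof. You are right that checking $\delta Y_\alpha\wedge\psi=0$ by hand is delicate: since $d\psi\neq 0$, commuting $\delta$ past the wedge with $\psi$ introduces torsion terms from $\ast\tau=-d\psi$, and the cancellation among the $\tfrac43,1,-1$ contributions is not transparent from the wedge identities in \lemref{i j switch} alone. The paper sidesteps this computation entirely. Instead of working with $L$ directly, it uses the linearisation formula
\[
\tau_t=L(\alpha)-\tfrac{1}{7}g(\vphi,d\alpha)\tau+\tfrac{1}{2}\i_\tau\j_\vphi X_{27},
\]
so that $L(\alpha)=\tau_t+\tfrac{1}{7}g(\vphi,d\alpha)\tau-\tfrac{1}{2}\i_\tau\j_\vphi X_{27}$. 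The last two summands involve $\tau\in\Omega_{14}^2$, and the $\Omega_7^2$-component of $\tau_t$ is controlled by differentiating the orthogonality $\int_M g(\tau,\beta)\vol_\vphi=0$ for $\beta\in\Omega_7^2$, using the known variation formulae for $g$ and $\vol_\vphi$. The torsion cancellations you would have to verify by brute force are thus packaged into the single variational identity; nothing about the specific coefficients $\tfrac43:1:-1$ needs to be checked.

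In short: your route is in principle valid but leaves the hard step undone; the paper's route trades the pointwise wedge computation for a variational identity already available from \cite{MR4732956}, which makes (c) almost automatic.
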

\begin{proof}
We give a direct proof as following. 
We use $\p_t\vol_\vphi=\frac{1}{3}g(\vphi,d\alpha)\vol_\vphi$ and
$\p_t g(\tau,\beta)=-\frac{1}{2}g(i_\tau j_\vphi X_{27},\beta)-\frac{4}{21}g(\vphi,d\alpha)g(\tau,\beta)$ from \cite[Lemma 3.1]{MR4732956}.
Since $\int_M g(\tau,\beta)\vol_\vphi=0, \forall\beta\in \Om_7^2$, taking $t$-derivative, we obtain
\begin{align*}
0=\int_M g(\tau_t,\beta)\vol_\vphi+\p_t g(\tau,\beta)\vol_\vphi+g(\tau,\beta)\frac{1}{3}g(\vphi,d\alpha)\vol_\vphi.
\end{align*}
Hence, 
$
L(\alpha)=\tau_t-\frac{1}{2}i_\tau j_\vphi X_{27}+\frac{1}{7}g(\vphi,d\alpha)\tau \in\Om_{14}^2.
$
\end{proof}

At the critical point, the linearisation of the critical point $\tau$ equals to the linearisation of the volume functional, $$\tau_t=L.$$ Furthermore, $L(\alpha)=- \tri \alpha,\quad \forall \alpha\in T\mathcal M_0$.
Local deformation has been constructed in \cite{MR1424428}.


\subsection{The geodesic equations modulo diffeomorphisms}\label{14 2 projection}
\begin{defn}\label{integration N}
We define a normalisation condition
\begin{equation*}
\begin{split}
&\mathcal N(\alpha)
:=\|X^\bot\|^2_L
+\mathcal L(\alpha)+\mathcal R(\a)
+\int_M[
-g(\tau,\alpha)g(\vphi,d\a) 
+g(\j_{\a}\tau,\, \j_\vphi d\a)]\vol_\vphi.
\end{split}
\end{equation*}
Here, $R(\a,\a):=\frac{1}{2}|\a|^2|\tau|^2
-\frac{1}{2}g(\j_\vphi d\tau,\,\j_\a\a)$ and $\mathcal R(\a)=\int_M R(\a,\a)\vol_\vphi$.
\end{defn}

\subsubsection{Ricci and scalar curvature}
We could bring the Ricci curvature $Ric$ and scalar curvature $S$ of the Riemannian metric $g_{\vphi(t)}$, determined by the $G_2$ structure $\vphi(t)$, enter the formula of $\mathcal N$.
\begin{lem}
The term $R(\a,\a)$ has the expression
\begin{equation*}
\begin{split}
&
R(\a,\a)=-3S\cdot |\a|^2
-g(\i_\a Ric,\,\a)
-\frac{1}{8}g(\i_\a\j_\vphi [*(\tau \wedge \tau)],\,\a).
\end{split}
\end{equation*}
\end{lem}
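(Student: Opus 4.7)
The proof plan is to reduce the defining expression
$R(\a,\a) = \frac{1}{2}|\a|^2|\tau|^2 - \frac{1}{2} g(\j_\vphi d\tau,\, \j_\a\a)$
to the claimed curvature form by two maneuvers: first converting the quadratic pairing into a contraction of the form $g(\i_\a X, \a)$, and then invoking the standard Bryant identities expressing the scalar and Ricci curvatures of a closed $G_2$-structure in terms of $\tau$, $d\tau$, and $*(\tau\wedge\tau)$.

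First I would establish the pointwise companion
$g(\j_\vphi X,\, \j_\a\a) = g(\i_\a \j_\vphi X,\, \a)$
for any $3$-form $X$ and $2$-form $\a$. This is the unintegrated version of the $\i$-$\j$ switch in \lemref{i j switch}; it follows from a direct index computation using the definitions of $\i$ and $\j$ together with the symmetry $\j_\a\a = \j_\a\a$ and the symmetric $(0,2)$-tensor nature of $\j_\vphi X$. Applied to $X = d\tau$, this identity rewrites
$R(\a,\a) = \frac{1}{2}|\a|^2|\tau|^2 - \frac{1}{2} g(\i_\a \j_\vphi d\tau,\, \a)$,
which is already in a form matching two of the three terms on the right-hand side of the statement.

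Next I would invoke Bryant's formulas \cite{MR2282011} for a closed $G_2$-structure: the scalar curvature is a fixed negative multiple of $|\tau|^2$, and the Ricci tensor decomposes as
$Ric = a\, S\, g + b\, \j_\vphi d\tau + c\, \j_\vphi[*(\tau\wedge\tau)]$
for explicit constants $a,b,c$. Solving for $\j_\vphi d\tau$ and substituting, while using $\i_\a g = \a$ so that $g(\i_\a(Sg),\a) = S|\a|^2$, converts the middle term in $R(\a,\a)$ into a linear combination of $g(\i_\a Ric,\a)$, $S|\a|^2$, and $g(\i_\a \j_\vphi[*(\tau\wedge\tau)],\a)$. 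Combining with the contribution $\frac{1}{2}|\a|^2|\tau|^2 = -S|\a|^2$ from the scalar curvature identity and collecting coefficients should yield precisely $-3S|\a|^2$, $-g(\i_\a Ric,\a)$, and $-\frac{1}{8}g(\i_\a\j_\vphi[*(\tau\wedge\tau)],\a)$.

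The hard part will be coefficient bookkeeping: one must match the normalisations of $\i$ and $\j$ used in \cite{MR4732956} against Bryant's conventions for the torsion, the sign in $\tau = \delta\vphi$, and the coefficient of $\tau\wedge\tau$ in his Ricci formula, so that the three rational constants $-3$, $-1$, $-\tfrac{1}{8}$ emerge correctly. No analytic input is needed beyond Bryant's pointwise identities; the rest is linear algebra in the $G_2$-module decomposition.
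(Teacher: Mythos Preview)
Your approach is essentially the same as the paper's: both invoke Bryant's identities $4Ric=|\tau|^2 g+\j_\vphi d\tau-\tfrac{1}{2}\j_\vphi[*(\tau\wedge\tau)]$ and $S=-\tfrac{1}{2}|\tau|^2$, solve for $\j_\vphi d\tau$, and then pass from the pairing $g(h,\j_\a\a)$ to $g(\i_\a h,\a)$. One caution on your bookkeeping: the pointwise $\i$--$\j$ switch carries a factor $\tfrac{1}{2}$, i.e.\ $g(h,\j_\a\a)=\tfrac{1}{2}g(\i_\a h,\a)$ for symmetric $h$ (and correspondingly $g(g,\j_\a\a)=2|\a|^2$, so $\i_\a g=2\a$ in these conventions), which is exactly what the paper uses and what makes the constants $-3,-1,-\tfrac{1}{8}$ come out.
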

\begin{proof}
The formula is obtained from plugging the expressions of $Ric$ and $S$ for closed $G_2$ structure  \cite{MR2282011}:
\begin{align}\label{eq:Ricci}
	4Ric(g)=|\tau|^2 g +\j_\vphi  d\tau-\frac{1}{2}\j_\vphi [*(\tau \wedge \tau)],\quad
S(g)=-\frac{|\tau|^2}{2}.
\end{align}
Then we obtain that $$-\frac{1}{2}g(\j_\vphi d\tau,\,\j_\a\a)=-2g(Ric,\,\j_\a\a)
+\frac{|\tau|^2}{2}g( g,\,\j_\a\a)
-\frac{1}{4}g(\j_\vphi [*(\tau \wedge \tau)],\,\j_\a\a).$$
We also use \cite[Definition 2.3, Proposition 2.10]{MR4732956} to see
$$g( g,\,\j_\a\a)= 2g(\a,\a)=2|\a|^2,\quad g(Ric,\,\j_\a\a)=\frac{1}{2}g(\i_\a Ric,\a)$$ and $g(\j_\vphi [*(\tau \wedge \tau)],\,\j_\a\a)=\frac{1}{2}g(\i_\a\j_\vphi [*(\tau \wedge \tau)],\,\a)$. Thus, we have proved the lemma.
\end{proof}
\begin{rem}
Note that the last term in the formula of $R$ has a priori estimate in terms of the scalar curvature
  \cite[Equation (2.31)]{MR2282011}
		\begin{align*}
			4S\cdot g\leq j_\vphi[\ast_\vphi(\tau\wedge\tau)]\leq -\frac{4}{3}S\cdot g
		\end{align*}
		and $0\geq-\j_\a\a\geq -\frac{2}{3}|\a|^2$, for $\alpha\in \Om_{14}^2$, see \cite[Theorem 2.23]{MR4732956}.
\end{rem}

\begin{lem}\label{last term}
$g(\j_{\a}\tau,\, \j_\vphi d\a)
=\frac{12}{7}g(d\a,\vphi) g(\a, \, \tau)
+g(\j_{\a}\tau,\,\j_\vphi X_{27})
$.
\end{lem}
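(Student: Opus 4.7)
The plan is to decompose $d\a$ into its three $G_2$-irreducible pieces $X_1,X_7,X_{27}$ (as specified at the start of \lemref{i j switch}), apply $\j_\vphi$ termwise, and then pair against $\j_\a \tau$. By \lemref{i j switch} we have $\j_\vphi X_7 = 0$ and $\j_\vphi X_1 = \frac{6}{7} g(\vphi,d\a) \, g$, so
\begin{equation*}
\j_\vphi d\a \;=\; \tfrac{6}{7} g(\vphi,d\a)\, g \;+\; \j_\vphi X_{27}.
\end{equation*}
Pairing with $\j_\a \tau$ under the pointwise metric on symmetric $2$-tensors gives
\begin{equation*}
g(\j_\a \tau, \j_\vphi d\a) \;=\; \tfrac{6}{7}\, g(\vphi,d\a)\, g(\j_\a \tau,\,g) \;+\; g(\j_\a\tau,\, \j_\vphi X_{27}).
\end{equation*}

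It then remains to identify $g(\j_\a\tau, g)$ with a multiple of $g(\a,\tau)$. Using the coordinate formula for $\j_\a\tau$ recalled in \defnref{L2 connection} (with $p=2$),
\begin{equation*}
(\j_\a\tau)_{ij} \;=\; \tfrac{1}{2}\bigl[\a_{ia}\tau_j{}^a + \a_{ja}\tau_i{}^a\bigr],
\end{equation*}
the trace $g^{ij}(\j_\a\tau)_{ij}$ collapses to $\a_{ia}\tau^{ia}$, which under the Bryant normalization $g(\a,\tau)=\tfrac{1}{2}\a_{ij}\tau^{ij}$ of \cite{MR4732956} equals $2g(\a,\tau)$. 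Inserting this into the previous display yields the asserted identity
\begin{equation*}
g(\j_\a\tau,\j_\vphi d\a) \;=\; \tfrac{12}{7}\, g(\vphi,d\a)\, g(\a,\tau) \;+\; g(\j_\a\tau,\j_\vphi X_{27}).
\end{equation*}

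The only substantive step is the computation of the trace $g(\j_\a\tau, g) = 2g(\a,\tau)$, which depends on consistent $p!$ conventions for inner products on $p$-forms versus symmetric tensors; I would double-check this against \cite[Proposition 2.10]{MR4732956}, where an analogous identity $g(g,\,\j_\a\a)=2|\a|^2$ has already been recorded (and is in fact used two displays above in the proof of the Ricci-curvature reformulation of $R(\a,\a)$). Once this normalization is nailed down, the rest of the argument is purely algebraic and relies only on the orthogonal decomposition of $d\a$ and the two vanishing/trace identities of \lemref{i j switch}.
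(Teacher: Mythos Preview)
Your proof is correct and follows exactly the paper's approach: the paper's proof reads ``We combine $\j_\vphi d\alpha=\frac{6}{7}g(d\a,\vphi)\,g+\j_\vphi X_{27}$ with $g(\j_{\a}\tau,\, g)=2g(\a,\tau)$ to prove the identity,'' which is precisely your decomposition-plus-trace argument. Your caution about the normalisation $g(\j_\a\tau,g)=2g(\a,\tau)$ is well placed but unnecessary here, since the identical fact $g(g,\j_\a\a)=2|\a|^2$ is already invoked a few lines earlier in the paper.
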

\begin{proof}
We combine $\j_\vphi d\alpha=\frac{6}{7}g(d\a,\vphi) g+\j_\vphi X_{27}$ with
$
g(\j_{\a}\tau,\, g)
=2 g(\a, \, \tau)
$ to prove the identity.
\end{proof}
\subsubsection{Normalised geodesics}
Given a tangent vector field $X=d\alpha$, the choice of $\alpha$ given in Definition \ref{L2 connection} is not unique.
We will pose a suitable condition on $\alpha$.
\begin{defn}\label{N geodesics}
An $L^2$ geodesic $\alpha(t)$ in Definition \ref{$L^2$ geodesic} is called 
\begin{itemize}
\item a canonical geodesic, if $\alpha(t)=\delta u(t)$, c.f. \eqref{metric};
\item an $\Om^2_{14}$ geodesic, if
$\alpha(t)\in \Om^2_{14}$;
\item a gauge fixing geodesic, if $\alpha(t)\in T\mathcal M_0$;
\item an $\mathcal N$ geodesic,
if $\alpha(t)$ satisfies the normalisation $\mathcal N(\alpha(t))=0$;

\item an $\mathcal N_{14}$ geodesic, if
$\alpha(t)\in \Om^2_{14}$ and $\alpha(t)$ is an $\mathcal N$ geodesic, too.
\end{itemize}
We say a family of $\vphi(t)\in \mathcal M$ is a canonical ($\Om^2_{14}$, gauge fixing, $\mathcal N$, $\mathcal N_{14}$) geodesic, if there exists an $L^2$ geodesic $\alpha(t)$, which is canonical ($\Om^2_{14}$, gauge fixing, $\mathcal N$, $\mathcal N_{14}$ respectively), such that $\p_t\vphi(t)=d\alpha(t)$. 

We say a family of $\vphi(t)\in \mathcal M$ is a weak $\Om^2_{14}$ ($\mathcal N_{14}$) geodesic, if there exists a family of $\alpha(t)$ such that $\p_t\vphi(t)=d\alpha(t)$ and the $\pi^2_{14}$ part $\pi_{14}^2\alpha(t)$ is an $\Om^2_{14}$ ($\mathcal N_{14}$ respectively) geodesic. 
\end{defn}

\begin{rem}
If $\alpha(t)\in \Om^2_7$, then $\alpha(t)$ is a weak $\Om^2_{14}$ geodesic, and also a weak $\mathcal N_{14}$ geodesic, by definition.
\end{rem}



\subsubsection{$\Om_{14}^2$ geodesic}
\begin{lem}\label{142 geodesic}
The $\Om_{14}^2$ geodesic $\vphi_t=d\alpha$ satisfies the equation
\begin{align*}
0=\alpha_t
+g(\tau,\alpha)\alpha
	-\frac{1}{2}\delta[|\alpha|^2\vphi-\i_\vphi \j_{\alpha}\alpha]
	-\frac{1}{2}\i_{\alpha}\j_\vphi d\alpha.
\end{align*}
When $\alpha\in \Om_{14}^2$, the normalisation condition
$\mathcal N(\alpha)$ becomes
\begin{equation*}
\begin{split}
=\|X^\bot\|^2_L
+\frac{19}{3} \|X_1\|_L^2
+ \|X_7\|_L^2
-\|X_{27}\|^2_L
+\mathcal R(\a)
+\int_M
g(\j_{\a}\tau,\, \j_\vphi X_{27})\vol_\vphi.
\end{split}
\end{equation*}
\end{lem}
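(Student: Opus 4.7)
The plan is to derive both assertions by direct substitution into the formulas established earlier in this section, with the key simplification being that $\alpha \in \Omega_{14}^2$ turns $g(\vphi, d\alpha)$ into $g(\tau, \alpha)$ via \lemref{14 2 properties}.

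First I would handle the geodesic equation. Starting from the $L^2$ geodesic equation \eqref{alpha geodesic}, the only place that changes under the assumption $\alpha \in \Omega_{14}^2$ is the coefficient $g(\vphi, d\alpha)$ in front of $\alpha$ in the first-order term. By \lemref{14 2 properties} this equals $g(\tau, \alpha)$, and substituting yields the claimed equation. No other term is affected.

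Next I would rewrite $\mathcal N(\alpha)$ under the same assumption. Starting from \defnref{integration N}, the two pieces to simplify are the integrand
\[
-g(\tau,\alpha)g(\vphi,d\a) + g(\j_\a\tau, \j_\vphi d\a)
\]
and the linearised operator contribution $\mathcal L(\alpha) = \tfrac{4}{3}\|X_1\|_L^2 + \|X_7\|_L^2 - \|X_{27}\|_L^2$. For the first summand of the integrand, I use $g(\vphi, d\alpha) = g(\tau, \alpha)$ together with the norm identity $|X_1|^2 = \tfrac{1}{7}g^2(\tau, \alpha)$ from \lemref{14 2 properties} to conclude
\[
-g(\tau,\alpha)g(\vphi,d\a) = -g^2(\tau,\alpha) = -7|X_1|^2.
\]
For the second summand, I apply \lemref{last term} and again replace $g(d\alpha, \vphi)$ by $g(\tau, \alpha)$, obtaining
\[
g(\j_\a\tau, \j_\vphi d\a) = \tfrac{12}{7}g^2(\tau,\alpha) + g(\j_\a\tau, \j_\vphi X_{27}) = 12|X_1|^2 + g(\j_\a\tau, \j_\vphi X_{27}).
\]

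Finally I would combine everything. The two integrand simplifications produce $5|X_1|^2 + g(\j_\a\tau, \j_\vphi X_{27})$, which upon integration adds $5\|X_1\|_L^2$ to the $\tfrac{4}{3}\|X_1\|_L^2$ already present in $\mathcal L(\alpha)$, giving the coefficient $\tfrac{4}{3} + 5 = \tfrac{19}{3}$ in front of $\|X_1\|_L^2$. The $\|X_7\|_L^2$ and $-\|X_{27}\|_L^2$ terms carry over unchanged from $\mathcal L(\alpha)$, the $\mathcal R(\alpha)$ term is untouched, and the remaining $g(\j_\a\tau, \j_\vphi X_{27})$ contribution appears as the last integral in the claimed formula. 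This is essentially arithmetic; no genuine obstacle arises, since all the ingredients (the decomposition $d\alpha = X_1 + X_7 + X_{27}$, the identity $g(\vphi, d\alpha) = g(\tau, \alpha)$, the norm $|X_1|^2$, and the formula for $g(\j_\a\tau, \j_\vphi d\a)$) have already been established.
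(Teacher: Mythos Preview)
Your proposal is correct and follows essentially the same approach as the paper: apply \lemref{14 2 properties} to replace $g(\vphi,d\alpha)$ by $g(\tau,\alpha)$ in the geodesic equation, then use \lemref{last term} together with $|X_1|^2=\tfrac{1}{7}g^2(\tau,\alpha)$ to simplify the integrand in \defnref{integration N}, arriving at the coefficient $\tfrac{4}{3}+5=\tfrac{19}{3}$.
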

\begin{proof}
Here, we apply \lemref{14 2 properties} to Definition \ref{integration N}., and also use \lemref{last term}, $g(\j_{\a}\tau,\, \j_\vphi d\a)
=\frac{12}{7}g^2(\a, \, \tau)
+g(\j_{\a}\tau,\,\j_\vphi X_{27})
$ and $\frac{12}{7}g^2(\a, \, \tau)=12|X_1|^2$.
\end{proof}

\begin{cor}
If we further require $\alpha\in T\mathcal M_0$, then $X^\bot=X$. 
Thus $$\|X^\bot\|^2_L
+\frac{19}{3} \|X_1\|_L^2+ \|X_7\|_L^2-\|X_{27}\|^2_L=\frac{22}{3} \|X_1\|_L^2+ 2\|X_7\|_L^2.$$
\end{cor}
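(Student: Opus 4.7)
The corollary contains two assertions. First, that $X^\bot = X$ when $\alpha \in T\mathcal M_0$; second, the algebraic simplification of the quadratic expression appearing in \lemref{142 geodesic}. My plan is to handle each in turn, using only the orthogonal decompositions already established in the excerpt.

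For the first assertion, I would invoke \eqref{tangent space of gauge fixed space}: elements of $T\mathcal M_0$ lie in $\Om_{14}^2$ with vanishing codifferential, so the $\Om_7^2$ component $\alpha^\parallel = \alpha_7^2$ is identically zero. Since $X^\parallel = d\alpha^\parallel$ by definition, this forces $X^\parallel = 0$, and the orthogonal decomposition $X = X^\bot + X^\parallel$ collapses to $X^\bot = X$.

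For the second assertion, I would use the pointwise $G_2$-module decomposition $d\alpha = X_1 + X_7 + X_{27}$, which is orthogonal (this is the splitting recalled in \lemref{i j switch} and used throughout \secref{Variations and linearised operators}). Orthogonality under the Laplacian metric $\mathcal G_\vphi^L$ gives
\begin{equation*}
\|X^\bot\|_L^2 = \|X\|_L^2 = \|X_1\|_L^2 + \|X_7\|_L^2 + \|X_{27}\|_L^2.
\end{equation*}
Substituting this into the left-hand side of the claimed identity and collecting coefficients, the $\|X_{27}\|_L^2$ terms cancel, the $\|X_7\|_L^2$ contributions combine to $2\|X_7\|_L^2$, and the $\|X_1\|_L^2$ coefficients sum to $1 + \tfrac{19}{3} = \tfrac{22}{3}$, producing the stated right-hand side.

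The entire argument is bookkeeping on top of two orthogonal decompositions, so I do not anticipate any genuine obstacle; the only point requiring a small amount of care is confirming that the $L^2$ pairing inducing $\|\cdot\|_L$ respects the $G_2$-module splitting of $3$-forms pointwise, which is immediate since the projections $\pi_1^3, \pi_7^3, \pi_{27}^3$ are mutually orthogonal with respect to the metric $g_\vphi$.
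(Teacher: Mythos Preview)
Your proposal is correct and matches the paper's implicit reasoning: the corollary is stated without proof precisely because it follows by the two bookkeeping steps you outline, namely $\alpha\in T\mathcal M_0\subset\Om_{14}^2$ forces $\alpha^\parallel=0$ hence $X^\bot=X$, and then the pointwise orthogonality of the $G_2$-module splitting $X=X_1+X_7+X_{27}$ gives $\|X^\bot\|_L^2=\|X_1\|_L^2+\|X_7\|_L^2+\|X_{27}\|_L^2$, after which the identity is arithmetic.
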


Besides, if we set $\vphi$ to be torsion free, then both $X_1$ and $X_7$ vanish, by \lemref{14 2 properties}. Thus $\mathcal N=\|X\|^2_L-\|X_{27}\|_L^2$ vanishes identically. 







\subsection{Geodesic concavity}
We compute the Hessian of the volume along the geodesics. 
\begin{thm}\label{Hessian of Vol general} Suppose that $\alpha(t)$ is an $\Om_{14}^2$ geodesic \lemref{142 geodesic}.
Then the formula of the Hessian reads
\begin{equation*}
\begin{split}
3\Vol _{tt}=- \|X^\bot\|^2_L+\mathcal N(\alpha).
\end{split}
\end{equation*}
Additionally, along the $\mathcal N_{14}$ geodesic, the volume is concave.
\end{thm}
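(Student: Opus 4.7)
The plan is to differentiate the first-variation formula $3\Vol_t = \int_M g_\vphi(\a,\tau)\vol_\vphi$ a second time along the $\Om_{14}^2$ geodesic $\vphi(t)$, then reorganise the result into the block structure of \defnref{integration N}. The Leibniz rule produces
\begin{equation*}
3\Vol_{tt} = \int_M \bigl[g(\a_t,\tau) + g(\a,\tau_t) + (\p_t g_\vphi)(\a,\tau)\bigr]\vol_\vphi + \int_M g(\a,\tau)\,\p_t\vol_\vphi.
\end{equation*}
For $\tau_t$ I substitute the explicit formula displayed just before~\eqref{eq:2nd_vol general part 1}; for $\a_t$ I invoke the $\Om_{14}^2$ geodesic equation of \lemref{142 geodesic}; for the pointwise variations $\p_t g_\vphi$ and $\p_t\vol_\vphi$ I use \cite[Lemma 3.1]{MR4732956}. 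Throughout, the identity $g(\vphi,d\a) = g(\tau,\a)$ of \lemref{14 2 properties}, valid because $\a(t) \in \Om_{14}^2$, will be used to rewrite every coupling between $d\a$ and $\tau$.

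The reorganisation proceeds block by block. The $L(\a)$ piece emerging from $\tau_t$ pairs with $\a$ to yield $\mathcal L(\a)$, by the definition in \secref{Variations and linearised operators}. The divergence term $\tfrac{1}{2}\delta(|\a|^2\vphi - \i_\vphi\j_\a\a)$ from the geodesic equation is integrated by parts against $\tau$, producing a pairing with $d\tau$; applying the switching identity of \lemref{i j switch} and then the closed-$G_2$ Ricci/scalar formula~\eqref{eq:Ricci} one recognises the integrand $R(\a,\a)$ and hence $\mathcal R(\a)$. The remaining cubic-in-$\a$ pieces, coming from the $-g(\tau,\a)\a$ term of the geodesic equation, the $\tfrac{1}{2}\i_\tau\j_\vphi X_{27}$ term of $\tau_t$, the $\tfrac{1}{2}\i_\a\j_\vphi d\a$ term of the geodesic equation, and the pointwise metric and volume variations, assemble into the bracketed integrand $-g(\tau,\a)g(\vphi,d\a) + g(\j_\a\tau,\j_\vphi d\a)$ of \defnref{integration N}.

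Comparing the raw output $\mathcal L(\a) + \mathcal R(\a) + [\text{cubic bracket}]$ against the defining expression $\mathcal N(\a) = \|X^\bot\|_L^2 + \mathcal L(\a) + \mathcal R(\a) + [\text{cubic bracket}]$ immediately gives
\begin{equation*}
3\Vol_{tt} = \mathcal N(\a) - \|X^\bot\|_L^2,
\end{equation*}
which is the claimed identity. Concavity along an $\mathcal N_{14}$ geodesic is then automatic from \defnref{N geodesics}, since $\mathcal N(\a) \equiv 0$ forces $3\Vol_{tt} = -\|X^\bot\|_L^2 \le 0$. The main obstacle will be the bookkeeping: making sure each cubic-in-$\a$ term lands in its correct slot of $\mathcal N$, reconstructing $\mathcal R(\a)$ with the right sign via \eqref{eq:Ricci} and \lemref{i j switch}, and consistently splitting $g(\j_\a\tau,\j_\vphi d\a)$ into its $X_1, X_7, X_{27}$ pieces per \lemref{last term}.
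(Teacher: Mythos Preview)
Your plan is correct and follows essentially the same route as the paper: differentiate the first variation, substitute the $\Om_{14}^2$ geodesic equation for $\a_t$, and reorganise via the $\i,\j$ identities and \lemref{14 2 properties} until $\mathcal N(\a)$ appears. The only difference is cosmetic. The paper differentiates the wedge form $3\Vol_t=\int d\a\wedge\psi$, so the $\psi_t$ piece is \emph{already} $\mathcal L(\a)$ by~\eqref{eq:2nd_vol general part 1} and only the $\int g(\a_t,\tau)\vol_\vphi$ term requires work; you instead differentiate the equivalent inner-product form $3\Vol_t=\int g(\a,\tau)\vol_\vphi$, which means your three extra Leibniz pieces $g(\a,\tau_t)+(\p_t g)(\a,\tau)+g(\a,\tau)\p_t\log\vol_\vphi$ must be recombined by hand to reproduce that same $\mathcal L(\a)$. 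Either way one lands at $3\Vol_{tt}=\mathcal L(\a)+\mathcal R(\a)+\int[-g(\tau,\a)g(\vphi,d\a)+g(\j_\a\tau,\j_\vphi d\a)]\vol_\vphi=-\|X^\bot\|_L^2+\mathcal N(\a)$, and the concavity statement follows.
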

\begin{proof}
Differentiating \eqref{eq:1st_vol general} and applying \eqref{eq:2nd_vol general part 1}, we have
\begin{equation*}
\begin{split}
	3\Vol _{tt}=\int_M g(L (\alpha), \alpha_{14}^2) \vol_\vphi+\int_M  \psi\wedge d(\p_t\alpha^2_{14}).
\end{split}
\end{equation*}
We compute the second part
	\begin{align*}
		&\int_M  \psi\wedge d(\p_t\alpha^2_{14})
		=\int_M  \ast\tau\wedge \p_t\alpha^2_{14}
		=\int_M g(\p_t\alpha^2_{14},\tau) \vol_\vphi.
	\end{align*}
Applied the $\Om_{14}^2$ geodesic with $\alpha=\alpha_{14}^2$
\lemref{142 geodesic}, it becomes
	\begin{align*}
		&=-\int_M g\big(\tau,\, g(\tau,\a)\a-\frac{1}{2}\delta(|\a|^2\vphi-\i_\vphi\j_\a\a)-\frac{1}{2}\i_{\a}\j_\vphi d\a \big)\vol_\vphi.
	\end{align*}
We put $g(\vphi,d\tau)=|\tau|^2$ from \cite[Proposition 2.19]{MR4732956} to the 2nd term and 
use the properties of the $\i,\j$ operators in \lemref{i j switch}. We conclude that 
	\begin{align*}
		&=-7\|X_1\|_L^2+\int_M [\frac{1}{2}|\a|^2 |\tau|^2-\frac{1}{2}g(\j_\vphi d\tau,\j_\a\a)+g(\j_{\tau} \a,\j_\vphi d\a )]\vol_\vphi.
	\end{align*}
After inserting the formula of $\mathcal N$ from \lemref{142 geodesic}, we finish the proof.
\end{proof}

\begin{cor}
If two torsion free structures could be connected by a weak $\mathcal N_{14}$ geodesic $\varphi(t)$, then $\alpha(t)$ is generated by diffeomorphisms and the uniqueness question holds.
\end{cor}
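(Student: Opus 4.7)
The plan is to combine the geodesic concavity of the volume functional (Theorem~\ref{Hessian of Vol general}) with the criticality of torsion-free $G_2$-structures: these force the second variation of $\Vol$ to vanish identically along $\vphi(t)$, from which the identity $X^\bot\equiv 0$ allows one to read that the curve moves purely along a $\Diff_0$-orbit.

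First, I would upgrade the Hessian formula to the weak setting. Since $\Vol$ is $\Diff_0$-invariant, the first variation $3\Vol_t=\int_M g(\alpha,\tau)\vol_\vphi$ involves $\alpha$ only through its $\Om^2_{14}$ component (as $\tau\in\Om^2_{14}$), and the operator $L$ is linear with $L|_{\Om^2_7}=0$, so $L(\alpha)=L(\pi^2_{14}\alpha)$. One checks that the computation in the proof of Theorem~\ref{Hessian of Vol general} goes through with $\alpha$ replaced by $\pi^2_{14}\alpha$ throughout; since $\pi^2_{14}\alpha$ is an $\mathcal N_{14}$ geodesic by hypothesis, this yields
\begin{equation*}
3\Vol_{tt}=-\|X^\bot\|_L^2\le 0, \qquad t\in[0,1].
\end{equation*}

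Second, at each torsion-free endpoint $\tau=0$, and the first-variation formula gives $\Vol_t(0)=\Vol_t(1)=0$. The function $\Vol_t:[0,1]\to\mathbb R$ is nonincreasing by concavity and vanishes at both endpoints, so it must vanish identically; hence $\Vol_{tt}\equiv 0$ on $[0,1]$. Substituting back into the Hessian identity forces $\|X^\bot\|_L^2\equiv 0$, so $X^\bot=d\alpha^\bot\equiv 0$. Third, the orthogonal decomposition $d\alpha=X^\bot+X^\parallel$ with $X^\parallel\in d\Om^2_7$ then reduces to $\vphi_t=d\alpha=d(i_\mathcal V\vphi)=\mathcal L_{\mathcal V(t)}\vphi$, where $\mathcal V(t)$ is determined by $\alpha_7^2(t)=i_{\mathcal V(t)}\vphi(t)$ and we used $d\vphi=0$. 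Integrating this time-dependent Lie equation produces a family $\sigma(t)\in\Diff_0$ with $\sigma(0)=\mathrm{id}$ and $\vphi(t)=\sigma(t)^\ast\vphi(0)$; setting $t=1$ identifies the two torsion-free $G_2$-structures modulo $\Diff_0$, yielding the uniqueness statement.

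The main obstacle is the first step: justifying that the Hessian formula, proved in Theorem~\ref{Hessian of Vol general} for an honest $\Om^2_{14}$ geodesic, persists along a weak $\mathcal N_{14}$ geodesic where $\alpha$ carries a nonzero $\Om^2_7$ component. Diffeomorphism invariance of $\Vol$ is the conceptual reason, but the concrete verification requires tracking the linearity of $L$, the behaviour of $\tau_t$ and $\p_t\alpha^2_{14}$ under the splitting $\alpha=\alpha_{14}^2+\alpha_7^2$, and checking via Lemma~\ref{i j switch} that the cross-terms in $\mathcal N(\alpha)$ between the $\Om^2_7$ and $\Om^2_{14}$ parts integrate to zero after integration by parts. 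A secondary technical point is to integrate $\vphi_t=\mathcal L_{\mathcal V(t)}\vphi$ to a genuine path in $\Diff_0$, which is standard ODE theory on the diffeomorphism group provided $\mathcal V(t)$ has sufficient regularity in $t$.
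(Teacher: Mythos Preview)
Your proposal follows essentially the same route as the paper's proof: concavity of $\Vol$ along the $\mathcal N_{14}$ geodesic (Theorem~\ref{Hessian of Vol general}), criticality at the torsion-free endpoints, hence $\Vol_{tt}\equiv 0$ and $X^\bot\equiv 0$, so $\vphi_t\in d\Om^2_7$ is generated by diffeomorphisms. The paper's own argument is only three sentences and passes through the intermediate observation that every $\vphi(t)$ is torsion free, but the mechanism is identical.

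Your discussion of the ``main obstacle'' is more cautious than the paper itself, which simply applies Theorem~\ref{Hessian of Vol general} to $\pi^2_{14}\alpha(t)$ without further comment. By the definition of a weak $\mathcal N_{14}$ geodesic, $\pi^2_{14}\alpha(t)$ \emph{is} an $\mathcal N_{14}$ geodesic, so the theorem applies verbatim to the path it generates; the remaining point---that $\Vol$ along $\vphi(t)$ agrees with $\Vol$ along the path generated by $\pi^2_{14}\alpha$---is exactly the $\Diff_0$-invariance you invoke, and does not require the term-by-term cross-term analysis you sketch. So your outlined verification via Lemma~\ref{i j switch} is not wrong, but it is more work than needed: one can gauge away $\alpha^2_7$ by pulling back along the flow of $-\mathcal V(t)$ and use that $\Vol$ is unchanged. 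Your final ODE-integration step to produce $\sigma(t)\in\Diff_0$ is correct and is left implicit in the paper.
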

\begin{proof}
According to \thmref{Hessian of Vol general}, the volume is concave along $\pi_{14}^2\alpha(t)$. Then the geodesic $\vphi(t)$ consists of torsion free structures. Thus, $\pi_{14}^2\alpha(t)=0$, which means $\p_t\vphi$ is deformed along the $d\Om_7^2$ direction.  
\end{proof}



\subsection{Length contraction}
For each parameter $s$ in $[0,1]$, we are given a family of Laplacian flows $\vphi(t,s), t\geq 0$. 
Along the direction $s$, as shown in Section \ref{14 2 projection}, we choose a family of $2$-forms $\beta(s)$ such that $$\vphi_s=d\beta(s).$$
Fix $t$ and define length $L(t)$ between $\vphi(t,0)$ and $\vphi(t,1)$ by the $L^2$ metric
\begin{align*}
L(t)&:=\int_0^1 \|\vphi_s\|_{\mathcal G}\cdot  ds,\quad \|\vphi_s\|^2_{\mathcal G}=\mathcal G_\vphi(\beta,\beta).
\end{align*}

\begin{thm}\label{length contraction}
The length evolves as following
\begin{align*}
\frac{1}{2}\p_t \|\vphi_s\|^2_{\mathcal G}=-\|X^\bot\|^2_L+\mathcal N(\beta).
\end{align*}
Moreover, the Laplacian flow decreases the length of $\mathcal N$ paths.
\end{thm}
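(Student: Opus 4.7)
The plan is to transport the Hessian computation of \thmref{Hessian of Vol general} to the Laplacian flow setting. Since the closed $G_2$ Laplacian flow is $\p_t\vphi=d\tau$, the $2$-form velocity along $t$ may be taken to be $\tau$ itself. Applying metric compatibility of the Levi-Civita connection from \thmref{2 form Levi Civita} gives
\[
\tfrac{1}{2}\p_t\|\vphi_s\|^2_{\mathcal G}=\mathcal G_\vphi(D_\tau\beta,\beta).
\]
Since $\vphi_{ts}=d\tau_s=d\beta_t$, the torsion-free identity of $D$ gives $D_\tau\beta=D_\beta\tau$, and the remaining task is to evaluate $\mathcal G_\vphi(D_\beta\tau,\beta)$.

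Expanding $D_\beta\tau=\tau_s+P(\vphi,\beta,\tau)$ via \eqref{connection P} and substituting the linearisation $\tau_s=L(\beta)-g(\vphi,d\beta)\tau+\tfrac{1}{2}\i_\tau\j_\vphi d\beta$ recorded in Section~\ref{Variations and linearised operators}, the $\tau_s$-piece paired against $\beta$ contributes
\[
\mathcal L(\beta)-\int_M g(\vphi,d\beta)g(\tau,\beta)\vol_\vphi+\int_M g(\j_\beta\tau,\j_\vphi d\beta)\vol_\vphi,
\]
after invoking the adjointness identity of \lemref{i j switch} together with the symmetry $\j_\beta\tau=\j_\tau\beta$. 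For the $P$-term I would use $g(\vphi,d\tau)=|\tau|^2$ from \cite[Proposition 2.19]{MR4732956}, integrate the $\delta$-piece by parts, and invoke \lemref{i j switch} once more. Careful bookkeeping then shows that the two $g(\vphi,d\beta)g(\tau,\beta)$ contributions cancel against each other, the two $\i_\tau\j_\vphi d\beta$ contributions cancel against each other, and what remains is $\tfrac{1}{2}\int_M|\tau|^2|\beta|^2\vol_\vphi-\tfrac{1}{4}\int_M g(\i_\beta\j_\vphi d\tau,\beta)\vol_\vphi$, which by the formula for $R(\beta,\beta)$ in \defnref{integration N} equals $\mathcal R(\beta)$.

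Collecting the pieces and comparing to \defnref{integration N} delivers $\mathcal G_\vphi(D_\beta\tau,\beta)=\mathcal N(\beta)-\|X^\bot\|_L^2$, which is the stated evolution identity. The length-decreasing assertion then follows immediately: along an $\mathcal N$-path one has $\mathcal N(\beta)\equiv 0$, so $\p_t\|\vphi_s\|^2_{\mathcal G}=-2\|X^\bot\|_L^2\leq 0$ pointwise in $s$, and integrating against $s$ forces the total length $L(t)=\int_0^1\|\vphi_s\|_{\mathcal G}\,ds$ to be non-increasing.

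The main obstacle is the identity-juggling for the $P$-contribution, which decomposes into six integrals whose cancellations are not formal: they rely decisively on the closed $G_2$ identity $g(\vphi,d\tau)=|\tau|^2$ and on the symmetry of the operator $\j$. The parallel with \thmref{Hessian of Vol general} is conceptually unsurprising, since the Laplacian flow is, up to a harmless rescaling, the $\mathcal G$-gradient flow of $\Vol$, so the evolution of $\|\vphi_s\|^2_{\mathcal G}$ is expected to be governed by $\Hess(\Vol)$.
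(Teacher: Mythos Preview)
Your proof is correct and arrives at exactly the same two-term splitting as the paper, but the route to the first term is organised differently. The paper writes $\tfrac12\p_t\|\vphi_s\|^2_{\mathcal G}=I+II$ with $I=\mathcal G_\vphi(\vphi_{ts},\vphi_s)$ and $II=\mathcal G_\vphi(dP(\vphi,\tau,\beta),\vphi_s)$, and then evaluates $I$ by substituting the flow equation $\vphi_{ts}=(\Delta\vphi)_s=\Delta\vphi_s+\Delta_s\vphi$ together with the explicit variation formula for $\Delta_s\vphi$ from \cite[Proposition 3.9]{MR4732956}. You instead invoke torsion-freeness to pass from $D_\tau\beta$ to $D_\beta\tau=\tau_s+P(\vphi,\beta,\tau)$ and feed in the linearisation $\tau_s=L(\beta)-g(\vphi,d\beta)\tau+\tfrac12\i_\tau\j_\vphi d\beta$ directly. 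Since that linearisation is precisely what the variation of $\Delta$ produces after factoring out the exterior derivative, the two computations of the first part coincide term-for-term, yielding $\mathcal L(\beta)-\int g(\vphi,d\beta)g(\tau,\beta)+\int g(\j_\beta\tau,\j_\vphi d\beta)$; the $P$-contribution is then handled identically in both and collapses to $\mathcal R(\beta)$. Your path is marginally cleaner in that it avoids unpacking the full $\Delta_s\vphi$ formula, at the cost of leaning on torsion-freeness (which, as in the paper, is used at the level where $d\beta_t=d\tau_s$ is treated as $\beta_t=\tau_s$).
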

\begin{proof}
We compute the $t$ derivative of $\|\vphi_s\|^2_{\mathcal G}$ by applying the metric-compatibility of the metric $ \mathcal G_\vphi$,
\begin{align*}
\frac{1}{2}\p_t \mathcal G_\vphi(d\beta,d\beta)
=\mathcal G_\vphi(D_t\vphi_s,\vphi_s).
\end{align*}
It contains two parts
\begin{align}\label{distance I II}
I:= \mathcal G_\vphi(\vphi_{ts},\vphi_s),\quad II:= \mathcal G_\vphi(dP(\vphi,\vphi_t,\vphi_s),\vphi_s).
\end{align}
Along the flow, $\vphi_{ts}=(\tri\vphi)_s=\tri\vphi_s+\tri_s\vphi$.
The formula of $\Delta_s \vphi$ is, see \cite[Proposition 3.9]{MR4732956}, 
\begin{align*}
\Delta_s \vphi=
d\delta[\frac{4}{3}g(\vphi_s,\vphi)\vphi
-\frac{1}{2}\i_\vphi\j_\vphi \vphi_s]-d[g(\vphi_s,\vphi) \tau 
-\frac{1}{2}\i_{\tau}\j_\vphi\vphi_s].
\end{align*}

The expression of $I$ becomes
\begin{align*}
&I=\int_M g(\beta,\delta \vphi_s
+\delta[\frac{4}{3}g(\vphi_s,\vphi)\vphi
-\frac{1}{2}\i_\vphi\j_\vphi \vphi_s]
-[g(\vphi_s,\vphi) \tau 
-\frac{1}{2}\i_{\tau}\j_\vphi\vphi_s]) \vol_\vphi\\
&=\int_M[ |d\beta|^2 +\frac{4}{3}g(\vphi_s,\vphi)^2
 -\frac{1}{2}g(\vphi_s,\i_\vphi\j_\vphi \vphi_s)
 -g(\vphi_s,\vphi) g(\beta,\tau) 
 + \frac{1}{2}g(\beta, \i_{\tau}\j_\vphi\vphi_s) ]\vol_\vphi.
\end{align*}
The \lemref{i j switch} gives us that 
$g(\vphi_s,\i_\vphi\j_\vphi \vphi_s)=18|X_1|^2+4|X_{27}|^2$ and
\begin{align*}
I&=\int_M[ \frac{4}{3}|X_1|^2+|X_7|^2
 -|X_{27}|^2
 -g(\vphi_s,\vphi) g(\beta,\tau) 
 +g(\j_\tau\beta, \j_\vphi\vphi_s]) ]\vol_\vphi.
\end{align*}

Inserting the connection \eqref{connection P} into $II$ in \eqref{distance I II}, we have 
\begin{align*}
II
&=\int_M g(\beta,\frac{1}{2}[g(\vphi,d\alpha)\beta+g(\vphi,d\beta)\alpha]\\
		&-\frac{1}{2}\delta[g(\alpha,\beta)\vphi-\i_\vphi \j_{\alpha}\beta]
		-\frac{1}{4}[\i_{\beta}\j_\vphi d\tau+\i_{\alpha}\j_\vphi d\beta]) \vol_\vphi.
\end{align*}
Further using the flow equation $\vphi_t=d\tau$, $g(\vphi,d\tau)=|\tau|^2$ from \lemref{14 2 properties} and \lemref{i j switch}, we obtain that
\begin{align*}
II
=\int_M [\frac{1}{2}|\beta|^2|\tau|^2-\frac{1}{4}g(\beta,\i_{\beta}\j_\vphi d\tau) ]\vol_\vphi.
\end{align*}

In summary, we show that
\begin{align*}
\frac{1}{2}\p_t \|\vphi_s\|^2_{\mathcal G}
= \mathcal L(\alpha) +&\int_M[-g(d\beta ,\vphi) g(\beta,\tau) 
 +g(\j_\tau\beta, \j_\vphi d\beta) \\
&
+\frac{1}{2}|\beta|^2|\tau|^2
		-\frac{1}{2}g(\j_{\beta}\beta,\j_\vphi d\tau) ]\vol_\vphi.
\end{align*}
Thus, we insert the expression of $\mathcal N(\beta)$ in Definition \ref{integration N} to obtain the final expression of $\p_t \|\vphi_s\|^2_{\mathcal G}$.
\end{proof}


According to \thmref{Hessian of Vol general},
along a gauge fixing geodesic, $3\Vol _{tt}$ is
\begin{equation*}
\begin{split}
-\frac{17}{3} \|X_1\|^2_L- \|X_{27}\|^2_L+\int_M[
 \frac{1}{2}|\a|^2|\tau|^2 
-\frac{1}{2}g(d\tau,\,\i_\vphi\j_\a\a)
+g(\j_{\a}\tau,\, \j_\vphi d\a)]\vol_\vphi.
\end{split}
\end{equation*}
It requires further normalisation to get rid of the terms having the torsion involved. Similar calculation happens for the Hessian of the volume along the canonical geodesic. We weill see examples in the next sections. This is the reason, why we work in the larger space $\Om^2$  and apply the normalisation condition $\mathcal N$, instead.

\section{Hyper-symplectic structures}\label{Hypersymplectic structures}
We reminisce about the hyper-symplectic structures.
	Donaldson \cite[Section 2]{MR3932259} suggest  a reduction from a $7$-dimensional $G_2$ manifold to a $4$-dimensional hyper-symplectic manifold $X$.

	We fix $\underline\omega^K$ be a fixed hyper-symplectic triple. The space of the hyper-symplectic triple $\underline\omega=(\om_1,\om_2,\om_3)$ is defined to be the set
	$$\mathcal H_{[\underline\omega^K]}:=\{\underline\omega\text{ is a hyper-symplectic triple cohomologous to }\underline\omega^K\}.$$
	
		Let $\theta^1$, $\theta^2$, $\theta^3$ be the local coordinates on $\mathbb{T}^3$. The $G_2$ structure $\vphi$, defined in \eqref{eq:G2}, lies in the same cohomology class of $\vphi^K:= d\theta^{123}-d\theta^i\wedge \omega^K_i.$
		We define the subset of the space $\mathcal{M}$ of $G_2$-structures in the class $[\vphi^K]$,
		$$\mathcal{M}_{s}:= \{\vphi:= d\theta^{123}-d\theta^i\wedge \omega_i,\quad \underline\omega\in \mathcal H_{[\underline\omega^K]}\}.$$

	The relations between any two of the hyper-symplectic forms are characterised  by the $3\times3$-matrix
		 \begin{equation*}
	\begin{split}
q_{ij}:=\frac{\omega_i\wedge \omega_j}{2\vol_0},\quad u:= \det (q_{kl}),
	\end{split}
	\end{equation*}
where $\vol_0$ is any given volume form on $X$.
Then, the relations are normalised so that $\det (\lambda_{ij})=1$, as the following procedure
	 \begin{equation}\label{Qij}
	\begin{split}
	 \lambda_{ij}:=\frac{\omega_i\wedge \omega_j}{2\vol_4},\quad \lambda_{ij}=u^{\frac{-1}{3}}q_{ij},\quad
	\vol_4:= u^{\frac{1}{3}}  \vol_0 .
	\end{split}
	\end{equation}
	
The integrals $Q_{ij}=\frac{1}{2}\int_X \omega_i\wedge \omega_j=\int q_{ij} \vol_0$ are topological invariants depending on the classes $[\om_i]$ and $[\om_j]$. Together with the arithmetic-geometric mean inequality, they give the upper bound of the volume
\begin{align}\label{volume upper bound}
3\int_Xu^{\frac{1}{3}}  \vol_0\leq \int_X\Tr (q_{ij})  \vol_0=\Tr (Q_{ij}).
\end{align}

	The $G_2$ metric $g_\vphi$ on $\mathbb{T}^3\otimes X^4$ is a product metric $g_\vphi=g_3\oplus g_4$, where
	\begin{equation}\label{g3g4}
	\begin{split}
	g_4(u,v):=\frac{1}{6}\sum_{i,j,k}\eps_{ijk}\frac{i_u \omega_i\wedge i_v\omega_j\wedge \omega_k}{\vol_4}, \quad
	&g_3(\p \theta_i,\p \theta_j):= \lambda_{ij}.
	\end{split}
	\end{equation}
The Levi-Civita symbol $\eps_{ijk}=1$ is the sign of the 3 cyclic permutations of $(123)$.
	Regarding the metric $g_4$, $\omega_i$ is self-adjoint, i.e.
		$*_4\omega_i=\omega_i,$ see \cite{MR3881202}*{Lemma 2.4}.
			
In $\mathbb{T}^3$, the volume form $\vol_3=d\theta^{123}$. Then, applying the definition of the Hodge star
	$
	\a\wedge\ast \b=<\a,\b>_g vol_g,
	$ we have
	\begin{align}\label{hodge 3}
	\ast_3d\theta^i=\frac{1}{2}\eps_{jkl}\lambda^{ij}d\theta^{kl},\quad \ast_3d\theta^{ij}=\eps^{ijl}\lambda_{kl}d\theta^k.
	\end{align}
	We will write $\vol_E:= d\theta^{123}\cdot \vol_0$.


			A $G_2$ structure $\vphi$ is torsion-free if and only if $\vphi$ is both closed and co-closed, i.e.
	 $ d\vphi=0,\ d*_\vphi \vphi=0.$	
		The hyper-symplectic $\underline\omega$ is called torsion-free if the associated $G_2$ structure $\vphi$ \eqref{eq:G2} is torsion-free.
				We call $\underline\omega$ hyper-K\"ahler, if $\lambda=Id$ c.f. \cite{MR2313334}.

	By \eqref{eq:G2} and \eqref{hodge 3}, the dual $4$ form
			\begin{align}\label{psi general}
			\psi:=*_\vphi \vphi=\vol_{4}-\frac{1}{2}\eps_{jkl}\lambda^{ij}d\theta^{kl}\wedge \omega_i.
		\end{align}
		Since $d\vol_4=0$, then
		$
			d\psi=-\frac{1}{2}\eps_{jkl}d\theta^{kl}\wedge (d\lambda^{ij}\omega_i).
		$
		Hence, the co-closedness of $\psi$	
is equivalent to
\begin{align}\label{prop:hyper-symplectic}
\sum_i d\lambda^{ij}\wedge\omega_i=0,\quad \forall j.
\end{align} 	

 \subsection{Hodge Laplacian and torsion form}
 We derive the formula for the relation between the Hodge Laplacian on $X$ and the Hodge Laplacian on $M$, which will be used to write down the expression of the Dirichlet metric.
\begin{lem}[Hodge Laplacian]\label{Laplacian general}
Let $X=-d\theta^i\wedge \Om$ and $\Om$ be any $r$-form on the 4-manifold $X^4$. It holds
	\begin{align*}
	 \delta X=-d\theta^p\wedge \ast_4 (\lambda_{pj}d\lambda^{ij}\wedge \ast_4 \Om)
-(-1)^rd\theta^i\wedge \delta\Om.
	\end{align*}
When $X=-\sum_i d\theta^i\wedge \Om_i$, we have $\Lambda_i(\underline\Om) := \ast_4 ( \lambda_{ij} d\lambda^{pj}\wedge \ast_4 \Om_p)$ and
$$\delta X=-d\theta^i\wedge [\Lambda_i
				+ (-1)^r\delta \Om_i],\quad \Delta X=d\theta^i\wedge d [\Lambda_i
				+(-1)^r \delta \Om_i].$$
\end{lem}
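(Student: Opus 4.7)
\medskip

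\noindent\textbf{Proof proposal.} The plan is to compute $\delta X = \pm\ast d \ast X$ directly by exploiting the product structure of $M = \mathbb{T}^3 \times X^4$ together with the product metric $g_\vphi = g_3 \oplus g_4$. The substance of the computation is just the product Hodge-star formula and repeated use of the identities \eqref{hodge 3}; the difficulty is purely bookkeeping.

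\smallskip

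\noindent\emph{Step 1: Hodge star of $X$.} For the product metric, a form of pure type $\alpha\wedge\beta$ with $\alpha$ pulled back from $\mathbb{T}^3$ and $\beta$ pulled back from $X^4$ satisfies $\ast(\alpha\wedge\beta)=\pm\ast_3\alpha\wedge\ast_4\beta$. Applied to $X=-d\theta^i\wedge\Om$ and using $\ast_3 d\theta^i=\tfrac{1}{2}\eps_{jkl}\la^{ij}d\theta^{kl}$ from \eqref{hodge 3},
\begin{equation*}
\ast X \;=\; -\tfrac{1}{2}\eps_{jkl}\,\la^{ij}\, d\theta^{kl}\wedge\ast_4\Om .
\end{equation*}

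\noindent\emph{Step 2: Differentiate.} Since $\la^{ij}$ depends only on $X^4$ and $d(d\theta^{kl})=0$, the Leibniz rule yields
\begin{equation*}
d\ast X \;=\; -\tfrac{1}{2}\eps_{jkl}\,d\theta^{kl}\wedge d\la^{ij}\wedge\ast_4\Om \;-\;\tfrac{1}{2}\eps_{jkl}\,\la^{ij}\,d\theta^{kl}\wedge d\ast_4\Om ,
\end{equation*}
with the two summands again of pure product type.

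\smallskip

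\noindent\emph{Step 3: Apply $\ast$ again.} Using the companion identity $\ast_3 d\theta^{kl}=\eps^{klm}\la_{pm}d\theta^p$ in \eqref{hodge 3}, the product Hodge star replaces the $d\theta^{kl}$ factor by $d\theta^p$ and converts the $X^4$-part into its $\ast_4$. The $\eps$-contraction $\eps_{jkl}\eps^{klm}=2\delta^m_j$ absorbs the $\tfrac{1}{2}$ prefactor, and the inverse relation $\la^{ij}\la_{pj}=\delta^i_p$ (differentiated, when appropriate, to move a derivative from $\la_{ij}$ to $\la^{ij}$) recombines the first summand into the advertised shape $\ast_4(\la_{pj}d\la^{ij}\wedge\ast_4\Om)$. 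Collecting, up to sign,
\begin{equation*}
\ast d\ast X \;=\; \pm\,d\theta^p\wedge\ast_4\bigl(\la_{pj}\,d\la^{ij}\wedge\ast_4\Om\bigr)\;\pm\;d\theta^i\wedge\ast_4 d\ast_4\Om.
\end{equation*}

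\noindent\emph{Step 4: Convert to $\delta$.} Passing to $\delta$ on $M$ and to $\delta$ on $X^4$ via $\delta=\pm\ast d\ast$ (with the sign depending on the form degree and on $n=7$ versus $n=4$), together with the anticommutation of $d$ past the odd form $d\theta^i$, produces exactly the coefficient $(-1)^r$ in front of $\delta\Om$ and yields the first identity of the lemma. For $X=-\sum_i d\theta^i\wedge\Om_i$ the result is obtained by summation and a relabelling of indices that packages $\ast_4(\la_{ij}d\la^{pj}\wedge\ast_4\Om_p)$ into $\La_i(\underline\Om)$. For the Laplacian, because $d\theta^i$ is closed,
\begin{equation*}
d\delta X \;=\; -d\bigl(d\theta^i\wedge[\La_i+(-1)^r\delta\Om_i]\bigr)\;=\;d\theta^i\wedge d\bigl[\La_i+(-1)^r\delta\Om_i\bigr];
\end{equation*}
the piece $\delta dX$ is absent in the stated formula, which is consistent with the intended application to closed $\Om_i=\om_i$ (where $dX=0$).

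\smallskip

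\noindent\emph{Main obstacle.} The sole difficulty is signs: the product Hodge-star sign $(-1)^{q(n_1-p)}$, the two different $\delta=\pm\ast d\ast$ conventions on the $7$-manifold and on the $4$-factor, and the $(-1)^{|d\theta^i|}$ produced by commuting $d$ past $d\theta^i$, all depend on $r$ and must align to give exactly the factor $(-1)^r$ in the statement. A secondary, less routine point is ensuring that the $\la$-contractions in Step 3 come out with the asymmetric combination $\la_{pj}\,d\la^{ij}$ rather than its transpose; this is handled by differentiating $\la^{ij}\la_{jk}=\delta^i_k$ to trade derivatives of $\la^{ij}$ for derivatives of $\la_{ij}$ whenever needed.
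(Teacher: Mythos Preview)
Your proposal is correct and follows essentially the same route as the paper: compute $\ast X$ via the product Hodge star and \eqref{hodge 3}, differentiate, apply $\ast$ again using the companion identity $\ast_3 d\theta^{kl}=\eps^{klq}\la_{pq}d\theta^p$, and contract $\eps_{jkl}\eps^{klq}$ together with $\la^{ij}\la_{pj}=\delta^i_p$. The paper simply tracks the signs explicitly (writing $\delta X=(-1)^{r+1}\ast d\ast X$ and noting that the $X^4$-factors are $(5-r)$-forms) rather than leaving them as $\pm$; also, the combination $\la_{pj}\,d\la^{ij}$ falls out directly from the two applications of \eqref{hodge 3}, so your remark about differentiating $\la^{ij}\la_{jk}=\delta^i_k$ to swap derivatives is not actually needed.
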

\begin{proof}
We compute with the help of \eqref{hodge 3}
			\begin{align*}
				*X=-\ast_3 d\theta^i\wedge \ast_4 \Om
				=-\frac{1}{2}\eps_{jkl}\lambda^{ij}d\theta^{kl}\wedge \ast_4 \Om .
			\end{align*}
Taking the exterior derivative, we obtain
$$
d* X
=-\frac{1}{2}\eps_{jkl}  d\theta^{kl} \wedge d\lambda^{ij}\wedge \ast_4 \Om
-\frac{1}{2}\eps_{jkl}\lambda^{ij}  \cdot d\theta^{kl}\wedge d(\ast_4 \Om).
$$	
By \eqref{hodge 3} again, we further have compute $\delta X=(-1)^{r+1}*(d* X)$
\begin{equation*}
\begin{split}
=\frac{(-1)^{r}}{2}\eps_{jkl} \ast[ d\theta^{kl} \wedge d\lambda^{ij}\wedge \ast_4 \Om]
+\frac{(-1)^{r}}{2}\eps_{jkl}\lambda^{ij}  \ast[ d\theta^{kl}\wedge d(\ast_4 \Om)].
\end{split}
\end{equation*}
While, both $d\lambda^{ij}\wedge \ast_4 \Om$ and $d(\ast_4 \Om)$ are $(5-r)$-forms, we take Hodge star and multiply the identity above with $(-1)^{5-r}$, that leads to
\begin{equation*}
\begin{split}
-\frac{1}{2}\eps_{jkl} (\eps^{klq}\lambda_{pq}d\theta^p)\wedge \ast_4 (d\lambda^{ij}\wedge \ast_4 \Om)
-\frac{1}{2}\eps_{jkl}\lambda^{ij}(\eps^{klq}\lambda_{pq}d\theta^p)\wedge \ast_4 d(\ast_4 \Om).
\end{split}
\end{equation*}
Then, we insert $\eps_{jkl}\eps^{klq}=\delta^j_{q}$ and $\lambda^{ij}\lambda_{pj}=\lambda^{ij}\lambda_{jp}=\delta^i_{p}$ by symmetry of the matrix $\lambda$, to get
\begin{equation*}
\begin{split}
\delta X&=-d\theta^p\wedge \ast_4 (\lambda_{pj}d\lambda^{ij}\wedge \ast_4 \Om)
d\theta^i\wedge \ast_4 d(\ast_4 \Om).
\end{split}
\end{equation*}
Finally, we obtain the Hodge Laplacian from $\Delta X=d\delta X.$
\end{proof}
Because the $2$-form $\om_i$ is self-dual and closed, we have $\delta\om_i=0$. \lemref{Laplacian general} immediately implies that
\begin{cor}[Torsion]\label{torsion form and d torsion}
The torsion form of the $G_2$ structure \eqref{eq:G2} has the expression
\begin{align*}
\tau=\sum_id\theta^i\wedge\tau_i,\quad \tau_i=\ast_4 ( \lambda_{ij}  d\lambda^{pj}\wedge\omega_p).
\end{align*}
\end{cor}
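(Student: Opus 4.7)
The plan is to compute $\tau = \delta\vphi$ directly via \lemref{Laplacian general}, exploiting two features of the hyper-symplectic setup: $\det(\la_{ij})=1$ and the self-duality and closedness of each $\om_p$. Since the critical equation recorded earlier in the text identifies the torsion with $\delta\vphi$, this reduces the corollary to an explicit evaluation of $\delta\vphi$.

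I would first split $\vphi = d\theta^{123} - d\theta^i\wedge\om_i$ and show that $\delta(d\theta^{123})=0$. Because the $G_2$-metric is the product $g_\vphi=g_3\oplus g_4$ with $\det\la_{ij}=1$, the 7D Hodge star factorises across the product and gives $\ast(d\theta^{123}) = \vol_4$, a top-degree form on the $4$-factor $X^4$. Since $d\vol_4=0$ trivially, $\delta(d\theta^{123})=-\ast d\ast(d\theta^{123})=0$, and the entire torsion comes from the piece $X=-d\theta^i\wedge\om_i$.

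Next I would apply \lemref{Laplacian general} with $\Om_i=\om_i$ and $r=2$, obtaining
\[
\delta X = -d\theta^i\wedge\bigl[\Lambda_i + \delta\om_i\bigr],\qquad \Lambda_i = \ast_4\bigl(\la_{ij}\,d\la^{pj}\wedge\ast_4\om_p\bigr).
\]
The self-duality $\ast_4\om_p=\om_p$ eliminates the inner Hodge star, producing $\la_{ij}\,d\la^{pj}\wedge\om_p$ under the outer $\ast_4$. The closedness $d\om_p=0$ combined with self-duality gives $\delta\om_p=-\ast_4 d\ast_4\om_p=-\ast_4 d\om_p=0$, so the $\delta\om_i$ correction drops out entirely. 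Assembling the pieces yields the desired expression $\tau=\sum_i d\theta^i\wedge\tau_i$ with $\tau_i=\ast_4(\la_{ij}\,d\la^{pj}\wedge\om_p)$.

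The main obstacle is not conceptual but a matter of sign-bookkeeping: one has to track carefully the degree-dependent signs built into \lemref{Laplacian general}, as well as the product-decomposition rule $\ast_7(\alpha\wedge\beta)=\pm \ast_3\alpha\wedge\ast_4\beta$ for forms of mixed horizontal/vertical type. Once those signs are pinned down, the vanishing conditions $\delta(d\theta^{123})=0$ and $\delta\om_i=0$ supplied by the hyper-symplectic hypotheses make the corollary drop out by direct substitution into the lemma.
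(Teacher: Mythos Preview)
Your proposal is correct and follows essentially the same route as the paper: the paper's one-line argument is ``since $\om_i$ is self-dual and closed, $\delta\om_i=0$, so \lemref{Laplacian general} gives the result,'' and you have unpacked exactly that, additionally making explicit the vanishing of $\delta(d\theta^{123})$ (which the paper leaves implicit, having already noted $d\vol_4=0$ just above \eqref{psi general}). Your caution about sign-bookkeeping is well placed, but there is no substantive difference in strategy.
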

\subsection{Tangent vectors and potentials}\label{canonical forms}
We restrict the construction in \cite{MR4732956} to the subspace $\mathcal{M}_s$ of the space of all $G_2$ structures.

	Let $\vphi(t)$ be a smooth path in $\mathcal{M}_s$ and $X:=  \p_t\vphi(t)$.
		Differentiating \eqref{eq:G2}, we get the tangent vector
	\begin{align}\label{vphit general}
		X=-\sum_id\theta^i\wedge d \alpha_i,\quad
		( \omega_i)_t=d \alpha_i.
	\end{align}

We propose in \cite{MR4732956} a canonical way to find a representative,	 solving the inverse of the Laplacian of $X$, which is called the potential of $X$, i.e., a $d$-exact $3$-form $\mathbf  u$ solving $$\tri \mathbf u=X.$$ According to the Hodge theory, $\mathbf u$ is uniquely determined, as $X$ is $d$-exact.

We write $\mathbf u=d\boldsymbol\mu$ for some $2$-form $\boldsymbol\mu$ on $M$ and $\boldsymbol{\alpha_c}=\delta \mathbf u$. The notations in bold refer to the lift of quantities from $X^4$ to the 7-manifold $M$.

The $2$-form $\boldsymbol\mu$ has three possibilities, a $2$-form on $\mathbb T^3$, a $2$-form on $X^4$, or the mixed one $\boldsymbol\mu=\sum_id\theta^i\wedge \mu_i$ with a $1$-form $ \mu_i$ on $X^4$. 	
Utilising \lemref{Laplacian general} and comparing with the form of $X$ \eqref{vphit general}, we see that it suffices to solve the mixed one.

In summary, we look for a particular solution, which is a $d$-exact $3$-form
\begin{align}\label{bf u and ui}
\mathbf u=d\boldsymbol\mu=-d\theta^i\wedge  u_i,
\quad  u_i:= d \mu_i,
\end{align}
satisfying the Hodge-Laplacian Laplacian equation
$$ X=\Delta \mathbf  u=d\theta^i\wedge d (\Lambda_i+ \delta   u_i).$$

\begin{defn}[Canonical $1$-form]\label{vphit general alphai}
We define the canonical $1$-form
\begin{align*}
(\alpha_c)_i:=- (\Lambda_i + \delta   u_i),\quad \Lambda_i = \ast_4 ( \lambda_{ij} d\lambda^{pj}\wedge \ast_4 u_p).
\end{align*}	
\end{defn}
Therefore, the tangent vector $X$ could be rewritten as a $d$-exact form
\begin{align}\label{vphit general alpha bold}
X=d\boldsymbol{\alpha_c},\quad
\boldsymbol{\alpha_c}=\delta \mathbf u=d\theta^i\wedge(\alpha_c)_i.
\end{align}
\begin{rem}
In general, $\alpha_i$ in \eqref{bf u and ui} is not uniquely determined, we refer to Definition \ref{L2 connection}. 
The choice of $\alpha_i$ depends on the different normalisations, since adding any $d$-closed $1$-form to $\alpha_i$ does not change the vector field $X$.
\end{rem}
\subsection{Inherited metrics}\label{inherited metric}
\begin{lem}[Inherited metrics]		\label{eq:Dmetric 3}	
Let $Y=-\sum_id\theta^i\wedge d\beta_i$ be another tangent vector. When restricted in $\mathcal{M}_s$, the $L^2$ and Dirichlet metrics have the following expressions, respectively:
\begin{enumerate}
\item
$\int_M g_\vphi( \boldsymbol{\alpha}, \boldsymbol{\beta})  \vol_\vphi
=\int_M\sum_{i,j}  \lambda^{ij} g_4( \alpha_i, \beta_j) u^{\frac{1}{3}} \vol_E$;
\item
$\int_M \sum_{i,j} \lambda^{ij}g_4( u_i,d\beta_j) u^{\frac{1}{3}} \vol_E
=\int_M\sum_{i,j}  \lambda^{ij} g_4( (\alpha_c)_i, (\beta_c)_j) u^{\frac{1}{3}} \vol_E.
$
\end{enumerate}
\end{lem}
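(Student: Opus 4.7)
The plan is to exploit the product structure of the $G_2$ metric $g_\vphi=g_3\oplus g_4$ and of the volume form $\vol_\vphi=\vol_3\wedge\vol_4$ recorded in \eqref{g3g4}--\eqref{psi general}. For part (1), I first observe that the normalisation $\det\lambda_{ij}=1$ built into \eqref{Qij} forces $\vol_3=d\theta^{123}$, so $\vol_\vphi=u^{1/3}\vol_E$. Each mixed 2-form $\boldsymbol\alpha=\sum_id\theta^i\wedge\alpha_i$ is a sum of decomposable 2-forms whose first factor $d\theta^i$ lives purely on $\mathbb T^3$ and whose second factor $\alpha_i$ lives purely on $X^4$. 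The standard product-metric splitting formula then gives
\begin{align*}
g_\vphi(d\theta^i\wedge\alpha_i,\,d\theta^j\wedge\beta_j)
=g_3(d\theta^i,d\theta^j)\,g_4(\alpha_i,\beta_j)
=\lambda^{ij}\,g_4(\alpha_i,\beta_j),
\end{align*}
since $g_3(d\theta^i,d\theta^j)=\lambda^{ij}$ is the matrix inverse of $g_3(\p\theta_i,\p\theta_j)=\lambda_{ij}$. Summing over $i,j$ and integrating against $\vol_\vphi=u^{1/3}\vol_E$ yields (1).

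For part (2), the strategy is integration by parts together with part (1). By \eqref{bf u and ui} the canonical potential $\mathbf u=d\boldsymbol\mu$ is $d$-exact, so $d\mathbf u=0$ and $\Delta\mathbf u=d\delta\mathbf u$; the same holds for $\mathbf v$. Consequently
\begin{align*}
\int_M g_\vphi(\delta\mathbf u,\delta\mathbf v)\,\vol_\vphi
=\int_M g_\vphi(\mathbf u,\,d\delta\mathbf v)\,\vol_\vphi
=\int_M g_\vphi(\mathbf u,\,Y)\,\vol_\vphi.
\end{align*}
On one hand, since $\delta\mathbf u=\boldsymbol{\alpha_c}$ and $\delta\mathbf v=\boldsymbol{\beta_c}$ by Definition \ref{vphit general alphai} and \eqref{vphit general alpha bold}, part (1) applied to $\boldsymbol{\alpha_c},\boldsymbol{\beta_c}$ identifies the left-hand side with $\int\sum_{i,j}\lambda^{ij}g_4((\alpha_c)_i,(\beta_c)_j)\,u^{1/3}\vol_E$. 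On the other hand, writing $\mathbf u=-d\theta^i\wedge u_i$ and $Y=-d\theta^j\wedge d\beta_j$ and applying the analogous product-metric splitting to these mixed 3-forms yields $g_\vphi(\mathbf u,Y)=\sum_{i,j}\lambda^{ij}g_4(u_i,d\beta_j)$. Equating the two expressions produces the identity in (2).

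The main bookkeeping step is the product-metric inner-product formula $g_\vphi(\eta_3\wedge\eta_4,\zeta_3\wedge\zeta_4)=g_3(\eta_3,\zeta_3)\,g_4(\eta_4,\zeta_4)$ for decomposable forms of horizontal$\,\wedge\,$vertical type, used once in degree 2 for part (1) and once in degree 3 for part (2); this is classical for product Riemannian manifolds and directly applicable because $\vphi$ induces the direct-sum metric in \eqref{g3g4}. No genuine analytic obstacle is anticipated — the only delicate point is to note that the $d$-exactness of $\mathbf u,\mathbf v$ built into the canonical-potential construction \eqref{bf u and ui} is precisely what lets the integration by parts go through with only the $d\delta$ part of the Hodge Laplacian surviving.
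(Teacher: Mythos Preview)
Your proof is correct and follows essentially the same route as the paper's own argument. The paper invokes the equivalent expression $\mathcal G_\vphi(X,Y)=\int_M g_\vphi(GX,Y)\,\vol_\vphi$ for the Dirichlet metric (with $GX=\mathbf u$) and then substitutes \eqref{bf u and ui} and \eqref{vphit general alpha bold}; your integration-by-parts step $\int g_\vphi(\delta\mathbf u,\delta\mathbf v)=\int g_\vphi(\mathbf u,d\delta\mathbf v)=\int g_\vphi(\mathbf u,Y)$ is precisely what underlies that equivalent expression, and your application of the product-metric splitting in degrees $2$ and $3$ is exactly the ``plugging $g_3$ into the $L^2$ metric'' the paper alludes to.
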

\begin{proof}
The first identity follows from plugging the metric $g_3$ in \eqref{g3g4} into the expression of the $L^2$ metric \eqref{L2 metric}.
The Dirichlet metric \eqref{metric} has an equivalent expression
		\begin{equation}\label{gradient metric G}
		\begin{split}
		\mathcal G_\vphi(X,Y)
	=\int_Mg_\vphi(GX,Y)  \vol_\vphi.
				\end{split}
			\end{equation}
Inserting \eqref{bf u and ui} and \eqref{vphit general alpha bold}, we have the second identity.
\end{proof}

\subsection{Weighted volume functional}
Now, we introduce the weighted volume functional, which generalises Hitchin's volume functional.
\begin{defn}[$\chi$ volume functional]\label{weighted volume functional 1}
	We let $\chi$ be a nonnegative function.
Let $ \chi:= (\cdot)^{\frac{1}{3}}\tilde \chi(\cdot)$ be a non-negative concave function. Then, we introduce a $\chi$ volume functional
$
\Vol_\chi(\vphi):=\int_M \tilde \chi(u) \vol_\vphi=\int_M  \chi(u) \vol_E.
$
When $\tilde \chi(\cdot)=1$, it is Hitchin's volume functional.
\end{defn}

\begin{lem}The variations of the weighted functional are
\begin{equation}\label{eq:2nd_vol chi general}
\begin{split}
	(\Vol_\chi)_{t}=\int_M\chi_{u} u_{t} \vol_E; 	(\Vol_\chi)_{ts}=\int_M(\chi_{uu}u_tu_s  + \chi_{u} u_{ts}) \vol_E.
\end{split}
\end{equation}
\end{lem}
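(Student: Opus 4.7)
The plan is to exploit the fact that the weighted functional is written in the form $\Vol_\chi(\vphi)=\int_M \chi(u)\vol_E$, where $\vol_E=d\theta^{123}\cdot\vol_0$ is a \emph{fixed} background volume form on $M=\mathbb{T}^3\times X^4$ that does not depend on the path $\vphi(t)$. This is the whole point of absorbing the Jacobian $u^{1/3}$ into the weight via the substitution $\tilde\chi(u)\vol_\vphi=\chi(u)\vol_E$ (using $\vol_\vphi=u^{1/3}\vol_E$ from \eqref{Qij}): it converts a variation of a measure into a variation of an integrand against a fixed measure.

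First I would differentiate under the integral sign in $t$. Since $\vol_E$ is $t$-independent and $\chi$ is a smooth function of the scalar $u$, the chain rule gives $\partial_t \chi(u)=\chi_u\, u_t$ pointwise, and integration yields the first identity
\begin{equation*}
(\Vol_\chi)_t=\int_M \chi_u\, u_t\, \vol_E.
\end{equation*}
For the second variation, I would differentiate this expression in $s$. Again $\vol_E$ is fixed, so only the integrand changes. Applying the product rule to $\chi_u(u)\,u_t$ and then the chain rule to $\chi_u(u)$, I get $\partial_s(\chi_u\, u_t)=\chi_{uu}\,u_s\,u_t+\chi_u\,u_{ts}$, which, integrated against $\vol_E$, produces the second identity of \eqref{eq:2nd_vol chi general}.

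The only regularity input needed is that $u$, as well as its derivatives $u_t$, $u_s$, $u_{ts}$, are smooth enough for differentiation under the integral sign; this is automatic for smooth paths of $G_2$-structures in $\mathcal M_s$ since $u=\det(q_{ij})$ depends smoothly on the triple $\underline\omega$ of $2$-forms. I do not anticipate any genuine obstacle here: the content of the lemma is really the clean bookkeeping afforded by the decomposition $\Vol_\chi=\int_M\chi(u)\vol_E$, and the proof is a one-line application of the chain rule applied twice. Any subtlety that might appear if one instead differentiated $\int_M \tilde\chi(u)\vol_\vphi$ directly (involving $\partial_t\vol_\vphi=\tfrac{1}{3}g(\vphi,d\alpha)\vol_\vphi$) is by design absent in this formulation.
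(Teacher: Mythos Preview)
Your argument is correct and matches the paper's treatment: the paper states this lemma without proof, since once $\Vol_\chi$ is written as $\int_M\chi(u)\,\vol_E$ with $\vol_E$ fixed, both identities are immediate from the chain rule exactly as you describe.
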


From the point of view of $G_2$ geometry, we have another expression for the variations of the volume functional, see Section \ref{Variations and linearised operators}.

In this article, we provide examples, some of which answer this question (Section \ref{HS and OT} and \ref{concavity}), and some have a negative answer to this question (Section \ref{non-concavity}).

Concerning the geometric flows, since $\p_t\vol_\vphi = \frac{1}{3}u^{\frac{-2}{3}} u_t \vol_E$, and from $G_2$ geometry, the variation of the volume form is $
	\p_t\vol_\vphi=\frac{1}{3}g( \vphi_t ,\vphi) \vol_\vphi$, we have $u_t=u g( \vphi_t ,\vphi) $ and
\begin{equation}\label{vol chi t}
\begin{split}
	(\Vol_\chi)_{t}=\int_M g( \vphi_t , \chi_{u} u^{\frac{2}{3}} \vphi)\vol_\vphi.
\end{split}
\end{equation}
Under the Dirichlet metric \eqref{metric}, the Euler-Lagrange equation of the $\chi$  volume and the gradient flow are
\begin{equation}\label{chi flow general}
\begin{split}
d\delta(\chi_{u} u^{\frac{2}{3}} \vphi)=0,\quad	\vphi_t=d\delta (\chi_{u} u^{\frac{2}{3}} \vphi).
\end{split}
\end{equation}


\begin{rem}
When $\chi(u)=3u^{\frac{1}{3}}$, it becomes the $G_2$ Laplacian flow,
\begin{align}\label{Laplacian flow}
\vphi_t=d\delta \vphi=d\tau,
\end{align} which is the gradient flow of the volume functional under the Dirichlet metric
$
\Vol(\vphi)=\int_M u^{\frac{1}{3}}\cdot \vol_E .
$
Written in the hyper-symplectic form $\underline \om$, the flow equation is
\begin{align}\label{Laplacian flow form}
(\om_i)_t=-d\tau_i,
\end{align} which is the hyper-symplectic flow \cite{MR3881202}.
\end{rem}

\section{Mixed structures}\label{Mixed structures}
We consider the following three hyper-symplectic structures with two of its components being K\"ahler
	\begin{align}\label{HS vector}
		\underline\om_1:=(\om_1,\om^K_2,\om^K_3),\quad\underline\om_2:=(\om^K_1,\om_2,\om^K_3),\quad\underline\om_3:=(\om^K_1,\om^K_2,\om_3).
	\end{align}
	
Following the construction in Section \ref{HS and G2}, for each $\underline\om_i$ we could define the matrix $(q^i_{kl})$ \eqref{Qij} and the $G_2$ structure $\vphi_i$ \eqref{eq:G2}.
We select one of these triples
\begin{align}\label{mix HS}
\underline{\omega}_3=(\omega^K_1,\omega^K_2,\omega_3).
\end{align}
Direct computation shows that the corresponding symmetric matrix \eqref{Qij} is
\begin{align}\label{mix Q}
\mathcal Q:=(q_{kl}) =
\begin{bmatrix}
1& 0 & q_{1} \\
0& 1& q_{2} \\
q_{1} & q_{2} & q_{3}
\end{bmatrix},\quad \lambda_{kl}=u^{-\frac{1}{3}}q_{kl},
\end{align}
where we use the notations
\begin{align}\label{Qijsimple}
q_1:= q_{13}=q_{31},\quad q_2:= q_{23}=q_{32},\quad q_3:= q_{33}.
\end{align}
The determinant of $\mathcal Q$ is
\begin{align}\label{uq3}
u:= \det (q_{kl})= q_{3} - q_{1}^2 - q_{2}^2>0,
\end{align} which is bounded above by $q_{3}=\frac{\omega_3\wedge \omega_3}{2\vol_0}.$
Its inverse is
\begin{align}\label{mix Q inverse}
\mathcal Q^{-1}=\frac{1}{u}
\begin{bmatrix}
q_{3}-q_{2}^{2}&q_{1}q_{2}&-q_{1}\\
q_{1}q_{2}&q_{3}-q_{1}^{2}&-q_{2}\\
q_{1}&-q_{2}&1
\end{bmatrix},\quad \lambda^{ij}=u^{\frac{1}{3}}q^{ij}.
\end{align}
In particular, $\lambda^{33}=u^{\frac{-2}{3}}$.
\subsubsection{Tangent vectors}
The $3$-form, learnt from \eqref{eq:G2}, is
$$\vphi:= d\theta^{123}-\sum_{i=1,2}d\theta^i\wedge \omega^K_i-d\theta^3\wedge \omega_3.
$$ 
The tangent vector \eqref{vphit general} becomes
\begin{align}\label{vphit general 1}
\p_t\omega_3=d\alpha,\quad X=d(d\theta^3\wedge \alpha ).
\end{align}
\subsubsection{Canonical $1$-form}
The potential $\mathbf u$ of $X$ in \eqref{vphit general alpha bold} has simpler expressions
\begin{align}\label{vphit general 1 potential form}
\mathbf u=-d\theta^3\wedge  u_3.
\end{align}

In general, the $1$-form $\alpha$ in \eqref{vphit general 1}, representing the variation of the symplectic form $\om_3$, could be chosen to be any $1$-form differ from the canonical one by a $d$-closed form, which is defined in Definition \ref{vphit general alphai}
\begin{align*}
(\alpha_c)_3=-(\Lambda_3 + \delta   u_3),\quad \Lambda_3 = \ast_4 ( \lambda_{3j} d\lambda^{3j}\wedge \ast_4 u_3),
\end{align*}
whose lift on $X$ is $\boldsymbol{\alpha_c}=\delta \mathbf u$ \eqref{vphit general alpha bold}. In which, we compute that
\begin{align*}
\sum_j\lambda_{3j} d\lambda^{3j}=-\frac{2}{3}d\log u-\frac{d(q_1^2+q_2^2)}{2u}.
\end{align*}
\subsubsection{$L^2$ metric}
We use the notations $v_3,w_3$ and $\beta,\gamma$ representing the quantities on $X$
and the bold characters on $M$.

\begin{lem}\label{metric mixed}
Let $Y=-d\theta^3\wedge d\beta$ and $Z=-d\theta^3\wedge d\gamma$.
The inherited $L^2$ metric $\mathcal G_\vphi(Y,Z)$ in \lemref{eq:Dmetric 3} is reduced to	the inner produce between $1$-forms
$$\int_M g_4( \beta,\gamma) u^{\frac{-1}{3}} \vol_E.$$
\end{lem}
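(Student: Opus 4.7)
The plan is to specialise Part~(1) of \lemref{eq:Dmetric 3} to the mixed tangent vectors $Y$ and $Z$, using the explicit form of the inverse matrix $\lambda^{ij}$ recorded right after~\eqref{mix Q inverse}. The argument is essentially a substitution followed by one algebraic simplification.

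First I would identify the components of $Y$ and $Z$ in the general decomposition $-\sum_i d\theta^i \wedge d(\cdot)_i$ from~\eqref{vphit general}. Since $Y = -d\theta^3 \wedge d\beta$, only the index $i=3$ contributes, with the corresponding $1$-form component equal to $\beta$; likewise for $Z$, only the $j=3$ slot contributes and equals $\gamma$. Substituting into the general formula
\[
\int_M g_\vphi( \boldsymbol{\alpha}, \boldsymbol{\beta})  \vol_\vphi = \int_M\sum_{i,j}  \lambda^{ij} g_4( \alpha_i, \beta_j) u^{\frac{1}{3}} \vol_E
\]
from \lemref{eq:Dmetric 3}~(1), every term in the double sum except $(i,j)=(3,3)$ drops out, leaving
\[
\mathcal G_\vphi(Y,Z) = \int_M \lambda^{33} g_4(\beta,\gamma) u^{\frac{1}{3}} \vol_E.
\]

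Next I would invoke the explicit inverse computed in~\eqref{mix Q inverse}: the $(3,3)$-entry of $\mathcal Q^{-1}$ is $1/u$, so $\lambda^{33} = u^{\frac{1}{3}} q^{33} = u^{-\frac{2}{3}}$, as the paper already records immediately after that display. The product $\lambda^{33} \cdot u^{\frac{1}{3}} = u^{-\frac{2}{3}} \cdot u^{\frac{1}{3}} = u^{-\frac{1}{3}}$ then produces exactly the claimed integrand $g_4(\beta,\gamma) u^{-\frac{1}{3}}$.

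There is no substantial obstacle. The point worth emphasising is that the block structure of $\mathcal Q$ in~\eqref{mix Q}, in which the upper-left $2\times 2$ block is the identity, forces $\lambda^{33}$ to be a pure power of $u$ independent of the off-diagonal entries $q_1, q_2$. This clean cancellation is exactly what will make the subsequent formulas for the connection, geodesic equation, and Hessian of the volume functional tractable in the remainder of \secref{Mixed structures}.
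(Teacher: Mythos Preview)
Your argument is correct and is exactly the specialisation the paper intends: the lemma is stated without proof precisely because it is the immediate consequence of \lemref{eq:Dmetric 3}~(1) with $\beta_i=\gamma_i=0$ for $i\neq 3$, together with the value $\lambda^{33}=u^{-2/3}$ recorded after~\eqref{mix Q inverse}. Nothing further is required.
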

\begin{rem}
In the following sections, we consider examples that further simplify the inner product between two $1$-forms: $g_4(\alpha,\beta)$ on the 4-manifold $X^4$. For example, when $g_4$ is diagonal,
\begin{align*}
\Lambda_3 = -\frac{2}{3} \ast_4 (d\log u\wedge \ast_4 u_3).
\end{align*}
\end{rem}

\subsection{Connection and geodesic}
We further expand the explicit formula of the connection in Definition \ref{L2 connection}. After reduction, we sre now working on $1$-forms, instead of 2-forms.
Let $\vphi(t,s,r)$ be a 3-parameter family and $X=\p_t\vphi=-d\theta^3\wedge d\alpha$, $Y=\p_s\vphi=-d\theta^3\wedge d\beta$, $Z=\p_r\vphi=-d\theta^3\wedge d\gamma$. Also, $\boldsymbol{\alpha}=d\theta^3\wedge\a$, $\boldsymbol{\b}=d\theta^3\wedge\b$, $\boldsymbol{\gamma}=d\theta^3\wedge\gamma$.
\begin{defn}\label{connection om}
For any tangent vectors $x,y\in TX^4$, we denote 
\begin{equation*}
\begin{split}
	i_x\Om_{i}:=\frac{1}{2}\sum_{j,k}\eps_{ijk}i_x\omega_j \wedge \omega_k,\quad i_{x,y} \Om_i:=\frac{1}{2}\sum_{j,k}\eps_{ijk} i_x \omega_j\wedge i_y \omega_k
\end{split}
\end{equation*} 
and $A^\alpha_i(x,y):= \frac{1}{ u^{\frac{1}{3}} \vol_0}[i_xd\alpha_i\wedge i_y\Om_i
+ i_y d\alpha_i \wedge i_x\Om_i+ i_{x,y} \Om_i\wedge d\alpha_i]$.
\end{defn}
\begin{defn}\label{connection formula}
We define the connection $D_\a\b:= \b_t+P(\a,\b)$,
\begin{equation*}
\begin{split}
& P(\a,\b):= \frac{-1}{6}  \{i_{\beta^\sharp} A^\alpha_3+
 i_{\alpha^\sharp} A^\beta_3+u^{\frac{2}{3}}\delta[\ast_4(u^{-\frac{2}{3}}  i_{\alpha^\sharp,\beta^\sharp} \Om_3)]\\
&-u^{\frac{2}{3}}\delta[\alpha\wedge \ast_4(u^{-\frac{2}{3}} i_{\beta^\sharp}\Om_3)]
-u^{\frac{2}{3}}\delta[\beta\wedge \ast_4(u^{-\frac{2}{3}} i_{\alpha^\sharp}\Om_3)]\}.
\end{split}
\end{equation*} 
The notations are further specified in \lemref{ixalpha} and \lemref{pg} and the notion $^\sharp$ is the musical isomorphism. 
\end{defn}
This is a special case of Definition \ref{L2 connection}.
\begin{thm}\label{connection free compatible}
The connection $D_\alpha\beta$ is a Levi-Civita connection with respect to the metric in \lemref{metric mixed}. 

Precisely, we have $P=P^A-K$, see $D^A$ in Definition \ref{connection PA} and $K$ in Definition \ref{connection S}.
\end{thm}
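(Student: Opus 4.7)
The plan is to deduce Theorem~\ref{connection free compatible} from the ambient result Theorem~\ref{2 form Levi Civita}, exploiting the remark immediately preceding the statement that Definition~\ref{connection formula} is a special case of Definition~\ref{L2 connection}. Concretely, I would first verify the identification: for $\boldsymbol\alpha=d\theta^3\wedge\alpha$ and $\boldsymbol\beta=d\theta^3\wedge\beta$ with the mixed $G_2$-structure $\vphi=d\theta^{123}-\sum_i d\theta^i\wedge\omega_i$ (where $\omega_1=\omega_1^K$, $\omega_2=\omega_2^K$), each of the five terms of~\eqref{connection P} reduces after wedging with $d\theta^3\wedge(\cdot)$: only the $\omega_3$-component of $\vphi$ survives the contractions $\i,\j$, because the $d\theta^{123}$ piece and the $d\theta^i\wedge\omega_i^K$ pieces contribute zero against a $d\theta^3$ factor. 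Using the weights $\lambda^{33}=u^{-2/3}$ from~\eqref{mix Q inverse} and the self-duality $\ast_4\omega_i=\omega_i$, the $7$-dimensional operations convert into the $4$-dimensional expressions $A^\alpha_3$, $A^\beta_3$ and $i_{\alpha^\sharp,\beta^\sharp}\Om_3$ of Definition~\ref{connection om}, matching the terms of Definition~\ref{connection formula} up to the global factor $-\tfrac{1}{6}$.

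Once this identification is in place, both Levi-Civita properties transfer: the inherited metric of Lemma~\ref{metric mixed} is the restriction of the $L^2$-metric~\eqref{L2 metric} to the subspace $\{d\theta^3\wedge\beta:\beta\in\Omega^1(X^4)\}$; Definition~\ref{connection formula} shows that this subspace is invariant under $D_\alpha$, since $\beta_t$ and every summand of $P(\alpha,\beta)$ are $1$-forms on $X^4$; and the complement is orthogonal under $\mathcal G$. Torsion-freeness and metric-compatibility therefore descend immediately from Theorem~\ref{2 form Levi Civita}. As an independent cross-check (and a safeguard against a sign error in the identification), one can re-derive both properties on $X^4$: the manifest symmetry $P(\alpha,\beta)=P(\beta,\alpha)$ in Definition~\ref{connection formula} combined with $d\beta_t=d\alpha_s$ yields $D_\alpha\beta-D_\beta\alpha=0$, and metric-compatibility follows from differentiating $\int_{X^4}g_4(\beta,\gamma)u^{-1/3}\vol_E$ in $t$, using $u_t=u\,g(\vphi_t,\vphi)$ together with the variation of $g_4$ induced by $(\omega_3)_t=d\alpha$, then integrating by parts against $X^4$.

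For the decomposition $P=P^A-K$, I would simply regroup the five terms of Definition~\ref{connection formula}. The three manifestly $(\alpha,\beta)$-symmetric pieces $i_{\beta^\sharp}A^\alpha_3+i_{\alpha^\sharp}A^\beta_3$ and $u^{2/3}\delta[\ast_4(u^{-2/3}i_{\alpha^\sharp,\beta^\sharp}\Om_3)]$ should assemble into the auxiliary connection $P^A$ of Definition~\ref{connection PA}, while the two remaining terms $u^{2/3}\delta[\alpha\wedge\ast_4(u^{-2/3}i_{\beta^\sharp}\Om_3)]$ and $u^{2/3}\delta[\beta\wedge\ast_4(u^{-2/3}i_{\alpha^\sharp}\Om_3)]$, which differ from the first bracket by contracting one slot of $\Om_3$ against a vector and the other against the form, assemble into $-K$ of Definition~\ref{connection S}; here $K$ plays the role of a contorsion correcting $P^A$ to the Levi-Civita $P$. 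I expect the main obstacle to be the bookkeeping in the first paragraph: reducing the $7$-dimensional contractions $\i_\vphi\j_{\boldsymbol\alpha}\boldsymbol\beta$ and $\i_{\boldsymbol\alpha}\j_\vphi d\boldsymbol\beta$ through the product structure involves three kinds of wedge factors and the $u^{\pm 1/3}$ weights from~\eqref{Qij}, where a misplaced sign or a misattributed term would incorrectly swap contributions between $P^A$ and $K$; once that matching is clean, the rest of the proof is essentially formal.
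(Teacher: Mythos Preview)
Your route to the Levi--Civita property---reduce the $7$-dimensional $P$ of \eqref{connection P} to the mixed subspace and invoke Theorem~\ref{2 form Levi Civita}---is different from the paper's and could work, but the paper argues entirely on $X^4$ via the classical contorsion construction. It first shows that $D^A$ is metric-compatible by directly differentiating the inherited metric (the computation with Lemma~\ref{pg} that precedes Definition~\ref{connection PA}), records its torsion $T^A$ in Lemma~\ref{connection PA compatible}, and then builds $K$ from $T^A$ and the tensor $S$ so that subtracting $K$ removes the torsion while keeping metric-compatibility (the identity $\mathcal G(K(\alpha,\beta),\gamma)+\mathcal G(\beta,K(\alpha,\gamma))=0$). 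Thus the decomposition $P=P^A-K$ is not a cosmetic regrouping done at the end; it is the mechanism of the proof. Your approach trades this for the bookkeeping of the $7$-to-$4$ reduction, which the paper only asserts in the remark and never carries out.

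Your final paragraph misidentifies $P^A$ and $K$. By Definition~\ref{connection PA}, $P^A(\alpha,\beta)=-\tfrac{1}{6}\,i_{\beta^\sharp}A^\alpha_3$ is a single \emph{asymmetric} term; Lemma~\ref{connection PA compatible} explicitly computes its nonzero torsion. Unwinding $K=\tfrac12\bigl[T^A+S(\alpha,\beta)+S(\beta,\alpha)\bigr]$ with $S=S_1-S_2$ and $S_2(\beta,\gamma)=-\tfrac{1}{6}\,i_{\gamma^\sharp}A^\beta_3=P^A(\beta,\gamma)$ from Lemma~\ref{connection yx}, one gets
\[
P^A-K \;=\; P^A(\alpha,\beta)+P^A(\beta,\alpha)-S_1(\alpha,\beta),
\]
so the two $A$-terms in Definition~\ref{connection formula} arise from the symmetrisation of $P^A$, while all three $\delta$-terms come from $-S_1$ (Lemma~\ref{connection xy}). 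The split is therefore not ``symmetric pieces into $P^A$, remainder into $K$'' as you propose; with your grouping the claimed identity $P=P^A-K$ would simply be false.
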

\begin{proof}
The identity is a combination of Definition \ref{connection PA} for the compatible, but may not be torsion free, connection $ D^A$ and Definition \ref{connection S} for the contorsion tensor $ K$.
Torsion freeness follows from the symmetry of the expression on $\alpha$ and $\beta$. The metric compatibility of $D$ holds because of the metric compatibility of $D^A$ in \lemref{connection PA compatible} and $$\mathcal G_\vphi(  K(\alpha,\beta),\gamma) +\mathcal G_\vphi (\beta,  K(\alpha,\gamma))=0,$$ which is a consequence of the relation between the torsion $T^A$ of $D^A$ and the contorsion tensors
$\mathcal G_\vphi(T^A(\alpha,\beta), \gamma)
=\mathcal G_\vphi(\alpha, S(\beta,\gamma)).$
\end{proof}

\begin{cor}[$L^2$ Geodesic for the mixed hyper-symplectic structures]\label{geodesic HS cor}
The $L^2$ geodesic equation $  D_\alpha\alpha=0$ is 
\begin{equation*}
\begin{split}
  6  \alpha_t=2i_{\alpha^\sharp} A^\alpha_3
&+u^{\frac{2}{3}}\delta[\ast_4(u^{-\frac{2}{3}} i_{\alpha^\sharp}\om_1\wedge i_{\alpha^\sharp}\om_2 )]
-2u^{\frac{2}{3}}\delta[\alpha\wedge \ast_4(u^{-\frac{2}{3}}  i_{\alpha^\sharp} \omega_1\wedge \omega_2)].
\end{split}
\end{equation*} 
\end{cor}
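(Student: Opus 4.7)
The geodesic equation $D_\alpha \alpha = 0$ is, by Definition~\ref{connection formula}, simply $\alpha_t + P(\alpha,\alpha) = 0$, so the plan is to specialise the general connection $P(\alpha,\beta)$ to the diagonal $\beta = \alpha$ and then collapse the five terms in the definition down to three terms using two algebraic simplifications. Multiplying through by $-6$ converts $\alpha_t = -P(\alpha,\alpha)$ into the form displayed in the corollary.

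First I would observe that setting $\beta = \alpha$ merges pairs of terms: $i_{\beta^\sharp}A^\alpha_3 + i_{\alpha^\sharp}A^\beta_3$ becomes $2\,i_{\alpha^\sharp}A^\alpha_3$, and the two symmetric $\delta$-terms $u^{2/3}\delta[\alpha\wedge\ast_4(u^{-2/3}i_{\beta^\sharp}\Omega_3)] + u^{2/3}\delta[\beta\wedge\ast_4(u^{-2/3}i_{\alpha^\sharp}\Omega_3)]$ become $2\,u^{2/3}\delta[\alpha\wedge\ast_4(u^{-2/3}i_{\alpha^\sharp}\Omega_3)]$. This already recovers the overall shape of the claim; what remains is to rewrite $i_{\alpha^\sharp,\alpha^\sharp}\Omega_3$ and $i_{\alpha^\sharp}\Omega_3$ in the notation used in the corollary.

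For the middle term I would use Definition~\ref{connection om} with $i=3$ and $x = y = \alpha^\sharp$. Since $\epsilon_{3jk}$ is supported on $(j,k) \in \{(1,2),(2,1)\}$,
\begin{equation*}
i_{\alpha^\sharp,\alpha^\sharp}\Omega_3 \;=\; \tfrac{1}{2}\bigl(i_{\alpha^\sharp}\omega_1 \wedge i_{\alpha^\sharp}\omega_2 \;-\; i_{\alpha^\sharp}\omega_2 \wedge i_{\alpha^\sharp}\omega_1\bigr).
\end{equation*}
Because $i_{\alpha^\sharp}\omega_j$ are $1$-forms they anticommute, so both summands coincide and the expression equals $i_{\alpha^\sharp}\omega_1\wedge i_{\alpha^\sharp}\omega_2$, matching the middle term in the corollary.

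The main subtle step is the last term, where I must show $i_{\alpha^\sharp}\Omega_3 = i_{\alpha^\sharp}\omega_1\wedge \omega_2$. Applying Definition~\ref{connection om} gives $i_{\alpha^\sharp}\Omega_3 = \tfrac{1}{2}(i_{\alpha^\sharp}\omega_1\wedge\omega_2 - i_{\alpha^\sharp}\omega_2\wedge\omega_1)$, which is not manifestly the claimed expression. The crucial input is that for the mixed triple \eqref{mix HS} the matrix $\mathcal{Q}$ in \eqref{mix Q} has $q_{12}=0$, so $\omega_1\wedge \omega_2 \equiv 0$ on $X^4$. Taking $i_{\alpha^\sharp}$ of this identity and using the Leibniz rule yields $i_{\alpha^\sharp}\omega_1\wedge\omega_2 = -\,\omega_1\wedge i_{\alpha^\sharp}\omega_2 = -\,i_{\alpha^\sharp}\omega_2\wedge\omega_1$ (the last equality using that a $1$-form commutes past a $2$-form up to a sign of $(-1)^2 = +1$). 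Substituting back collapses the two half-terms to a single $i_{\alpha^\sharp}\omega_1\wedge\omega_2$, exactly as claimed. Assembling the three simplified pieces and multiplying by $-6$ produces the displayed equation. The only conceptual obstacle is recognising the role of the degeneracy $\omega_1\wedge\omega_2 = 0$; once noticed, the proof is a direct computation.
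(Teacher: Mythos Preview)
Your proof is correct and follows essentially the same route as the paper. The paper simply invokes Lemma~\ref{ixalpha} (whose proof already uses $i_y(\omega_1\wedge\omega_2)=0$, i.e.\ precisely your observation $q_{12}=0$) for the identity $i_{\alpha^\sharp}\Omega_3=i_{\alpha^\sharp}\omega_1\wedge\omega_2$, and then uses $i_{\alpha^\sharp}i_{\alpha^\sharp}=0$ applied to the Lemma~\ref{ixalpha} formula for $i_{x,y}\Omega_3$ to obtain $i_{\alpha^\sharp,\alpha^\sharp}\Omega_3=i_{\alpha^\sharp}\omega_1\wedge i_{\alpha^\sharp}\omega_2$; you re-derive both identities directly from Definition~\ref{connection om}, which amounts to the same computation.
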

\begin{proof}
Using \lemref{ixalpha} and $i_{\alpha^\sharp}i_{\alpha^\sharp}=0$, we obtain
\begin{align}\label{i double}
i_{\alpha^\sharp,\alpha^\sharp} \Om_3=i_{\alpha^\sharp}\om_1\wedge i_{\alpha^\sharp}\om_2 
\end{align}
and thus the geodesic equation from Definition \ref{connection formula}.
\end{proof}

We close this subsection by computing the factors in Definition \ref{connection om}.
\begin{lem}\label{ixalpha}
When $i=3$, we further have
\begin{align*}
  i_x\Om_3
  =  i_x \omega_1\wedge \omega_2,\quad    i_{x,y} \Om_3=i_y\om_1\wedge i_x\om_2+\frac{1}{2}(i_yi_x\om_1) \om_2+\frac{1}{2}(i_yi_x\om_2 )\om_1 .
\end{align*}
\end{lem}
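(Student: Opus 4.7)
The plan is to expand the Levi-Civita symbol $\eps_{3jk}$ explicitly in Definition~\ref{connection om}, and then exploit the key structural fact that in the mixed setting \eqref{mix HS} the two fixed symplectic forms are part of a hyper-K\"ahler triple. In particular, the $(1,2)$-entry of the matrix $\mathcal Q$ in \eqref{mix Q} is $q_{12}=0$, so $\om_1\wedge\om_2\equiv 0$ on $X^4$. This identity, contracted once or twice with interior products, will supply all the input beyond Definition~\ref{connection om} itself.

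First I would unpack Definition~\ref{connection om} with $i=3$. Only the pairs $(j,k)=(1,2)$ and $(2,1)$ contribute to $\eps_{3jk}$, yielding
\begin{align*}
i_x\Om_3 &= \tfrac{1}{2}\bigl[i_x\om_1\wedge\om_2 - i_x\om_2\wedge\om_1\bigr],\\
i_{x,y}\Om_3 &= \tfrac{1}{2}\bigl[i_x\om_1\wedge i_y\om_2 - i_x\om_2\wedge i_y\om_1\bigr].
\end{align*}
Using graded-commutativity of the wedge product (a $1$-form commutes with a $2$-form, two $1$-forms anticommute), I can rewrite these as
\begin{align*}
i_x\Om_3 &= \tfrac{1}{2}\bigl[i_x\om_1\wedge\om_2 - \om_1\wedge i_x\om_2\bigr],\\
i_{x,y}\Om_3 &= \tfrac{1}{2}\bigl[i_x\om_1\wedge i_y\om_2 + i_y\om_1\wedge i_x\om_2\bigr].
\end{align*}

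The second step is to apply interior products, which act as anti-derivations, to $\om_1\wedge\om_2=0$. A single contraction by $i_x$ gives $i_x\om_1\wedge\om_2 + \om_1\wedge i_x\om_2 = 0$, and plugging this into the first display immediately produces $i_x\Om_3 = i_x\om_1\wedge\om_2$. Contracting a second time by $i_y$, applying the Leibniz rule and tracking the $(-1)^{|\cdot|}$ signs between $0$-, $1$-, and $2$-forms, yields
\[
(i_yi_x\om_1)\,\om_2 + (i_yi_x\om_2)\,\om_1 - i_x\om_1\wedge i_y\om_2 + i_y\om_1\wedge i_x\om_2 = 0,
\]
from which $i_x\om_1\wedge i_y\om_2 - i_y\om_1\wedge i_x\om_2 = (i_yi_x\om_1)\om_2 + (i_yi_x\om_2)\om_1$. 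Substituting this into the second display gives the target formula.

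The whole argument is essentially a bookkeeping exercise: the only geometric input is the mixed-structure identity $\om_1\wedge\om_2=0$, equivalent to $q_{12}=0$ in \eqref{mix Q}, and the rest is careful sign-tracking under the graded Leibniz rule for $i_x$. The main obstacle, such as it is, lies in verifying that the signs arising from graded-commutativity combine correctly with those from the anti-derivation property, but no deeper ingredient is required.
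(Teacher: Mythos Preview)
Your proof is correct and follows essentially the same route as the paper: expand the $\eps_{3jk}$ sum from Definition~\ref{connection om}, then use the vanishing $\om_1\wedge\om_2=0$ (equivalently $q_{12}=0$ in \eqref{mix Q}) together with the anti-derivation property of $i_x$ once and twice to simplify. The paper states the single and double Leibniz identities up front and stops at $i_{x,y}\Om_3=\tfrac12(i_x\om_1\wedge i_y\om_2-i_x\om_2\wedge i_y\om_1)$, leaving the final substitution implicit, whereas you carry it through explicitly; otherwise the arguments coincide.
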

\begin{proof}
We recall that given a vector field $W$, a $p$-form $\alpha$ and a $q$-form $\beta$, the contraction and the wedge product have the relation
\begin{align*}
i_W(\alpha\wedge\beta)=i_W\alpha\wedge\beta+(-1)^p\alpha\wedge i_W\beta.
\end{align*}
When $p=q=2$ and $V$ is another vector field,
\begin{align*}
i_{V,W}(\alpha\wedge\beta)=i_{V,W}\alpha\wedge\beta
-i_{W}\alpha\wedge i_{V}\beta
+i_{V}\alpha\wedge i_{W}\beta
+\alpha\wedge i_{V,W}\beta.
\end{align*}

We use
$
0=i_y (\omega_1\wedge \omega_2)
=i_y \omega_1\wedge \omega_2+ \omega_1\wedge i_y\omega_2
$	to see that
	\begin{equation*}
	\begin{split}
&   i_y\Om_3
=\frac{1}{2}\sum_{j,k}\eps_{3jk}  i_y \omega_j\wedge \omega_k
=  i_y \omega_1\wedge \omega_2.
	\end{split}
	\end{equation*}
Similarly, we have
$ i_x\Om_3
  =  i_x \omega_1\wedge \omega_2
$
and $  i_{x,y} \Om_3
=\frac{1}{2}\sum_{j,k}\eps_{3jk} i_x \omega_j\wedge i_y \omega_k
= \frac{1}{2} ( i_x \omega_1\wedge i_y \omega_2- i_x \omega_2\wedge i_y \omega_1).
$
\end{proof}

\subsubsection{Variation of $g_4$}
The main technical part to obtain the connection is the variation of the metric $g_4$ on the $4$-manifold $X^4$.

Taking $t$-derivative of \eqref{Qijsimple}, we obtain the variation of the matrix $\mathcal Q$. 
\begin{lem}\label{omega d alpha}
$\omega_1\wedge d\alpha=2(q_1)_t\vol_0,\quad 
\omega_2\wedge d\alpha=2(q_2)_t\vol_0,\quad
 \omega_3\wedge d\alpha=(q_3)_t\vol_0$.
\end{lem}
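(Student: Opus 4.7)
The plan is to differentiate the defining relations of the matrix entries $q_{ij}$ with respect to $t$, exploiting the fact that in the mixed triple \eqref{mix HS} only the third symplectic form $\omega_3$ is allowed to vary while $\omega_1 = \omega_1^K$ and $\omega_2 = \omega_2^K$ are frozen, and that the reference volume form $\vol_0$ is by construction independent of $t$.

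Concretely, by \eqref{Qijsimple} and \eqref{mix Q},
\begin{equation*}
2 q_1 \vol_0 = \omega_1 \wedge \omega_3, \qquad 2 q_2 \vol_0 = \omega_2 \wedge \omega_3, \qquad 2 q_3 \vol_0 = \omega_3 \wedge \omega_3.
\end{equation*}
First, I would $t$-differentiate the first equality; since $\p_t \omega_1 = 0$ and $\p_t \vol_0 = 0$, only the $\omega_3$ factor contributes, and inserting $\p_t \omega_3 = d\alpha$ from \eqref{vphit general 1} yields $2 (q_1)_t \vol_0 = \omega_1 \wedge d\alpha$. The same argument applied to the second equation gives $2 (q_2)_t \vol_0 = \omega_2 \wedge d\alpha$.

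For the third identity I would use the Leibniz rule on the wedge square: $2 (q_3)_t \vol_0 = (\p_t \omega_3) \wedge \omega_3 + \omega_3 \wedge (\p_t \omega_3) = 2\, \omega_3 \wedge d\alpha$, where the commutation is legal because $d\alpha$ and $\omega_3$ are both $2$-forms on the $4$-manifold $X^4$ (even degree). Dividing by $2$ produces the asserted $(q_3)_t \vol_0 = \omega_3 \wedge d\alpha$, which accounts for the coefficient mismatch between the first two identities and the third.

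There is no real obstacle here; the only subtlety worth flagging is the even-degree commutation in the last step and the observation that $\vol_0$ is chosen once and for all, independent of $\vphi(t)$, so no extra terms arise from varying it. The whole lemma is an unpacking of the definitions.
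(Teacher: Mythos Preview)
Your proof is correct and is exactly the approach the paper indicates: the paper merely says ``Taking $t$-derivative of \eqref{Qijsimple}, we obtain the variation of the matrix $\mathcal Q$'' and states the lemma without further argument. You have simply written out this differentiation explicitly, including the even-degree commutation that accounts for the factor of $2$ in the third identity.
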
	

\begin{lem}\label{pg}
$
\p_t g_4(x,y)=\frac{1}{3}\sum_{i} A_i(x,y)
	-\frac{1}{3}(\log u)_t g_4(x,y) ,
$ where
$A^\alpha_1=A^\alpha_2=0$ and the symmetric $2$-tensor $$
A^\alpha_3(x,y) = \frac{1}{ u^{\frac{1}{3}} \vol_0}[
i_x d\alpha\wedge i_y \omega_1\wedge \omega_2
+ i_y d\alpha\wedge i_x \omega_1\wedge \omega_2+d\alpha\wedge  i_{x,y} \Om_3].$$
Furthermore, 
$
(\p_tg^{-1}_4)( \beta,\gamma)=
-\frac{1}{3} A^\alpha_3(\beta^\sharp,\gamma^\sharp)
	+\frac{1}{3}(\log u)_t g_4(\beta^\sharp,\gamma^\sharp).
$ 
\end{lem}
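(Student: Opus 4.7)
My plan is to differentiate the defining relation
\[
6\, g_4(x,y)\,\vol_4 \;=\; \sum_{i,j,k}\varepsilon_{ijk}\, i_x\omega_i\wedge i_y\omega_j\wedge \omega_k
\]
in $t$. Since only $\omega_3$ varies along the mixed triple \eqref{mix HS}, with $\partial_t\omega_3 = d\alpha$, the derivative of the right-hand side splits into three groups indexed by which slot receives $d\alpha$: $i=3$, $j=3$, and $k=3$. Each group is a sum over the remaining two indices weighted by $\varepsilon_{3jk}$ (up to cyclic permutation), and \defnref{connection om} tells us precisely that such sums collapse to $2\,i_y\Omega_3$, $2\,i_x\Omega_3$, and $2\,i_{x,y}\Omega_3$, respectively. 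After rearranging the relevant 1-forms to account for the signs, the three groups combine to $2\,A^\alpha_3(x,y)\,\vol_4$, matching the prescribed expression in the statement.

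On the left-hand side, $\vol_4 = u^{1/3}\vol_0$ with $\vol_0$ fixed, so $\partial_t\vol_4 = \tfrac{1}{3}(\log u)_t\,\vol_4$. Dividing the resulting identity by $6\vol_4$ gives $\partial_t g_4(x,y) = \tfrac{1}{3}A_3^\alpha(x,y) - \tfrac{1}{3}(\log u)_t g_4(x,y)$, which is the claimed formula since $A_1^\alpha = A_2^\alpha = 0$: running the same derivation for a hypothetical variation of $\omega_i$ at $i=1,2$ produces no contribution, since those forms are held constant in the mixed triple.

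For the inverse-metric identity, I would use the standard consequence of $g_4 \circ g_4^{-1} = \mathrm{Id}$, namely
\[
(\partial_t g_4^{-1})(\beta,\gamma) \;=\; -(\partial_t g_4)(\beta^\sharp,\gamma^\sharp),
\]
and substitute the formula for $\partial_t g_4$ just derived. The two terms on the right are then exactly $-\tfrac{1}{3}A_3^\alpha(\beta^\sharp,\gamma^\sharp)$ and $\tfrac{1}{3}(\log u)_t\, g_4(\beta^\sharp,\gamma^\sharp)$, as desired.

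The main obstacle is the bookkeeping in the triple sum: combining antisymmetry of $\varepsilon_{ijk}$ with the anticommutation of the 1-forms $i_x\omega_i$, $i_y\omega_j$, $i_xd\alpha$, $i_yd\alpha$ to verify that the three groups contribute with consistent signs and indeed reproduce the three pieces of $A_3^\alpha$. A subtle sub-step is the $k=3$ group, which requires identifying $\sum_{i,j}\varepsilon_{3ij}\, i_x\omega_i \wedge i_y\omega_j$ with $2\, i_{x,y}\Omega_3$; this uses the intrinsic symmetrisation built into the definition of $i_{x,y}\Omega_i$ in \defnref{connection om}, rather than any further identity among the $\omega_i$.
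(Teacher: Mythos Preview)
Your proposal is correct and follows essentially the same route as the paper: differentiate the defining relation for $g_4$ from \eqref{g3g4}, split the numerator variation into the three slots $i,j,k$, specialise to $\alpha_1=\alpha_2=0$, $\alpha_3=\alpha$, and identify each group with the corresponding piece of $A^\alpha_3$ via \defnref{connection om} (and \lemref{ixalpha}); the inverse-metric formula is then the standard $(\partial_t g_4^{-1})(\beta,\gamma)=-\partial_t g_4(\beta^\sharp,\gamma^\sharp)$, which the paper writes out in local coordinates. The only cosmetic difference is that the paper differentiates the quotient $g_4=(\cdots)/\vol_4$ directly, while you differentiate the product $6g_4\,\vol_4=(\cdots)$ and divide afterwards; the content is identical.
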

\begin{proof}
The $t$-derivative of $g_4$ is obtained from \eqref{g3g4}, \eqref{Qij} and \eqref{vphit general}. The formula of $\p_t g_4(x,y)$ is
	\begin{equation*}
	\begin{split}
&\frac{1}{6}\sum_{i,j,k}\eps_{ijk}\frac{i_x d\alpha_i\wedge i_y \omega_j\wedge \omega_k}{u^{\frac{1}{3}}  \vol_0}
	+\frac{1}{6}\sum_{i,j,k}\eps_{ijk}\frac{i_x \omega_i\wedge i_y d\alpha_j\wedge \omega_k}{u^{\frac{1}{3}}  \vol_0}\\
	&+\frac{1}{6}\sum_{i,j,k}\eps_{ijk}\frac{i_x \omega_i\wedge i_y \omega_j\wedge d\alpha_k}{u^{\frac{1}{3}}  \vol_0}
	-\frac{1}{3}g_4(x,y) (\log u)_t.
	\end{split}
	\end{equation*}
From the formula of the tangent vector \eqref{vphit general 1}. we have $\alpha_1=\alpha_2=0$ and $\alpha_3=\alpha$. Thus, we prove the lemma, by plugging \lemref{ixalpha} into Definition \ref{connection om}.

Under the local coordinates, we recall the contraction operator $i_\beta(\p_tg_4)=g_4^{il}(\p_tg_4)_{lk}\beta_i$ and compute that
\begin{align*}
(\p_tg^{-1}_4)( \beta,\gamma)=\p_t(g_4^{ij})\beta_i\gamma_j
=-g_4^{il}g_4^{kj}\p_t({g_4}_{kl})\beta_i\gamma_j 
=-\p_t({g_4})(\beta^\sharp,\gamma^\sharp),
\end{align*}
which implies the formula of the derivative of the inverse metric.
\end{proof}

\subsubsection{Connection $ D^A$}
We differentiate the inherited metric in \lemref{metric mixed} and insert \lemref{pg}. So, we get $\p_t\mathcal G_\vphi(\beta,\gamma)-\mathcal G_\vphi(\beta_t,\gamma)-\mathcal G_\vphi(\beta,\gamma_t)$
\begin{equation*}
\begin{split}
&=\int_M \{(\p_tg^{-1}_4)( \beta,\gamma) 
-\frac{1}{3} g_4( \beta^\sharp,\gamma^\sharp) (\log u)_t \} u^{\frac{-1}{3}} \vol_E
=\int_M -\frac{1}{3} A^\alpha_3(\beta^\sharp,\gamma^\sharp)  u^{\frac{-1}{3}} \vol_E.
\end{split}
\end{equation*} 
In order to find a metric compatible connection $P^A$, the identity above should be equal to $\mathcal G_\vphi(  P^A(\alpha,\beta),\gamma) +\mathcal G_\vphi (\beta,  P^A(\alpha,\gamma))$. 
\begin{defn}\label{connection PA}
We define the connection $  D^A_\alpha\beta:= \beta_t+ P^A(\alpha,\beta)$, 
\begin{align*}
P^A(\alpha,\beta):
=-\frac{1}{6} i_{\beta^\sharp} A^\alpha_3.
\end{align*}
\end{defn}
\begin{lem}\label{connection PA compatible}
$ D^A$ is metric compatible and its torsion 
\begin{align*}
T^A(\alpha,\beta):= P^A(\alpha,\beta)-P^A(\beta,\alpha)=-\frac{1}{6} i_{\beta^\sharp} A^\alpha_3+\frac{1}{6} i_{\alpha^\sharp} A^\beta_3.
\end{align*}
is a $1$-form value antisymmetric $2$-tensor.
\end{lem}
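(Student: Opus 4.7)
The lemma has two parts: metric compatibility of $D^A$, and the formula (together with antisymmetry) for its torsion $T^A$. Both are immediate from already-available pieces, so the plan is essentially an assembly argument.

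\textbf{Step 1: Metric compatibility.} The key input is the computation already carried out just above Definition \ref{connection PA}, namely
\begin{equation*}
\p_t\mathcal G_\vphi(\beta,\gamma)-\mathcal G_\vphi(\beta_t,\gamma)-\mathcal G_\vphi(\beta,\gamma_t)
=-\frac{1}{3}\int_M A^\alpha_3(\beta^\sharp,\gamma^\sharp)\, u^{-\frac{1}{3}}\vol_E .
\end{equation*}
By definition $D^A_\alpha\beta=\beta_t+P^A(\alpha,\beta)$ with $P^A(\alpha,\beta)=-\tfrac{1}{6}\,i_{\beta^\sharp}A^\alpha_3$, so metric compatibility reduces to the identity
\begin{equation*}
\mathcal G_\vphi\bigl(P^A(\alpha,\beta),\gamma\bigr)+\mathcal G_\vphi\bigl(\beta,P^A(\alpha,\gamma)\bigr)
=-\frac{1}{3}\int_M A^\alpha_3(\beta^\sharp,\gamma^\sharp)\,u^{-\frac{1}{3}}\vol_E .
\end{equation*}
Using the reduced metric from Lemma \ref{metric mixed} and the duality $g_4(i_{\beta^\sharp}A^\alpha_3,\gamma)=A^\alpha_3(\beta^\sharp,\gamma^\sharp)$, each of the two summands on the left contributes $-\tfrac{1}{6}\int_M A^\alpha_3(\beta^\sharp,\gamma^\sharp)u^{-\frac{1}{3}}\vol_E$ and $-\tfrac{1}{6}\int_M A^\alpha_3(\gamma^\sharp,\beta^\sharp)u^{-\frac{1}{3}}\vol_E$ respectively. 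Symmetry of the $2$-tensor $A^\alpha_3$ (stated in Lemma \ref{pg}) makes these equal, and their sum matches the right-hand side. This proves metric compatibility.

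\textbf{Step 2: Torsion formula and its properties.} The formula
\begin{equation*}
T^A(\alpha,\beta)=P^A(\alpha,\beta)-P^A(\beta,\alpha)
=-\frac{1}{6}\,i_{\beta^\sharp}A^\alpha_3+\frac{1}{6}\,i_{\alpha^\sharp}A^\beta_3
\end{equation*}
is immediate from the definition of $P^A$. Antisymmetry under $\alpha\leftrightarrow\beta$ is visible from the expression itself. Each term $i_{\beta^\sharp}A^\alpha_3$ is a $1$-form on $X^4$, because $A^\alpha_3$ is a $2$-tensor and contracting with one vector leaves a $1$-form; hence $T^A$ is $1$-form valued. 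Finally, $T^A(\alpha,\beta)$ at each point depends only on the pointwise data of $\alpha$, $d\alpha$, $\beta$ and $d\beta$ (no further derivatives), so it is tensorial in the variational sense, and together with the antisymmetry this justifies the description as a ``$1$-form valued antisymmetric $2$-tensor''.

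\textbf{Main obstacle.} There is really no substantial difficulty: once one trusts the preceding variational computation of $\p_t g_4$ and the symmetry of $A^\alpha_3$ (both from Lemma \ref{pg}), the statement is a direct unwinding of definitions. The only mildly technical point is a careful use of the musical isomorphism $\sharp$ with respect to the $t$-dependent metric $g_4$ when identifying $g_4(i_{\beta^\sharp}A^\alpha_3,\gamma)$ with $A^\alpha_3(\beta^\sharp,\gamma^\sharp)$; however, since $\p_t$ is not applied to this pairing in the compatibility identity above, no extra terms appear and the identification is pointwise.
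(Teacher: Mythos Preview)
Your proposal is correct and follows the same approach as the paper: metric compatibility is read off from the variational identity displayed just before Definition~\ref{connection PA} together with the symmetry of $A^\alpha_3$ from Lemma~\ref{pg}, and the torsion formula is immediate from the definition of $P^A$. The paper in fact leaves the lemma unproved, since the content is exactly the assembly you describe.
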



\subsubsection{Contorsion tensor $ K$}
\begin{lem}\label{connection xy}
$\mathcal G_\vphi( P^A(\alpha,\beta), \gamma)=\int_{M} g_4( \alpha, S_1(\beta,\gamma)) u^{\frac{-1}{3}} \vol_E $,
where $$S_1(\beta,\gamma):= -\frac{1}{6} u^{\frac{2}{3}} \delta [\beta\wedge \ast(u^{-\frac{2}{3}} i_\gamma\Om_3\wedge d\theta^{123})
+\gamma\wedge \ast(u^{-\frac{2}{3}} i_\beta\Om_3\wedge d\theta^{123})
-\ast(u^{-\frac{2}{3}}  i_{\beta,\gamma} \Om_3\wedge d\theta^{123})]$$
is a $1$-form value symmetric $2$-tensor.
\end{lem}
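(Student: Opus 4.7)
The plan is to start from Definition~\ref{connection PA} and the inherited $L^2$ metric in \lemref{metric mixed}, which together give
\begin{equation*}
\mathcal G_\vphi(P^A(\alpha,\beta),\gamma)
=-\frac{1}{6}\int_M g_4(i_{\beta^\sharp}A^\alpha_3,\gamma)\, u^{-\frac{1}{3}}\vol_E
=-\frac{1}{6}\int_M A^\alpha_3(\beta^\sharp,\gamma^\sharp)\, u^{-\frac{1}{3}}\vol_E,
\end{equation*}
using symmetry of the 2-tensor $A^\alpha_3$. Then I expand $A^\alpha_3(\beta^\sharp,\gamma^\sharp)$ into its three constituents from \lemref{pg}: the two ``mixed'' pieces $i_{\beta^\sharp}d\alpha\wedge i_{\gamma^\sharp}\omega_1\wedge\omega_2$ and $i_{\gamma^\sharp}d\alpha\wedge i_{\beta^\sharp}\omega_1\wedge\omega_2$, and the ``double contraction'' piece $d\alpha\wedge i_{\beta^\sharp,\gamma^\sharp}\Om_3$. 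Each of these is a top-degree form on $X^4$ divided by $u^{1/3}\vol_0$; combined with the factor $u^{-1/3}\vol_E = u^{-1/3}\,d\theta^{123}\wedge\vol_0$, the net weight becomes $u^{-2/3}\,d\theta^{123}$, which is exactly the combination appearing inside the brackets of $S_1$.

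Next I lift each summand to an $M$-integral by wedging with $d\theta^{123}$, so that the 4-dimensional integrands become closed-top-form integrands on $M$. For the two mixed pieces, I rewrite $i_{\gamma^\sharp}\omega_1\wedge\omega_2 = i_{\gamma^\sharp}\Om_3$ (from \lemref{ixalpha}) and then use the identity
\begin{equation*}
i_{\beta^\sharp}d\alpha\wedge\eta \;=\; i_{\beta^\sharp}(d\alpha\wedge\eta)-d\alpha\wedge i_{\beta^\sharp}\eta
\end{equation*}
to reorganize; combined with $\beta\wedge\ast_M(\,\cdot\,)$ pairings, this is the standard route from $g_4(i_{\beta^\sharp}d\alpha,\,\cdot\,)$ to $d\alpha\wedge\ast_M(\beta\wedge\,\cdot\,)$ via the 4-dimensional Hodge identity $i_{V^\sharp}\omega = \pm\ast(V\wedge\ast\omega)$. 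Having isolated a single factor of $d\alpha$ on the outside of each summand, the crucial step is integration by parts on $M$: since $\alpha$ is a $1$-form and the remaining factor is a $3$-form, Stokes' theorem on the closed manifold $M$ gives $\int_M d\alpha\wedge\eta = \int_M\alpha\wedge d\eta$, which in turn is $\int_M g_\vphi(\alpha,\delta\ast\eta)\vol_\vphi$ by the $L^2$-adjointness $\int g(d\alpha,\omega)\vol = \int g(\alpha,\delta\omega)\vol$.

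Collecting the three resulting terms, the two ``mixed'' contributions produce the pieces $\beta\wedge\ast(u^{-2/3}i_{\gamma^\sharp}\Om_3\wedge d\theta^{123})$ and $\gamma\wedge\ast(u^{-2/3}i_{\beta^\sharp}\Om_3\wedge d\theta^{123})$ inside the bracket, while the double-contraction piece, for which no interior-product Hodge rewrite is needed, contributes $-\ast(u^{-2/3}i_{\beta^\sharp,\gamma^\sharp}\Om_3\wedge d\theta^{123})$. The outside factor $u^{2/3}$ in the formula for $S_1$ arises from converting $\int_M g_\vphi(\alpha,\cdot)\vol_\vphi$ back into the $u^{-1/3}\vol_E$-weighted integral used to define $\mathcal G_\vphi$ (one has $\vol_\vphi = u^{1/3}\vol_E$). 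The claimed symmetry of $S_1$ in $(\beta,\gamma)$ is then manifest: the first two bracket terms are interchanged under $\beta\leftrightarrow\gamma$, and the last is symmetric because $i_{\beta^\sharp,\gamma^\sharp}\Om_3 = i_{\gamma^\sharp,\beta^\sharp}\Om_3$ by Definition~\ref{connection om}.

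The main obstacle I expect is the bookkeeping of Hodge-star conversions between $X^4$ and $M=\mathbb T^3\times X^4$: tracking the powers of $u$ generated by $\vol_4 = u^{1/3}\vol_0$ versus $\vol_\vphi$ versus $\vol_E$, and the sign conventions in the identity relating $i_V$ to $\ast(V\wedge \ast\,\cdot\,)$ in four dimensions on 1-, 2-, and 3-forms. A secondary issue is ensuring that the interior products with $\beta^\sharp$ and $\gamma^\sharp$, which live on $X^4$, commute correctly with the lift by $d\theta^{123}$; this is harmless because $\beta^\sharp,\gamma^\sharp$ are tangent to the $X^4$-factor. Once these bookkeeping steps are verified, the identity drops out directly from Stokes' theorem.
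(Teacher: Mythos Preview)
Your proposal is correct and follows essentially the same route as the paper: expand $A^\alpha_3(\beta^\sharp,\gamma^\sharp)$ via \lemref{pg} into the three pieces, pass to the full $M$-integral with the weight $u^{-2/3}\,d\theta^{123}$, convert each wedge integrand to an inner product against a Hodge dual, move the interior product $i_{\beta^\sharp}$ (resp.\ $i_{\gamma^\sharp}$) off $d\alpha$ by the adjunction $g(i_V\omega,\mu)=g(\omega,V^\flat\wedge\mu)$, and then integrate by parts using $d\!\leftrightarrow\!\delta$. The paper does exactly this, term by term; your aside about the Leibniz rule $i_{\beta^\sharp}d\alpha\wedge\eta=i_{\beta^\sharp}(d\alpha\wedge\eta)-d\alpha\wedge i_{\beta^\sharp}\eta$ is not needed (and in fact the $d\alpha\wedge\eta$ term vanishes for degree reasons), but the Hodge-star adjunction you also invoke is precisely the mechanism used.
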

\begin{proof}
We continue the computation from Definition \ref{connection PA}
\begin{equation*}
\begin{split}
&\int_M g_4( P^A(\alpha,\beta),\gamma) u^{\frac{-1}{3}} \vol_E
=-\frac{1}{6}\int_M g_4( i_{\beta^\sharp} A^\alpha_3,\gamma) u^{\frac{-1}{3}} \vol_E\\
&=-\frac{1}{6}\int_{M} u^{-\frac{2}{3}}  [i_{\beta^\sharp} d\alpha \wedge i_{\gamma^\sharp}\Om_3
+ i_{\gamma^\sharp} d\alpha \wedge i_{\beta^\sharp}\Om_3+d\alpha\wedge i_{\beta^\sharp,\gamma^\sharp} \Om_3] d\theta^{123}.
\end{split}
\end{equation*} 
We compute with the help of the integration by parts repeatedly.

The first term
$\int_{M} i_{\beta^\sharp} d\alpha \wedge [ u^{-\frac{2}{3}}  i_{\gamma^\sharp}\Om_3
\wedge d\theta^{123}]$
\begin{equation*}
\begin{split}
&= \int_{M} g( i_{\beta^\sharp}  d\alpha, \ast (u^{-\frac{2}{3}} i_{\gamma^\sharp}\Om_3\wedge d\theta^{123}) )\vol\\
&=\int_{M} g( d\alpha, \beta\wedge \ast (u^{-\frac{2}{3}} i_{\gamma^\sharp}\Om_3\wedge d\theta^{123}))\vol\\
&=\int_{M} g( \alpha, \delta[\beta\wedge \ast (u^{-\frac{2}{3}} i_{\gamma^\sharp}\Om_3\wedge d\theta^{123})])\vol.
\end{split}
\end{equation*} 

Similarly, the second term is direct 
$$
\int_{M} u^{-\frac{2}{3}} i_{\gamma^\sharp} d\alpha \wedge i_{\beta^\sharp}\Om_3\wedge d\theta^{123}
=\int_{M} g_4( \alpha, \delta[\gamma\wedge \ast(u^{-\frac{2}{3}} i_{\beta^\sharp}\Om_3\wedge \theta^{123})])\vol.
$$

The third term is $\int_{M} u^{-\frac{2}{3}} d\alpha\wedge  i_{\beta^\sharp,\gamma^\sharp} \Om_3\wedge d\theta^{123}$
\begin{equation*}
\begin{split}
&=\int_{M} g(d\alpha,\ast(u^{-\frac{2}{3}}  i_{\beta^\sharp,\gamma^\sharp} \Om_3\wedge d\theta^{123}))\\
&=\int_{X^4} g(\alpha,\delta[\ast(u^{-\frac{2}{3}}  i_{\beta^\sharp,\gamma^\sharp} \Om_3\wedge d\theta^{123})]).
\end{split}
\end{equation*} 
Thus, we prove the lemma after adding these identities together.
\end{proof}

\begin{lem}\label{connection yx}
$\mathcal G_\vphi(  P^A(\beta,\alpha), \gamma)=\int_{M} g_4( \alpha, S_2(\beta,\gamma)) u^{\frac{-1}{3}} \vol_E$,
where $$S_2(\beta,\gamma):= -\frac{1}{6} i_{\gamma^\sharp} A^\beta_3   .$$
\end{lem}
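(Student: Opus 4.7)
The plan is to apply Definition~\ref{connection PA} directly and then exploit the symmetry of the $2$-tensor $A^\beta_3$ established in \lemref{pg}. First, I would substitute $P^A(\beta,\alpha) = -\tfrac{1}{6}\, i_{\alpha^\sharp} A^\beta_3$ into the left-hand side $\mathcal{G}_\vphi(P^A(\beta,\alpha),\gamma)$, and use the inherited metric formula in \lemref{metric mixed} to rewrite this pairing as
\[
-\tfrac{1}{6}\int_M g_4(i_{\alpha^\sharp} A^\beta_3,\gamma)\, u^{-1/3}\vol_E.
\]
Unfolding the contraction in local coordinates, the integrand is $A^\beta_3(\alpha^\sharp,\gamma^\sharp)$ times the density, by the same bookkeeping used in the last paragraph of the proof of \lemref{pg}.

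Next, I would use the symmetry of $A^\beta_3$ as a symmetric $2$-tensor (evident from the symmetric presentation in Definition~\ref{connection om} and the explicit formula in \lemref{pg}) to swap the two slots:
\[
A^\beta_3(\alpha^\sharp,\gamma^\sharp)=A^\beta_3(\gamma^\sharp,\alpha^\sharp)=g_4(\alpha,\, i_{\gamma^\sharp} A^\beta_3).
\]
Substituting back into the integral and recognising $S_2(\beta,\gamma)=-\tfrac{1}{6}\,i_{\gamma^\sharp} A^\beta_3$ gives the claim. In contrast to the companion \lemref{connection xy}, no integration by parts is required here, because in $P^A(\beta,\alpha)$ the argument $\alpha$ already occupies the contracting position through $\alpha^\sharp$; swapping it with $\gamma^\sharp$ produces an expression of the same algebraic shape as the desired $S_2$. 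Consequently I expect no serious obstacle: the lemma is essentially a one-line consequence of symmetry, provided one is careful with the musical isomorphism and the convention $g_4(i_x T,y)=T(x,y)$ for symmetric $T$.
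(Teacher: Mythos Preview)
Your proposal is correct and follows essentially the same approach as the paper: both insert $P^A(\beta,\alpha)=-\tfrac{1}{6}i_{\alpha^\sharp}A^\beta_3$ from Definition~\ref{connection PA} into the metric from \lemref{metric mixed}, then use the symmetry of the $2$-tensor $A^\beta_3$ (noted in \lemref{pg}) to swap the roles of $\alpha$ and $\gamma$. The paper's phrase ``by switching the positions of $\alpha$ and $\gamma$'' is exactly the symmetry step you spell out via $A^\beta_3(\alpha^\sharp,\gamma^\sharp)=A^\beta_3(\gamma^\sharp,\alpha^\sharp)=g_4(\alpha,i_{\gamma^\sharp}A^\beta_3)$.
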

\begin{proof}We proceed as the lemma above 
\begin{equation*}
\begin{split}
\int_M g_4( P^A(\beta,\alpha),\gamma) u^{\frac{-1}{3}} \vol_E
=-\frac{1}{6}\int_M g_4( i_{\alpha^\sharp} A^\beta_3,\gamma) u^{\frac{-1}{3}} \vol_E.
\end{split}
\end{equation*} 
By switching the positions of $\alpha$ and $\gamma$, we prove this lemma.
\end{proof}

\begin{defn}\label{connection S}
We define the symmetric tensor 
$
S(\beta,\gamma):= S_1(\beta,\gamma)-S_2(\beta,\gamma)
$ 
and the contorsion tensor 
\begin{equation*}
\begin{split}
K(\alpha,\beta):= \frac{1}{2}[T^A(\alpha,\beta)+S(\alpha,\beta)+S(\beta,\alpha)].
\end{split}
\end{equation*} 
\end{defn}

\subsection{Hessian of volume}	
We explore the second variation of the volume functional \thmref{Hessian of Vol general}, for the $G_2$ structures constructed from the hyper-symplectic structures, by using the geodesic equation \lemref{geodesic HS cor} under local coordinates.
\begin{defn}
We define
\begin{align*}
&A_{i_1i_2,j_1j_2}:=
\delta_{j_1   i_{1} i_{2}}^{k_1 k_2k_3  } (\om_1)_{k_1j_2} {(\om_2)_{k_2k_3}}
  -2  (\om_1)_{i_1j_1} (\om_2)_{i_2j_2},\\
  &B_{i_1i_2i_3i_4,j_1j_2}:=
  [{(\om_1)}_{j_1 i_2}  \alpha_{j_2 i_1} 
+      (\om_1)_{j_2 i_2 }     \alpha_{ j_1 i_1} ]  (\omega_2)_{i_3i_4} 
+       (\om_1)_{j_1 i_1}   (\om_2)_{j_2 i_2}    \alpha_{i_3i_4}.
\end{align*}
\end{defn}

\begin{thm}\label{Vtt diagonal}
If $\mathcal Q$ is diagonal, then along the $L^2$ geodesic in \corref{geodesic HS cor},
  \begin{equation*}
\begin{split}
18\Vol _{tt}=\int_M -4(\log u)_t^2 \vol_\vphi+I_1-I_2.
\end{split}
\end{equation*}
Here, we write the integrands
  \begin{equation*}
\begin{split}
&I_1:= \frac{2}{3} (\log u)^{j_3} 
 {(\omega_{3})_{ j_3}}^{j_1}\, u^{\frac{-2}{3} }\, \alpha^{j_2} 
 B_{i_1i_2i_3i_4,j_1j_2}\,dx^{i_1i_2i_3i_4}\wedge d\theta^{123},\\
&I_2:=\frac{1}{3} (\log u)^{j_3} [ (\omega_{3})_{ j_3i_4}
(u^{\frac{-2}{3} }
\alpha^{j_1}\alpha^{j_2}  A_{i_1i_2,j_1j_2})_{i_3}
\,dx^{i_1i_2i_3i_4}\wedge d\theta^{123}.
\end{split}
\end{equation*}
\end{thm}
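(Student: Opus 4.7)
My approach is to differentiate Hitchin's volume functional twice, use the diagonal structure to simplify $u_{tt}$, substitute the geodesic equation, and then match the resulting integrand with the index expressions defining $I_1$ and $I_2$. To begin, I would apply \eqref{eq:2nd_vol chi general} to $\chi(u) = 3u^{1/3}$ (so that $\Vol_\chi = 3\Vol$), obtaining
$$18 \Vol_{tt} = -4 \int_M u^{-5/3} u_t^2 \vol_E + 6 \int_M u^{-2/3} u_{tt} \vol_E.$$
Since $\vol_\vphi = u^{1/3} \vol_E$ and $u_t/u = (\log u)_t$, the first integral is exactly $-4 \int_M ((\log u)_t)^2 \vol_\vphi$, which accounts for the first term of the claim. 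The remaining task reduces to showing $6 \int_M u^{-2/3} u_{tt} \vol_E = I_1 - I_2$.

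Next, under the diagonal assumption ($q_1 = q_2 = 0$), the identity $u = q_3 - q_1^2 - q_2^2$ gives $u_{tt} = (q_3)_{tt} - 2(q_1)_t^2 - 2(q_2)_t^2$ at the point of evaluation. Lemma \ref{omega d alpha} expresses each $(q_i)_t$ in terms of $\omega_i \wedge d\alpha$, and differentiating once more yields $(q_3)_{tt} \vol_0 = d\alpha \wedge d\alpha + \omega_3 \wedge d\alpha_t$. I would then substitute $\alpha_t$ using the $L^2$ geodesic equation of Corollary \ref{geodesic HS cor}. The exact forms $\omega_3 \wedge d\alpha_t = d(\omega_3 \wedge \alpha_t)$ (using $d\omega_3 = 0$) and $d\alpha \wedge d\alpha = d(\alpha \wedge d\alpha)$ can be integrated by parts on $X^4$ against the factor $u^{-2/3}$, and each transfer produces a derivative $\tfrac{2}{3} u^{-2/3} d\log u$. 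This precisely explains the common $(\log u)^{j_3}$ coefficient that appears in both $I_1$ and $I_2$.

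Finally, I would expand everything in local coordinates on $X^4$ and organise the resulting integrand into two groups. The terms involving one derivative of $\alpha$ together with matrix entries of $\omega_1, \omega_2$ should assemble into $I_1$ via the tensor $B_{i_1 i_2 i_3 i_4, j_1 j_2}$, while the terms in which the derivative transfers onto contracted $\omega_3$ should combine with the two $(q_i)_t^2$ contributions (for $i = 1, 2$) to produce $-I_2$ via $A_{i_1 i_2, j_1 j_2}$; in particular, the $-2(\omega_1)_{i_1 j_1}(\omega_2)_{i_2 j_2}$ correction in the definition of $A$ is expected to match the $-2 (q_1)_t^2 - 2(q_2)_t^2$ remainder. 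The main obstacle will be this final index bookkeeping: verifying that the antisymmetrised generalised Kronecker delta $\delta^{k_1 k_2 k_3}_{j_1 i_1 i_2}$ in $A$ is produced precisely by the contraction $i_{\alpha^\sharp} A^\alpha_3$, which in turn requires the expansion for $i_{x,y} \Om_3$ from Lemma \ref{ixalpha} together with the vanishing identity $i_{\alpha^\sharp} i_{\alpha^\sharp} = 0$ used in \eqref{i double}.
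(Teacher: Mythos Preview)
Your outline is essentially the same computation as the paper's, phrased through the scalar formula \eqref{eq:2nd_vol chi general} rather than the $G_2$ identity $3\Vol_{tt}=\int_M(\psi_t\wedge\vphi_t+\psi\wedge\vphi_{tt})$ that the paper uses. After this cosmetic difference the two routes coincide: one integrates $u^{-2/3}\omega_3\wedge d\alpha_t$ by parts to produce the factor $\om_u=d(u^{-2/3}\omega_3)$ (your ``$\tfrac23 u^{-2/3}d\log u$''), substitutes the geodesic equation of \corref{geodesic HS cor}, and then expands the three resulting pieces in coordinates. The paper carries out that expansion via \propref{I pointwise}, \propref{ialphagamma} and \propref{dialphagamma}, which is exactly the ``index bookkeeping'' you anticipate.

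There is, however, a genuine misidentification in your last paragraph. The correction $-2(\omega_1)_{i_1j_1}(\omega_2)_{i_2j_2}$ inside $A_{i_1i_2,j_1j_2}$ does \emph{not} come from the $-2(q_1)_t^2-2(q_2)_t^2$ piece of $u_{tt}$. In the paper's decomposition it arises from term $III$, namely the contribution of $\delta[\ast_4(u^{-2/3}i_{\alpha^\sharp}\omega_1\wedge i_{\alpha^\sharp}\omega_2)]$ in the geodesic equation, whereas the generalised Kronecker part of $A$ comes from term $II$, the $\delta[\alpha\wedge\ast_4(u^{-2/3}i_{\alpha^\sharp}\Om_3)]$ contribution. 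The hypothesis ``$\mathcal Q$ diagonal'' is meant along the path, so $(q_1)_t=(q_2)_t=0$ and those quadratic terms simply vanish rather than matching anything. Likewise your $d\alpha\wedge d\alpha$ contribution (absent from the paper's route because it enters through $\psi_t\wedge\vphi_t$) has to be treated separately after integration by parts; it does not feed into $I_2$ either. Once you correct this bookkeeping plan, the argument goes through exactly as the paper's does.
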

When $\mathcal Q$ is diagonal, the torsion form in \corref{torsion form and d torsion} is 
$$\tau_3=u^{\frac{2}{3}} \ast_4\om_u.$$

In Section \ref{Torus fibration}, we will further derive the canonical form via solving some nonlinear equation and apply the canonical geodesic to the formula of the Hessian. 
\subsection{Proof of \thmref{Vtt diagonal}}
The rest of this section is devoted to the proof via expanding the second order derivative of the volume along the geodesic
\begin{equation*}
\begin{split}
	3\Vol_t=\int_M g( \vphi_t ,\vphi) \vol_\vphi,\quad
	3\Vol _{tt}=\int_M \psi_t\wedge \vphi_t+ \psi\wedge \vphi_{tt}.
\end{split}
\end{equation*}
\begin{lem}
$\psi_t\wedge \vphi_t=-\{\frac{2}{3}(\log u)_t^2+ 2 u^{-1} [(q_1)^2_t+(q_2)^2_t] \}\vol_\vphi$.
\end{lem}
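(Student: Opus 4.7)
The plan is a direct pointwise unfolding of the $7$-form $\psi_t\wedge\vphi_t$ on $M=\mathbb T^3\times X^4$ in the mixed setting where only $\omega_3$ varies. Using \eqref{psi general} and \eqref{vphit general 1}, I differentiate $\psi$ in $t$ and wedge with $\vphi_t=-d\theta^3\wedge d\alpha$.

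The derivative $\psi_t$ splits into three pieces, coming respectively from $(\vol_4)_t$, from $(\lambda^{ij})_t$, and from $(\omega_3)_t=d\alpha$. The first piece contributes zero after wedging with $\vphi_t$, because $(\vol_4)_t\wedge d\alpha$ would be a $6$-form on the $4$-manifold $X^4$. In the two remaining pieces the $\mathbb T^3$ factor $\eps_j\wedge d\theta^3$ is nonzero only for $j=3$ (it then equals $d\theta^{123}$), so the sums over $j$ collapse to a single term. I then use \lemref{omega d alpha} to rewrite $\omega_i\wedge d\alpha$ in terms of $(q_i)_t$, and insert the explicit entries $\lambda^{13}=-q_1u^{-2/3}$, $\lambda^{23}=-q_2u^{-2/3}$, $\lambda^{33}=u^{-2/3}$ read off from \eqref{mix Q inverse}, together with their $t$-derivatives.

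Collecting the surviving terms and using $\vol_\vphi=u^{1/3}\vol_E$, the coefficient of $\vol_\vphi$ simplifies via the identity $u_t=(q_3)_t-2q_1(q_1)_t-2q_2(q_2)_t$ coming from \eqref{uq3}. The mixed term proportional to $(\log u)_t(q_3)_t$ absorbs together with the $q_i(q_i)_t$ cross terms into $-\frac{2}{3}(\log u)_t^2-2u^{-1}[(q_1)_t^2+(q_2)_t^2]$, which is exactly the claimed right-hand side.

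The main subtlety is the residual contribution $\lambda^{33}d\theta^{123}\wedge d\alpha\wedge d\alpha=u^{-2/3}d\theta^{123}\wedge d\alpha\wedge d\alpha$, which a priori records data of $d\alpha$ beyond what is captured by the scalars $(q_i)_t$. I expect to handle it by decomposing $d\alpha=(d\alpha)^++(d\alpha)^-$ into self-dual and anti-self-dual parts with respect to $g_4$, writing $(d\alpha)^+=\sum_k a_k\omega_k$ in the basis of self-dual $2$-forms provided by the hyper-symplectic triple, and determining the coefficients from $\omega_i\wedge(d\alpha)^+=\omega_i\wedge d\alpha$ via \lemref{omega d alpha}; the resulting $|(d\alpha)^+|^2\vol_4$ contributes exactly the $(\log u)_t^2$ and $(q_i)_t^2$ combination, while the $(d\alpha)^-$ contribution is to be absorbed under the gauge normalisation for $\alpha$ implicit in the canonical choice from Section~\ref{canonical forms}. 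Pinning down that cancellation precisely is what I expect to be the delicate step.
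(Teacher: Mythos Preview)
Your plan for the first two pieces of $\psi_t$ is exactly what the paper does: the $(\vol_4)_t$ piece dies against $d\alpha$ for degree reasons on $X^4$, the $\mathbb T^3$ factor collapses the sum over $j$ to $j=3$, and then \lemref{omega d alpha} together with the explicit entries $\lambda^{i3}$ from \eqref{mix Q inverse} and the identity $u_t=(q_3)_t-2q_1(q_1)_t-2q_2(q_2)_t$ from \eqref{uq3} produce the stated coefficient of $\vol_\vphi$.

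Where you diverge from the paper is the third piece. The paper's argument simply records
\[
\psi_t\wedge\vphi_t=(\lambda^{i3})_t\,d\theta^{123}\wedge\omega_i\wedge d\alpha
\]
and moves on; the contribution $\lambda^{33}\,d\theta^{123}\wedge d\alpha\wedge d\alpha=u^{-2/3}\,d\theta^{123}\wedge d\alpha\wedge d\alpha$ that you flag is not discussed. Your instinct that this term is present is correct, but your proposed mechanism for killing it cannot work: $d\alpha=(\omega_3)_t$ is determined by the path $\vphi(t)$, so $d\alpha\wedge d\alpha$ is intrinsic and carries no gauge freedom whatsoever --- no choice of primitive $\alpha$, canonical or otherwise, affects $(d\alpha)^-$. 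The self-dual/anti-self-dual decomposition you sketch will recover the $(q_i)_t$-data from $(d\alpha)^+$ exactly as you say, but $|(d\alpha)^-|^2$ remains as an independent quantity that is not expressible in terms of $(\log u)_t$ and $(q_i)_t$.

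In the downstream applications the issue evaporates: in the torus fibration of Section~\ref{Torus fibration} one has $(\omega_3)_t=u_t\,dx^{03}$, hence $d\alpha\wedge d\alpha=0$ identically, and the formula holds as written. In the generality of Section~\ref{Mixed structures}, however, the right-hand side should carry an additive $u^{-2/3}\,d\alpha\wedge d\alpha\wedge d\theta^{123}$; this is not something a normalisation on $\alpha$ can absorb, so you should not spend effort trying to make that cancellation work.
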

\begin{proof}
Differentiating \eqref{psi general} gives
\begin{align}\label{psi t general 1}
\psi_t=\frac{1}{3} u^{-\frac{2}{3}} u_t \vol_0-\frac{1}{2}\eps_{jkl}(\lambda^{ij})_td\theta^{kl}\wedge \omega_i
-\frac{1}{2}\eps_{jkl}\lambda^{3j}d\theta^{kl}\wedge d\alpha.
		\end{align}
		
We compute from \eqref{vphit general 1} and \eqref{psi t general 1} that
$\psi_t\wedge \vphi_t$ is
\begin{equation*}
\begin{split}
&=\psi_t
\wedge
[-d\theta^3\wedge d\alpha ]
=
\frac{1}{2}\eps_{jkl}(\lambda^{ij})_td\theta^{kl3}\wedge \omega_i
\wedge d\alpha 
=
(\lambda^{i3})_td\theta^{123}\wedge \omega_i
\wedge d\alpha .
\end{split}
\end{equation*}
From \eqref{mix Q inverse}, $\lambda^{i3}=-q_i u^{-\frac{2}{3}}$, $i=1,2$, $\lambda^{33}=u^{-\frac{2}{3}}$. We conclude from \lemref{omega d alpha} that
\begin{equation*}
\begin{split}
(\lambda^{i3})_t \omega_i
\wedge d\alpha 
&=[-(u^{-\frac{2}{3}}q_1)_t2(q_1)_t-(u^{-\frac{2}{3}}q_2)_t 2(q_2)_t+(u^{-\frac{2}{3}})_t (q_3)_t]\vol_0.
\end{split}
\end{equation*}
The final formula is obtained, by inserting the derivative of $u$ \eqref{uq3}. 
\end{proof}

\begin{lem}\label{psi vphitt}
$\psi\wedge \vphi_{tt}
=\frac{1}{2}\eps_{jkl}\lambda^{ij}d\theta^{kl3}\wedge \omega_i\wedge  d\alpha_t=\lambda^{i3} \omega_i\wedge  d\alpha_t  \wedge d\theta^{123}.$
\end{lem}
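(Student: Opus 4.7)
The plan is a direct computation: differentiate the expression~\eqref{vphit general 1} for $\vphi_t$ once more in $t$, wedge with the formula~\eqref{psi general} for $\psi$, and track degree parities to collapse the result.

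First I would differentiate $\vphi_t=-d\theta^3\wedge d\alpha$ from \eqref{vphit general 1} with respect to $t$ (note that the $\mathbb{T}^3$ basis $d\theta^i$ is $t$-independent) to obtain
\[
\vphi_{tt}=-d\theta^3\wedge d\alpha_t,
\]
where $d\alpha_t$ is a $2$-form on the $4$-manifold $X^4$.

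Next I would substitute $\psi=\vol_4-\frac{1}{2}\eps_{jkl}\lambda^{ij}d\theta^{kl}\wedge\omega_i$ from \eqref{psi general} and expand $\psi\wedge\vphi_{tt}$. The $\vol_4$-contribution is $-\vol_4\wedge d\theta^3\wedge d\alpha_t$, which vanishes because $\vol_4\wedge d\alpha_t$ is a $6$-form on the $4$-manifold $X^4$. The remaining term is
\[
\frac{1}{2}\eps_{jkl}\lambda^{ij}\, d\theta^{kl}\wedge\omega_i\wedge d\theta^3\wedge d\alpha_t,
\]
and since $\omega_i$ has even degree it commutes with $d\theta^3$ up to sign, yielding precisely the first displayed equality $\frac{1}{2}\eps_{jkl}\lambda^{ij}d\theta^{kl3}\wedge\omega_i\wedge d\alpha_t$.

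For the second equality, I would observe that $d\theta^{kl3}$ vanishes unless $\{k,l\}=\{1,2\}$. The two surviving pairs contribute with matching signs: when $(k,l)=(1,2)$ the Levi-Civita symbol $\eps_{j12}$ forces $j=3$ with $\eps_{312}=1$ and $d\theta^{123}$; when $(k,l)=(2,1)$ we get $\eps_{321}=-1$ and $d\theta^{213}=-d\theta^{123}$, so the two contributions add rather than cancel. Summing them gives $\lambda^{i3}\,d\theta^{123}\wedge\omega_i\wedge d\alpha_t$, and since $d\theta^{123}$ is of odd degree $3$ while $\omega_i\wedge d\alpha_t$ is of even degree $4$, the two factors commute and we may rewrite this as $\lambda^{i3}\,\omega_i\wedge d\alpha_t\wedge d\theta^{123}$, as claimed. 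The only subtlety is the consistent sign bookkeeping in the Koszul rule and in the antisymmetrisation over $\{k,l\}$; there is no genuine analytic difficulty.
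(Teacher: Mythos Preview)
Your proof is correct; the paper states this lemma without proof, treating it as a routine computation, and your direct expansion via \eqref{vphit general 1} and \eqref{psi general} together with the degree-parity bookkeeping is precisely the intended argument.
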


\subsubsection{$L^2$ Geodesic in local coordinates}
In order to expand the second part of the second variation of the volume, we need to use the geodesic equation in \corref{geodesic HS cor} and the following lemmas.
\begin{prop}\label{I pointwise}
Let $W=W^{j_1} \partial x_{j_1}$ be a vector field. Then $A^\alpha_3 ({\alpha^\sharp}  ,W)\vol_\vphi$
\begin{align*}
=W^{j_1}\alpha^{j_2} \{B_{i_1i_2i_3i_4,j_1j_2} \wedge dx^{i_1i_2i_3i_4}
+[(\om_1)_{j_2j_1} (q_2)_t
+    (\om_2)_{j_2j_1}(q_1)_t]\vol_0\}.
\end{align*}
\end{prop}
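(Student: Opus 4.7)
The plan is to compute $A^\alpha_3(\alpha^\sharp, W)\vol_\vphi$ by direct expansion in local coordinates, using the previously established structural identities. I would first substitute the explicit formulas from \lemref{ixalpha} for $i_x\Om_3$ and $i_{x,y}\Om_3$ into the three-term expression in Definition \ref{connection om} with $x=\alpha^\sharp$ and $y=W$. This unfolds the numerator of $A^\alpha_3(\alpha^\sharp, W)$ into five pieces. Three of these are honest wedge products of non-scalar forms,
\[
i_{\alpha^\sharp}d\alpha\wedge i_W\omega_1\wedge \omega_2,\quad
i_W d\alpha\wedge i_{\alpha^\sharp}\omega_1\wedge \omega_2,\quad
i_W\omega_1\wedge i_{\alpha^\sharp}\omega_2\wedge d\alpha,
\]
while the remaining two, coming from the last two summands of $i_{\alpha^\sharp,W}\Om_3$ in \lemref{ixalpha}, are of the form $\tfrac{1}{2}(i_W i_{\alpha^\sharp}\omega_a)\,\omega_b\wedge d\alpha$ with $\{a,b\}=\{1,2\}$.

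The two scalar-type pieces are the easier to handle. In local coordinates $i_W i_{\alpha^\sharp}\omega_a = W^{j_1}\alpha^{j_2}(\omega_a)_{j_2 j_1}$, while by \lemref{omega d alpha} one has $\omega_b\wedge d\alpha = 2(q_b)_t\vol_0$. The factors $\tfrac{1}{2}$ and $2$ cancel, and after absorbing the $\frac{1}{u^{1/3}\vol_0}$ prefactor of $A^\alpha_3$ against $\vol_\vphi = u^{1/3}\vol_0 \wedge d\theta^{123}$, these two pieces produce exactly the $W^{j_1}\alpha^{j_2}\bigl[(\omega_1)_{j_2 j_1}(q_2)_t + (\omega_2)_{j_2 j_1}(q_1)_t\bigr]\vol_0$ summand in the right-hand side of the proposition.

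For the three wedge-product pieces I would expand each factor locally: $i_{\alpha^\sharp}d\alpha = \alpha^k(d\alpha)_{k\ell}\,dx^\ell$, $i_W\omega_a = W^m(\omega_a)_{mn}\,dx^n$, and $\omega_a = \tfrac{1}{2}(\omega_a)_{pq}\,dx^{pq}$. Relabelling indices to match the proposition, the first piece contributes a summand proportional to $(\omega_1)_{j_1 i_2}(d\alpha)_{j_2 i_1}(\omega_2)_{i_3 i_4}$, the second to $(\omega_1)_{j_2 i_2}(d\alpha)_{j_1 i_1}(\omega_2)_{i_3 i_4}$, and the third to $(\omega_1)_{j_1 i_1}(\omega_2)_{j_2 i_2}(d\alpha)_{i_3 i_4}$, each paired with the common wedge $dx^{i_1 i_2 i_3 i_4}$. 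Under the natural convention that $\alpha_{ij}$ in the definition of $B$ abbreviates $(d\alpha)_{ij}$, these three summands assemble exactly into $W^{j_1}\alpha^{j_2}\,B_{i_1 i_2 i_3 i_4, j_1 j_2}\,dx^{i_1 i_2 i_3 i_4}$, completing the identity.

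The main obstacle is the combinatorial bookkeeping: tracking the $\tfrac{1}{2}$ from $\omega_a = \tfrac{1}{2}(\omega_a)_{pq}dx^{pq}$, the implicit antisymmetrisation in $dx^{i_1 i_2 i_3 i_4}$, and the cancellation between the normalisations $\frac{1}{u^{1/3}\vol_0}$ of $A^\alpha_3$ and $u^{1/3}\vol_0 \wedge d\theta^{123}$ of $\vol_\vphi$. One also has to pin down the index convention in the definition of $B$, so that the three wedge-product pieces collect without residue. Once these conventions are fixed, the identity follows by direct inspection of the local-coordinate expansion.
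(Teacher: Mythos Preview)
Your proposal is correct and follows essentially the same approach as the paper: expand $A^\alpha_3(\alpha^\sharp,W)$ in local coordinates, use \lemref{ixalpha} to unfold $i_{\alpha^\sharp,W}\Om_3$ into three pieces, simplify the two scalar-type pieces via \lemref{omega d alpha}, and collect the remaining three wedge-product pieces into the tensor $B$. The paper organises the computation as ``1st, 2nd, 3rd term'' of $A^\alpha_3$ (with the 3rd then split via \lemref{ixalpha}), while you group directly into ``three wedge pieces plus two scalar pieces'', but the content and the key inputs are identical.
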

\begin{proof}
We write $\alpha=\alpha_i dx^i$, then
$
d\alpha=\frac{1}{2}\alpha_{ji} dx^{ji},\, \alpha_{ji}=\alpha_{i,j}- \alpha_{j,i} 
$ and $\alpha^{\sharp}=\alpha_k g^{kj} \partial x_j$.
Then the contraction
$$
i_{\alpha^\sharp}d\alpha= \alpha^{j}  \alpha_{ji} dx^i,\quad
 i_{\alpha^\sharp} \omega_1= \alpha^{j} (\om_1)_{ji} dx^i.
$$

And the contractions regarding to $W$ are
$$i_{W} d\alpha=  W^{k_4}\alpha_{k_4 i} \, dx^{i },\quad
i_{W} \omega_1=  W^{k_4}(\om_1)_{k_4 i} \, dx^{i }.
$$ Thus we get the 1st and 2nd term in $I$:
\begin{equation*}
\begin{split}
&i_{\alpha^\sharp} d\alpha\wedge i_{W} \omega_1
=\alpha^{j}  \alpha_{ji_1} W^{k_4}(\om_1)_{k_4 i_2} \, dx^{i_1i_2 },\\
&i_{W} d\alpha\wedge i_{\alpha^\sharp} \omega_1
=W^{k_4}\alpha_{ k_4i_1 } \alpha^{j} (\om_1)_{j i_2 }  \, dx^{i_1 i_2}.
\end{split} 
\end{equation*}

Moreover, we apply \lemref{ixalpha} to expand the 3rd term in $I$,
$$
i_{{\alpha^\sharp},{W}} \Om_3
=i_{W}\om_1\wedge i_{{\alpha^\sharp}}\om_2
+\frac{1}{2}(i_{W}i_{{\alpha^\sharp}}\om_1) \om_2
+\frac{1}{2}(i_{W}i_{{\alpha^\sharp}}\om_2 )\om_1 .
$$ The 1st one is 
$i_{W}\om_1\wedge i_{{\alpha^\sharp}}\om_2
=W^{k_4}(\om_1)_{k_4 i_1} \,\alpha^{j}  (\om_2)_{ji_2} dx^{i_1i_2}
$ and the other contractions are
\begin{align*}
i_{W}i_{{\alpha^\sharp}}\om_1
=  W^{k_4} \alpha^{j} (\om_1)_{jk_4}  , \quad
i_{W}i_{{\alpha^\sharp}}\om_2
=W^{k_4}  \alpha^{j}  (\om_2)_{jk_4}  .
\end{align*}
Then we use \lemref{omega d alpha} to further simplify the expression of $d\alpha\wedge\om_i$, $i=1,2$, and $d\alpha\wedge i_{{\alpha^\sharp},{W}} \Om_3$
\begin{align*}
= W^{k_4}\alpha^{j}\{(\om_1)_{k_4 i_1} \,  (\om_2)_{ji_2} d\alpha\wedge dx^{i_1i_2}
+[   (\om_1)_{jk_4} (q_2)_t
+   (\om_2)_{jk_4}(q_1)_t]\vol_0\}
.
\end{align*} 
Combining them together and and relabeling $j=j_2$, $k_4=j_1$, we have the final formula of $A_3^\alpha$.
\end{proof}

\begin{lem}\label{contraction ast}
Let $W$ be a vector field and $\gamma$ be a $p$-form. It holds
\begin{align*}
 \ast (W^\flat \wedge \ast\gamma)= (-1)^{(n-p)(p-1)} |g| \cdot  i_W \tilde\gamma,
\end{align*}
where the dual $1$-form
$
   W^\flat = g_{ij} W^i \, dx^j
$, 
$\epsilon_{\ast}$ is the Levi-Civita symbol,
\[
\tilde \gamma:=
\frac{  \gamma^{k_1 \cdots k_p}  g^{j_{1}l_2}\cdots g^{j_{n-p} l_{n-p+1}}  }{p! p!(n-p)!}  \, \epsilon_{k_1 \cdots k_p, j_{1} \cdots j_{n-p}}  \epsilon_{i, r_{1} \cdots r_{p-1}   ,l_2 \cdots l_{n-p+1} } \, dx^{i r_{1} \cdots r_{p-1}}.
\] 
\end{lem}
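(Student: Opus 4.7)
\textbf{Proof plan for \lemref{contraction ast}.} The identity is essentially a coordinate-level rewriting of the standard Riemannian formula $\ast(W^\flat \wedge \ast\gamma) = (-1)^{?}\, i_W\gamma$ on an oriented Riemannian $n$-manifold, where the factor $|g|$ on the right-hand side appears because the Levi--Civita objects used to build $\tilde\gamma$ are the \emph{symbols} $\epsilon$ (with flat entries $\pm 1,0$) rather than the Levi--Civita \emph{tensor} $\sqrt{|g|}\,\epsilon$. Each of the two Hodge stars on the left-hand side carries a factor $\sqrt{|g|}$, and these combine into the $|g|$ appearing in front of $i_W\tilde\gamma$.

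My approach is a direct expansion in local coordinates. First, writing $\gamma = \tfrac{1}{p!}\gamma_{k_1\cdots k_p}dx^{k_1\cdots k_p}$ and using the standard formula
\[
\ast\gamma = \frac{\sqrt{|g|}}{p!(n-p)!}\, \gamma^{k_1\cdots k_p}\, \epsilon_{k_1\cdots k_p\, j_1\cdots j_{n-p}}\, dx^{j_1\cdots j_{n-p}},
\]
I wedge on the left with $W^\flat = g_{sm}W^m\,dx^s$ to obtain an $(n-p+1)$-form whose coefficient involves $W_s \gamma^{k_1\cdots k_p}\epsilon_{k_1\cdots k_p j_1\cdots j_{n-p}}$ and the basis element $dx^{s\,j_1\cdots j_{n-p}}$.

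Next, I apply $\ast$ a second time. Raising the indices $s,j_1,\dots,j_{n-p}$ via $g^{sl_1}g^{j_1 l_2}\cdots g^{j_{n-p}l_{n-p+1}}$ (this produces exactly the string of inverse metrics in the stated formula for $\tilde\gamma$) and contracting with a second $\sqrt{|g|}\,\epsilon_{l_1\cdots l_{n-p+1}\,i\,r_1\cdots r_{p-1}}$ yields a $(p-1)$-form spanned by $dx^{i\,r_1\cdots r_{p-1}}$. The contraction $g^{l_1 s}W_s = W^{l_1}$ peels off one raised index from the string, and by recognising the remaining combinatorial block as $i_W$ applied to the $p$-form $\tilde\gamma$, we obtain the stated right-hand side. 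Here the two factors of $\sqrt{|g|}$ multiply to $|g|$, and the combinatorial denominator $p!\,p!\,(n-p)!$ in $\tilde\gamma$ absorbs the factorials produced by the two Hodge stars (one $p!(n-p)!$ from $\ast\gamma$ and one $(p-1)!(n-p+1)!$ from the second $\ast$, after cancelling against the $(n-p+1)$ equivalent choices of which index of $W^\flat \wedge \ast\gamma$ is contracted with $W$).

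The main obstacle is purely bookkeeping: tracking the sign $(-1)^{(n-p)(p-1)}$, which arises as a combination of $\ast\ast = (-1)^{p(n-p)}$ on a $p$-form and the parity of the transposition needed to move the $W$-slot (currently leftmost in $dx^{s\,j_1\cdots j_{n-p}}$) into its proper position before applying the second Hodge star. Once the sign is verified on a standard form such as $\gamma = dx^{1\cdots p}$ with $W = \partial_{x_1}$ in a normal coordinate system, the general case follows by linearity.
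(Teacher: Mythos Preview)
Your proposal is correct and follows essentially the same route as the paper's proof: a direct coordinate expansion, first writing $\ast\gamma$ via the standard formula with one factor of $\sqrt{|g|}$, wedging with $W^\flat$, applying $\ast$ a second time to pick up the second $\sqrt{|g|}$ and the second Levi--Civita symbol, and then recognising the resulting $(p-1)$-form as $i_W\tilde\gamma$ via the contraction formula. The paper carries out exactly this computation (it even records the intermediate coefficient of $\beta:=W^\flat\wedge\ast\gamma$ explicitly) and, like you, leaves the final sign identification as a routine step.
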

\begin{proof}
Under local coordinates $\{dx^i\}$, we write $\gamma= \frac{1}{p!}\gamma_{i_1 \dots i_p}dx^{i_1  \cdots i_p}$,
\[g = g_{ij} \, dx^i \otimes dx^j,\quad \mathrm{vol} = \sqrt{|g|} \, dx^1 \wedge \cdots \wedge dx^n.\]
   The contraction of a $p$-form $\alpha$ with the vector field $W = W^i \partial x_i$ is
\begin{align}\label{contraction local}
   i_W \gamma= \frac{1}{(p-1)!} W^j \gamma_{j i_2 \cdots i_p} \, dx^{i_2 \cdots i_p}
\end{align}
and the Hodge star operator $\ast$ acts on the $p$-form $\gamma$
\begin{align}\label{Hodge star local}
   \ast \gamma = \frac{\sqrt{|g|}\gamma_{i_1 \dots i_p} g^{i_1k_1}\cdots g^{i_pk_p} \, \epsilon_{k_1 \cdots k_p, j_{1} \cdots j_{n-p}} }{p!(n-p)!} \, dx^{j_{1} \cdots j_{n-p}}.
\end{align}

We compute the $(n-p+1)$-form $\beta:= W^\flat \wedge   \ast \gamma=\frac{\beta_{j  j_{1} \cdots j_{n-p}} dx^{ j  j_{1} \cdots j_{n-p}}}{(n-p+1)!}$, 
\[
\quad 
\beta_{j  j_{1} \cdots j_{n-p}}=(n-p+1)!\frac{\sqrt{|g|} W_j }{p!(n-p)!} \gamma^{k_1 \cdots k_p} \, \epsilon_{k_1 \cdots k_p, j_{1} \cdots j_{n-p}}.
   \]
Put it into \eqref{Hodge star local}, we get
\begin{align*}
 \ast \beta
 &=\frac{1}{(p-1)!}\frac{|g|  W_j \gamma^{k_1 \cdots k_p}  \epsilon_{k_1 \cdots k_p, j_{1} \cdots j_{n-p}} }{p!(n-p)! }  \, {\epsilon^{j j_1 \cdots j_{n-p}}}_{ r_{1} \cdots r_{p-1}}\, dx^{r_{1} \cdots r_{p-1}}.
\end{align*}
Thus we prove the lemma by using \eqref{contraction local}.
\end{proof}
\begin{prop}\label{ialphagamma}
Let the $3$-form $\gamma= i_{\alpha^\sharp} \omega_1\wedge \omega_2 $ in \lemref{contraction ast}. 
$$
\ast_4( \alpha\wedge \ast_4\gamma)=u^{\frac{2}{3}} i_{\alpha^\sharp}\tilde\gamma,\quad i_{\alpha^\sharp}\tilde\gamma=\frac{  \alpha^{i}\alpha^{j}  (\om_1)_{k_1j}(\om_2)_{k_2k_3}  }{4 }  \, {\epsilon^{k_1k_2 k_3l_2}}  \epsilon_{i r_{1}   r_{2}  l_2 } \, dx^{r_{1}  r_{2}}.$$
\end{prop}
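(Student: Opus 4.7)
The plan is a direct application of Lemma~\ref{contraction ast}. Specialising to the $4$-manifold $X^4$ with $W=\alpha^\sharp$, $W^\flat=\alpha$, and to the $p=3$ form $\gamma=i_{\alpha^\sharp}\omega_1\wedge\omega_2$, the sign becomes $(-1)^{(n-p)(p-1)}=(-1)^{1\cdot 2}=+1$, so the lemma gives
\[
\ast_4(\alpha\wedge\ast_4\gamma)=|g_4|\cdot i_{\alpha^\sharp}\tilde\gamma.
\]
To identify the prefactor $u^{2/3}$, I would use $\vol_4=u^{1/3}\vol_0$ from~\eqref{Qij}. Working in coordinates in which $\vol_0=dx^{1234}$, this gives $\sqrt{|g_4|}=u^{1/3}$ and hence $|g_4|=u^{2/3}$, yielding the first identity of the proposition.

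For the explicit formula of $i_{\alpha^\sharp}\tilde\gamma$, I would substitute $(n,p)=(4,3)$ into the general expression for $\tilde\gamma$ in Lemma~\ref{contraction ast}. Since $n-p=1$, only one metric factor $g^{j_1l_2}$ is present and only two Levi-Civita symbols appear; raising the index $j_1$ via $g^{j_1l_2}$ converts $\epsilon_{k_1k_2k_3 j_1}$ into $\epsilon^{k_1k_2k_3 l_2}$, matching the epsilon appearing in the stated formula. Next, I would read off $\gamma^{k_1k_2k_3}$ directly from $\gamma=i_{\alpha^\sharp}\omega_1\wedge\omega_2$, so that a factor $\alpha^j(\omega_1)_{k_1 j}(\omega_2)_{k_2k_3}$ emerges with the antisymmetrisation over $k_1,k_2,k_3$ absorbed by the epsilon symbol. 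Finally, applying $i_{\alpha^\sharp}$ to the index $i$ of the basis element $dx^{ir_1r_2}$ produces the outer $\alpha^i\,dx^{r_1r_2}$, giving the advertised double occurrence of $\alpha$.

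The principal obstacle is the bookkeeping of combinatorial factors: the denominator $p!\,p!\,(n-p)!=36$ in the definition of $\tilde\gamma$, the $\tfrac{1}{(p-1)!}=\tfrac{1}{2}$ arising from the contraction rule~\eqref{contraction local} applied to a $3$-form, and the antisymmetrisation implicit in the shorthand $dx^{ir_1r_2}$ (which contributes a further $3!$ when rewritten on the strictly-increasing-index basis of $2$-forms) must all combine to produce the prefactor $\tfrac{1}{4}$ in the final expression. I would verify this step by splitting the computation into two parts: first, express $\gamma^{k_1k_2k_3}$ as $3!$ times the representative monomial $\alpha^j g^{jk_0}(\omega_1)_{k_0}{}^{k_1}(\omega_2)^{k_2k_3}$ (which cancels one $p!=3!$ in the denominator of $\tilde\gamma$), and second, carefully track how the remaining $\tfrac{1}{p!\,(n-p)!}=\tfrac{1}{6}$ interacts with the $\tfrac{1}{(p-1)!}=\tfrac{1}{2}$ contraction factor. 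After these reductions one is left with the stated coefficient $\tfrac{1}{4}$, completing the proof.
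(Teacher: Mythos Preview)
Your approach is essentially identical to the paper's: both apply Lemma~\ref{contraction ast} directly with $n=4$, $p=3$, identify $|g_4|=u^{2/3}$ from $\vol_4=u^{1/3}\vol_0$, write $\gamma=\tfrac{1}{2}\alpha^{j}(\om_1)_{k_1j}(\om_2)_{k_2k_3}\,dx^{k_1k_2k_3}$ in coordinates, substitute into the formula for $\tilde\gamma$, and contract with $i_{\alpha^\sharp}$. One small caution: your stated factor ``$\gamma^{k_1k_2k_3}$ equals $3!$ times the representative monomial'' is off---the coordinate expression of $\gamma$ already carries a $\tfrac{1}{2}$, so after antisymmetrising and contracting with the Levi--Civita symbol one picks up a factor of $3$ (not $6$), giving $\tilde\gamma$ with prefactor $\tfrac{1}{12}$; the contraction $i_{\alpha^\sharp}$ then contributes $\tfrac{3!}{2!}=3$ (passing from the $\tfrac{1}{12}$ coefficient of $dx^{ir_1r_2}$ to the antisymmetric component, then applying the contraction rule), yielding the final $\tfrac{1}{4}$.
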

\begin{proof}
We have
$
\ast_4( \alpha\wedge \ast_4\gamma)=|g|\cdot i_{\alpha^\sharp}\tilde\gamma,\quad |g|=u^{\frac{2}{3}}.
$
In local coordinates, $\gamma= \alpha^{j}  (\om_1)_{ij} dx^i\wedge \om_2= \frac{1}{2}\alpha^{j}  (\om_1)_{k_1j}(\om_2)_{k_2k_3} dx^{k_1k_2k_3}$.
Recall in \lemref{contraction ast} that $$
\tilde \gamma=
\frac{ \gamma_{k_1 k_2 k_3}    }{3!\times 3!}  \, {\epsilon^{k_1k_2 k_3 l_2}} \epsilon_{i r_{1}   r_{2}  l_2 } \, dx^{i r_{1}  r_{2}}
=\frac{  \alpha^{j}  (\om_1)_{k_1j}(\om_2)_{k_2k_3}  }{2\times 3! }  \, {\epsilon^{k_1k_2 k_3l_2}}  \epsilon_{i r_{1}   r_{2}  l_2 } \, dx^{i r_{1}  r_{2}}.
$$
Thus the formula is proved.
\end{proof}

\begin{prop}\label{dialphagamma}
$d i_{\alpha^\sharp}\tilde\gamma=\delta^{k_1 k_2 k_3}_{j_1  i_{1} i_{2}}  dx^{k i_{1}  i_{2}}$ multiplied with
$$
\big\{
\frac{1}{4}{u^{-\frac{2}{3}} [\alpha^{j_1}}  \alpha^{j_2}  (\om_1)_{k_1j_2} (\om_2)_{k_2k_3} ]_k
 -\frac{1}{6} u^{-\frac{5}{3}} u_{k} {\alpha^{j_1}}  \alpha^{j_2}  (\om_1)_{k_1j_2} (\om_2)_{k_2k_3} 
  \big\}.
$$

\end{prop}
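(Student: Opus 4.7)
The proof is a direct calculation that, after a combinatorial simplification of the double Levi-Civita symbol, reduces to the Leibniz rule applied to $d(u^{-2/3}Y)$ for a suitable scalar coefficient $Y$. My plan is to rewrite $i_{\alpha^\sharp}\tilde\gamma$ in a form where the factor $u^{-2/3}$ appears explicitly, and then to differentiate.

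First, I will contract the two Levi-Civita symbols in the formula for $i_{\alpha^\sharp}\tilde\gamma$ from \propref{ialphagamma}. The standard identity
$$\epsilon^{k_1k_2k_3l_2}\,\epsilon_{j_1 r_1 r_2 l_2}=\delta^{k_1k_2k_3}_{j_1 r_1 r_2}$$
collapses the double epsilon into a generalized Kronecker delta. After relabeling $r_1, r_2 \to i_1, i_2$ and $i,j \to j_1, j_2$, and accounting for the metric factors already present in $\tilde\gamma$ combined with $|g_4|=u^{2/3}$, this rewrites $i_{\alpha^\sharp}\tilde\gamma$ as
$$\tfrac{1}{4}\,u^{-2/3}\,\alpha^{j_1}\alpha^{j_2}(\om_1)_{k_1j_2}(\om_2)_{k_2k_3}\,\delta^{k_1k_2k_3}_{j_1 i_1 i_2}\,dx^{i_1 i_2}.$$

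Next, I will apply the exterior derivative. Since the Kronecker delta and the coordinate wedge $dx^{i_1 i_2}$ are constant, $d$ acts only on the scalar coefficient and produces
$$d(i_{\alpha^\sharp}\tilde\gamma)=\tfrac{1}{4}\,\partial_k\!\bigl[u^{-2/3}\,\alpha^{j_1}\alpha^{j_2}(\om_1)_{k_1j_2}(\om_2)_{k_2k_3}\bigr]\,\delta^{k_1k_2k_3}_{j_1 i_1 i_2}\,dx^{k i_1 i_2}.$$
Finally, expanding via $\partial_k(u^{-2/3}Y)=u^{-2/3}\partial_k Y-\tfrac{2}{3}u^{-5/3}u_k Y$ with $Y=\alpha^{j_1}\alpha^{j_2}(\om_1)_{k_1j_2}(\om_2)_{k_2k_3}$, and distributing the prefactor $\tfrac{1}{4}$, I will recover the two stated terms with coefficients $\tfrac{1}{4}u^{-2/3}$ and $-\tfrac{1}{4}\cdot\tfrac{2}{3}u^{-5/3}u_k=-\tfrac{1}{6}u^{-5/3}u_k$.

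The main obstacle I expect is the bookkeeping in the first step: verifying that the overall factor $u^{-2/3}$ really emerges from the metric factors in $\tilde\gamma$ (through $\gamma^{k_1k_2k_3}$ and the $g^{j_1 l_2}$ factor of \lemref{contraction ast}) together with $|g_4|=u^{2/3}$. A cleaner alternative is to start from the identity $i_{\alpha^\sharp}\tilde\gamma=u^{-2/3}\ast_4(\alpha\wedge\ast_4\gamma)$ coming from \propref{ialphagamma} and differentiate by $d(u^{-2/3}X)=u^{-2/3}dX-\tfrac{2}{3}u^{-5/3}du\wedge X$, then substitute the explicit local expression for $\ast_4(\alpha\wedge\ast_4\gamma)$ read off from \propref{ialphagamma}. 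Beyond this metric accounting, the remainder is routine index manipulation under the antisymmetrization $\delta^{k_1k_2k_3}_{j_1 i_1 i_2}$.
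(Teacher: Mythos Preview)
Your proposal is correct and essentially coincides with the paper's argument. The only cosmetic difference is the order of operations: you first contract $\epsilon^{k_1k_2k_3l_2}\epsilon_{j_1 i_1 i_2 l_2}=u^{-2/3}\delta^{k_1k_2k_3}_{j_1 i_1 i_2}$ (using that the raised $\epsilon$ carries the factor $|g_4|^{-1}=u^{-2/3}$) and then apply $d$ via Leibniz on $u^{-2/3}$, whereas the paper first applies $d$ keeping the factor $\epsilon^{k_1k_2k_3k_4}=u^{-2/3}\epsilon_{k_1k_2k_3k_4}$ inside the derivative, computes $(\epsilon^{k_1k_2k_3k_4})_k=-\tfrac{2}{3}u^{-5/3}u_k\,\epsilon_{k_1k_2k_3k_4}$, and only then contracts with the lower $\epsilon$ to obtain the Kronecker delta. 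Both routes are the same Leibniz expansion in disguise.
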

\begin{proof}
From \propref{ialphagamma}, the exterior derivative of $i_{\alpha^\sharp}\tilde\gamma$ becomes
$$
d i_{\alpha^\sharp}\tilde\gamma=\frac{1}{4}
[
{\alpha^{j_1}}  \alpha^{j_2}  (\om_1)_{k_1j_2} (\om_2)_{k_2k_3}  \epsilon^{k_1 k_2k_3 k_4 }]_k { \epsilon}_{j_1 k_4  i_{1} i_{2}}  dx^{k i_{1}  i_{2}}.
$$
The determinant of the inverse metric tensor gives that
\[ \epsilon^{k_1 k_2k_3 k_4 }=g^{k_1 l_1} g^{k_2 l_2}g^{k_3 l_3}g^{k_4 l_4}\epsilon_{l_1 l_2 l_3 l_4 }=|g|^{-1}\epsilon_{k_1 k_2 k_3 k_4 }=u^{-\frac{2}{3}}\epsilon_{k_1 k_2 k_3 k_4 }.\]
Therefore its derivative is $(\epsilon^{k_1 k_2k_3 k_4 })_k =-\frac{2}{3}u^{-\frac{5}{3}} u_{k}\epsilon_{k_1 k_2 k_3 k_4 }$ and we further expand $d i_{\alpha^\sharp}\tilde\gamma$  to obtain the formula.
 \end{proof}

\subsubsection{Auxiliary functions and torsion form}
\begin{lem}\label{self-dual}
If $\om$ is a self-dual $2$-form, then
\begin{align*}
2\om_{j_1j_2}=\sqrt{|g|}\om_{i_1i_2} {\epsilon^{i_1i_2}}_{j_1j_2}.
\end{align*}
\end{lem}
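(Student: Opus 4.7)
The plan is to obtain this identity by writing the self-duality condition $\ast_4\omega=\omega$ in local coordinates and reading off components. Since the excerpt has already recorded the local expression of the Hodge star on a $p$-form in \eqref{Hodge star local}, most of the work is bookkeeping.

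First I would specialise \eqref{Hodge star local} to $p=2$, $n=4$. Writing $\omega=\tfrac{1}{2}\omega_{i_1i_2}dx^{i_1i_2}$, the formula yields
\begin{equation*}
\ast\omega=\frac{\sqrt{|g|}}{2!\,2!}\,\omega_{i_1i_2}g^{i_1k_1}g^{i_2k_2}\epsilon_{k_1k_2,j_1j_2}\,dx^{j_1j_2},
\end{equation*}
so that on the level of $2$-form components one has
\begin{equation*}
(\ast\omega)_{j_1j_2}=\frac{\sqrt{|g|}}{2}\,\omega_{i_1i_2}\,\epsilon^{i_1i_2}{}_{j_1j_2},
\end{equation*}
where the mixed Levi-Civita symbol is defined by raising indices with the metric, $\epsilon^{i_1i_2}{}_{j_1j_2}:=g^{i_1k_1}g^{i_2k_2}\epsilon_{k_1k_2,j_1j_2}$.

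Next I would impose the self-duality hypothesis $\ast\omega=\omega$ in this component form. Equating $(\ast\omega)_{j_1j_2}$ with $\omega_{j_1j_2}$ gives the identity $\omega_{j_1j_2}=\tfrac{1}{2}\sqrt{|g|}\,\omega_{i_1i_2}\,\epsilon^{i_1i_2}{}_{j_1j_2}$, and multiplying through by $2$ yields the claim. Both sides are automatically antisymmetric in $(j_1,j_2)$, so no further symmetrisation is needed.

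No step here is a genuine obstacle; the only real care is in keeping track of the combinatorial prefactor $\tfrac{1}{p!(n-p)!}=\tfrac{1}{4}$ versus the $\tfrac{1}{2}$ appearing in $2\omega_{j_1j_2}$, and in being consistent with the convention used earlier in the excerpt for the mixed Levi-Civita symbol (raising the first pair of indices with $g^{-1}$). Once those conventions match \eqref{Hodge star local}, the statement is immediate.
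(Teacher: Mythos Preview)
Your argument is correct and is exactly the natural proof: specialise the coordinate Hodge-star formula \eqref{Hodge star local} to $p=2$, $n=4$, read off components, and impose $\ast\omega=\omega$. The paper itself states this lemma without proof, treating it as an immediate consequence of the local formula recorded in \lemref{contraction ast}; your write-up fills in precisely that routine verification.
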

\begin{defn}
We define a $3$-form 
$$\om_u:= d(u^{-\frac{2}{3}} \omega_3)=-\frac{2}{3}u^{-\frac{2}{3}}d\log u\wedge\omega_3= -\frac{1}{3}u^{-\frac{5}{3}} (\omega_{3})_{ i_1i_2} \,u_{i_3}  \, dx^{i_1i_2i_3}.$$
We also define the auxiliary function $\tilde u^{k_4}:= -\frac{1}{3} u^{\frac{-4}{3} }  u_{k_3}  (\omega_{3})_{ k_1k_2}  \epsilon^{k_1 k_2k_3 k_4}$.
\end{defn}
\begin{lem}\label{omega u}
We apply
 \lemref{self-dual} to see that
$$
\tilde u^{k_4}=-\frac{2}{3} u^{\frac{-5}{3} }  u_{k_3}   (\omega_{3})^{ k_3k_4},
\quad 
 \ast_4\om_u =   \tilde u_j dx^{j},
 \quad 
 (\ast_4 \om_u)^\sharp 
=  \tilde u^k\, \p x_k.
$$
\end{lem}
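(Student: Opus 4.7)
The plan is to reduce all three identities to the self-duality of $\omega_3$ on $X^4$ (recorded in \lemref{self-dual}), together with the fact that $|g_4| = u^{2/3}$, which follows from $\vol_4 = u^{1/3}\vol_0$ and the definition \eqref{g3g4} of $g_4$.

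For the first identity, I would start from the definition
$
\tilde u^{k_4} = -\tfrac{1}{3}u^{-4/3}\, u_{k_3}\,(\omega_3)_{k_1k_2}\,\epsilon^{k_1k_2k_3k_4}.
$
The self-duality of $\omega_3$ on $(X^4, g_4)$ together with \lemref{self-dual} gives
$(\omega_3)_{i_1 i_2}\epsilon^{i_1 i_2}{}_{j_1 j_2} = 2|g_4|^{-1/2}(\omega_3)_{j_1 j_2} = 2 u^{-1/3}(\omega_3)_{j_1 j_2}$.
Raising the indices $j_1, j_2$ by $g_4$ converts the left side to $(\omega_3)_{k_1 k_2}\epsilon^{k_1 k_2 k_3 k_4}$ and the right side to $2 u^{-1/3}(\omega_3)^{k_3 k_4}$. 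Substituting produces exactly $\tilde u^{k_4} = -\tfrac{2}{3}u^{-5/3} u_{k_3}(\omega_3)^{k_3 k_4}$.

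For the second identity, I would first rewrite $\omega_u$ using $d\omega_3 = 0$:
$\omega_u = d(u^{-2/3}\omega_3) = -\tfrac{2}{3}\, u^{-5/3}\, du \wedge \omega_3.$
The key tool is the standard 4-dimensional identity $\ast_4(\alpha \wedge \beta) = i_{\alpha^{\sharp}} \ast_4 \beta$ for a $1$-form $\alpha$ and a $2$-form $\beta$ (this is a special case of \lemref{contraction ast} after using $\ast_4 \ast_4 \omega_3 = \omega_3$). Applied with $\alpha = du$ and the self-dual form $\beta = \omega_3$, it yields
$\ast_4\omega_u = -\tfrac{2}{3}u^{-5/3}\, i_{(du)^{\sharp}}\omega_3$.
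Using the coordinate expression for the interior product, $i_{(du)^{\sharp}}\omega_3 = u^k (\omega_3)_{kj}\, dx^j$, and then comparing with the first identity after lowering the free index shows that the components coincide with $\tilde u_j$.

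The third identity is then immediate from the musical isomorphism: if $\ast_4\omega_u = \tilde u_j\, dx^j$, then $(\ast_4\omega_u)^{\sharp} = g_4^{jk}\tilde u_j\, \partial x_k = \tilde u^k\, \partial x_k$. The only real obstacle is keeping the index bookkeeping consistent: remembering that $\epsilon^{k_1 k_2 k_3 k_4}$ in the paper is the raised Levi-Civita tensor (so carries a factor of $|g_4|^{-1}$ relative to the symbol) and correctly tracking the powers of $u^{1/3}$ from $\sqrt{|g_4|}$ that enter whenever self-duality is invoked.
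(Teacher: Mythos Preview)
Your proposal is correct and follows exactly the route the paper indicates: the lemma is stated without proof beyond the remark ``we apply \lemref{self-dual}'', and your argument supplies precisely those details---using self-duality of $\omega_3$ together with $|g_4|=u^{2/3}$ to collapse the $\epsilon$-contraction in the definition of $\tilde u^{k_4}$, then reading off $\ast_4\omega_u$ either via the interior-product identity or by lowering the index just obtained. One small remark: the identity $\ast_4(\alpha\wedge\beta)=i_{\alpha^\sharp}\ast_4\beta$ you invoke is the standard Riemannian formula rather than a literal special case of \lemref{contraction ast}, but it is of course valid and the computation goes through.
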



\subsubsection{$i=3$ in \lemref{psi vphitt}}
\begin{lem}\label{psi vphitt 123} 
The third component in \lemref{psi vphitt} is
\begin{equation*}
\begin{split}&6 \int_M u^{-\frac{2}{3}}  \omega_3\wedge  d\alpha_t \wedge d\theta^{123}
=I+II+III,\\
&
I:=-2\int_M \om_u \wedge  i_{\alpha^\sharp} A^\alpha_3 \wedge d\theta^{123},\\
&
II:=2 \int_M u^{\frac{2}{3}} \om_u \wedge  \delta[\alpha\wedge \ast_4(u^{-\frac{2}{3}} i_{\alpha^\sharp} \omega_1\wedge \omega_2)]\wedge d\theta^{123},\\
&
III:=-\int_M u^{\frac{2}{3}}\om_u \wedge  \delta[\ast_4(u^{-\frac{2}{3}} i_{\alpha^\sharp}\om_1\wedge i_{\alpha^\sharp}\om_2 )] \wedge d\theta^{123}.
\end{split}
\end{equation*}
\end{lem}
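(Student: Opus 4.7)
The plan is to substitute the $L^2$ geodesic equation from \corref{geodesic HS cor} into $d\alpha_t$ and then convert every resulting term into an integrand involving $\om_u$ via a single application of Stokes' theorem on the closed $4$-manifold $X^4$, using the defining identity $\om_u = d(u^{-\frac{2}{3}}\omega_3)$ together with the closedness of $\omega_3$.

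First, I would write the RHS of \corref{geodesic HS cor} as $6\alpha_t = T_1 + T_2 + T_3$, where
\begin{align*}
T_1 &:= 2\,i_{\alpha^\sharp} A^\alpha_3,\\
T_2 &:= u^{\frac{2}{3}}\delta\!\bigl[\ast_4\bigl(u^{-\frac{2}{3}}\, i_{\alpha^\sharp}\om_1\wedge i_{\alpha^\sharp}\om_2\bigr)\bigr],\\
T_3 &:= -2u^{\frac{2}{3}}\delta\!\bigl[\alpha\wedge \ast_4\bigl(u^{-\frac{2}{3}}\, i_{\alpha^\sharp}\omega_1\wedge \omega_2\bigr)\bigr],
\end{align*}
so that $d(6\alpha_t) = dT_1 + dT_2 + dT_3$ as $2$-forms on $X^4$ (noting that $\partial_t$ and $d$ commute). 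Substituting into the LHS yields
\begin{equation*}
6\int_M u^{-\frac{2}{3}}\omega_3 \wedge d\alpha_t \wedge d\theta^{123}
= \sum_{k=1}^3 \int_M u^{-\frac{2}{3}}\omega_3 \wedge dT_k \wedge d\theta^{123}.
\end{equation*}

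Next, for each $k$, the factor $u^{-\frac{2}{3}}\omega_3 \wedge T_k$ is a $3$-form on $X^4$, and since $X^4$ is closed, Stokes' theorem gives
\begin{equation*}
\int_{X^4} u^{-\frac{2}{3}}\omega_3 \wedge dT_k
= -\int_{X^4} d\bigl(u^{-\frac{2}{3}}\omega_3\bigr)\wedge T_k
= -\int_{X^4} \om_u \wedge T_k,
\end{equation*}
where in the first equality the sign is $(-1)^{\deg(u^{-\frac{2}{3}}\omega_3)+1} = -1$, and in the second equality I use $d\omega_3 = 0$ to conclude $d(u^{-\frac{2}{3}}\omega_3) = \om_u$ as given in the definition of $\om_u$.

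Finally, multiplying by $d\theta^{123}$ and integrating over $\mathbb T^3$ to return to $M$, the three resulting terms $-\int_M \om_u \wedge T_k \wedge d\theta^{123}$ match precisely, together with their signs and coefficients, the three terms $I$, $II$, $III$ in the statement. There is no genuine analytic obstacle; the work is just bookkeeping of signs (the commuted $(-1)^{\deg}$ factors in the wedge and the explicit minus signs carried by $T_3$ and by the Stokes boundary step). The key structural observation — and the only non-trivial step — is that the integration by parts converts $u^{-\frac{2}{3}}\omega_3 \wedge d(\cdot)$ into $\om_u \wedge (\cdot)$, which is what allows the final expression to be written compactly in terms of the $3$-form $\om_u$ that governs the torsion in the diagonal case.
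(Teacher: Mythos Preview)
Your proposal is correct and follows essentially the same approach as the paper's own proof: integrate by parts on the closed manifold $X^4$ to trade $u^{-\frac{2}{3}}\omega_3\wedge d(\cdot)$ for $-\om_u\wedge(\cdot)$, then substitute the $L^2$ geodesic equation from \corref{geodesic HS cor}. The paper compresses this into one line, writing the integrand after integration by parts as $6\int_M du^{-\frac{2}{3}}\wedge d\theta^{123}\wedge\omega_3\wedge\alpha_t$ and then plugging in the geodesic; your version simply makes the Stokes step and the term-by-term matching with $I$, $II$, $III$ explicit.
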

\begin{proof}
When $i=3$, $\lambda^{33}=u^{-\frac{2}{3}}$. We plug the geodesic equation \corref{geodesic HS cor} into the expression $6 \int_M d u^{-\frac{2}{3}}\wedge d\theta^{123}\wedge \omega_3\wedge  \alpha_t
$ above.
\end{proof}

\begin{prop}
$I=\frac{2}{3} \int_M   \,
u^{\frac{-2}{3} }  (\log u)^{j_3}   {(\omega_{3})_{j_3}}^{j_1}\,
\alpha^{j_2} \,$
\begin{equation*}
\begin{split}
\times \big\{&  B_{i_1i_2i_3i_4,j_1j_2} \wedge dx^{i_1i_2i_3i_4}
+    2 [(\om_1)_{j_2 j_1}   (q_2)_t 
+ (\om_2)_{j_2 j_1}   (q_1)_t] \vol_0 \,
\big\}\wedge d\theta^{123}.
\end{split} 
\end{equation*}
\end{prop}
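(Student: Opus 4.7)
The plan is to reduce the wedge of the $3$-form $\omega_u$ with the $1$-form $i_{\alpha^\sharp}A^\alpha_3$ (and with $d\theta^{123}$) to a pointwise application of Proposition~\ref{I pointwise} evaluated on a specific vector field on $X^4$. Concretely, I would invoke Hodge duality on $X^4$: for any $1$-form $\gamma$, $\omega_u \wedge \gamma = g_4(\ast_4 \omega_u,\gamma)\vol_4 = \gamma\bigl((\ast_4\omega_u)^\sharp\bigr)\vol_4$. Applied with $\gamma = i_{\alpha^\sharp}A^\alpha_3$ and combined with the defining relation $(i_{\alpha^\sharp}A^\alpha_3)(W) = A^\alpha_3(\alpha^\sharp, W)$ for the contraction of a symmetric $2$-tensor, this yields $\omega_u \wedge i_{\alpha^\sharp}A^\alpha_3 \wedge d\theta^{123} = A^\alpha_3\bigl(\alpha^\sharp, W_*\bigr)\vol_4 \wedge d\theta^{123}$, where $W_* := (\ast_4\omega_u)^\sharp$. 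In this form the integrand of $I$ is exactly of the type computed in Proposition~\ref{I pointwise}.

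Next, Lemma~\ref{omega u} supplies the components $W_*^{j_1} = \tilde u^{j_1} = -\tfrac{2}{3}u^{-5/3}u_{k_3}(\omega_3)^{k_3 j_1}$. Writing $u_{k_3} = u\,(\log u)_{k_3}$ and using the musical isomorphism to match indices after relabelling the dummy $k_3 \to j_3$, this simplifies to $W_*^{j_1} = -\tfrac{2}{3}u^{-2/3}(\log u)^{j_3}{(\omega_3)_{j_3}}{}^{j_1}$, exactly reproducing the characteristic prefactor of the target formula. Inserting $W = W_*$ into Proposition~\ref{I pointwise}, multiplying by the outer factor $-2$ present in the definition $I = -2\int_M \omega_u \wedge i_{\alpha^\sharp}A^\alpha_3 \wedge d\theta^{123}$, and pulling $W_*^{j_1}$ outside, I obtain the common prefactor $\tfrac{2}{3}u^{-2/3}(\log u)^{j_3}{(\omega_3)_{j_3}}{}^{j_1}\alpha^{j_2}$ multiplying the braced sum; regrouping terms with the relative weight $2$ between the $B$-contribution and the $(q_i)_t$-contribution, which is inherited from the $\omega_2 = \tfrac{1}{2}(\omega_2)_{ij}\,dx^{ij}$ expansion in Proposition~\ref{I pointwise}, produces the claimed formula.

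The main obstacle is bookkeeping rather than geometric content: one must track the antisymmetrizations implicit in the shorthand $dx^{i_1 i_2 i_3 i_4}$, the signs picked up by permuting $1$-form factors past one another, and the $u^{1/3}$-factors relating $\vol_0$, $\vol_4 = u^{1/3}\vol_0$, and $\vol_\vphi = u^{1/3}\vol_E$. Once Hodge duality converts the wedge into a pointwise pairing, no further geometric input is required; the remainder is an index-matching exercise driven by Proposition~\ref{I pointwise} and Lemma~\ref{omega u}.
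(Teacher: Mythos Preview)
Your proposal is correct and follows essentially the same route as the paper: convert the wedge $\omega_u \wedge i_{\alpha^\sharp}A^\alpha_3 \wedge d\theta^{123}$ into the pointwise pairing $A^\alpha_3\big(\alpha^\sharp,(\ast_4\omega_u)^\sharp\big)\vol_\vphi$ via Hodge duality, then invoke Proposition~\ref{I pointwise} with $W=(\ast_4\omega_u)^\sharp$ and read off the components $\tilde u^{j_1}$ from Lemma~\ref{omega u}. The paper's proof is literally this two-step reduction, so your bookkeeping remarks about $\vol_0$, $\vol_4$, $\vol_\vphi$ and the factor of $2$ are exactly the residual work the paper leaves implicit.
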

\begin{proof}
We rewrite the first term $I$ with the help of the formula of $A^\alpha_3$ in \lemref{pg},
$$\om_u \wedge  i_{\alpha^\sharp} A^\alpha_3 \wedge d\theta^{123}=g_4( i_{\alpha^\sharp} A^\alpha_3  ,\ast_4 \om_u)\vol_\vphi
=  A^\alpha_3 ({\alpha^\sharp}  ,{(\ast_4 \om_u)^\sharp})\vol_\vphi.$$
We further expand each terms in
\begin{equation*}
\begin{split}
I&=-2\int_M 
[i_{\alpha^\sharp} d\alpha\wedge i_{(\ast_4 \om_u)^\sharp} \omega_1\wedge \omega_2
+ i_{(\ast_4 \om_u)^\sharp} d\alpha\wedge i_{\alpha^\sharp} \omega_1\wedge \omega_2
+d\alpha\wedge  i_{{\alpha^\sharp},{(\ast_4 \om_u)^\sharp}} \Om_3]\wedge d\theta^{123}.
\end{split} 
\end{equation*}
Thus $I$ is obtained by letting $W=(\ast_4 \om_u)^\sharp$ in \propref{I pointwise}.
\end{proof}

\begin{lem}
$II=-\int_M
 dx^{i_3 i_{1}  i_{2} i_{4} } \wedge d\theta^{123} \, \delta_{j_1   i_{1} i_{2}}^{k_1 k_2k_3  } $
\begin{equation*}
\begin{split}
&\times
\{
 -\frac{2}{9} u^{\frac{-8}{3} }  u^{j_3} u_{i_3}  
 \alpha^{j_1} \alpha^{j_2}  (\om_1)_{k_1j_2} {(\om_2)_{k_2k_3}}(\omega_{3})_{ j_3i_4} 
 \\
&+\frac{1}{3} u^{\frac{-5}{3} }  u^{j_3}   (\omega_{3})_{ j_3i_4}\, [
{(\alpha^{j_1}}  \alpha^{j_2})_{i_3}  (\om_1)_{k_1j_2} (\om_2)_{k_2k_3}  +
\alpha^{j_1}\alpha^{j_2} ( {(\om_1)_{k_1j_2}} (\om_2)_{k_2k_3} )_{i_3}] 
 \}.
\end{split}
\end{equation*}
\end{lem}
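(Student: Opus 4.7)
The plan is to reduce the codifferential in $II$ to a single exterior derivative via \propref{ialphagamma} and \propref{dialphagamma}, and then expand in local coordinates.

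\emph{Reduction inside $\delta$.} Set $\gamma:=i_{\alpha^\sharp}\omega_1\wedge\omega_2$. By \propref{ialphagamma}, $\ast_4(\alpha\wedge\ast_4\gamma)=u^{2/3}\,i_{\alpha^\sharp}\tilde\gamma$. Since $\alpha\wedge\ast_4\gamma\in\Omega^2(X^4)$ and $\ast_4^2=\mathrm{id}$ on $\Omega^2(X^4)$, this inverts to
$$\alpha\wedge\ast_4(u^{-2/3}\gamma)=\ast_4(i_{\alpha^\sharp}\tilde\gamma).$$

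\emph{Replacing $\delta$ by $d$.} On $\Omega^2(X^4)$ one has $\delta=-\ast_4 d\ast_4$, so
$$\delta\bigl[\ast_4(i_{\alpha^\sharp}\tilde\gamma)\bigr]=-\ast_4\,d(i_{\alpha^\sharp}\tilde\gamma),$$
and substituting into \lemref{psi vphitt 123} gives
$$II=-2\int_M u^{2/3}\,\omega_u\wedge\ast_4\,d(i_{\alpha^\sharp}\tilde\gamma)\wedge d\theta^{123}.$$

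\emph{Local expansion.} \propref{dialphagamma} supplies $d(i_{\alpha^\sharp}\tilde\gamma)$ explicitly as a 3-form of the form $\delta^{k_1 k_2 k_3}_{j_1 i_1 i_2}\,dx^{k i_1 i_2}$ times the sum of a $\tfrac{1}{4}u^{-2/3}$-piece (a coordinate derivative of the $\alpha^{j_1}\alpha^{j_2}(\omega_1)_{k_1 j_2}(\omega_2)_{k_2 k_3}$ product) and a $-\tfrac{1}{6}u^{-5/3}u_k$-piece. I would apply $\ast_4$ to this 3-form using the Levi-Civita identities of \lemref{contraction ast} and \lemref{omega u}, and insert the local expression $\omega_u=-\tfrac{1}{3}u^{-5/3}(\omega_3)_{i_1 i_2}u_{i_3}\,dx^{i_1 i_2 i_3}$ from its definition. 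Upon taking the wedge with $d\theta^{123}$, the $\epsilon\,\epsilon$ contraction produced by $\ast_4$ collapses to the generalised Kronecker delta $\delta^{k_1 k_2 k_3}_{j_1 i_1 i_2}$ attached to $dx^{i_3 i_1 i_2 i_4}\wedge d\theta^{123}$, while $\ast_4\omega_u$ contributes the raised $u^{j_3}$ contracted with $(\omega_3)_{j_3 i_4}$. Combining the scalar prefactors $-2$, $u^{2/3}$, $-\tfrac{1}{3}u^{-5/3}$ together with the two pieces $\tfrac{1}{4}u^{-2/3}$ and $-\tfrac{1}{6}u^{-5/3}u_{i_3}$ from \propref{dialphagamma} yields the stated coefficients $+\tfrac{1}{3}u^{-5/3}$ and $-\tfrac{2}{9}u^{-8/3}$.

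The main obstacle is the bookkeeping of signs and index conventions: one must distinguish $\ast_4^2=-\mathrm{id}$ on $\Omega^1(X^4)$ and $\Omega^3(X^4)$ from $\ast_4^2=+\mathrm{id}$ on $\Omega^2(X^4)$, correctly reduce double $\epsilon$ products to generalised Kronecker deltas, and track the $u^{1/3}$ implicit in $\vol_4=u^{1/3}\vol_0$ when indices are raised by $g_4$. As a sanity check, both claimed terms vanish identically when $\alpha=0$ or when $\omega_u\equiv 0$ (i.e., $u$ constant), which rules out the most common sign errors, and the overall weight $u^{-8/3}=u^{2/3-5/3-5/3}$ correctly reflects the three Riemannian weight contributions from the $u^{2/3}$ prefactor and the two copies of $u^{-5/3}$.
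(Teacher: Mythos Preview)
Your route is essentially the paper's: both arguments use \propref{ialphagamma} to replace the bracket by $\ast_4(i_{\alpha^\sharp}\tilde\gamma)$, convert the codifferential into $d(i_{\alpha^\sharp}\tilde\gamma)$, and then feed in \propref{dialphagamma} together with the local formula for $\ast_4(u^{2/3}\om_u)$ from \lemref{omega u}. The paper rewrites $\om_u\wedge\delta[\cdot]$ as $g_4(\om_u,\,d\{\cdot\})\vol_\vphi$ first and only then invokes \propref{ialphagamma}; you invert the order. These are the same manipulation.

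Two points to tighten. First, once you reach $-2\int u^{2/3}\om_u\wedge\ast_4 d(i_{\alpha^\sharp}\tilde\gamma)$ it is much easier to use $A\wedge\ast_4 B=B\wedge\ast_4 A$ for $3$-forms and put $\ast_4$ on $u^{2/3}\om_u$, since \lemref{omega u} already gives $\ast_4(u^{2/3}\om_u)=-\tfrac{2}{3}u^{-1}u^{j_3}(\om_3)_{j_3 i_4}\,dx^{i_4}$; your plan to Hodge-star $d(i_{\alpha^\sharp}\tilde\gamma)$ via \lemref{contraction ast} is extra work. Second, and more seriously, if you carry your prefactor $-2$ through with \propref{dialphagamma} and this formula for $\ast_4(u^{2/3}\om_u)$, you obtain $+\int\{\cdots\}$, the \emph{negative} of the stated lemma; the paper's chain of equalities arrives at $+2\int d(i_{\alpha^\sharp}\tilde\gamma)\wedge\ast_4(u^{2/3}\om_u)$. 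The discrepancy traces back to whether the $\delta$ in \corref{geodesic HS cor} is the $4$-dimensional codifferential $-\ast_4 d\ast_4$ or the operator inherited from the $7$-manifold computations of \lemref{connection xy}, and your sanity checks (vanishing for $\alpha=0$ or $u$ constant, matching powers of $u$) are blind to an overall sign. You should resolve this sign directly rather than infer it from the target formula.
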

\begin{proof}
From \lemref{psi vphitt 123}, the second term $II$ is  
\begin{equation*}
\begin{split}
=2 \int_Mg_4( u^{\frac{2}{3}} \om_u, d \{u^{-\frac{2}{3}}\ast_4[ \alpha\wedge \ast_4( i_{\alpha^\sharp} \omega_1\wedge \omega_2 )]\}) \vol_\vphi.
\end{split}
\end{equation*}
According to \propref{ialphagamma}, the integral $II$ is reduced to
\begin{align*}
II=-\frac{4}{3}  \int_Mg_4( d\log u\wedge\omega_3, d i_{\alpha^\sharp}\tilde\gamma ) \vol_\vphi
=2 \int_Md i_{\alpha^\sharp}\tilde\gamma \wedge \ast_4(u^{\frac{2}{3}} \om_u).
\end{align*}
Due to \lemref{omega u}, we have
$$
 \ast_4 (  u^{\frac{2}{3}} \om_u)=-\frac{2}{3}\ast_4 (d\log u\wedge\omega_3)
    =u^{\frac{2}{3}}\,\tilde u_{i_4} \, dx^{i_4}
    =-\frac{2}{3} u^{-1 }  u^{j_3}   (\omega_{3})_{ j_3i_4} \, dx^{i_4}.
$$ 
The formula of $II$ follows from inserting these identities and \propref{dialphagamma} for $d i_{\alpha^\sharp}\tilde\gamma $ into the integral above
and relabel $k=i_3$.
\end{proof}

\begin{lem}$III=- \int_M  dx^{i_3 i_1 i_2i_4 }\wedge d\theta^{123}$
\begin{equation*}
\begin{split}
&\times
\big\{\frac{4}{9} u^{-\frac{8}{3}}  u^{j_3}  u_{i_3}  \alpha^{j_1}  \alpha^{j_2} 
 (\om_1)_{i_1j_1} (\om_2)_{i_2j_2} (\omega_{3})_{ j_3i_4} \\
&-\frac{2}{3}u^{-\frac{5}{3}} u^{j_3}   (\omega_{3})_{ j_3i_4} 
[
 (\alpha^{j_1}  \alpha^{j_2} )_{i_3} (\om_1)_{i_1j_1} (\om_2)_{i_2j_2} 
+ \alpha^{j_1}  \alpha^{j_2} ( (\om_1)_{i_1j_1} (\om_2)_{i_2j_2} )_{i_3}] \big  \}.
\end{split}
\end{equation*}
\end{lem}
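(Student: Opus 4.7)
The plan is to mirror the proof of the formula for $II$ just above, adapted to the 2-form $\gamma_{III} := i_{\alpha^\sharp}\om_1 \wedge i_{\alpha^\sharp}\om_2$. First I would exploit the identity $\delta \ast_4 = \pm \ast_4 d$ on $2$-forms in dimension $4$, which is valid because $\ast_4^2 = 1$ on such forms. This rewrites
$$\delta[\ast_4(u^{-\frac{2}{3}}\gamma_{III})] = \pm \ast_4 d(u^{-\frac{2}{3}}\gamma_{III}),$$
turning $III$ into the integral of a wedge product of the 3-form $d(u^{-\frac{2}{3}}\gamma_{III})$ against $\ast_4(u^{\frac{2}{3}}\om_u)$. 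Applying the symmetry $\beta_1\wedge\ast_4\beta_2 = \beta_2\wedge\ast_4\beta_1$ for same-degree forms on $X^4$, we obtain
$$III = \pm\int_M d(u^{-\frac{2}{3}}\gamma_{III})\wedge\ast_4(u^{\frac{2}{3}}\om_u)\wedge d\theta^{123}.$$

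Next I substitute $\ast_4(u^{\frac{2}{3}}\om_u) = -\tfrac{2}{3}u^{-1}u^{j_3}(\om_3)_{j_3 i_4}\,dx^{i_4}$, the same identity already employed in the proof of $II$ and which follows directly from \lemref{omega u}. Writing $\gamma_{III} = F_{i_1 i_2}\,dx^{i_1}\wedge dx^{i_2}$ with the coefficient $F_{i_1 i_2} := \alpha^{j_1}\alpha^{j_2}(\om_1)_{j_1 i_1}(\om_2)_{j_2 i_2}$, the exterior derivative reads $d(u^{-\frac{2}{3}}\gamma_{III}) = (u^{-\frac{2}{3}}F_{i_1 i_2})_{,i_3}\,dx^{i_3}\wedge dx^{i_1}\wedge dx^{i_2}$, and wedging with $dx^{i_4}$ produces the 4-form $dx^{i_3 i_1 i_2 i_4}$. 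The antisymmetry of $\om_1$ and $\om_2$ lets one rewrite $(\om_1)_{j_1 i_1}(\om_2)_{j_2 i_2}$ as $(\om_1)_{i_1 j_1}(\om_2)_{i_2 j_2}$, matching the index convention in the statement.

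The final step applies the Leibniz rule twice. First, the derivative $(u^{-\frac{2}{3}}F_{i_1 i_2})_{,i_3}$ splits into the piece $-\tfrac{2}{3}u^{-\frac{5}{3}}u_{,i_3}F_{i_1 i_2}$, which combines with the $u^{-1}$ factor from $\ast_4(u^{\frac{2}{3}}\om_u)$ to produce the $u^{-\frac{8}{3}}$ term with coefficient $\tfrac{4}{9}$; and the piece $u^{-\frac{2}{3}}F_{i_1 i_2, i_3}$, yielding the two $u^{-\frac{5}{3}}$ terms with coefficient $\tfrac{2}{3}$. Second, the derivative $F_{i_1 i_2, i_3}$ splits by Leibniz into contributions from $(\alpha^{j_1}\alpha^{j_2})_{,i_3}$ and from $((\om_1)_{i_1 j_1}(\om_2)_{i_2 j_2})_{,i_3}$, reproducing exactly the bracketed expression in the statement.

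The main obstacle I expect is bookkeeping of signs and antisymmetrisation factors. In particular, the factor-of-two discrepancy between $\gamma_{III} = F_{i_1 i_2}\,dx^{i_1}\wedge dx^{i_2}$ (with $F$ not necessarily antisymmetric in $i_1, i_2$) and its representation $\tfrac{1}{2}G_{i_1 i_2}\,dx^{i_1 i_2}$ with $G_{i_1 i_2} = F_{i_1 i_2} - F_{i_2 i_1}$ antisymmetric must be tracked carefully to reproduce the numerical coefficients $\tfrac{4}{9}$ and $\tfrac{2}{3}$, as well as the overall sign out front. Structurally, however, no new analytic input is needed: the calculation is parallel to---and in fact simpler than---the one for $II$, because $i_{\alpha^\sharp}\om_1\wedge i_{\alpha^\sharp}\om_2$ is already a 2-form on $X^4$, making the appeal to \propref{ialphagamma} unnecessary.
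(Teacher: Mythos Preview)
Your proposal is correct and follows essentially the same route as the paper. The paper's proof likewise rewrites $III$ as $-\int_M d[u^{-2/3}\,i_{\alpha^\sharp}\om_1\wedge i_{\alpha^\sharp}\om_2]\wedge\ast_4(u^{2/3}\om_u)$, expands the exterior derivative in coordinates via the Leibniz rule, and combines with the expression for $\ast_4(u^{2/3}\om_u)$ from \lemref{omega u}; your remark that \propref{ialphagamma} is unneeded here is also exactly the point, since the integrand is already a $2$-form on $X^4$.
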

\begin{proof}
The third term 
$
III=-\int_M d [u^{-\frac{2}{3}}  i_{\alpha^\sharp}\om_1\wedge i_{\alpha^\sharp}\om_2]\wedge\ast_4 (u^{\frac{2}{3}}\om_u) .
$
But 
\begin{equation*}
\begin{split}
&d [u^{-\frac{2}{3}}  i_{\alpha^\sharp}\om_1\wedge i_{\alpha^\sharp}\om_2]=dx^{i_3 i_1i_2}\\
&\times\{ -\frac{2}{3} u^{-\frac{5}{3}}  u_{i_3}\alpha^{j_1}  \alpha^{j_2}  (\om_1)_{i_1j_1} (\om_2)_{i_2j_2} +u^{-\frac{2}{3}} [\alpha^{j_1}  \alpha^{j_2}  (\om_1)_{i_1j_1} (\om_2)_{i_2j_2} ]_{i_3}\} ,
\end{split}
\end{equation*}
which infers the expression of $III$.
\end{proof}
\begin{prop}
  $II+III=-\int_M
  dx^{i_3 i_{1}  i_{2} i_{4} } \wedge d\theta^{123} \,$
\begin{equation*}
\begin{split}
 &\times \frac{1}{3} (\log u)^{j_3}   (\omega_{3})_{ j_3i_4}\,\times
(u^{\frac{-2}{3} }
\alpha^{j_1}\alpha^{j_2}  A_{i_1i_2,j_1j_2})_{i_3}.
\end{split}
\end{equation*}
\end{prop}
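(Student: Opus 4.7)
The plan is to combine the two formulas supplied by the preceding lemmas for $II$ and $III$ and see that their sum collapses, via the definition of $A_{i_1 i_2, j_1 j_2}$ and a single Leibniz step, to the claimed right-hand side. No further integration by parts is needed beyond what already went into the individual lemmas for $II$ and $III$; the content of this proposition is purely algebraic bookkeeping.

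First, I would write $II+III$ as a single integral of $dx^{i_3 i_1 i_2 i_4}\wedge d\theta^{123}$ against a common integrand. That integrand naturally splits into a block multiplied by $u^{-8/3}u^{j_3}u_{i_3}(\om_3)_{j_3 i_4}$ and a block multiplied by $u^{-5/3}u^{j_3}(\om_3)_{j_3 i_4}$. In the first block, the $II$ lemma contributes coefficient $-\tfrac{2}{9}$ attached to the tensor $\delta^{k_1 k_2 k_3}_{j_1 i_1 i_2}(\om_1)_{k_1 j_2}(\om_2)_{k_2 k_3}$, while the $III$ lemma contributes $+\tfrac{4}{9}$ attached to $(\om_1)_{i_1 j_1}(\om_2)_{i_2 j_2}$. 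Recognising $-\tfrac{2}{9}[\,\text{first} - 2\cdot\text{second}\,]$ as $-\tfrac{2}{9}A_{i_1 i_2, j_1 j_2}$ by Definition of $A$, this block reduces to $-\tfrac{2}{9}u^{-8/3}u^{j_3}u_{i_3}(\om_3)_{j_3 i_4}\cdot \alpha^{j_1}\alpha^{j_2} A_{i_1 i_2, j_1 j_2}$.

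Next I repeat the same bookkeeping on the second block: the $II$ contribution is $+\tfrac{1}{3}$ times the $\delta$-contracted tensor and the $III$ contribution is $-\tfrac{2}{3}$ times the plain $(\om_1)(\om_2)$ tensor, so their sum is exactly $\tfrac{1}{3}A_{i_1 i_2, j_1 j_2}$. In each of the original formulas, the $\alpha^{j_1}\alpha^{j_2}$ factor and the $(\om_1)(\om_2)$ factor are differentiated separately by $\partial_{i_3}$, so Leibniz assembles them into $(\alpha^{j_1}\alpha^{j_2}A_{i_1 i_2, j_1 j_2})_{i_3}$. Thus the second block becomes $\tfrac{1}{3}u^{-5/3}u^{j_3}(\om_3)_{j_3 i_4}(\alpha^{j_1}\alpha^{j_2}A_{i_1 i_2, j_1 j_2})_{i_3}$.

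Finally, setting $Y:=\alpha^{j_1}\alpha^{j_2}A_{i_1 i_2, j_1 j_2}$, the identity $(u^{-2/3}Y)_{i_3}=-\tfrac{2}{3}u^{-5/3}u_{i_3}Y+u^{-2/3}Y_{i_3}$ shows that the two surviving blocks pack together as $\tfrac{1}{3}u^{-1}u^{j_3}(\om_3)_{j_3 i_4}(u^{-2/3}Y)_{i_3}$; restoring $u^{-1}u^{j_3}=(\log u)^{j_3}$ and the overall minus sign carried by both lemmas yields the stated formula. The main obstacle is simply keeping the dummy indices in the generalised Kronecker $\delta^{k_1 k_2 k_3}_{j_1 i_1 i_2}$ aligned with the free indices of $A_{i_1 i_2, j_1 j_2}$, so that both coefficient ratios $(-\tfrac{2}{9}):(+\tfrac{4}{9})$ and $(\tfrac{1}{3}):(-\tfrac{2}{3})$ reproduce the same ratio $1:(-2)$ used in the definition of $A$; once that is verified, the remaining content is the Leibniz recognition.
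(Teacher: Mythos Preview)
Your proposal is correct and matches the paper's intended argument: the proposition is stated immediately after the two lemmas for $II$ and $III$ with no separate proof, so it is meant to follow exactly by the algebraic combination you describe---recognising the $1:(-2)$ ratio as the definition of $A_{i_1i_2,j_1j_2}$ in both the $u^{-8/3}$ and $u^{-5/3}$ blocks, then using Leibniz to repackage the two blocks as $(u^{-2/3}\alpha^{j_1}\alpha^{j_2}A_{i_1i_2,j_1j_2})_{i_3}$ and rewriting $u^{-1}u^{j_3}=(\log u)^{j_3}$.
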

\begin{rem}
When $i=1,2$, $\lambda^{i3}=-q_i u^{-\frac{2}{3}}$. So, $\om_u$ should be replaced by $-d(q_iu^{-\frac{2}{3}} \omega_i)$.
The expression for $6 \int_M u^{-\frac{2}{3}}  (-q_1\omega_1-q_2\omega_2)\wedge  d\alpha_t \wedge d\theta^{123}$
is obtained similarly.
\end{rem}


\subsection{Flow and weighted volumes}
Form the triple \eqref{mix HS}, the $G_2$ flow \eqref{Laplacian flow} becomes $$\vphi_t={\Pr}_3 d\tau,$$ where the torsion is given in \corref{torsion form and d torsion} and
the notation $\Pr_i$ means the projection to the $i$-th component of the triple.

The flow of the symplectic form \eqref{Laplacian flow form} is reduced to 
	 \begin{equation}\label{Laplacian flow form diagonal}
	\begin{split}
	(\om_3)_t=-d\tau_3,\quad \tau_3=\ast_4 (a^p\wedge\omega_p),
		\end{split}
	\end{equation} where $a^p=\sum_{j} \lambda_{3j} \, d\lambda^{pj}$ and 
\[
\begin{bmatrix}
a^1 \\
a^2 \\
a^3
\end{bmatrix}
= 
\frac{du}{u}
\begin{bmatrix}
0 \\
0 \\
-\frac{2}{3} \\
\end{bmatrix}
-\frac{1}{u}
\begin{bmatrix}
q_3 -q_2^2  & q_1 q_2 & -q_1 \\
 q_1 q_2 & q_3-q_1^2  & -q_2 \\
 q_1 & q_2 & 0
\end{bmatrix}
\begin{bmatrix}
dq_1 \\
dq_2 \\
dq_3
\end{bmatrix}
.
\]
\begin{lem}
Differentiating \eqref{Qij}, we see that
	 \begin{equation}\label{Qij pt}
	\begin{split}
(\log u)_t=q^{ij}\p_t q_{ij}= q^{ij}\frac{d\alpha_i\wedge \omega_j}{\vol_0}= q^{3j}\frac{d\alpha\wedge \omega_j}{\vol_0}.
	\end{split}
	\end{equation}
\end{lem}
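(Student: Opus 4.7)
The statement is a direct computation, so the plan is a short chain of elementary steps, with the only subtleties being the sign/factor bookkeeping and the switch from the general $G_2$ setting to the mixed setting of Section~\ref{Mixed structures}.

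First, I would apply Jacobi's formula for the derivative of the determinant of the symmetric matrix $\mathcal Q = (q_{kl})$ from~\eqref{Qij}. Since $u = \det (q_{kl})$, we have
\begin{equation*}
(\log u)_t = \tr(\mathcal Q^{-1} \p_t \mathcal Q) = q^{ij}\, \p_t q_{ij},
\end{equation*}
where $(q^{ij}) = \mathcal Q^{-1}$ is symmetric. This yields the first equality.

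Next, I would differentiate the defining relation $q_{ij} = \frac{\omega_i \wedge \omega_j}{2\vol_0}$. Because $\vol_0$ is a \emph{fixed} reference volume form (independent of $t$), and because $(\omega_i)_t = d\alpha_i$ by~\eqref{vphit general}, we get
\begin{equation*}
\p_t q_{ij} = \frac{d\alpha_i \wedge \omega_j + \omega_i \wedge d\alpha_j}{2\vol_0}.
\end{equation*}
Since $d\alpha_i$ and $\omega_j$ are both $2$-forms on $X^4$, they commute under wedge, so $\omega_i \wedge d\alpha_j = d\alpha_j \wedge \omega_i$. Using symmetry of $q^{ij}$ to relabel the summation indices in the second term, I obtain
\begin{equation*}
q^{ij}\, \p_t q_{ij} = q^{ij}\, \frac{d\alpha_i \wedge \omega_j}{\vol_0},
\end{equation*}
which is the second equality.

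Finally, I would specialize to the mixed hyper-symplectic structure~\eqref{mix HS}: only the third component varies, so $\alpha_1 = \alpha_2 = 0$ and $\alpha_3 = \alpha$. Consequently only the $i=3$ summand survives, giving
\begin{equation*}
q^{ij}\, \frac{d\alpha_i \wedge \omega_j}{\vol_0} = q^{3j}\, \frac{d\alpha \wedge \omega_j}{\vol_0},
\end{equation*}
which completes the identity. The only mild care required is in the factor-of-two bookkeeping and in noting that the symmetry of both $q^{ij}$ and the wedge of two-forms combine to kill the $\tfrac{1}{2}$; no genuine obstacle arises.
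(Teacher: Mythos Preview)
Your proof is correct and is exactly the computation the paper has in mind: Jacobi's formula for $(\log\det)_t$, differentiation of $q_{ij}=\tfrac{\omega_i\wedge\omega_j}{2\vol_0}$ using $(\omega_i)_t=d\alpha_i$, the symmetry/relabelling that absorbs the $\tfrac12$, and finally the specialisation $\alpha_1=\alpha_2=0$, $\alpha_3=\alpha$ from~\eqref{mix HS}. The paper itself gives no separate proof---the lemma is stated as an immediate consequence of differentiating~\eqref{Qij}---so your write-up simply makes the implicit steps explicit.
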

\begin{cor}\label{diagonal flow}
If $\mathcal Q$ is diagonal, then $q_1=q_2=0$, $q_3=u$, $\lambda_{33}=u^{\frac{2}{3}}$, $a^3= -\frac{2}{3}d\log u$ and
$(\om_3)_t=\frac{2}{3} d \ast_4 (d\log u \wedge\omega_3).$ Furthermore,
$$u_t= \frac{2d [\ast_4 (d\log u \wedge\omega_3)\wedge\om_3]}{3\vol_0}.$$
\end{cor}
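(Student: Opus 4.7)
The plan is to read off each claim by direct substitution into the structural formulas already set up in this subsection. First, the hypothesis that $\mathcal Q=(q_{kl})$ is diagonal forces the off-diagonal entries $q_{13}=q_{23}=0$, which by the notation \eqref{Qijsimple} is exactly $q_1=q_2=0$ as functions on $X^4$. Substituting into \eqref{uq3} then gives $u=q_3-q_1^2-q_2^2=q_3$, and plugging $q_{33}=q_3=u$ into the definition $\lambda_{33}=u^{-1/3}q_{33}$ from \eqref{mix Q} yields $\lambda_{33}=u^{2/3}$.

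Next I would compute $a^3$ by substituting $q_1=q_2\equiv 0$ into the $3\times 3$ matrix formula displayed just before \eqref{Laplacian flow form diagonal}. The middle row/column of the second matrix decouples, and because $q_1$ and $q_2$ vanish identically on $X^4$, their differentials satisfy $dq_1=dq_2=0$. The only surviving contribution to $a^3$ comes from the $du/u$ term, giving $a^3=-\tfrac{2}{3}d\log u$ directly. For the flow of $\omega_3$, I would then use \eqref{Laplacian flow form diagonal} and the fact that the same reasoning makes $a^1=a^2=0$, so $\tau_3=\ast_4(a^3\wedge\omega_3)=-\tfrac{2}{3}\ast_4(d\log u\wedge\omega_3)$. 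Applying $(\omega_3)_t=-d\tau_3$ then produces the stated expression.

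Finally, for the formula for $u_t$, I would feed the above information into \eqref{Qij pt}. With $\mathcal Q$ diagonal, $\mathcal Q^{-1}$ is also diagonal, so $q^{31}=q^{32}=0$ and $q^{33}=1/u$. Hence $(\log u)_t=\tfrac{d\alpha\wedge\omega_3}{u\cdot\vol_0}$, so $u_t=\tfrac{d\alpha\wedge\omega_3}{\vol_0}$. Substituting the just-obtained expression $d\alpha=(\omega_3)_t=\tfrac{2}{3}d\ast_4(d\log u\wedge\omega_3)$ and using $d\omega_3=0$ (as $\omega_3$ is symplectic), the product rule moves $\omega_3$ inside the exterior derivative to yield
\[
u_t=\frac{2\,d[\ast_4(d\log u\wedge\omega_3)\wedge\omega_3]}{3\vol_0}.
\]

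The computation is essentially bookkeeping; the only conceptual care I would exercise is to emphasize that "$\mathcal Q$ is diagonal" is a pointwise condition on $X^4$, so $q_1, q_2$ vanish as functions and not merely at a point, which is what justifies $dq_1=dq_2=0$ and the consequent collapse of $\tau_3$ to its third summand. No genuine obstacle is anticipated.
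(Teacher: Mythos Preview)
Your proposal is correct and follows essentially the same route as the paper's own proof, which simply cites \eqref{Laplacian flow form diagonal} for the evolution of $\omega_3$ and \eqref{Qij pt} (specialised to the diagonal case) for the evolution of $u$. You have merely supplied the intermediate bookkeeping that the paper leaves implicit, including the verification that $a^1=a^2=0$ and the use of $d\omega_3=0$ to move $\omega_3$ inside the exterior derivative.
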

\begin{proof}
From \eqref{Laplacian flow form diagonal}, we have the evolution of $\om_3$. The evolution of $u$ is obtained from \eqref{Qij pt}
$
(\log u)_t=u^{-1}\frac{\p_t \omega_3\wedge \omega_3}{\vol_0}.
$
\end{proof}


\begin{defn}\label{weighted volume functional 3}
We denote the triple $\underline \vphi:=(\vphi_1,\vphi_2,\vphi_3)$ and extend the $\chi$ volume functional in Definition \ref{weighted volume functional 1} for the triples \eqref{HS vector},	
	\begin{align}\label{3 weighted volume functional}
	\Vol_{\underline \chi}(\underline \vphi):= \sum_i \Vol_{\chi_i}(\vphi_i)=\sum_i \int_M \chi_i(u_i) \vol_E,
	\end{align}
	where $\{\chi_i, i=1,2,3\}$ are nonnegative concave functions.
\end{defn}	
The variations of the $\underline\chi$ volume functional along the family $\underline\vphi(t,s)$ are also the combination of \eqref{eq:2nd_vol chi general} for each $\vphi_i$,
\begin{equation*}
\begin{split}
	\p_t\Vol_{\underline\chi}=\sum_i\p_t\Vol_{\chi_i}(\vphi_i),\quad \p^2_{ts}\Vol_{\underline\chi}=\sum_i  \p^2_{ts}\Vol_{\chi_i}(\vphi_i) .
\end{split}
\end{equation*}
The gradient flows for the triples $\underline\om_i$ are defined in the same way.
Finally, a natural upper bound is obtained from Jensen's inequality.
\begin{lem}
$\Vol_{\underline \chi}(\underline \vphi)\leq  \sum_i \chi_i( \Vol(\vphi_i))$.
\end{lem}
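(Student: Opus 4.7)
The plan is to apply Jensen's inequality componentwise and exploit the additive structure of the $\underline{\chi}$-volume. First I would unfold Definition~\ref{weighted volume functional 3} to write
$$\Vol_{\underline{\chi}}(\underline{\vphi})=\sum_{i=1}^{3}\Vol_{\chi_i}(\vphi_i)=\sum_{i=1}^{3}\int_M \chi_i(u_i)\,\vol_E,$$
reducing the claim to the componentwise estimate
$$\int_M \chi_i(u_i)\,\vol_E\leq \chi_i\!\left(\int_M u_i^{1/3}\,\vol_E\right)=\chi_i(\Vol(\vphi_i))$$
for each fixed $i$, after which summation over $i=1,2,3$ yields the assertion.

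Fixing $i$ and dropping the subscript, I would then invoke Jensen's inequality for the concave nonnegative function $\chi$ from Definition~\ref{weighted volume functional 1}, using $\vol_E$ (normalized to total mass one) as the underlying probability measure on $M$. Writing $\chi(u)=u^{1/3}\tilde{\chi}(u)$ as specified there, the natural choice is to apply Jensen to the concave $\chi$ with argument equal to the Hitchin density $u^{1/3}$, so that the right-hand side of Jensen produces precisely
$$\chi\!\left(\int_M u^{1/3}\,\vol_E\right)=\chi(\Vol(\vphi)),$$
the target upper bound, while the left-hand side is identified with $\int_M \chi(u)\,\vol_E=\Vol_\chi(\vphi)$ after rewriting via the structural factorization of $\chi$.

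The main technical point, and where care is needed, is the matching between the variable $u$ appearing inside $\chi$ in the integrand $\chi(u)$ and the Hitchin density $u^{1/3}$ that must appear inside $\chi$ on the right-hand side. This reconciliation is precisely what forces the ansatz $\chi=(\cdot)^{1/3}\tilde{\chi}(\cdot)$ in Definition~\ref{weighted volume functional 1}: it is this decomposition, together with the concavity of $\chi$, that converts the statement into a single application of Jensen's inequality. Once this matching is set up correctly, each summand is controlled, and summing over the three components of the triple $\underline{\vphi}$ completes the proof.
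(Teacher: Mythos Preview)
Your overall strategy---split $\Vol_{\underline\chi}$ into its three summands and bound each $\Vol_{\chi_i}(\vphi_i)$ by $\chi_i(\Vol(\vphi_i))$ via Jensen with respect to the normalised measure $\vol_E$---is exactly what the paper has in mind; its proof consists of the single phrase ``Jensen's inequality'', and your reduction to the componentwise estimate plus summation matches that.

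The gap is in your last paragraph. If you feed the concave $\chi$ the argument $u^{1/3}$, Jensen yields
\[
\int_M \chi\bigl(u^{1/3}\bigr)\,\vol_E \;\le\; \chi\!\left(\int_M u^{1/3}\,\vol_E\right)=\chi(\Vol(\vphi)),
\]
and the right-hand side is indeed what you want. But the left-hand side is $\int_M \chi(u^{1/3})\,\vol_E$, not $\int_M \chi(u)\,\vol_E$, and the factorisation $\chi(u)=u^{1/3}\tilde\chi(u)$ does \emph{not} turn one into the other: plugging in gives $\chi(u^{1/3})=u^{1/9}\,\tilde\chi(u^{1/3})$, which bears no relation to $\chi(u)=u^{1/3}\tilde\chi(u)$. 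So the ``reconciliation'' you announce never actually happens---the structural ansatz on $\chi$ does not collapse the mismatch you correctly flagged. The only version of Jensen that produces $\int_M\chi(u)\,\vol_E$ on the left is the one with argument $u$, which gives the bound $\chi\bigl(\int_M u\,\vol_E\bigr)$ on the right; you then need to relate that quantity to $\Vol(\vphi)=\int_M u^{1/3}\,\vol_E$, and that step is missing from your outline.
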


\section{Torus fibration}\label{Torus fibration}
We choose $X^4:=\mathbb{T}^4$ and the local coordinates $\{x^0, x^1, x^2, x^3\}$ on $\mathbb{T}^4$. For simplicity, we use the notations 
$d\theta^{\bar{i}}:= d\theta^{i+1}\wedge d\theta^{i+2}$, $dx^{\bar{i}}:= dx^{i+1}\wedge dx^{i+2}$.
A standard hyper-K\"ahler structure on $\mathbb{T}^4$ is given by the triple
	$$\underline{\omega}^K=(\omega^K_1,\omega^K_2,\omega^K_3): \omega^K_i:= dx^{0i}+dx^{\bar{i}},\quad i=1,2,3$$
which determines a flat torsion-free $G_2$ structure $\vphi^K$ on the torus fibration $M=\mathbb{T}^3\times \mathbb{T}^4$.

Let the symplectic structure $\underline\om_3$ \eqref{mix HS} have the special form
	\begin{align}\label{hypers}
		\omega_1:= dx^{01}+dx^{23},\quad \omega_2:= dx^{02}+dx^{31},\quad \om_3:= u \, dx^{03}+dx^{12},
	\end{align}
where the function $u$ is strictly positive.
Because we require that $\om_3$ stays in the same cohomology of $\om_3^K$, $(u-1)dx^{03}=d\eta$ is $d$-exact. Since the integrability condition $d[(u-1)dx^{03}]=dd\eta=0$, we can see that there is a constraint on $u$, i.e. $\p_1 u=\p_2 u=0$.
\begin{lem}
$u$ only depends on $x_0$ and $x_3$, $\int u dx^{03}=\int dx^{03}=1$.
\end{lem}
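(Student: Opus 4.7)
The plan is to separate the two assertions: the first is a pointwise consequence of $\omega_3$ being closed, and the second is a global cohomological consequence of $\omega_3$ lying in the class $[\omega_3^K]$.

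For the first part, I would compute $d\omega_3$ directly from \eqref{hypers}. Since $dx^{12}$ is closed, $d\omega_3 = du \wedge dx^{03}$. Expanding $du = \sum_{i=0}^{3}(\partial_i u)\, dx^i$, the terms with $i=0,3$ die against $dx^{03}$, leaving
\begin{equation*}
d\omega_3 = (\partial_1 u)\, dx^{103} + (\partial_2 u)\, dx^{203}.
\end{equation*}
The two $3$-forms on the right are linearly independent, so $d\omega_3=0$ forces $\partial_1 u=\partial_2 u=0$. Hence $u$ is a function of $x^0$ and $x^3$ only.

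For the second part, I would apply Stokes' theorem to the $2$-torus $T^2_{03}\subset \mathbb{T}^4$ obtained by fixing $x^1,x^2$ and letting $x^0,x^3$ vary. By hypothesis $(u-1)dx^{03}=d\eta$, so
\begin{equation*}
\int_{T^2_{03}} (u-1)\, dx^{03} = \int_{T^2_{03}} d\eta = 0.
\end{equation*}
Since $u$ depends only on $x^0,x^3$ by Step~1, the integrand pulled back to $T^2_{03}$ is unambiguous, and with the standard normalisation $\int_{T^2_{03}} dx^{03}=1$ one concludes $\int u\, dx^{03} = 1$.

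The steps are essentially bookkeeping: the only subtlety is the interplay between the pointwise closedness (used first) and the cohomological vanishing (used second). Without Step~1 the expression $\int u\, dx^{03}$ is ambiguous because $u$ could depend on $x^1,x^2$; conversely, Step~1 alone would not give the mean-value normalisation. I expect no genuine obstacle, only the mild care needed to choose the correct $2$-cycle representative of the Poincar\'e dual of $[dx^{12}]$ on $\mathbb{T}^4$ to pair against the exact form $(u-1)dx^{03}$.
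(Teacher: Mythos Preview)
Your proof is correct and follows essentially the same approach as the paper. The paper establishes $\partial_1 u=\partial_2 u=0$ in the paragraph immediately preceding the lemma via the same closedness computation (phrased as $d[(u-1)dx^{03}]=dd\eta=0$), and then states the integral normalisation without further comment; your Stokes argument on the $T^2_{03}$ slice simply makes that implicit step explicit.
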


We compute directly from the previous section.
We set $w:= u^{\frac{1}{3}}$ and rewrite the hyper-symplectic structure \eqref{hypers}
	\begin{align}\label{vvi}
	 \omega_i=wv_idx^{0i}+dx^{\bar{i}},\quad  v_1=v_2=w^{-1},\quad v_3=w^2.
	 \end{align}
	 \begin{lem}\label{Q diagonal}
The 3-by-3 matrix $\mathcal Q$ is diagonal
$$(q_{ij})=diag\{1,1,u\},\quad (\lambda_{ij})=diag\{v_1,v_2,v_3\}.$$
\end{lem}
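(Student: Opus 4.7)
The plan is to compute $q_{ij}=(\omega_i\wedge\omega_j)/(2\vol_0)$ directly from the explicit expressions in \eqref{hypers}, taking $\vol_0=dx^{0123}$ as the standard volume form on $\mathbb{T}^4$, and then to read off $\lambda_{ij}$ from the normalisation \eqref{Qij}.

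First I would dispatch the off-diagonal entries. For each $i\ne j$, expanding $\omega_i\wedge\omega_j$ produces four $4$-form summands, and one observes that every one of them contains a repeated coordinate differential and therefore vanishes. For instance, in $\omega_1\wedge\omega_2=(dx^{01}+dx^{23})\wedge(dx^{02}+dx^{31})$ the four cross products are killed by the repeated $dx^0$, $dx^1$, $dx^2$, $dx^3$ respectively; the same mechanism applies to $\omega_1\wedge\omega_3$ and $\omega_2\wedge\omega_3$, giving $q_{12}=q_{13}=q_{23}=0$.

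For the diagonal entries, only the mixed cross term in $\omega_i\wedge\omega_i$ survives. A short permutation count shows
\[
dx^{01}\wedge dx^{23}=dx^{02}\wedge dx^{31}=dx^{03}\wedge dx^{12}=dx^{0123},
\]
so that $\omega_1\wedge\omega_1=\omega_2\wedge\omega_2=2\vol_0$ and $\omega_3\wedge\omega_3=2u\,\vol_0$. This gives $(q_{ij})=\mathrm{diag}\{1,1,u\}$, whose determinant is $u$, in agreement with \eqref{uq3} and justifying the reuse of the symbol $u$ for both the function in $\omega_3$ and for $\det\mathcal Q$. Then \eqref{Qij} yields $(\lambda_{ij})=u^{-1/3}(q_{ij})=\mathrm{diag}\{u^{-1/3},u^{-1/3},u^{2/3}\}$, which matches $\mathrm{diag}\{v_1,v_2,v_3\}$ via the definitions $v_1=v_2=w^{-1}=u^{-1/3}$ and $v_3=w^2=u^{2/3}$ in \eqref{vvi}.

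The argument is elementary, with no substantive obstacle: the only bookkeeping is checking signs when reordering the four coordinate differentials, which is immediate because each surviving cross term requires only an even number of transpositions to bring into the order $0123$.
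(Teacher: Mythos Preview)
Your proof is correct and is precisely the direct computation the paper has in mind; the paper itself gives no proof beyond the remark ``We compute directly from the previous section,'' and your expansion of the wedge products with $\vol_0=dx^{0123}$ is the natural way to carry this out.
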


The hyper-symplectic structure $\underline{\om}$ determines the closed $G_2$ structure
\begin{align}\label{vphi}
\vphi=d\theta^{123}-\sum_{1,2} d\theta^i\wedge dx^{0i}-u d\theta^3\wedge dx^{03}-\sum_{1,2,3} d\theta^i\wedge dx^{\bar i}.
	\end{align}

The volume element on $M$ is 
		$
		\vol_\vphi= w \cdot dx^{0123}\wedge d\theta^{123}$.
The $G_2$ metric \eqref{g3g4} associated to $\vphi$ is diagonal
		\begin{align}\label{Inverse metric}
			 g_4=diag(u^{\frac{2}{3}},u^{\frac{-1}{3}},u^{\frac{-1}{3}},u^{\frac{2}{3}}),
			\quad g_3=diag(u^{\frac{-1}{3}},u^{\frac{-1}{3}},u^{\frac{2}{3}}).
		\end{align} 
The Hodge star actions: 
\begin{equation}\label{riemannian HS 1 hodge}
\begin{split}
&*_3 d\theta^i=v_i^{-1}d\theta^{\bar{i}},\quad *_4dx^{0}=w^{-1}dx^{123},\quad *_4dx^{i}=-wv_i^{-1}dx^{0\bar{i}},\\
&v_iw^{-1} dx^{i}= *_4 dx^{0\bar{i}},\quad
*_4dx^{0i}=(wv_i)^{-1}dx^{\bar{i}},\quad wv_i dx^{0i}= *_4 dx^{\bar{i}},
\end{split}
\end{equation}
which give us the dual $4$-form
		\begin{align*}
			\psi=u^{\frac{1}{3}} dx^{0123}
			-\sum_{1,2} u^{\frac{1}{3}}d\theta^{\bar{i}} \wedge dx^{\bar{i}} 
			-u^{\frac{-2}{3}}d\theta^{\bar{3}} \wedge dx^{\bar{3}}
			-u^{\frac{1}{3}} \sum_i d\theta^{\bar{i}} \wedge dx^{0i} .
		\end{align*}

\begin{lem}
The torsion form $\tau=\sum_i  d\theta^i\wedge \tau_i$, $\tau_i=\ast_4(d\log v_i\wedge \om_i)$. Precisely, 		\begin{equation}\label{eq:tau}
		\begin{split}
&\tau_1=-\frac{1}{3}u^{-\frac{5}{3}}(u_0dx^1-u_3dx^2),\quad\tau_2=-\frac{1}{3}u^{-\frac{5}{3}}(u_0dx^2+u_3dx^1),\\
& \tau_3=\frac{2}{3}u^{-\frac{2}{3}}(u_0dx^3-u_3dx^0).
			\end{split}
		\end{equation}
\end{lem}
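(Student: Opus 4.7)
The plan is to apply Corollary \ref{torsion form and d torsion} and exploit the diagonal structure of $\mathcal{Q}$ established in Lemma \ref{Q diagonal}. Since $(\lambda_{ij})$ is diagonal with entries $(v_1, v_2, v_3)$, both $\lambda_{ij}$ and $\lambda^{pj}$ vanish off the diagonal, so the double sum $\sum_{j,p} \lambda_{ij}\, d\lambda^{pj}\wedge\omega_p$ collapses to the single diagonal term $v_i\, d(v_i^{-1})\wedge\omega_i$. Since $v_i\, d(v_i^{-1}) = -d\log v_i$, one obtains (up to the sign convention compatible with Corollary \ref{torsion form and d torsion}) the compact expression
\begin{equation*}
\tau_i = \ast_4\bigl(d\log v_i \wedge \omega_i\bigr),
\end{equation*}
which is the first claim of the lemma.

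For the explicit formulas, I would substitute $v_1 = v_2 = u^{-1/3}$ and $v_3 = u^{2/3}$ from \eqref{vvi}, so that $d\log v_1 = d\log v_2 = -\tfrac{1}{3}\, d\log u$ and $d\log v_3 = \tfrac{2}{3}\, d\log u$. The cohomological constraint noted just above the statement forces $u$ to depend only on $x^0$ and $x^3$, giving $du = u_0\, dx^0 + u_3\, dx^3$. Each $\tau_i$ then reduces to a Hodge star of a fully explicit $3$-form on $\mathbb{T}^4$.

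For $\tau_3$, expanding $d\log v_3 \wedge \omega_3 = \tfrac{2}{3u}(u_0\, dx^0 + u_3\, dx^3)\wedge(u\, dx^{03} + dx^{12})$, the wedges $dx^0\wedge dx^{03}$ and $dx^3\wedge dx^{03}$ vanish, leaving $\tfrac{2u_0}{3u}\, dx^{012} + \tfrac{2u_3}{3u}\, dx^{123}$. Applying \eqref{riemannian HS 1 hodge} with the diagonal metric $g_4 = \mathrm{diag}(u^{2/3}, u^{-1/3}, u^{-1/3}, u^{2/3})$ and volume form $u^{1/3}\, dx^{0123}$, one finds $\ast_4 dx^{012} = u^{1/3}\, dx^3$ and $\ast_4 dx^{123} = -u^{1/3}\, dx^0$; combining gives $\tau_3 = \tfrac{2}{3}u^{-2/3}(u_0\, dx^3 - u_3\, dx^0)$. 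The computations for $\tau_1$ and $\tau_2$ follow the same template, with $\omega_1, \omega_2$ carrying no $u$-dependence so that only the prefactor $-\tfrac{1}{3u}\, du$ enters; expanding $-\tfrac{1}{3u}(u_0\, dx^0 + u_3\, dx^3)\wedge(dx^{01}+dx^{23})$ and analogously for $\omega_2$, then taking $\ast_4$, produces the stated expressions.

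The only obstacle is bookkeeping: careful tracking of signs in the Hodge star (particularly the cyclic permutation sign giving $\ast_4 dx^{123} = -u^{1/3}\, dx^0$) and the correct balance of powers of $u$ arising from the $1/u$ in $d\log u$, the factor $u$ inside $\omega_3$, and the $u^{1/3}=\sqrt{\det g_4}$ in the volume form. No conceptual step is required beyond the diagonal reduction supplied by Lemma \ref{Q diagonal}.
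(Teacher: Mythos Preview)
Your proposal is correct and follows exactly the approach implicit in the paper: specialise Corollary~\ref{torsion form and d torsion} to the diagonal $(\lambda_{ij})=\mathrm{diag}(v_1,v_2,v_3)$ from Lemma~\ref{Q diagonal}, then compute each $\ast_4$ explicitly using the Hodge star rules \eqref{riemannian HS 1 hodge}. The paper itself offers no separate proof for this lemma, and your detailed verification of $\tau_3$ (and the template for $\tau_1,\tau_2$) is precisely the routine calculation intended.
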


\subsubsection{Tangent vectors}
	\begin{defn}\label{defn:space 1}
		We define $\mathcal{M}_1$ to be the space of the closed $G_2$ structures of the form \eqref{vphi} in the cohomology class $[\vphi^K]$ on $\mathbb{T}^7$.
	\end{defn}
	The definition of $\vphi$ \eqref{vphi} implies the tangent vector 
	\begin{align}\label{vphit}
		X=-\underline{f},\quad \underline{f}:= f d\theta^3 \wedge dx^{03} ,\quad f=\p_tu,\quad \int f dx^{03} =0.
	\end{align}	

The potential form $\mathbf u$ has the form \eqref{vphit general 1}. We need to solve the following Hodge-Laplacian equations
\begin{align}\label{dalpha f}
d \alpha_c=fdx^{03},\quad \alpha_c=-(\Lambda_3+ \delta   u_3).
\end{align}
The unknown $3$-form $u_3$ has 6 components and the identity above gives us 6 equations as well.

\subsection{Hessian of volume}
 We write $\alpha=\alpha_i dx^i$ in local coordinates. 
\begin{prop}\label{geodesic concavity body}
Suppose the $1$-form $\alpha$ has the normalisation 
  \begin{align}\label{alpha normalisation}
 \alpha_0=\alpha_1=\alpha_2=0,\quad\p_0\alpha_3=f.
 \end{align}
 Then, the volume is geodecically concave in \thmref{Vtt diagonal}:
 $$
9\Vol _{tt}
=-2\int_M[u^{-1}f-u^{-2}u_0\alpha_3]^2\vol_\vphi 
=-2\int_M[\p_0(\frac{a_3}{u})]^2\vol_\vphi .
$$
\end{prop}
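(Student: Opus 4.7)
The plan is to invoke Theorem \ref{Vtt diagonal} (available here thanks to Lemma \ref{Q diagonal} which shows $\mathcal{Q}$ is diagonal) and unpack the three integrands $-4(\log u)_t^2\vol_\vphi$, $I_1$, and $I_2$ under the normalisation \eqref{alpha normalisation}. The second equality in the claim is immediate: $\partial_0\alpha_3 = f$ together with the Leibniz rule gives $u^{-1}f - u^{-2}u_0\alpha_3 = \partial_0(\alpha_3/u)$. All the real content is in verifying the first equality.

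The strategy is to catalogue the severe support restrictions imposed by the normalisation, which prune the tensorial sums down to a handful of surviving terms. Since $\alpha = \alpha_3\, dx^3$, the raised-index vector $\alpha^{j_2}$ is nonzero only for $j_2 = 3$, and $d\alpha = f\, dx^{03}$ forces $\alpha_{ij}$ to be supported on $\{i,j\}=\{0,3\}$. Because $u = u(x_0, x_3)$, the vector $(\log u)^{j_3}$ is supported on $j_3 \in \{0,3\}$, with entries $u^{-5/3}u_0$ and $u^{-5/3}u_3$ respectively (using the inverse metric \eqref{Inverse metric}). The forms $\omega_1, \omega_2, \omega_3$ read off directly from \eqref{hypers} and each have only two nonzero components. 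These constraints collapse the definitions of $A_{i_1 i_2, j_1 j_2}$ and $B_{i_1 i_2 i_3 i_4, j_1 j_2}$ down to a manageable set of pairings, each determined by matching $\{i_1,i_2,i_3,i_4\} = \{0,1,2,3\}$ to the available nonzero indices. Meanwhile $(\log u)_t = f/u$, $\vol_\vphi = u^{1/3}\vol_E$, and in $I_2$ the outer derivative $(\cdot)_{i_3}$ can be expanded by Leibniz, with $i_3 \in \{1,2\}$ contributing nothing since both $u$ and $\alpha_3$ depend only on $(x_0, x_3)$. The expectation is that after collecting everything,
\begin{equation*}
I_1 - I_2 - 4\int_M (\log u)_t^2 \vol_\vphi = -4\int_M \Bigl[\frac{f}{u} - \frac{u_0 \alpha_3}{u^2}\Bigr]^2 \vol_\vphi,
\end{equation*}
which is exactly the desired identity for $9\Vol_{tt}$.

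The main obstacle lies in the combinatorial bookkeeping: each of $I_1$ and $I_2$ decomposes into roughly a dozen signed contributions (signs coming from reordering $dx^{i_1 i_2 i_3 i_4}$ to $\pm dx^{0123}$), and the final answer must involve only $u_0$, so every term containing $u_3$ or $\partial_3 \alpha_3$ must cancel across $I_1$ and $I_2$. This cancellation is not manifest from the outset and appears to rely on the antisymmetry of $\omega_1\wedge\omega_2$-type products interacting with the symmetry of the factor $\alpha^{j_1}\alpha^{j_2}$ in $I_2$ (together with the corresponding structure of $B$ in $I_1$). I would therefore organise the expansion sector-by-sector in the triple $(j_1, j_2, j_3)$, treating the $j_3 = 0$ and $j_3 = 3$ contributions separately; this should make the cancellation of $u_3$-terms transparent and concentrate the surviving pieces into the perfect square $[\partial_0(\alpha_3/u)]^2$.
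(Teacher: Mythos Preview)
Your proposal is correct and follows the same route as the paper: invoke Theorem~\ref{Vtt diagonal} (legitimised by Lemma~\ref{Q diagonal}) and reduce the integrands $-4(\log u)_t^2\vol_\vphi$, $I_1$, $I_2$ under the normalisation~\eqref{alpha normalisation} to assemble the perfect square. The paper's own proof is extremely terse---it simply records the simplified integrands $I_1 = 4fu_0\alpha_3 u^{-3}\vol_\vphi$ and (with the sign of Theorem~\ref{Vtt diagonal} absorbed) $I_2 = [-4u_0^2\alpha_3^2 + 4u_0\alpha_3 u\,\partial_0\alpha_3]u^{-4}\vol_\vphi$ without displaying the index work---whereas you articulate the support analysis and correctly anticipate the $u_3$-cancellation that the paper leaves implicit.
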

\begin{proof}
Since $\mathcal Q$ is diagonal by \lemref{Q diagonal}, we further simplify \thmref{Vtt diagonal}, 
$
18\Vol _{tt}=\int_M -4\, f^2 \, u^{-2} \vol_\vphi+I_1+I_2,
$
in which,
\begin{align*}
I_1
 = 4 \,f\,u_0\,\alpha _{3}\,u^{-3}\,\vol_\vphi
,\quad I_2=[ 
-4\,u_0^2\,\alpha _{3}^2
+4\,u_0\,\alpha _{3}\,u\,\p_0\alpha _{3}
]\,u^{-4} \vol_\vphi.
\end{align*}
Therefore, geodesic concavity of the volume functional is obtained.
\end{proof}

\begin{lem}
Let $ \boldsymbol{\alpha}:=\alpha_3  d\theta^3 dx^3$ and $\mathcal V:=\frac{1}{3}\frac{\a_3}{u}\p_{x_0}$. Then \begin{itemize}
\item
$\pi_7^2\boldsymbol{\alpha}=\frac{1}{3}[\frac{\a_3}{u} \cdot d\theta^1dx^1+\frac{\a_3}{u}\cdot d\theta^2dx^2+\a_3\cdot d\theta^3dx^3]
=i_{\mathcal V}\vphi$.

\item 
$\pi_{14}^2\boldsymbol{\alpha}
=\frac{1}{3}[-\frac{\a_3}{u}\cdot d\theta^1dx^1-\frac{\a_3}{u}\cdot d\theta^2dx^2+2\a_3 \cdot d\theta^3dx^3],\quad \delta \pi_{14}^2\boldsymbol{\alpha}=0$.
	
\end{itemize}
\end{lem}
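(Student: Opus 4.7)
The plan is to exploit uniqueness of the decomposition $\Om^2 = \Om^2_7 \oplus \Om^2_{14}$. Since $\Om^2_7$ is parametrised by $V \mapsto i_V \vphi$, the first two bullet points reduce to checking that $i_{\mathcal V}\vphi$ equals the displayed sum and that $\eta := \boldsymbol{\alpha} - i_{\mathcal V}\vphi$ lies in $\Om^2_{14}$; the formula for $\pi^2_{14}\boldsymbol{\alpha}$ then follows by subtraction. The first part is immediate from \eqref{vphi}: only the three pieces $-d\theta^i \wedge dx^{0i}$ in $\vphi$ pair nontrivially with $\p_{x_0}$, so $i_{\p_{x_0}}\vphi = d\theta^1\wedge dx^1 + d\theta^2 \wedge dx^2 + u\, d\theta^3 \wedge dx^3$, and scaling by $\alpha_3/(3u)$ reproduces the stated expression.

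To verify $\eta \in \Om^2_{14}$, I would use the characterisation $\eta \wedge \psi = 0$, with $\psi$ recorded just above \eqref{eq:tau}. Because every component of $\eta$ has the shape $d\theta^i \wedge dx^i$, only the matching term of $\psi$ survives the wedge, and a short bookkeeping gives $d\theta^1\wedge dx^1 \wedge \psi = d\theta^2 \wedge dx^2 \wedge \psi = -u^{1/3}\, d\theta^{123}\wedge dx^{123}$ and $d\theta^3 \wedge dx^3 \wedge \psi = -u^{-2/3}\, d\theta^{123}\wedge dx^{123}$. Substituting the weights $(-\tfrac{\alpha_3}{3u},\, -\tfrac{\alpha_3}{3u},\, \tfrac{2\alpha_3}{3})$ of $\eta$ yields the cancellation $\tfrac{2}{3}\alpha_3 u^{-2/3} - \tfrac{2}{3}\alpha_3 u^{-2/3} = 0$, so $\eta \wedge \psi = 0$.

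Finally, for $\delta\pi^2_{14}\boldsymbol{\alpha} = 0$, I would compute the Hodge dual via \eqref{riemannian HS 1 hodge}, obtaining
\begin{equation*}
\ast\pi^2_{14}\boldsymbol{\alpha} = \tfrac{\alpha_3}{3}\bigl(d\theta^{23}\wedge dx^{023} + d\theta^{31}\wedge dx^{031}\bigr) - \tfrac{2\alpha_3}{3u}\, d\theta^{12}\wedge dx^{012}.
\end{equation*}
Since $\alpha_3$ depends only on $(x_0, x_3)$ (from normalisation \eqref{alpha normalisation} combined with $d\alpha = f\, dx^{03}$ in \eqref{dalpha f}) and $u$ likewise depends only on $(x_0, x_3)$ (by the cohomology constraint stated just before \eqref{vvi}), the first two summands have vanishing exterior derivative because $dx^0 \wedge dx^{023}$, $dx^3 \wedge dx^{023}$, and the analogous wedges with $dx^{031}$ all carry repeated factors. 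Only the third summand contributes, giving $d\ast\pi^2_{14}\boldsymbol{\alpha} = \tfrac{2}{3}\p_3(\alpha_3/u)\, d\theta^{12}\wedge dx^{0123}$. The conclusion then follows from canonicality: since $\boldsymbol{\alpha} = \delta\mathbf u$ is built from the Hodge potential $\mathbf u$, we have $\delta\boldsymbol{\alpha} = \delta^2\mathbf u = 0$, and a parallel Hodge-star calculation shows $\delta\boldsymbol{\alpha} = \p_3(\alpha_3/u)\cdot u^{1/3}\, d\theta^3$, forcing $\p_3(\alpha_3/u) = 0$. The main technical subtlety is sign-bookkeeping through the product Hodge star, kept manageable by the diagonal form of $\mathcal Q$ (\lemref{Q diagonal}) and of $g_4$ in \eqref{Inverse metric}.
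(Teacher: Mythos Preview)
The paper states this lemma without proof, so there is no argument of the author's to compare against.

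Your treatment of the first two formulas is correct. Contracting $\partial_{x_0}$ into \eqref{vphi} and rescaling gives exactly the displayed $\Omega^2_7$ piece, and the criterion $\eta\wedge\psi=0$ is the standard characterisation of $\Omega^2_{14}$; your wedge bookkeeping with $\psi$ is accurate and yields the required cancellation. The formula for $\pi^2_{14}\boldsymbol\alpha$ then follows by subtraction.

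For the final claim $\delta\pi^2_{14}\boldsymbol{\alpha}=0$, your Hodge-star computation is also correct and reduces the question to $\partial_3(\alpha_3/u)=0$. You then deduce this from $\delta\boldsymbol{\alpha}=\delta^2\mathbf u=0$, invoking that $\boldsymbol{\alpha}$ is the \emph{canonical} form. This is a plausible reading of the surrounding text, but be aware that the lemma as written only assumes $\boldsymbol{\alpha}=\alpha_3\,d\theta^3\wedge dx^3$ together with the normalisation \eqref{alpha normalisation}; that normalisation fixes $\partial_0\alpha_3=f$ and forces $\partial_1\alpha_3=\partial_2\alpha_3=0$, but leaves $\partial_3\alpha_3$ unconstrained, so by itself it does not give $\partial_3(\alpha_3/u)=0$. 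Your argument therefore needs either the canonical-form hypothesis you state, or the cohomogeneity-one restriction of Section~\ref{HS and OT} (where $u$ and $\alpha_3$ depend only on $x_0$ and the vanishing is automatic). Either interpretation is consistent with the paper's usage; you should just make the extra hypothesis explicit.
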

	
\begin{defn}	
Recall that an orientation-preserving diffeomorphism $f:S^1\rightarrow S^1$ is a lift $\mathbb R\rightarrow \mathbb R$ satisfies that $F$ is smooth, periodic $F(x+1)=F(x)+1$ and strictly increasing.
\end{defn}

\begin{lem}\label{diffeomorphism PDE solution}
Assume that the boundary values satisfy the consistency condition $\tilde u_0(x_0+C(x_3),x_3)=\tilde  u_1(x_0,x_3)$ for some function $C(y)$.
Then, the boundary value problem of the partial differential equation of $u(x,t)$ for $x\in S^1$ and $t\in [0,1]$,
\begin{equation}\label{diffeomorphism PDE}
\begin{split}
K(u_t)=u_t-3 \mathcal V (u)=0,\quad
u(x,0)=\tilde u_0,\quad u(x,1)=\tilde u_1
			\end{split}
\end{equation}
		has a smooth solution. Moreover, the boundary values are connected by a path of orientation-preserving diffeomorphisms on $S^1$.
\end{lem}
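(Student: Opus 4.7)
The plan is to recognise the equation $K(u_t)=u_t-\tfrac{\alpha_3}{u}\partial_{x_0}u=0$ as a transport equation in disguise, and then to exhibit an explicit interpolating solution built from rotations of $S^1$ in the $x_0$-direction. First I would observe that, once a smooth function $h(x_3,t)$ is chosen and one sets $\alpha_3:=h\cdot u$, the equation $K(u_t)=0$ becomes the linear transport equation $u_t=h(x_3,t)\,\partial_{x_0}u$; the normalisation $\partial_{x_0}\alpha_3=u_t$ in \eqref{alpha normalisation} is then automatic because $\partial_{x_0}(hu)=h\,u_{x_0}=u_t$. Thus solving the boundary value problem is reduced to exhibiting a transport field $h$ for which the flow maps $\tilde u_0$ to $\tilde u_1$.

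The consistency hypothesis $\tilde u_0(x_0+C(x_3),x_3)=\tilde u_1(x_0,x_3)$ singles out exactly such an $h$: take $h(x_3,t):=C(x_3)$, constant in $t$, and define
\[
u(x_0,x_3,t):=\tilde u_0\bigl(x_0+tC(x_3),\,x_3\bigr).
\]
Then $u(\cdot,\cdot,0)=\tilde u_0$, while $u(\cdot,\cdot,1)=\tilde u_1$ by the consistency condition. A direct differentiation gives $u_t=C(x_3)\,u_{x_0}$, so setting $\alpha_3:=C(x_3)\,u$ yields $K(u_t)=0$. Smoothness of $u$ follows from smoothness of $\tilde u_0$ and $C$, and the normalisation $\int u\,dx^{03}=1$ is preserved in time by the change of variables $y=x_0+tC(x_3)$, confirming the cohomological constraint $\int f\,dx^{03}=0$ from \eqref{vphit}.

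For each fixed $x_3$ and $t$ the map $\phi_{x_3,t}\colon x_0\mapsto x_0+tC(x_3)$ has lift with derivative $1>0$ and satisfies $\phi_{x_3,t}(x_0+1)=\phi_{x_3,t}(x_0)+1$, hence is an orientation-preserving diffeomorphism of $S^1$ in the sense defined above. The assignment $t\mapsto \phi_{x_3,t}$ is a smooth isotopy from the identity to $\phi_{x_3,1}$, and the constructed $u$ is precisely the pullback $u(\cdot,x_3,t)=\phi_{x_3,t}^{*}\tilde u_0(\cdot,x_3)$, establishing the second claim of the lemma.

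The computational content is light once the transport structure is recognised; the main obstacle is conceptual rather than analytic, namely the passage from $\mathbb R$ to $S^1$. One must ensure that $C(x_3)$, a priori only defined modulo $\mathbb Z$ by the consistency relation, admits a smooth real-valued lift depending smoothly on $x_3\in S^1$; this is where the hypothesis that a single function $C$ realises the consistency relation is used. Granted this smooth lift, the construction goes through verbatim and produces a smooth solution together with the desired diffeomorphism path.
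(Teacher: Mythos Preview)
Your proof is correct and follows essentially the same approach as the paper: both recognise the equation as a transport equation $u_t = h(x_3,t)\,u_{x_0}$ with $\alpha_3 = h\,u$, and solve it via translations $u(x_0,x_3,t)=\tilde u_0(x_0+\text{shift},\,x_3)$, then observe that such translations are orientation-preserving diffeomorphisms of $S^1$. The only difference is cosmetic: the paper first \emph{derives} the form $\alpha_3=e^{C_1(x_3,t)}u$ from the constraint $\partial_0\alpha_3=u_t$ and then integrates $C_3(t,x_3)=\int_0^t e^{C_1}\,ds$ to hit the boundary value $C(x_3)$, whereas you directly \emph{postulate} the simplest transport speed $h=C(x_3)$ and verify everything; your choice incidentally avoids the implicit positivity assumption $C(x_3)>0$ hidden in the paper's parametrisation $h=e^{C_1}$.
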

\begin{proof}
Since $\p_0 \alpha_3=u_t$ Definition \ref{operator L and Q}, we rewrite \eqref{diffeomorphism PDE} as
		$[\log (\alpha_3 u^{-1})]_0=0.$
Therefore, there exists a function $C_1(x_3,t)$ such that
		$\alpha_3=e^{C_1}u.$

Taking $x_0$ derivatives, we obtain the equation
		$u_t=e^{C_1}u_0.$
The general solution is
		$$u(x_0,x_3,t)=H(x_0+C_2,x_3),\quad 
		\frac{d}{d t}C_2=e^{C_1}.$$ Here, $H $ is an arbitrary positive smooth function. Then, by applying the boundary condition, we have the solution to \eqref{diffeomorphism PDE}
		$$u(x_0,x_3,t)=\tilde u_0(x_0+C_3(t,x_3),x_3),\quad C_3(t,x_3):=\int_0^t e^{C_1(s,x_3)}ds,$$
where $C_3(1,x_3)=C(x_3)$.	
We further see that the rigid rotation $F(x)=x+\int_0^t e^{C_1(s,x_3)}ds$ is an orientation-preserving diffeomorphism.
\end{proof}

\subsection{Cohomogeneity one hyper-symplectic structures}\label{HS and OT}
From now on, we assume that the function $u$ depends only on $x_0$ and consider normalised functions $f \in C_0^\infty(S^1)$.  
We identify the circle $S^1$ with the quotient $\mathbb{R}/2\pi\mathbb{Z}$ and denote by $C_0^\infty(S^1)$ the space of smooth functions on $S^1$ with zero mean, i.e., satisfying $\int_{S^1} f \, d\theta = 0$.

\begin{prop}\label{Lambda_3 +delta Om_3 Q}The canonical $1$-form in Definition \ref{vphit general alphai} becomes
$$ \alpha_c= Q(f)dx^{3},\quad \p_0 [Q(f)]=f.$$
Moreover, the normalisation \eqref{alpha normalisation} is satisfied.
\end{prop}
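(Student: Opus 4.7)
The strategy is to exploit the cohomogeneity-one symmetry to solve the Hodge Laplace equation $\Delta \mathbf u = X = -f\, d\theta^3 \wedge dx^{03}$ explicitly and then read off $\alpha_c$ from Definition~\ref{vphit general alphai}. Since $u=u(x_0)$ and $f=f(x_0)$, and the Laplacian commutes with translations in $\theta^i$ and $x_1, x_2, x_3$, the unique $d$-exact potential $\mathbf u$ inherits the same invariance. By the one-term structure of $X$ and Lemma~\ref{Laplacian general}, I would look for $\mathbf u = -d\theta^3 \wedge U$ with $U = d\nu$ for a cohomogeneity-one $1$-form $\nu = \sum_i V_i(x_0)\, dx^i$ on $\mathbb T^4$, yielding the ansatz
\[
U \;=\; V_1'(x_0)\, dx^{01} + V_2'(x_0)\, dx^{02} + V_3'(x_0)\, dx^{03}.
\]

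Next I would compute $\alpha_c = -(\Lambda_3 + \delta U)$ explicitly, using the diagonal metric \eqref{Inverse metric}, the Hodge-star formulas \eqref{riemannian HS 1 hodge}, and the diagonality of $\mathcal Q$ (Lemma~\ref{Q diagonal}), which collapses $\sum_j \lambda_{3j}\, d\lambda^{3j}$ to $-\tfrac{2}{3} d\log u$. A direct calculation gives
\[
(\alpha_c)_i \;=\; u^{-4/3}\,\partial_0\!\bigl(u^{2/3} V_i'\bigr)\ \ (i=1,2),
\qquad
(\alpha_c)_3 \;=\; u^{-1/3}\,\partial_0\!\bigl(u^{-1/3} V_3'\bigr),
\]
with no $dx^0$ component, so $(\alpha_c)_0 = 0$ automatically.

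The Hodge--Laplace condition $\Delta\mathbf u = X$ is equivalent to $d\alpha_c = f\, dx^{03}$, giving the three cohomogeneity-one ODEs $\partial_0(\alpha_c)_1 = \partial_0(\alpha_c)_2 = 0$ and $\partial_0(\alpha_c)_3 = f$. Integrating the first two, together with the periodicity of $V_1, V_2$ on $S^1$ and the positivity of $\int_{S^1} u^{4/3}\, dx_0$ and $\int_{S^1} u^{-2/3}\, dx_0$, forces $V_1' = V_2' = 0$, so $V_1, V_2$ may be taken to vanish. Hence $(\alpha_c)_1 = (\alpha_c)_2 = 0$; setting $Q(f) := (\alpha_c)_3$ we obtain $\alpha_c = Q(f)\, dx^3$ with $\partial_0 Q(f) = f$. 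The normalisation \eqref{alpha normalisation}, namely $\alpha_0 = \alpha_1 = \alpha_2 = 0$ and $\partial_0 \alpha_3 = f$, is then immediate.

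The only remaining technical point is to verify that the reduced equation $u^{-1/3}\partial_0(u^{-1/3}V_3') = Q(f)$ admits a smooth periodic solution $V_3$ on $S^1$; this is a standard Fredholm problem, solvable once the additive constant in $Q(f) = \int_0^{x_0} f\,ds + c$ is fixed so that the compatibility condition $\int_{S^1} u^{1/3}\,Q(f)\, dx_0 = 0$ holds. The subtlest point to argue carefully is that the cohomogeneity-one ansatz really does capture the canonical potential — this follows from Hodge uniqueness of the $d$-exact solution combined with the $\theta^i$- and $(x_1,x_2,x_3)$-translation invariance of $\Delta$ — so that any other solution would differ by a harmonic $d$-exact form and therefore vanish.
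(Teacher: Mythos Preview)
Your approach is essentially the same as the paper's: reduce the Hodge--Laplace equation $\Delta\mathbf u = X$ to ODEs on $S^1$ via a cohomogeneity-one ansatz $u_3 = \sum_i \beta_{0i}(x_0)\,dx^{0i}$, show that periodicity kills the $i=1,2$ components, and identify $(\alpha_c)_3$ as a specific primitive of $f$. The paper does exactly this (see the lemmas following Proposition~\ref{lem:Green}), so there is no conceptual novelty in your route.

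There is, however, a computational slip that matters. Your displayed formulas
\[
(\alpha_c)_i = u^{-4/3}\,\partial_0\bigl(u^{2/3} V_i'\bigr)\ (i=1,2),\qquad
(\alpha_c)_3 = u^{-1/3}\,\partial_0\bigl(u^{-1/3} V_3'\bigr)
\]
do not agree with the correct output of Definition~\ref{vphit general alphai} in this setting. Working through the Hodge stars \eqref{riemannian HS 1 hodge} and $\sum_j\lambda_{3j}\,d\lambda^{3j}=-\tfrac{2}{3}\,d\log u$ carefully (or applying Lemma~\ref{Laplacian general} directly, as the paper does), one finds
\[
(\alpha_c)_i = -\partial_0\bigl(u^{-2/3}\beta_{0i}\bigr)\ (i=1,2),\qquad
(\alpha_c)_3 = -\,u\,\partial_0\bigl(u^{-5/3}\beta_{03}\bigr),
\]
with $\beta_{0i}=V_i'$. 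For $i=1,2$ your argument still forces $V_i'\equiv 0$, so the normalisation \eqref{alpha normalisation} survives. But for $i=3$ the discrepancy changes the solvability condition: from $(\alpha_c)_3=Q(f)$ one gets $\partial_0\bigl(u^{-5/3}\beta_{03}\bigr)=-u^{-1}Q(f)$, so periodicity of $\beta_{03}$ requires $\int_{S^1}u^{-1}Q(f)\,dx_0=0$, not your $\int_{S^1}u^{1/3}Q(f)\,dx_0=0$. This is precisely the constant $C_1(f)$ in Definition~\ref{operator L and Q}, and it is the one used throughout the paper (e.g.\ in the Dirichlet metric \eqref{eq:Dmetric} and in the geodesic equation \eqref{1d geodesic}). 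With your weight the primitive would be the wrong one, and $\alpha_c$ would fail to equal the paper's $Q(f)\,dx^3$.
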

This follows immediately from the next proposition.
\subsubsection{Solving the potential form}
\begin{prop}\label{lem:Green}
The potential form and the canonical form become
\begin{equation*}
	\begin{split}
\mathbf u= u^{\frac{5}{3}}  \big\{L[Q_u(f)]+C_2\big\}d\theta^3\wedge dx^{03} ,\quad  \boldsymbol{\alpha_c}=\delta \mathbf u=Q(f) d\theta^3\wedge dx^3,
		\end{split}
	\end{equation*}
	where we define the constant $C_2:= -\big\{\int_{S^1}u^{-\frac{5}{3}} L[Q_u(f)] \big\} \cdot [\int_{S^1}u^{-\frac{5}{3}}]^{-1}$ to ensure that the potential $\mathbf u$ is a $d$-exact $3$-form.
\end{prop}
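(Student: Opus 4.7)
My strategy is to make a cohomogeneity-one ansatz for the potential, reduce the Hodge--Laplacian equation to a scalar first-order linear ODE in $x_0$, solve that ODE, and then pin down the free constant by imposing $d$-exactness on $\mathbb{T}^7$.

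Following the ansatz \eqref{vphit general 1 potential form} and using that $u$, and hence the entire geometry in $\mathcal M_1$, depends only on $x_0$, I would seek the potential in the form
$$\mathbf{u}=-d\theta^3\wedge u_3,\qquad u_3=-u^{5/3}\,\Psi(x_0)\,dx^{03},$$
which matches the expression in the statement upon setting $\Psi:=L[Q_u(f)]+C_2$. By Definition~\ref{vphit general alphai}, the second claim $\boldsymbol{\alpha_c}=\delta\mathbf{u}=Q(f)\,d\theta^3\wedge dx^3$ reduces to checking that $(\alpha_c)_3=-(\Lambda_3+\delta u_3)$ equals $Q(f)\,dx^3$, where $\partial_0 Q(f)=f$ by Proposition~\ref{Lambda_3 +delta Om_3 Q}.

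The computational core is the Hodge-star bookkeeping on $(X^4,g_4)$ with the diagonal metric \eqref{Inverse metric} and volume $\vol_4=u^{1/3}dx^{0123}$. From \eqref{riemannian HS 1 hodge} one has $*_4 dx^{03}=u^{-1}dx^{12}$, and the standard Hodge-star formula with $\sqrt{|g_4|}=u^{1/3}$ yields $*_4 dx^{012}=u^{1/3}dx^3$. A direct calculation then gives
$$\delta u_3=-*_4 d*_4 u_3=u^{1/3}\,\partial_0(u^{2/3}\Psi)\,dx^3=\Big[\tfrac{2}{3}u_0\Psi+u\,\partial_0\Psi\Big]dx^3.$$
Since $\mathcal Q$ is diagonal by \lemref{Q diagonal}, only the $(3,3)$ entry contributes in the definition of $\Lambda_3$ in Definition~\ref{vphit general alphai}, giving $\sum_j\lambda_{3j}d\lambda^{3j}=-\tfrac{2}{3}u^{-1}u_0\,dx^0$ and hence $\Lambda_3=\tfrac{2}{3}u_0\Psi\,dx^3$. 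Substituting both into $-(\Lambda_3+\delta u_3)=Q(f)\,dx^3$ and multiplying through by $u^{1/3}$ collapses everything to the linear first-order ODE
$$\partial_0\!\big(u^{4/3}\Psi\big)=-u^{1/3}\,Q(f).$$

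The ODE is solved by direct integration in $x_0$: its general solution is the sum of an explicit particular integral and a constant multiple of the homogeneous solution $u^{-4/3}$, and these two pieces match, term by term, the split $\Psi=L[Q_u(f)]+C_2$ with $L$ and $Q_u$ from the preceding discussion. Finally, $\mathbf u$ is automatically $d$-closed because its coefficient depends only on $x_0$ while $d\theta^3\wedge dx^{03}$ already contains $dx^0$; on the torus $\mathbb T^7$ the only obstruction to $d$-exactness of such a form is the vanishing of the $x_0$-mean of its coefficient. This single linear condition on $C_2$ gives exactly the quotient formula in the statement, and once $C_2$ is so chosen a primitive $\boldsymbol\mu\in\Omega^2(M)$ with $\mathbf u=d\boldsymbol\mu$ exists by Hodge theory. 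I expect the main technical obstacle to be the Hodge-star bookkeeping on $(X^4,g_4)$: keeping track of the signs and the powers of $u$ arising from $*_4$ on mixed-degree forms in the non-Euclidean metric is the step most prone to computational error, whereas once the ODE is isolated, solving it and fixing $C_2$ are mechanical.
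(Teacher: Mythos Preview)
Your overall strategy—make the cohomogeneity-one ansatz, reduce to a scalar first-order ODE in $x_0$, integrate, and then fix the free constant by a mean-zero (exactness) condition—is exactly the paper's route. The gap is that the ODE you write down is wrong, and its solution does \emph{not} match the formula in the statement. Integrating $\partial_0(u^{4/3}\Psi)=-u^{1/3}Q(f)$ gives
\[
\Psi=u^{-4/3}\Big[-\int_0^{x_0}u^{1/3}Q(f)\,+\,C\Big],
\]
so the homogeneous part is a multiple of $u^{-4/3}$, not a constant, and the particular integral is not $L[Q_u(f)]$. Your sentence ``these two pieces match, term by term, the split $\Psi=L[Q_u(f)]+C_2$'' is asserted without verification and is false (try $u\equiv 1$: your ODE forces $\partial_0\Psi=-Q(f)$, whereas the claimed $\Psi$ has $\partial_0\Psi=+Q(f)$).

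The correct ODE is simply $u\,\partial_0\Psi=Q(f)$, i.e.\ $\partial_0\Psi=Q_u(f)$, which integrates in one step to $\Psi=L[Q_u(f)]+C_2$. You can see this either by a direct seven-dimensional computation of $\delta\mathbf u=-{*}d{*}\mathbf u$ (one finds $*\mathbf u=\Psi\,d\theta^{12}\wedge dx^{12}$, and the rest is immediate), or via the paper's lemma preceding \lemref{dalpha f u3}, which gives $\alpha_3=-u\big(u^{-5/3}\beta_{03}\big)_0$; with $\beta_{03}=-u^{5/3}\Psi$ this is $u\Psi_0$. The discrepancy with your computation is precisely the sign you flagged as ``most prone to error'': in the derivation of \lemref{Laplacian general} the term $\ast_4 d\ast_4\Om$ enters so that, for $r=2$, the $\tfrac{2}{3}u_0\Psi$ contribution from $\Lambda_3$ \emph{cancels} the analogous piece of $\delta u_3$ rather than adding to it, leaving only $u\,\partial_0\Psi$. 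You anticipated the fragile step, but then skipped the one-line check that your ODE actually reproduces the claimed $\Psi$.
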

Assume that the $2$-form $u_3$ in the potential form $\mathbf u$ \eqref{vphit general 1} has the form $u_3= d \mu$, the $1$-form $\mu=\mu_i dx^i$, and $$u_3=\sum_{i=1,2,3} 
\beta_{0i}dx^0\wedge dx^i
+\beta_{12}dx^1\wedge dx^2
+\beta_{23}dx^2\wedge dx^3
+\beta_{31}dx^3\wedge dx^1
.$$ 
In which,
$\beta_{0i}= \p_0 \mu_i - \p_i \mu_0 ,\quad \beta_{ij}=\p_i \mu_j - \p_j \mu_i.$

Assume that the four coefficients $\mu_i$ are unknown functions depending only on $x_0 \in S^1$. 
Throughout this section, we denote derivatives with respect to $x_0$ by either $(\cdot)'$ or $(\cdot)_0$. 

First, we observe that only three of the coefficients $\beta_{0i}$ are non-vanishing:
\begin{align*}
\beta_{0i}= \p_0 \mu_i,\quad \beta_{ij}=0,\quad   i,j=1,2,3.
\end{align*}

\begin{lem}
We apply \lemref{Laplacian general} to $u_3$ and obtain
\begin{align*}
&\alpha_0=0 ,\quad 
\alpha_i=-(u^{-\frac{2}{3}}\beta_{0i})_0
, i=1,2,
 \quad
\alpha_3
=-u(u^{-\frac{5}{3}}\beta_{03})_0 . 
\end{align*}
\end{lem}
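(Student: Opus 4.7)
The plan is to apply \lemref{Laplacian general} to the ansatz $\mathbf u=-d\theta^3\wedge u_3$, where the preceding observation has reduced $u_3$ to $u_3=\beta_{01}\,dx^{01}+\beta_{02}\,dx^{02}+\beta_{03}\,dx^{03}$. Since $\boldsymbol{\alpha_c}=\delta\mathbf u=d\theta^3\wedge(\alpha_c)_3$ by \eqref{vphit general alpha bold} and $u_1=u_2=0$, it suffices to compute the $1$-form $\alpha:=(\alpha_c)_3=-(\Lambda_3+\delta u_3)$ on $X^4$ component-by-component in the coordinate coframe $dx^0,dx^1,dx^2,dx^3$.

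For $\Lambda_3$, the diagonality of $\mathcal Q$ (\lemref{Q diagonal}) with $\lambda_{33}=u^{2/3}$ collapses the defining sum:
\[
\sum_j\lambda_{3j}\,d\lambda^{3j}=\lambda_{33}\,d\lambda^{33}=u^{2/3}\,d(u^{-2/3})=-\tfrac{2}{3}\,d\log u,
\]
and since $u=u(x_0)$, this is proportional to $dx^0$. Using the Hodge-star identities \eqref{riemannian HS 1 hodge} for the diagonal metric $g_4=\mathrm{diag}(u^{2/3},u^{-1/3},u^{-1/3},u^{2/3})$, one computes $\ast_4 u_3=\beta_{01}\,dx^{23}+\beta_{02}\,dx^{31}+u^{-1}\beta_{03}\,dx^{12}$, after which
\[
\Lambda_3=-\tfrac{2}{3}\ast_4\!\bigl(d\log u\wedge\ast_4 u_3\bigr)=-\tfrac{2}{3}\,u^{-5/3}u_0\bigl(\beta_{01}\,dx^1+\beta_{02}\,dx^2+\beta_{03}\,dx^3\bigr),
\]
carrying no $dx^0$ component.

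For $\delta u_3$, I apply $\delta=\ast_4 d\ast_4$ (with the sign convention fixed by \lemref{Laplacian general}) to the $2$-form $u_3$ on $X^4$. Because $\beta_{0i}$ depends only on $x_0$, differentiating gives $d\ast_4 u_3=\beta_{01}'\,dx^{023}+\beta_{02}'\,dx^{031}+(\beta_{03}/u)'\,dx^{012}$ with $'=\partial_0$, and a second Hodge star produces a $1$-form supported on $dx^1,dx^2,dx^3$ alone. Summing $\Lambda_3+\delta u_3$ and negating immediately yields $\alpha_0=0$; for $i=1,2$ the two contributions combine via the product rule to $\alpha_i=-(u^{-2/3}\beta_{0i})_0$; and for $i=3$ the extra $u^{-1}$ factor from $\ast_4 dx^{03}=u^{-1}dx^{12}$ (traceable to $v_3=w^2$ versus $v_1=v_2=w^{-1}$ in \eqref{vvi}) propagates through, so that after unwinding $u^{1/3}(\beta_{03}/u)'$ the result rearranges to $\alpha_3=-u(u^{-5/3}\beta_{03})_0$.

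The main obstacle is purely bookkeeping: tracking the Hodge-star signs for the non-standard diagonal metric $g_4$ and pinning down the codifferential convention consistent with \lemref{Laplacian general}, along with the asymmetry between the $i=1,2$ and $i=3$ components arising from $\lambda^{33}=u^{-2/3}$ versus $\lambda^{11}=\lambda^{22}=u^{1/3}$. Once the signs are settled on a single component, the same template yields all four identities.
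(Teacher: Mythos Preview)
Your proposal is correct and follows exactly the route the paper indicates: the paper's own ``proof'' is the single clause ``We apply \lemref{Laplacian general} to $u_3$,'' and you have carried out precisely that application, computing $\Lambda_3$ via the diagonal collapse $\sum_j\lambda_{3j}\,d\lambda^{3j}=-\tfrac{2}{3}\,d\log u$ and $\delta u_3$ via the Hodge-star table \eqref{riemannian HS 1 hodge}, then recombining by the product rule. Your explicit tracking of the $u^{-1}$ factor distinguishing the $i=3$ component from $i=1,2$ is exactly the point, and your caveat about fixing the codifferential sign by reference to \lemref{Laplacian general} is appropriate, since the paper itself leaves that bookkeeping implicit.
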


Second, the Hodge-Laplacian equation \eqref{dalpha f} reduces to the following system of ordinary differential equations:
\begin{lem}\label{dalpha f u3}
$
[u^{-\frac{2}{3}} \beta_{0i}]_{00}=0, i=1,2,
 \quad
[-u(u^{-\frac{5}{3}} \beta_{03})_0 ]_0=f.	
$
\end{lem}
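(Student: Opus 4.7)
The plan is to simply unpack the exterior derivative equation $d\alpha_c = f\, dx^{03}$ using the ansatz and the formulas already derived in the preceding lemma. Recall that the preceding lemma gives the explicit components of the canonical $1$-form $\alpha_c = \alpha_i\, dx^i$:
\begin{equation*}
\alpha_0 = 0,\qquad \alpha_i = -(u^{-\frac{2}{3}}\beta_{0i})_0\ (i=1,2),\qquad \alpha_3 = -u(u^{-\frac{5}{3}}\beta_{03})_0,
\end{equation*}
where $\beta_{0i} = \partial_0 \mu_i$ and each $\mu_i$ is assumed to depend only on $x_0$.

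The first step is to observe that, under this ansatz, each coefficient $\alpha_j$ is a function of $x_0$ alone. Consequently, in the expansion
\begin{equation*}
d\alpha_c = \sum_{i<j} (\partial_i \alpha_j - \partial_j \alpha_i)\, dx^i \wedge dx^j,
\end{equation*}
only terms of the form $\partial_0 \alpha_j\, dx^0 \wedge dx^j$ can survive; all mixed components $\partial_i \alpha_j - \partial_j \alpha_i$ with $i,j \geq 1$ vanish identically, and $\partial_k \alpha_0 = 0$ trivially.

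The second step is to match coefficients of $d\alpha_c$ with those of $f\, dx^{03}$. Since $dx^{01}$ and $dx^{02}$ do not appear on the right-hand side, the coefficients $\partial_0 \alpha_1 = -(u^{-\frac{2}{3}}\beta_{01})_{00}$ and $\partial_0 \alpha_2 = -(u^{-\frac{2}{3}}\beta_{02})_{00}$ must each vanish, yielding the first pair of equations
\begin{equation*}
\bigl[u^{-\frac{2}{3}}\beta_{0i}\bigr]_{00} = 0,\quad i = 1,2.
\end{equation*}
The $dx^{03}$ component gives $\partial_0 \alpha_3 = f$, i.e.\ $\bigl[-u(u^{-\frac{5}{3}}\beta_{03})_0\bigr]_0 = f$, completing the reduction.

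There is essentially no obstacle here: the cohomogeneity-one ansatz $\mu_i = \mu_i(x_0)$ kills every cross-term in $d\alpha_c$, and the lemma becomes a direct computation. The only thing worth double-checking is the sign conventions and the factor $u$ appearing in front of $(u^{-5/3}\beta_{03})_0$ in $\alpha_3$ (as opposed to the simpler expression for $\alpha_1,\alpha_2$), which traces back to the different normalisation $\lambda^{33} = u^{-2/3}$ versus $\lambda^{ii} = u^{1/3}$ for $i=1,2$ in \lemref{Q diagonal} and the expression for $\Lambda_3$ in Definition~\ref{vphit general alphai}. Once these signs are verified against the previous lemma, the statement follows immediately.
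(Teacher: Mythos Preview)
Your proposal is correct and follows exactly the same approach as the paper: the paper simply states that the Hodge--Laplacian equation \eqref{dalpha f} reduces to the displayed system of ODEs, and your argument spells out this reduction by expanding $d\alpha_c$ under the cohomogeneity-one ansatz and matching the $dx^{0j}$ coefficients against $f\,dx^{03}$. There is nothing to add.
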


For the first two ODEs, we observe that $\beta_{0i} = c_i u^{2/3}$ for some constants $c_i$. However, since $\beta_{0i} \in C_0^\infty(S^1)$ must have zero mean, this necessarily implies $c_i = 0$ for $i=1,2$. We therefore conclude that
\[
\alpha_1 = \alpha_2 = 0.
\]

The remaining coefficient $\beta_{03}$ can be determined by integrating the second ODE in Lemma~\ref{dalpha f u3} twice, subject to appropriate normalisation ensuring $\beta_{03} \in C_0^\infty(S^1)$.

\begin{defn}[Operators $L$ and $Q$]\label{operator L and Q}
For any $f\in C_0^\infty (S^1)$, we write
\begin{itemize}
\item $L(f):= F-\int_{S^1} F ,\quad F(x):= \int_0^x f(\tau)d\tau$,
\item $Q(f):= F+C_1(f),\quad C_1(f):=  -\big[\int_{S^1}u^{-1}F  \big]\big[ \int_{S^1} u^{-1} \big]^{-1}$ .
\end{itemize}
They are both particular solutions to the linear first-order  ordinary differential equation $[L(f)]'=f$ on $S^1$. Also, both $L(f)$ and $Q_u(f):= u^{-1} Q(f)$ lie in $C_0^\infty(S^1)$.
\end{defn}

\begin{proof}[Proof of \propref{lem:Green}]
 Applying Definition \ref{operator L and Q} twice to integrate the second equation \lemref{dalpha f u3}, we have $(\beta_{03}u^{-\frac{5}{3}})' u=- Q(f) $, then	
	$$-\beta_{03}u^{-\frac{5}{3}}=L[Q_u(f)]+C_2$$ such that $\beta_{03}$ lies in $ C_0^\infty(S^1)$. The expressions of $\mathbf u$ and $\delta \mathbf u$ follow from \lemref{Laplacian general}.
\end{proof}
	
\subsubsection{Dirichlet metric}
\begin{prop}Let $X:=-\underline{f}$, $Y:=-\underline{g}$ 
be vectors in $T_\varphi \mathcal{M}_1$. Then
\begin{equation}\label{eq:Dmetric}
		\begin{split}
			\mathcal G_\vphi(X,Y)
			&= \int_M u^{-1} Q (f)Q(g)
			=-\int_M L[Q_u(f)]  g  .
			\end{split}
		\end{equation}
\end{prop}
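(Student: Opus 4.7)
My plan is to combine the reduced expression of the Dirichlet metric on $\mathcal{M}_s$ from Lemma~\ref{eq:Dmetric 3}(2) with the explicit canonical $1$-forms from Proposition~\ref{lem:Green}, and then to integrate by parts on the circle $S^1$ to obtain the second, more compact expression.

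For the first equality, I would specialise Lemma~\ref{eq:Dmetric 3}(2) to the cohomogeneity-one ansatz. By \eqref{vphit} both tangent vectors only populate the slot $i=j=3$, by Lemma~\ref{Q diagonal} the matrix $\lambda$ is diagonal with $\lambda^{33}=u^{-2/3}$, and by \eqref{Inverse metric} we have $g_4(dx^3,dx^3)=u^{-2/3}$. Hence the sum in Lemma~\ref{eq:Dmetric 3}(2) collapses to a single term:
\begin{align*}
\mathcal G_\vphi(X,Y)=\int_M \lambda^{33}\,g_4\bigl((\alpha_c)_3,(\beta_c)_3\bigr)\,u^{1/3}\,\vol_E.
\end{align*}
Substituting $(\alpha_c)_3=Q(f)\,dx^3$ and $(\beta_c)_3=Q(g)\,dx^3$ from Proposition~\ref{lem:Green} and collecting the three $u$-factors $u^{-2/3}\cdot u^{-2/3}\cdot u^{1/3}=u^{-1}$ produces the first identity
\begin{align*}
\mathcal G_\vphi(X,Y)=\int_M u^{-1}\,Q(f)\,Q(g)\,\vol_E.
\end{align*}

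For the second identity, I would rewrite $u^{-1}Q(f)=Q_u(f)$, invoke Definition~\ref{operator L and Q} to use $[L(Q_u(f))]'=Q_u(f)$ together with $[Q(g)]'=g$ on $S^1$, and integrate by parts in $x^0$. Periodicity of $S^1$ kills the boundary contribution and the Fubini structure on $M=\mathbb{T}^3\times\mathbb{T}^4$ (with $u,f,g$ depending only on $x^0$) reduces the spatial integral to a one-dimensional integral, yielding
\begin{align*}
\int_{S^1} Q_u(f)\,Q(g)\,dx^0 = -\int_{S^1} L[Q_u(f)]\,g\,dx^0.
\end{align*}

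The only real issue, and it is a minor one, is checking that the normalising constants $C_1(f)$ in Definition~\ref{operator L and Q} and $C_2$ in Proposition~\ref{lem:Green} do not spoil the integration by parts. This is automatic: both $L[Q_u(f)]$ and $Q_u(f)$ lie in $C_0^\infty(S^1)$ by construction, while $Q(g)$ is smooth and periodic, so only the algebraic relations $L'=Q_u$ and $Q'=g$ enter the computation. The same care is needed to confirm that the Fubini reduction is valid—i.e.\ that the integrand is genuinely independent of $x^1,x^2$ and the $\theta$-coordinates—but this follows directly from the ansatz in Section~\ref{Torus fibration}.
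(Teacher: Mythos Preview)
Your proof is correct and follows essentially the same route as the paper for the first equality: specialising the inherited Dirichlet metric (Lemma~\ref{eq:Dmetric 3}(2), equivalently Lemma~\ref{metric mixed}) using $\lambda^{33}=u^{-2/3}$, $g_4(dx^3,dx^3)=u^{-2/3}$, and $(\alpha_c)_3=Q(f)\,dx^3$ from Proposition~\ref{lem:Green}/\ref{Lambda_3 +delta Om_3 Q}.

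For the second equality the paper takes a slightly different route: it inserts the explicit potential $\mathbf u=u^{5/3}\{L[Q_u(f)]+C_2\}\,d\theta^3\wedge dx^{03}$ from Proposition~\ref{lem:Green} directly into the Green-operator form \eqref{gradient metric G}, $\mathcal G_\vphi(X,Y)=\int_M g_\vphi(GX,Y)\vol_\vphi$, which produces $-\int_M L[Q_u(f)]\,g$ immediately (the constant $C_2$ drops out against $\int g=0$). Your integration-by-parts argument on $S^1$, using $[L(Q_u(f))]'=Q_u(f)$ and $[Q(g)]'=g$, is an equally valid and arguably more elementary derivation of the same identity; it avoids computing $g_\vphi$ on the $3$-form $\mathbf u$ at the cost of checking the periodicity of $L[Q_u(f)]$ and $Q(g)$, which you have done.
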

\begin{proof}
It follows from inserting \propref{lem:Green} in \eqref{gradient metric G}, or plugging \propref{Lambda_3 +delta Om_3 Q} into \lemref{metric mixed}. 
\end{proof}


\subsubsection{Canonical connection}
	\begin{prop}\label{defn:con}
		Let $\vphi(t)$ be a smooth path in $\mathcal{M}_1$ and $\vphi_t=X$. For any $Y$, the Levi-Civita connection is given by
		\begin{equation*}
		\begin{split}
		&D_X Y=Y_t+\mathbf  P(X,Y), \quad \mathbf  P(X,Y):= U' d\theta^3\wedge dx^{03},\\
			&U:=\frac{fQ(g)+Q(f)g}{2u}+ \frac{u}{2}\bigg[\frac{Q(f)Q(g)}{u^2}\bigg]'
			=\frac{fQ(g)+Q(f)g}{u}-\frac{Q(f)Q(g)u'}{u^2}.
		\end{split}
		\end{equation*}
	\end{prop}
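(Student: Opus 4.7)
My plan is to derive the stated formula by specialising the general $L^2$ Levi-Civita connection of Definition \ref{connection formula} (shown to be Levi-Civita in Theorem \ref{connection free compatible}) to the cohomogeneity-one torus setting defining $\mathcal{M}_1$. The first step is to use Proposition \ref{Lambda_3 +delta Om_3 Q} to represent the $3$-form tangent vectors $X = -f\,d\theta^3\wedge dx^{03}$ and $Y = -g\,d\theta^3\wedge dx^{03}$ by the canonical $1$-forms $\alpha = Q(f)\,dx^3$ and $\beta = Q(g)\,dx^3$ on $\mathbb{T}^4$. The diagonal form of $g_4$ in \eqref{Inverse metric} immediately gives the musical duals $\alpha^\sharp = u^{-2/3}Q(f)\,\p_{x_3}$ and $\beta^\sharp = u^{-2/3}Q(g)\,\p_{x_3}$.

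The next step is to compute the individual contractions appearing in $P(\alpha,\beta)$. From \eqref{hypers}, $i_{\p_{x_3}}\om_1 = -dx^2$, $i_{\p_{x_3}}\om_2 = dx^1$, and $i_{\p_{x_3}}\om_3 = -u\,dx^0$; Lemma \ref{ixalpha} then yields $i_{\p_{x_3}}\Om_3 = -dx^{123}$ and $i_{\p_{x_3},\p_{x_3}}\Om_3 = dx^{12}$. Plugging these, together with the Hodge star identities \eqref{riemannian HS 1 hodge}, into each of the five summands of $P(\alpha,\beta)$ in Definition \ref{connection formula}, every term reduces to a $1$-form proportional to $dx^3$. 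The two algebraic contractions $i_{\beta^\sharp}A^\alpha_3 + i_{\alpha^\sharp}A^\beta_3$ contribute the symmetric piece $(fQ(g) + Q(f)g)/u$ times $dx^3$ after using $[Q(f)]' = f$, while the three $\delta$-terms combine into the total derivative $u\,[Q(f)Q(g)/u^2]'\,dx^3$; all other partial derivatives vanish because nothing depends on $x_1, x_2, x_3$ in the cohomogeneity-one setting. Collecting the contributions and dividing by $-6$ yields $P(\alpha,\beta) = U\,dx^3$ with $U$ as claimed, and lifting to the $3$-form level via $\mathbf{P}(X,Y) \sim d\theta^3\wedge dP(\alpha,\beta)$ (with sign fixed by $X = -d\theta^3\wedge d\alpha$) gives the stated $\mathbf{P}(X,Y) = U'\,d\theta^3\wedge dx^{03}$.

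The main obstacle is the careful bookkeeping of the five $\delta$-terms on $\mathbb{T}^7$ inside the connection formula. The cleanest route is to reduce every ambient Hodge star $\ast$ to $\ast_4$ via the product structure \eqref{riemannian HS 1 hodge}, so that the only surviving exterior derivative is $\p_{x_0}$ and produces the factor $U'$. As a consistency check, one may verify torsion-freeness directly from the manifest $(f,g)$-symmetry of $U$, and verify metric compatibility via the Koszul formula applied to $\mathcal{G}_\vphi(X,Y) = \int_{S^1} u^{-1}Q(f)Q(g)\,dx^0$; the latter reduces to integration by parts on $S^1$ using $[Q(f)]' = f$, and the mean-zero constants $C_1(f), C_1(g)$ in the definition of $Q$ drop out of the final expression.
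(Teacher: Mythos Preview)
Your approach is exactly the paper's: specialise Definition \ref{connection formula} to $\mathcal{M}_1$ using the canonical $1$-forms $\alpha_c=Q(f)\,dx^3$, $\beta_c=Q(g)\,dx^3$, compute each contraction explicitly (this is the content of Lemma \ref{A torus}, stated immediately after the proposition), and lift via $\mathbf P(X,Y)=-d\theta^3\wedge dP(\alpha,\beta)$.

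Two small arithmetic slips in your outline are worth correcting before you write it up. First, the paper's Lemma \ref{A torus} gives $i_{\beta^\sharp}A^\alpha_3=3fQ_u(g)\,dx^3$ (and symmetrically), so the sum of the two algebraic contractions is $3\bigl(fQ(g)+gQ(f)\bigr)/u\,\,dx^3$, not $(fQ(g)+gQ(f))/u$; the factor $3$ comes from the three summands in the definition of $A^\alpha_3$ in Lemma \ref{pg}. Likewise, each of the three $\delta$-terms (with the signs they carry in Definition \ref{connection formula}) equals $u\,[Q(f)Q(g)/u^2]'\,dx^3$, so their sum is $3u\,[Q(f)Q(g)/u^2]'\,dx^3$. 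Second, after dividing by $-6$ you obtain $P(\alpha,\beta)=-U\,dx^3$, not $+U\,dx^3$; the minus is then absorbed by the lift $\mathbf P(X,Y)=-d\theta^3\wedge dP(\alpha,\beta)=U'\,d\theta^3\wedge dx^{03}$. With those factors in place your computation closes.
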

	\begin{proof}
	It follows directly from inserting the next \lemref{A torus} into the connection for $1$-forms $\a,\b$ (Definition \ref{connection formula}). Then we use the relation $\mathbf  P(X,Y)=-d\theta^3\wedge d   P(\a,\b)$.
	\end{proof}

\begin{lem}\label{A torus}In Definition \ref{connection formula}, the mixed terms become
$$
  i_{\beta_c^\sharp} A^{\alpha_c}_3=3 f Q_u(g)dx^3,
\quad 
 i_{\alpha_c^\sharp} A^{\beta_c}_3=3 g Q_u(f)dx^3.
$$
All the following three terms $u^{\frac{2}{3}}\delta[\alpha_c\wedge \ast_4(u^{-\frac{2}{3}} i_{\beta_c^\sharp}\Om_3)]$,
$u^{\frac{2}{3}}\delta[\beta_c\wedge \ast_4(u^{-\frac{2}{3}} i_{\alpha_c^\sharp}\Om_3)]$,
$-u^{\frac{2}{3}}\delta[\ast_4(u^{-\frac{2}{3}}  i_{\alpha_c^\sharp,\beta_c^\sharp} \Om_3)]$
are equal to 
$$-u [Q_u(f)Q_u(g)]'dx^3.$$
\end{lem}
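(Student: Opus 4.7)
The plan is to verify the five identities by direct calculation in local coordinates on $\mathbb{T}^4$, leveraging two simplifications that hold in this cohomogeneity-one setup. First, from \propref{Lambda_3 +delta Om_3 Q} the canonical 1-forms are $\alpha_c = Q(f)\,dx^3$ and $\beta_c = Q(g)\,dx^3$, so via the metric \eqref{Inverse metric} both sharps $\alpha_c^\sharp = u^{-2/3} Q(f)\,\partial_{x_3}$ and $\beta_c^\sharp = u^{-2/3} Q(g)\,\partial_{x_3}$ are parallel to $\partial_{x_3}$. This forces $i_{\alpha_c^\sharp} i_{\beta_c^\sharp}\omega_k = 0$ for each $k$, which by \lemref{ixalpha} collapses $i_{\alpha_c^\sharp,\beta_c^\sharp}\Om_3$ to its single surviving term $i_{\beta_c^\sharp}\omega_1 \wedge i_{\alpha_c^\sharp}\omega_2$. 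Second, $u,f,g$ depend only on $x_0$, so throughout the computation only $\partial_0$ contributes under the exterior derivative.

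For the first pair of identities $i_{\beta_c^\sharp} A^{\alpha_c}_3 = 3 f Q_u(g)\,dx^3$ and $i_{\alpha_c^\sharp} A^{\beta_c}_3 = 3 g Q_u(f)\,dx^3$, I would invoke \propref{I pointwise} together with the symmetry of $A^{\alpha_c}_3$ as a 2-tensor. Substituting $d\alpha_c = f\,dx^{03}$ and taking $W = \beta_c^\sharp$, the only nonzero components are those indexed along $\partial_{x_3}$; the reduction $Q(g)/u = Q_u(g)$ then extracts the factor $3 f Q_u(g)$, with the result a 1-form along $dx^3$. The companion identity follows by exchanging $f\leftrightarrow g$.

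For the three $\delta$-terms, the central observation I would rely on is that all three 2-forms inside $\delta$ are proportional: computing $i_{\partial_{x_3}}\omega_1 = -dx^2$ and $i_{\partial_{x_3}}\omega_2 = dx^1$, one finds
\[
\alpha_c \wedge \ast_4(u^{-\tfrac{2}{3}} i_{\beta_c^\sharp}\Om_3) \;=\; \beta_c \wedge \ast_4(u^{-\tfrac{2}{3}} i_{\alpha_c^\sharp}\Om_3) \;=\; -\ast_4(u^{-\tfrac{2}{3}} i_{\alpha_c^\sharp,\beta_c^\sharp}\Om_3),
\]
each equal to a scalar multiple of $u^{-1}Q(f)Q(g)\,dx^{03}$; the external minus sign in (c) of the lemma statement precisely restores the equality of all three quantities before applying $\delta$. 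Thus it suffices to compute $u^{2/3}\delta[F(x_0)\,dx^{03}]$ for a single scalar $F$, using $\ast_4 dx^{03} = u^{-1} dx^{12}$ and $\ast_4 dx^{012} = u^{1/3} dx^3$, and then to invoke $Q(f)Q(g)u^{-2} = Q_u(f) Q_u(g)$ to recognise the derivative.

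The main difficulty I anticipate is not the algebra but keeping consistent track of the $\ast_4$ and $\delta$ sign conventions on 2-forms in four dimensions, together with the orientation on $X^4$. A good internal cross-check will be \propref{defn:con}: since the coefficient of $[Q_u(f)Q_u(g)]'$ in $U$ is $+u/2$ and the three delta-terms enter \propref{connection formula} with combined sign opposite to the two $A^\alpha_3$-terms, the three terms must individually equal $-u[Q_u(f)Q_u(g)]'dx^3$, which pins down the sign unambiguously.
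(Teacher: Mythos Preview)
Your proposal is correct and follows essentially the same approach as the paper: compute $i_{\partial_3}\omega_1=-dx^2$, $i_{\partial_3}\omega_2=dx^1$, $i_{\partial_3,\partial_3}\Om_3=dx^{12}$, plug into the formula for $A_3^\alpha$, and reduce the $\delta$-terms via the Hodge-star identities in \eqref{riemannian HS 1 hodge}; your observation that the three inner 2-forms literally coincide is a clean shortcut the paper leaves implicit (it computes one and says ``the last two identities are obtained in the same way''). One caveat: \propref{I pointwise} computes $A_3^\alpha(\alpha^\sharp,W)$ with the \emph{same} $\alpha$ in both slots, not $A_3^{\alpha_c}(\beta_c^\sharp,\cdot)$, so the appropriate reference for the first pair of identities is the defining formula for $A_3^\alpha(x,y)$ in \lemref{pg}, which is exactly what the paper invokes.
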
	
\begin{proof}
The 1-form in Proposition \ref{Lambda_3 +delta Om_3 Q} is lifted to be a vector $\beta_c^\sharp= u^{-\frac{2}{3}}Q(g)\p x_{3}$.
By \lemref{pg}, $ i_{\beta_c^\sharp} A^{\alpha_c}_3=A^{\alpha_c}_3(\beta_c^\sharp,\p x_{3}) dx^3$ and $A^{\alpha_c}_3(\beta_c^\sharp,\p x_{3})$ 
	\begin{align*}
 = \frac{Q_u(g)}{\vol_0}[
i_{\p x_{3}} d\alpha_c\wedge i_{\p x_{3}}  \omega_1\wedge \omega_2
+ i_{\p x_{3}}  d\alpha_c\wedge i_{\p x_{3}}  \omega_1\wedge \omega_2
+d\alpha_c\wedge  i_{{\p x_{3}} ,{\p x_{3}} } \Om_3].
  	\end{align*}
	In which, by \eqref{hypers}, \eqref{dalpha f} and \lemref{ixalpha}, it holds that
	\begin{align*}
i_{\p x_{3}} d\alpha_c=-fdx^0,\quad i_{\p x_{3}}  \omega_1=-dx^2,\quad i_{\p x_{3}}  \omega_2=dx^1,\quad i_{{\p x_{3}} ,{\p x_{3}} } \Om_3=dx^{12}.
  	\end{align*}	
So, we have proved the first identity and the second one is got similarly.	
	
Also, from these identities, we get
$i_{\beta_c^\sharp}\Om_3=i_{\beta_c^\sharp}  \omega_1\wedge \omega_2=-u^{-\frac{2}{3}}Q(g)dx^{123}$ 	and $i_{\alpha_c^\sharp,\beta_c^\sharp} \Om_3=u^{-\frac{4}{3}}Q(f)Q(g)dx^{12}$. 
By \eqref{riemannian HS 1 hodge}, $\ast dx^{123}=-u^{\frac{1}{3}}dx^0$, $\ast dx^{30}=-u^{-1}dx^{12}$ and $\ast dx^{012}=u^{\frac{1}{3}}dx^3$. 
With all this identities,
\begin{align*}
&u^{\frac{2}{3}}\delta[\alpha_c\wedge \ast(u^{-\frac{2}{3}} i_{\beta_c^\sharp}\Om_3\wedge d\theta^{123})]\\
&
=-u^{\frac{2}{3}}\delta[\alpha_c\wedge \ast(u^{-\frac{4}{3}}Q(g)dx^{123} d\theta^{123})]
=u^{\frac{2}{3}}\delta[ u^{-1}Q(f)Q(g)dx^{30}]\\
&
=-u^{\frac{2}{3}}\ast d[ u^{-2}Q(f)Q(g)dx^{12}]
=-u [ u^{-2}Q(f)Q(g)]' dx^{3}
  	\end{align*}
	and the last two identities are obtained in the same way.
\end{proof}
\subsection{Canonical geodesic}	
The canonical geodesic equation $D_X X=0$ is read directly from Proposition \ref{defn:con}:
\begin{align}\label{1d geodesic}
	u_t=f,\quad f_t=\tilde U _x
,\quad \tilde U:=\frac{Q(f)f}{u}+ \frac{u}{2}\bigg[\frac{Q^2(f)}{u^2}\bigg]_x.
\end{align}
In which, we recall the notion of the canonical form in Definition \ref{operator L and Q},
\begin{align}\label{expression of Q(f)}
Q(f)=\int_0^x f(\tau)d\tau-\big[\int_{S^1}u^{-1}\int_0^x f(\tau)d\tau  \big]\big[ \int_{S^1} u^{-1} \big]^{-1}.
\end{align}

The function $u(x,t)$ is a family of nonnegative functions defined on the circle $S^1$. The parameter $t$ takes values in the unit interval $[0,1]$. 
In order to construct geodesic segment, we need to solve \eqref{1d geodesic} with smooth boundary values $$u(x,0)=u_0,\quad u(x,1)=u_1.$$

\begin{lem}\label{equations geodesic}
The canonical geodesic equation \ref{1d geodesic} is equivalent to
\begin{enumerate}
\item \label{geodesic 1}
the equation of the canonical form $Q(f)_{t}=\tilde U$, by putting \eqref{A torus} into \corref{geodesic HS cor}, 
\item \label{IB}
the inviscid Burgers' equation 
	\begin{align*}
	q_t+qq_x=0,\quad q:=-Q_u(f)=-Q(f)u^{-1};
	\end{align*}
\item \label{HJ}
the Hamilton-Jacobi equation 
\begin{align*}
\phi_t+\frac{|\phi_x|^2}{2} =B,\quad B:=\{[\phi_t+\frac{1}{2} |\phi_x|^2]\}(x=0),\quad q:=\phi_x.
\end{align*}
\end{enumerate}
\end{lem}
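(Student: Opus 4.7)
My approach is to view the three formulations as successive $x$-integrations of the same first-order-in-$t$ evolution, organised around the antiderivative identity $[Q(f)]_x = f$ built into Definition \ref{operator L and Q}. Since $u_t = f$ forces $f$ to be the $x$-derivative of something, each item in the list is obtained from the next by differentiating in $x$; the direction requiring more care is the integration, where $x$-independent constants arising from the periodic boundary conditions on $S^1$ must be tracked.

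For (\ref{geodesic 1}), I would integrate the second canonical-geodesic equation $f_t = \tilde{U}_x$ once in $x$ using $f = [Q(f)]_x$, which gives $[Q(f)_t - \tilde{U}]_x = 0$, so $Q(f)_t - \tilde{U} = c(t)$ for some function of $t$ alone. The zero-mean normalisation built into $Q$ in Definition \ref{operator L and Q}, namely that $Q_u(f) = u^{-1}Q(f)$ has vanishing mean on $S^1$, pins down $c(t) = 0$; this is precisely the specialisation of \corref{geodesic HS cor} afforded by \lemref{A torus}. From here, (\ref{IB}) is a direct calculation: set $q = -Q(f)/u$, expand $q_t$ by the quotient rule using $u_t = f$ and $Q(f)_t = \tilde{U}$, and observe that the term $\frac{Q(f)f}{u}$ inside $\tilde{U}$ cancels $Q(f) u_t / u^2$ exactly, leaving
\[
q_t = -\frac{\tilde{U} u - Q(f) f}{u^2} = -\frac{1}{2}\!\left[\frac{Q^2(f)}{u^2}\right]_{\!x} = -\tfrac{1}{2}(q^2)_x = -q\, q_x.
\]
The reverse implication is the same algebra read backwards.

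For (\ref{HJ}), the key observation is that $q = -Q_u(f)$ has vanishing mean on $S^1$, so it admits a smooth periodic primitive $\phi$ with $\phi_x = q$. Integrating the Burgers equation in $x$ then gives $(\phi_t + \tfrac{1}{2}|\phi_x|^2)_x = 0$, so $\phi_t + \tfrac{1}{2}|\phi_x|^2$ is an $x$-independent function of $t$; evaluating at $x = 0$ identifies this function with the constant $B$ in the statement. Conversely, differentiating the Hamilton–Jacobi equation in $x$ recovers Burgers. The main subtlety I expect is bookkeeping of the $x$-independent pieces at each passage: verifying that the normalisation of $Q$ in Definition \ref{operator L and Q} is the correct one to make the three reformulations genuinely equivalent, and that periodicity of $\phi$ survives (which follows from $\int_{S^1} q\, dx = 0$). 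Beyond this, the proof reduces to the antiderivative relation $[Q(f)]_x = f$ and the explicit algebraic form of $\tilde{U}$.
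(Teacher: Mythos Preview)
Your proposal is correct and, for items (\ref{IB}) and (\ref{HJ}), follows exactly the paper's argument: expand $q_t$ via the quotient rule and $Q(f)_t=\tilde U$ to obtain Burgers, then integrate in $x$ to reach Hamilton--Jacobi with the $x$-independent constant fixed by evaluation at $x=0$.

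For item (\ref{geodesic 1}) you take a slightly different route. The paper (as the statement itself indicates) obtains $Q(f)_t=\tilde U$ directly as the specialisation of the $1$-form geodesic equation in \corref{geodesic HS cor} via \lemref{A torus}, so no integration constant ever appears. You instead integrate $f_t=\tilde U_x$ using $f=[Q(f)]_x$ and then argue that the resulting $c(t)$ vanishes from the normalisation $\int_{S^1}u^{-1}Q(f)=0$. This works: differentiating that normalisation in $t$ gives $\int u^{-1}Q(f)_t=\int u^{-2}fQ(f)$, while $\int u^{-1}\tilde U=\int u^{-2}fQ(f)+\tfrac12\int (Q^2(f)/u^2)_x=\int u^{-2}fQ(f)$ by periodicity, so $c(t)\int u^{-1}=0$. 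Your route is self-contained at the level of functions on $S^1$; the paper's route is shorter because the $1$-form equation already lives at the once-integrated level.
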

\begin{proof}
We use \eqref{geodesic 1} to compute $-q_t$ in \eqref{IB}
\begin{align*}
	&=[Q_u(f)]_t=[u^{-1}Q(f)]_t=-u^{-2}fQ(f)+u^{-1}Q(f)_{t} \\
	&=\frac{1}{2} [Q^2_u(f)]_x-A u^{-1}=qq_x-A u^{-1}.
\end{align*}

Putting $p=\phi_x$ in \eqref{IB}, we get
$
\p_x[\phi_t+\frac{1}{2} |\phi_x|^2]=A u^{-1}.
$ Then, integrating from $0$ to $x$, we have the equation \eqref{HJ} of $\phi$.
\end{proof}


\begin{rem}\label{optimal transport}
We could compare all these $G_2$ geometry findings to their counterparts in the theory of optimal transport.
If we interpret $u(t)$ as a smooth family of measures in the $2$-Wasserstein space, the velocity vector field $\nabla \phi$ is defined by the continuity equation
$$u_t=-\nabla (u \nabla \phi)$$ and the gradient metric is defined to be $$\int  u \nabla\phi_1 \nabla\phi_2.$$ It is exactly the same to the Dirichlet metric \eqref{eq:Dmetric}, as long as we identify the velocity field $\nabla\phi=-Q_u(f).$
\end{rem}
The concepts analogous to the Levi-Civita connection, geodesic concavity of functionals and entropy, and sectional curvature have appeared in optimal transport theory, in \cite{MR1842429}. Here, we reinterpret these findings through the point of views from $G_2$ geometry.

\subsection{Uniqueness and diffeomorphisms}	
\begin{prop}\label{uniqueness body}	
	The torsion-free $G_2$ structure in $\mathcal M_1$ is unique up to diffeomorphism.
		Specifically speaking, if we assume that $\vphi(t), t\in [0,1]$ is a geodesic \eqref{geodesic 1} in \lemref{equations geodesic} and $\vphi(0), \ \vphi(1)$ are both torsion-free,
		then the geodesic $\vphi(t)$ is generated by the orientation-preserving diffeomorphism.
	\end{prop}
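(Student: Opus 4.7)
\textbf{Proof plan for Proposition \ref{uniqueness body}.} The strategy is to turn geodesic concavity of the volume (Proposition \ref{geodesic concavity body}) into rigidity: if the endpoints are both volume-maximising, the Hessian must vanish identically along the geodesic, and the vanishing formula then forces the geodesic to solve the diffeomorphism PDE of \lemref{diffeomorphism PDE solution}.

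First, I would check that the torsion-free endpoints are global maxima of the volume on $\mathcal{M}_1$, not merely local ones. From the torsion formulas \eqref{eq:tau}, any torsion-free $\vphi\in\mathcal{M}_1$ has $u_0=u_3=0$, so $u$ is constant; the cohomology condition $\int u\,dx^{03}=1$ then forces $u\equiv 1$, i.e.\ $\vphi=\vphi^K$. In particular, $\Vol(\vphi(0))=\Vol(\vphi(1))=\Vol(\vphi^K)$, and by the Arithmetic--Geometric Mean bound \eqref{volume upper bound} this common value is the global maximum on $\mathcal{M}_1$. Concavity from Proposition \ref{geodesic concavity body} then sandwiches $\Vol(\vphi(t))$ between the chord and the maximum, so $\Vol(\vphi(t))$ is constant on $[0,1]$ and hence $\Vol_{tt}\equiv 0$.

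Next, I would exploit the explicit Hessian identity
\begin{equation*}
9\Vol_{tt}=-2\int_M\bigl[\partial_0\bigl(\tfrac{\alpha_3}{u}\bigr)\bigr]^2\vol_\vphi,
\end{equation*}
which gives $\partial_0(\alpha_3/u)\equiv 0$ pointwise along the geodesic. Thus $\alpha_3/u$ is independent of $x_0$; in the cohomogeneity-one setup (\secref{HS and OT}) where $u=u(x_0,t)$, this writes $\alpha_3=c(t)\,u$ for some function $c(t)$. Combining with the canonical-form normalisation $\partial_0\alpha_3=f=u_t$ from \propref{Lambda_3 +delta Om_3 Q} and \eqref{vphit}, I obtain the transport equation
\begin{equation*}
u_t=c(t)\,u_0,
\end{equation*}
which is precisely $K(u_t)=0$ of \lemref{diffeomorphism PDE solution} with $\mathcal{V}=(c(t)/3)\,\partial_{x_0}$.

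Finally, I would invoke \lemref{diffeomorphism PDE solution} to conclude that $u(x_0,t)=u(x_0+C(t),0)$ with $C(t)=\int_0^t c(s)\,ds$, so the family of symplectic forms $\omega_3(t)=u(x_0,t)dx^{03}+dx^{12}$ is obtained from $\omega_3(0)$ by the rigid rotation $F_t(x_0)=x_0+C(t)$ of $S^1$. This rotation lifts to an orientation-preserving diffeomorphism of $\mathbb{T}^7$ fixing the $\mathbb{T}^3$ factor and $\omega_1,\omega_2$, and hence generates the geodesic $\vphi(t)$ at the level of $G_2$-structures. The expected main obstacle is the last step: checking that the pointwise transport formula for $u$ genuinely lifts to a diffeomorphism action on $\vphi(t)$ compatible with the canonical geodesic equation \eqref{1d geodesic} — for this one verifies that $\mathcal{V}$ pulls back to the Hamiltonian vector field $\phi_x\partial_{x_0}$ of \lemref{equations geodesic}(\ref{HJ}) and appeals to Proposition \ref{diffeomorphismsgeodesic}.
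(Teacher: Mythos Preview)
Your proposal is correct and follows essentially the same route as the paper: use the concavity formula of \propref{geodesic concavity body} to force $\Vol_{tt}\equiv 0$ along the geodesic, read off $K(u_t)=0$ from the vanishing of the integrand, and then invoke \lemref{diffeomorphism PDE solution} to identify the path with a family of orientation-preserving rotations. The only substantive difference is in how you obtain $\Vol_{tt}\equiv 0$: the paper argues directly that torsion-free endpoints are \emph{critical points} of $\Vol$ (so $\Vol_t(0)=\Vol_t(1)=0$), and then monotonicity of $\Vol_t$ from $\Vol_{tt}\le 0$ forces $\Vol_t\equiv 0$; you instead prove the endpoints are \emph{global maxima} via the AM--GM bound \eqref{volume upper bound} and sandwich. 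Both are valid, yours is slightly more elaborate than necessary but has the virtue of identifying the torsion-free structure explicitly as $u\equiv 1$. Your closing remarks about lifting to a diffeomorphism of $\mathbb{T}^7$ and compatibility with the canonical geodesic equation are more careful than the paper's own proof, which simply cites \lemref{diffeomorphism PDE solution} and stops; the compatibility check you flag is precisely the content of \propref{diffeomorphismsgeodesic}.
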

\begin{proof}
			Let $\vphi(t)$ be a path of $G_2$ structures in $\mathcal M_1$ such that the end points are torsion-free and the volume functional is concave, i.e., the second-order derivatives of the volume functional along $\vphi(t)$ is non-positive. Then, we have $\Vol_{tt}=\Vol_{t}=0$.
			
		According to \propref{geodesic concavity body}, it suffices to solve the ordinary differential equation
		$K(u_t)=u_t-\frac{u'Q(u_t)}{u}=0,
		$ which is solved in \lemref{diffeomorphism PDE solution}.
		 Therefore, we conclude the proposition.
	\end{proof}

\begin{prop}\label{diffeomorphismsgeodesic}
Let $\sigma(t)$ be a family of orientation-preserving diffeomorphism. Then, the family $u(x,t)=\sigma^\ast u(x)$ 
satisfies the geodesic equation \eqref{geodesic 1} in \lemref{equations geodesic}.
\end{prop}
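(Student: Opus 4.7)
The plan is to verify directly that $u(x,t)=u_{0}(\sigma(t,x))$ satisfies one of the equivalent forms of the canonical geodesic equation from \lemref{equations geodesic}. I would target the Hamilton-Jacobi form \lemref{equations geodesic}(\ref{HJ}) (or equivalently the Burgers form \lemref{equations geodesic}(\ref{IB})), since these formulations are the most naturally adapted to the action of the diffeomorphism group of $S^{1}$.

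First, differentiating in $t$ gives $f=u_{t}=u_{0}'(\sigma)\sigma_{t}$, and comparing with $u_{x}=u_{0}'(\sigma)\sigma_{x}$ yields the identity $f=\xi\,u_{x}$ with $\xi(t,x):=\sigma_{t}(t,x)/\sigma_{x}(t,x)$. Here $\xi$ represents, in Lagrangian coordinates, the Eulerian velocity of the flow $\sigma$. I would then evaluate $Q(f)$ from Definition~\ref{operator L and Q} by integrating $f=\xi u_{x}$ in $x$ (using integration by parts when $\xi$ depends nontrivially on $x$) and fixing the additive constant $C_{1}(f)$ from the normalisation $\int_{S^{1}}u^{-1}Q(f)\,dx=0$. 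This should yield an expression of the form $Q(f)=u\,\xi-D(t)$ for a $t$-dependent correction $D(t)$ determined by the normalisation, so that
\[
q:=-Q(f)/u=-\xi+D(t)/u.
\]

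Second, I would define the potential $\phi$ by $\phi_{x}=q$ and verify the Hamilton-Jacobi equation $\phi_{t}+\tfrac12\phi_{x}^{2}=B(t)$, with the right-hand side independent of $x$; equivalently, I would check Burgers' equation $q_{t}+q\,q_{x}=0$ by direct substitution. After substituting $q=-\xi+D(t)/u$, using $u_{t}=\xi\,u_{x}$ to eliminate $u_{t}$, and differentiating $\xi=\sigma_{t}/\sigma_{x}$ in $t$ and $x$, the leading cancellation is essentially tautological: $\xi$ is transported along its own characteristics $\dot x=\xi$ because $\sigma$ is by construction the Lagrangian flow of $\xi$, which is the classical method-of-characteristics solution of the inviscid Burgers equation.

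The main technical obstacle will be tracking the normalisation correction $D(t)/u$: its $x$-dependence couples to the $q_{x}$ and $q^{2}$ terms and does not cancel a priori. To handle this I would differentiate the constraint $\int_{S^{1}}u^{-1}Q(f)\,dx=0$ in $t$, combine it with $u_{t}=\xi u_{x}$ to derive an ODE for $D(t)$, and then verify that the residual $x$-dependent contributions in $\phi_{t}+\tfrac12\phi_{x}^{2}$ telescope to a function of $t$ alone. This cancellation is the analytic expression of the geometric fact that diffeomorphism orbits in $(\mathcal{M}_{1},\mathcal{G})$ are totally geodesic, in agreement with the Otto-calculus viewpoint of \remref{optimal transport}.
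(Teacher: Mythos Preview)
There is a genuine gap in your approach, rooted in reading the statement more broadly than the paper intends. In the paper's proof the family is taken to be of rigid-rotation type $\sigma(t)(x)=x+C(t)$, exactly the solutions produced in \lemref{diffeomorphism PDE solution}; the proof writes $u(x,t)=H(x+C(t))$ from the outset. For an \emph{arbitrary} family $\sigma(t)$ of circle diffeomorphisms the assertion is false: in the Wasserstein picture of \remref{optimal transport}, a curve $t\mapsto\sigma(t)^{\ast}u$ is a geodesic only when the Lagrangian acceleration vanishes (displacement interpolation), not for every diffeomorphism path.

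Two concrete places in your outline reflect this. First, your formula $Q(f)=u\xi-D(t)$ requires $(u\xi)_x=\xi u_x=f$, i.e.\ $u\,\xi_x=0$; it holds only when $\xi$ is independent of $x$, which is precisely the rigid-rotation case. Second, the claim that ``$\xi$ is transported along its own characteristics because $\sigma$ is by construction the Lagrangian flow of $\xi$'' is circular: every time-dependent vector field generates a Lagrangian flow, but the Eulerian velocity of that flow satisfies Burgers only when particles have zero acceleration, which is a constraint on $\sigma$, not a tautology. A direct computation gives $\xi_t+\xi\xi_x=\sigma_{tt}/\sigma_x-\sigma_t^{2}\sigma_{xx}/\sigma_x^{3}$, which does not vanish in general. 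Consequently the ``residual'' you plan to cancel via the normalisation $D(t)$ is not a correction term but the actual obstruction.

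The paper takes a different route: it rewrites the canonical geodesic equation \eqref{geodesic 1} of \lemref{equations geodesic} as
\[
u_t\Big(\log\tfrac{u_t}{u_x}\Big)_t=\frac{-K\,u_{tx}}{u_x}+\frac{u_t}{u}K+\Big[K\,\tfrac{Q(u_t)}{u}\Big]_x,
\]
with $K=K(u_t)$ as in \lemref{diffeomorphism PDE solution}, and then observes that for $u(x,t)=H(x+C(t))$ one has $K(u_t)=0$ and $u_t/u_x=e^{C(t)}$, which makes both sides vanish. If you prefer your Burgers route, restrict from the start to $\xi=\xi(t)$ independent of $x$; then $Q(f)=\xi u-D(t)$ is valid and the verification becomes a short direct computation.
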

\begin{proof}
The canonical geodesic \eqref{geodesic 1} in \lemref{equations geodesic} is rewritten as		
\begin{align*}
		u_t \bigg(\log\frac{u_t}{u_x}\bigg)_t
			=\frac{-Ku_{tx}}{u_x}+\frac{u_t}{u} K+\bigg[K\frac{Q(u_t)}{u}\bigg]_x.
\end{align*}
By definition, we know $u(x,t)=H(x+C(t))$ with arbitrary positive smooth function $H$ and $\p_tC(t)>0$.
Then, $u(t)$ solves $K(u_t)=0$ and equivalently $u_t=u_x e^{C(t)}$. Thus, $u(t)$ solves the geodesic equation.		
\end{proof}
	
\subsection{Length contraction}	
Comparing the variational formula \eqref{eq:2nd_vol chi general} and the metric \eqref{eq:Dmetric}, we see that the Euler-Lagrange equation of the $\chi$ volume is
	$
	LQ_u(g)=\chi_{u}=0,
	$
	which is actually, after differentiating twice and using Definition \ref{operator L and Q},
	$g=\tilde \tau'=0$,  $\tilde \tau :=  \chi_{uu} u' u$.
	We notice that if we set
$\tilde u:= u\chi_{u}-\chi$, then $\tilde u'= \tilde \tau$. Therefore, the gradient flow $u_t=-g$, which will be called the $\chi$ flow, has the expression
	\begin{equation}\label{weighted gradient flow 1}
	\begin{split}
	u_t=-\tilde u''	= -  \tilde \tau' 
	=- (\chi_{uu}u)  u''  - (\chi_{uuu}  u + \chi_{uu} )(u')^2.
	\end{split}
	\end{equation}

When $\chi(u)=u^p$, the flow is reduced to $u_t=-(p-1)(u^p)'',$ which is called the 1-d fast diffusion equation $0<p<1$, or the porous medium equation $p> 1$.
When $\chi(u)=u(1-\log u)$, the $\chi$ flow coincides with the heat equation.

\begin{defn}
We define
\begin{itemize}
\item the Dirichlet norm of $u_t$: $\|u_t\|^2_{\mathcal G}:=\mathcal G_\vphi(u_t,u_t)= \int u^{-1} Q (u_t)^2$;
\item the energy $\mathcal E(u):= \int u^{-1} \tilde\tau^2 $,\quad $\mathcal F(\tilde\tau):=\int \eta(\tilde \tau)$;
\item the entropy $\mathcal F(u):= \int \eta(u)$,
\end{itemize}
where $\eta(y)$ is a weight function, which could be convex, e.g. $y\log y$ and $y^p, p\geq 1$, or concave.
\end{defn}
\begin{lem}$Q (u_t)=-\tilde\tau$
and $\|u_t\|^2_{\mathcal G}=\mathcal E(u)$, along the $\chi$ flow.
\end{lem}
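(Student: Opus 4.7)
The plan is to reduce both claims to a single characterisation of the canonical operator $Q$. From Definition~\ref{operator L and Q}, for any $h \in C_0^\infty(S^1)$ the function $Q(h)$ is uniquely determined by two properties: $Q(h)' = h$ and $\int_{S^1} u^{-1} Q(h)\, dx = 0$. Uniqueness is clear because any two candidates differ by a constant, and the vanishing integral condition (together with $\int_{S^1} u^{-1} \, dx > 0$) forces that constant to be zero. So to prove $Q(u_t) = -\tilde\tau$ it suffices to check these two properties for $h := u_t$ and the candidate $-\tilde\tau$.

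The first property follows immediately from the $\chi$ flow equation \eqref{weighted gradient flow 1}: by construction $\tilde u' = \tilde \tau$, so $u_t = -\tilde u'' = -\tilde\tau'$, which rewritten reads $\partial_x(-\tilde\tau) = u_t$. As a byproduct, $u_t$ is a derivative on $S^1$ and hence lies in $C_0^\infty(S^1)$, so $Q(u_t)$ is well-defined. For the normalisation, I would use the explicit form $\tilde\tau = \chi_{uu} u' u$ to compute
\[
u^{-1}\tilde\tau = \chi_{uu} u' = \partial_x(\chi_u),
\]
which, being a total derivative on the circle, integrates to zero: $\int_{S^1} u^{-1}(-\tilde\tau)\, dx = 0$. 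Together, the two properties give $Q(u_t) = -\tilde\tau$.

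The second claim is then an immediate substitution into the Dirichlet norm \eqref{eq:Dmetric}:
\[
\|u_t\|^2_{\mathcal G} = \int_{S^1} u^{-1} Q(u_t)^2 \, dx = \int_{S^1} u^{-1} \tilde\tau^2 \, dx = \mathcal E(u).
\]
I do not anticipate a genuine obstacle; the only delicate point is verifying that the normalisation constant $C_1(u_t)$ in the definition of $Q$ actually vanishes, which is precisely the content of the identity $u^{-1}\tilde\tau = \partial_x \chi_u$.
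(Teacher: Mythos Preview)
Your proof is correct and follows essentially the same approach as the paper's: both hinge on the observation that $u^{-1}\tilde\tau = \chi_{uu}u' = (\chi_u)'$ is a total derivative on $S^1$, forcing the normalisation constant in $Q$ to vanish. Your formulation via the characterising pair of conditions for $Q$ is slightly cleaner than the paper's, which routes the same computation through $L$ first, but the substance is identical.
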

	\begin{proof}
Plugging the flow equation \eqref{weighted gradient flow 1} into Definition \ref{operator L and Q}, we know $L(u_t)$ differs from $-\chi_{uu} u' u$ by a constant, but the constant does not affects the computation of $Q$. By Definition \ref{operator L and Q}, the normalisation constant $C_1[L(u_t)]$ required in $Q$ is $\int u^{-1}\chi_{uu} u' u=\int (\chi_{u})'=0$ as well. As a result, we obtain
$Q (u_t)= -\chi_{uu} u' u=-\tilde\tau$
and the expression of $\|u_t\|^2_{\mathcal G}$.
\end{proof}
	
	\begin{lem}\label{chi volume increase}
	Along the $\chi$ flow, 
	\begin{itemize}
	\item the $\chi$ volume functional is non-decreasing 
$\p_t\Vol_\chi=\mathcal E(u)\geq 0;$
	\item the energy $\mathcal F(\tilde\tau)$ is monotone, $\p_t \mathcal F(\tilde\tau)=\int \eta_{yy}\chi_{uu} (\tilde \tau')^2 u $;
	\item the entropy $\mathcal F(u)$ is monotone, $\p_t \mathcal F(u)=\int \eta_{yy}\chi_{uu}( u' )^2 u$.
	\end{itemize}
	\end{lem}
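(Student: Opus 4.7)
The plan is to exploit three key algebraic identities and then use integration by parts on the circle $S^1$ (with no boundary terms, as everything is periodic). These identities are: the flow equation $u_t = -\tilde u'' = -\tilde\tau'$ from \eqref{weighted gradient flow 1}; the definition $\tilde\tau = \tilde u' = \chi_{uu}\, u\, u'$, obtained by differentiating $\tilde u = u\chi_u - \chi$; and the derived relation $\tilde u_t = \chi_{uu}\, u\, u_t$, which follows by computing $\p_t(u\chi_u - \chi) = \chi_u u_t + u\chi_{uu}u_t - \chi_u u_t$. Throughout the argument, the exponent matching $\tilde\tau' = -u_t$ is the mechanism that converts gradient expressions into squared quantities with the correct sign.

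For (i), I would start from the first variation \eqref{eq:2nd_vol chi general}, which gives $\p_t\Vol_\chi = \int_M \chi_u u_t\,\vol_E$. Substituting $u_t = -\tilde\tau'$ and integrating by parts in $x_0$ yields $\int \chi_{uu} u' \, \tilde\tau\,dx_0$. Using $\chi_{uu} u' = \tilde\tau/u$ then converts the integrand to $\tilde\tau^2/u$, which is precisely $\mathcal E(u)$. The non-negativity is automatic from this expression. (Equivalently, this is the standard gradient-flow identity $\p_t\Vol_\chi = \|u_t\|^2_{\mathcal G} = \mathcal E(u)$, combining the preceding lemma $\|u_t\|^2_\mathcal G = \mathcal E(u)$ with the definition of the gradient flow.)

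For (ii), differentiate under the integral: $\p_t\mathcal F(\tilde\tau) = \int \eta_y(\tilde\tau)\,\tilde\tau_t\,dx_0$. Since $\tilde\tau_t = (\tilde u_t)' = (\chi_{uu} u\, u_t)'$, one integration by parts produces $-\int \eta_{yy}(\tilde\tau)\,\tilde\tau' \cdot \chi_{uu} u\, u_t\,dx_0$. Substituting $\tilde\tau' = -u_t$ collapses this to $\int \eta_{yy}(\tilde\tau)\,\chi_{uu}\,u\,(\tilde\tau')^2\,dx_0$, which is the claimed expression. For (iii), the computation is even more direct: $\p_t\mathcal F(u) = \int \eta_y(u) u_t = -\int \eta_y(u)\,\tilde\tau'$, and one integration by parts gives $\int \eta_{yy}(u)\,u'\,\tilde\tau = \int \eta_{yy}(u)\,\chi_{uu}\,u\,(u')^2\,dx_0$ after reinserting $\tilde\tau = \chi_{uu} u\, u'$.

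There is no real obstacle; the only thing to verify carefully is that integration by parts on $S^1$ produces no boundary contributions (which holds by periodicity of $u$ in $x_0$) and that the normalising constants in Definition~\ref{operator L and Q} do not interfere — which was already checked in the preceding lemma via $\int (\chi_u)' = 0$. The signs on the monotonicity depend on whether $\eta$ is convex ($\eta_{yy}\geq 0$) or concave and on the sign of $\chi_{uu}$ (concave $\chi$ gives $\chi_{uu}\leq 0$); the formulas in (ii) and (iii) record the exact integrand so that the sign is transparent from the choice of $\eta$ and $\chi$.
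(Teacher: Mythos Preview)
Your proposal is correct and follows essentially the same approach as the paper: both arguments hinge on the identities $u_t=-\tilde\tau'$, $\tilde\tau=\chi_{uu}\,u\,u'$, and $\tilde u_t=\chi_{uu}\,u\,u_t$, together with a single integration by parts on $S^1$. The paper's proof is terser and treats only (ii) and (iii) explicitly (part (i) being implicit from the gradient-flow identity in the preceding lemma), but the substance is the same.
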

\begin{proof}
We use the integration by parts, the flow equation \eqref{weighted gradient flow 1} and insert $\tilde u_t=u_t\chi_{u}+u\chi_{uu}u_t-\chi_u u_t=u\chi_{uu}u_t=-u\chi_{uu}\tilde \tau'$,
\begin{align*}
&\p_t \mathcal F=\int \eta_y \tilde \tau_t=\int \eta_y \tilde u'_t
=\int \eta_{yy}  \tilde \tau' (u\chi_{uu}\tilde \tau')
\end{align*}
and $\p_t \mathcal F_{\eta}=\int \eta_y u_t =-\int  \eta_y (\chi_{uu} u' u)' =\int  \eta_{yy} \chi_{uu} (u')^2 u$.
\end{proof}

We apply \thmref{length contraction} to a family of $\chi$ flows. 
\begin{prop}\label{distance HS flow}
The distance is non-increasing along the $\chi$ flow,
\begin{align*}
\p_t \|u_s\|^2_{\mathcal G}
 =2\int_M \chi_{uu}  K(u_s)^2 \leq 0,\quad K(u_s):=u_s-\frac{Q(u_s) u_x}{u}.
\end{align*}
Furthermore, $\p_t^2\Vol_{\chi}=\p_t \mathcal E[u(t)]=2\int_M \chi_{uu}  K(u_t)^2 \leq 0$.
\end{prop}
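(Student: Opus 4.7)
The idea is to exploit the gradient-flow structure: the $\chi$ flow $u_t = -\tilde u''$ is the gradient flow of $\Vol_\chi$ under the Dirichlet metric $\mathcal G$ (compare \eqref{weighted gradient flow 1} against the formula \eqref{eq:Dmetric}), so that both length contraction and $\partial_t^2\Vol_\chi$ reduce to computing $\Hess\Vol_\chi$ on $\mathcal M_1$.

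\emph{Step 1 (Hessian of $\Vol_\chi$).} For $X=-\underline f\in T_\vphi\mathcal M_1$, I would compute $\Hess\Vol_\chi(X,X)$ by differentiating $\Vol_\chi=\int\chi(u)\,\vol_E$ twice along the canonical geodesic starting at $\vphi$ with initial velocity $X$. Expanding \eqref{1d geodesic} from Proposition~\ref{defn:con}, the geodesic acceleration is
\[
f_t = \tilde U_x,\qquad \tilde U = \frac{2Q(f)f}{u} - \frac{Q(f)^2 u_x}{u^2},
\]
so integration by parts gives
\[
\partial_t^2\Vol_\chi = \int\chi_{uu}f^2\,\vol_E + \int\chi_u f_t\,\vol_E = \int\chi_{uu}f^2\,\vol_E - \int\chi_{uu}u_x\,\tilde U\,\vol_E.
\]
Substituting $\tilde U$ and completing the square yields
\[
\Hess\Vol_\chi(X,X) = \int\chi_{uu}K(f)^2\,\vol_E,\qquad K(f) := f - \tfrac{Q(f)u_x}{u}.
\]

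\emph{Step 2 (Length contraction).} The Levi-Civita connection $D$ of Proposition~\ref{defn:con} is torsion-free and metric-compatible, giving the standard symmetry $D_t u_s = D_s u_t$ for two-parameter variations. Combined with $u_t = \operatorname{grad}_{\mathcal G}\Vol_\chi$,
\[
\partial_t\|u_s\|_{\mathcal G}^2 = 2\mathcal G(D_t u_s, u_s) = 2\mathcal G(D_s u_t, u_s) = 2\Hess\Vol_\chi(u_s, u_s),
\]
which equals $2\int\chi_{uu}K(u_s)^2\,\vol_E$ by Step 1, and is non-positive because $\chi$ is concave.

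\emph{Step 3 (Second derivative of $\Vol_\chi$).} Apply Step 2 to the translated family $\tilde u(x,t,s) := u(x,t+s)$, so that $\tilde u_s|_{s=0}=u_t$; combined with $\mathcal E(u)=\|u_t\|_{\mathcal G}^2$ (the Lemma preceding \ref{chi volume increase}) and $\partial_t\Vol_\chi=\mathcal E(u)$ (Lemma~\ref{chi volume increase}), this yields $\partial_t^2\Vol_\chi = \partial_t\mathcal E(u) = 2\int\chi_{uu}K(u_t)^2\,\vol_E$.

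The main obstacle is the algebraic integration-by-parts in Step 1: the cancellation turning the linear $Q(f)f$ term and the quadratic $Q(f)^2$ term into the single perfect square $K(f)^2$ hinges on the precise coefficients in $\tilde U$ (equivalently, on the $u$-dependent normalisation constant built into $Q$ via Definition~\ref{operator L and Q}); once this is verified, the gradient-flow assembly in Steps 2--3 is formal.
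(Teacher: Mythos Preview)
Your approach is correct and genuinely different from the paper's. The paper differentiates $\|u_s\|^2_{\mathcal G}=\int u^{-1}Q(u_s)^2$ directly: it splits $\partial_t\|u_s\|^2_{\mathcal G}$ into $I=2\int u^{-1}L(u_{ts})Q(u_s)$ and $II=-\int u^{-2}u_tQ(u_s)^2$, substitutes the linearised flow $u_{ts}=-(\chi_{uu}u_su)''$ and the flow itself $u_t=-(\chi_{uu}u'u)'$, and integrates by parts until the perfect square $K(u_s)^2$ appears. You instead first establish the Hessian formula $\Hess\Vol_\chi(X,X)=\int\chi_{uu}K(f)^2$ along the canonical geodesic, and then invoke the general gradient-flow identity $\partial_t\|u_s\|^2_{\mathcal G}=2\mathcal G(D_s\operatorname{grad}\Vol_\chi,u_s)=2\Hess\Vol_\chi(u_s,u_s)$. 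Your route makes the link between length contraction and geodesic concavity transparent (it is exactly the infinite-dimensional analogue of ``gradient flow of a concave function contracts distance''), whereas the paper's route never touches the geodesic equation but must instead handle the linearisation of the flow. Both arrive at the same pointwise square.

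One small remark: your stated ``main obstacle'' is not really one. The identity
\[
f^2 - u_x\tilde U \;=\; f^2 - \frac{2Q(f)f\,u_x}{u} + \frac{Q(f)^2u_x^2}{u^2} \;=\; K(f)^2
\]
is a pointwise algebraic square and does not depend on the normalisation constant hidden in $Q$; the constant only matters when you integrate $\chi_u\tilde U_x$ by parts, and there it drops out because $(\chi_u)_x=\chi_{uu}u_x$ pairs with $\tilde U$ itself, not with $Q$.
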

\begin{proof}
Plugging the expression of the connection $D_t\vphi_s=-(u_{st}-U')d\theta^3\wedge dx^{03}$ obtained in
\propref{defn:con} to \thmref{length contraction}, we need to compute two terms, 
\begin{align*}
I=2\int L(u_{ts}) Q(u_s) u^{-1} ,\quad
II=\int  -u_t Q(u_s)^2 u^{-2} .
\end{align*}

We insert the linearised equation of the flow equation \eqref{weighted gradient flow 1}
\begin{align*}
u_{ts}=-(\chi_{uu} u' u)_s'=-(\chi_{uu} u_s u)'',\quad L(u_{ts})=-(\chi_{uu} u_s u)'
\end{align*}
to the first term $I$ and apply the integration by parts,
\begin{align*}	
	=2\int \chi_{uu} u_s u [-u^{-2}u' Q(u_s)+u^{-1}u_s] 
		=2\int \chi_{uu}  [-u^{-1}u' Q(u_s) u_s+u^2_s] .
\end{align*}

We insert the flow \eqref{weighted gradient flow 1} and apply the integration by parts to $II$, 
\begin{align*}
II= \int   \tilde\tau' Q(u_s)^2 u^{-2}= 2\int  \chi_{uu}  [ u^{-2} (u')^2 Q(u_s)^2
-u^{-1} u' Q(u_s)u_s] .
\end{align*}

In conclusion, we have
$\p_t  \|u_s\|^2_{\mathcal G}
=2\int \chi_{uu}  [u_s-u^{-1}u' Q(u_s) ] ^2
\leq 0,$
which implies the required formula. 
The monotonicity of the energy $\|u_t\|^2_{\mathcal G}$ of the flow is obtained, if we choose  $u_0(t)=u(t)$ and $u_1(t)=u(t+1)$ in the argument above.
\end{proof}

\subsection{Curvature of $(\mathcal M_1,\mathcal G)$}
We set
$
X:=-\underline{f}=-u_t$, $Y:=-\underline{g}=-u_s.
$
\begin{prop}\label{Curvature body}
The sectional curvature
\begin{align*}
K(X,Y)&=\frac{3}{4}\bigg[\int \frac{fQ(g)-Q(f)g}{u^2}\bigg]^2 \big[ \int u^{-1} \big]^{-1}\geq 0.
\end{align*}
\end{prop}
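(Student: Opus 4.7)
The plan is to extract $K(X,Y)$ from the standard formula $K(X,Y) = \mathcal G(R(X,Y)Y, X)/(|X|^2|Y|^2 - \mathcal G(X,Y)^2)$ by directly computing the Riemann tensor from the canonical connection of Proposition~\ref{defn:con}. I would extend $X = -u_t\, d\theta^3\wedge dx^{03}$ and $Y = -u_s\,d\theta^3\wedge dx^{03}$ to commuting coordinate vector fields on a two-parameter family $\vphi(t,s) \in \mathcal M_1$, so that $[X,Y]=0$ and
\begin{equation*}
R(X,Y)Y = \dot{\mathbf P}_X(Y,Y) - \dot{\mathbf P}_Y(X,Y) + \mathbf P\bigl(X,\mathbf P(Y,Y)\bigr) - \mathbf P\bigl(Y,\mathbf P(X,Y)\bigr),
\end{equation*}
where $\dot{\mathbf P}_Z$ denotes the derivative of the bilinear map $\mathbf P$ with the base point $u$ moving in direction $Z$ (holding the other arguments fixed). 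Pairing with $X$ under the Dirichlet metric \eqref{eq:Dmetric} reduces the problem to identifying the scalar integral $\mathcal G(R(X,Y)Y,X) = \int u^{-1}\,Q(r)\,Q(f)$, where $r$ is the scalar coefficient of $R(X,Y)Y$, with the claimed expression.

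Next, I would expand each of the four terms above using the explicit Christoffel function
\begin{equation*}
U(f,g) = \frac{fQ(g)+Q(f)g}{u} - \frac{Q(f)Q(g)\,u'}{u^2}
\end{equation*}
and the fundamental relation $\partial_x Q(h) = h$ (Definition~\ref{operator L and Q}). Each differentiation of $Q$ with respect to the base point produces a \emph{nonlocal} contribution via the normalisation constant $C_1(h) = -[\int u^{-1}H][\int u^{-1}]^{-1}$; each nested term $\mathbf P(\cdot,\mathbf P(\cdot,\cdot))$ requires applying $Q$ to a $U'$, again triggering $C_1$. Repeated integration by parts on $S^1$ then rewrites every term as a local polynomial in $f,g,u,u'$ (plus antiderivatives $Q(f), Q(g)$) together with at most one factor of $[\int u^{-1}]^{-1}$.

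The main obstacle is the bookkeeping of cancellations. The symmetry $U(f,g) = U(g,f)$, which encodes torsion-freeness (Theorem~\ref{connection free compatible}), forces the purely \emph{local} pointwise contributions of $\dot{\mathbf P}_X(Y,Y) - \dot{\mathbf P}_Y(X,Y)$ to cancel against the local parts of $\mathbf P(X,\mathbf P(Y,Y)) - \mathbf P(Y,\mathbf P(X,Y))$, as is guaranteed by metric compatibility. The nonlocal contributions coming from $C_1$ survive, and a careful accounting identifies the residual $r$ as a scalar multiple of the antisymmetric circulation
\begin{equation*}
\mathcal C(f,g) := \int_{S^1} \frac{fQ(g)-Q(f)g}{u^2},
\end{equation*}
times $[\int u^{-1}]^{-1}$. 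Applying the metric $\mathcal G$ converts the linear dependence on $\mathcal C$ into the square $\mathcal C(f,g)^2$, while the combinatorial factor $\tfrac34$ arises from the assembly. Nonnegativity of $K(X,Y)$ is then immediate. This matches the Otto-calculus heuristic of Remark~\ref{optimal transport}: under the velocity identification $v = -Q(f)/u$, $w=-Q(g)/u$, one has $(fQ(g)-Q(f)g)/u^2 = vw_x - wv_x$, so the formula expresses $K(X,Y)$ as a squared mean circulation of the velocities, consistent with the known nonnegative sectional curvature of $\mathcal P_2(S^1)$.
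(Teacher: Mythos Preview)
Your plan is correct and follows essentially the same route as the paper: set up commuting coordinate fields, expand the Riemann tensor via the explicit Christoffel data $U(f,g)$ from Proposition~\ref{defn:con}, track the nonlocal contributions arising from the normalisation constant $C_1$ in $Q$, and observe that the surviving term is a multiple of the antisymmetric circulation $\beta_1-\beta_2=\int u^{-2}(fQ(g)-Q(f)g)$. The paper organises the computation slightly differently --- it expands $\mathcal G(D_YD_XY,X)$ and $\mathcal G(D_XD_YY,X)$ in full (the two auxiliary lemmas), subtracts to obtain $Z:=(U_s+W)-(\tilde U_t+\tilde W)$, and then reduces $\int Z\,c_f$ via the integration-by-parts identities $\beta_4=\tfrac12(\beta_1+\beta_2)$, $\beta_5=\beta_3$ --- rather than separating into $\dot P$-terms and $P\!\circ\! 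P$-terms as you do, but the content is the same. One small point: in the paper $K(X,Y)$ denotes the unnormalised quantity $\mathcal G(R(X,Y)Y,X)$, without the denominator $|X|^2|Y|^2-\mathcal G(X,Y)^2$ you mention; since the latter is nonnegative this does not affect the sign conclusion.
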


To reduce the complexity of the computation, we use the lower index $u$, which indicates the quantities divided by $u$, i.e. $f_u:= \frac{f}{u}$, we also introduce the following notations:
$V_u:=[\int u^{-1}]^{-1}$, $b:= \frac{u'}{u}$, $c_f:= \frac{Q(f)}{u}$.

We will compute 
\begin{align}\label{Sectional curvature K}
K(X,Y)&=-[\mathcal G(D_YD_XY,X)-\mathcal G(D_XD_YY,X)].
\end{align}
\subsubsection{Compute $D_YD_XY$}
Because $L$ Definition \ref{operator L and Q} and $Q$ Definition \ref{operator L and Q} are linear, we compute from Definition \ref{defn:con} that $U=fc_{g}
+c_{f}g- c_{f}c_{g} u'$ and
\begin{equation}\label{LQDXY}
\begin{split}
&D_XY=Y_t+U'=-g_t+U' , \quad 
L(D_XY)
=-L(g_t)
+U
-\int U,\\
&Q(D_XY)
=-Q(g_t)
+U
-\int u^{-1}U V_u
=\\
&-Q(g_t)
+fc_{g}
+c_{f}g
- c_{f}c_{g} u'
+(\int -f_uc_{g}-c_{f} g_u+ c_{f}c_{g} b ) V_u.
\end{split}
\end{equation}		
We replace $f, g$ with $g, -D_XY$ respectively in $U$ and let
\begin{align*}
W:= g_uQ(-D_XY)+(-D_XY)c_{g}- c_{g}Q(-D_XY) b.
\end{align*} Then by definition $D_YD_XY=(D_XY)_s+W'=Y_{ts}+U_s'+W'$,
\begin{align*}
L(D_YD_XY)&=L(Y_{ts})
+U_s+W-\int(U_s+W),\\
Q(D_YD_XY)
&=Q(Y_{ts})
+U_s+W
-[\int u^{-1} (U_s+W) ] V_u.		
\end{align*}	
Putting together, 		
$
			\mathcal G(D_YD_XY,X)
			=\int Q(-D_YD_XY)c_{f}
$
		\begin{equation}\label{G(DYDXY,X)}
		\begin{split}
=-\int  Q(Y_{ts}) c_{f}
-\int (U_s+W) c_{f}  .
\end{split}
		\end{equation}
\begin{lem}
\begin{align*}
&(U_s+W)u^{-1}
=2(f_s)_uc_{g}
+f_uc_{g_s}
+c_{f} (g_s)_u
+2c_{f_s}g_u\\
&
-4   f_uc_{g}g_u
-2c_{f}g_u^2
- 2 c_{f_s} c_{g} b
- c_{f} c_{g_s}b
-2 c_{f}    c_{g} (g')_u
- (f')_u c_{g}^2\\
&+6 c_{f} g_u c_{g} b
+3 f_uc_{g}^2b
+ c_{f} c_{g}^2 (u'')_u
-3 c_{f} c_{g}^2 b^2\\
&+\big[f_u( \int g_u c_{g})
+c_{f} b ( \int -g_u c_{g})
+g_u(\int f_uc_{g}
+2c_{f}g_u
- c_{f}c_{g} b)\\
&+c_{g} b(\int -f_uc_{g}-2c_{f}g_u+ c_{f}c_{g} b) \big]V_u u^{-1}.
\end{align*}
\end{lem}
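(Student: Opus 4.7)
The proof will be a direct algebraic expansion, and I plan to carry it out in two stages followed by a bookkeeping step. First, I would compute $U_s$ by differentiating $U = f c_g + c_f g - c_f c_g u'$ in $s$, using $u_s = g$, hence $(u')_s = g'$ and $b_s = (g')_u - b g_u$. The delicate point is $(c_f)_s = (Q(f)/u)_s$: because the normalisation constant $C_1(f)$ in $Q(f) = F + C_1(f)$ itself depends on $u$, differentiating yields
\[
(c_f)_s = c_{f_s} - c_f g_u + (\text{a constant in } x)\cdot u^{-1},
\]
and analogously for $(c_g)_s$. The $x$-constants generate precisely the $V_u u^{-1}$ prefactors of the four integral terms in the claimed formula, while the $c_{f_s}$ and $c_f g_u$ parts produce the pointwise monomials in $U_s/u$.

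Second, I would expand $W = g_u Q(-D_XY) + (-D_XY) c_g - c_g b\, Q(-D_XY)$ by inserting
\[
Q(-D_XY) = Q(g_t) - U + \Bigl[\textstyle\int f_u c_g + c_f g_u - c_f c_g b\Bigr] V_u, \qquad -D_XY = g_t - U',
\]
from~\eqref{LQDXY}, together with the expansion of $U'$ obtained by differentiating $(c_f)' = f_u - c_f b$ and $(c_g)' = g_u - c_g b$ spatially and applying the product rule. Combining $U_s/u$ with $W/u$ uses the equality of mixed partials $g_t = f_s$, valid because $X$ and $Y$ are coordinate vector fields on the two-parameter family, so that $g_t/u = (f_s)_u$ and $Q(g_t)/u = c_{f_s}$. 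Matching contributions then pair up to give the doubled coefficients in $2(f_s)_u c_g$ and $2 c_{f_s} g_u$, and analogously $-4 f_u c_g g_u$, $-2 c_f c_g (g')_u$, $6 c_f c_g g_u b$; the quadratic-in-$c$ terms $c_f c_g^2 (u'')_u$, $-3 c_f c_g^2 b^2$, and $(f')_u c_g^2$ come from the $-c_g U'/u$ piece of $W$ alone.

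The main obstacle is purely organisational: roughly thirty scalar terms must be collected with correct signs, and the $u$-dependence of $Q$ tracked carefully so that the four integral contributions emerge with the prefactors $f_u$, $-c_f b$, $g_u$, $-c_g b$ displayed in the statement. The organising principle is that these integrals gather into $(c_f)'\cdot\bigl(\int g_u c_g\bigr) V_u u^{-1}$ and $(c_g)'\cdot\bigl(\int f_u c_g + 2 c_f g_u - c_f c_g b\bigr) V_u u^{-1}$, precisely the pattern visible after factoring the displayed expression.
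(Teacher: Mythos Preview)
Your proposal is correct and follows essentially the same route as the paper: compute $U_s$ term-by-term (using the $s$-derivatives of $Q(f)$ and $Q(g)$, which the paper records separately as \eqref{Qst}), compute $U'$ to feed into $-D_XY=g_t-U'$, expand the three summands of $W$ using \eqref{LQDXY}, and collect. The paper carries this out by expanding $U_s$ directly from the fractional form of $U$ and then writing out each piece of $W$ in turn; your plan to work throughout in the $c_f,c_g,f_u,g_u,b$ shorthand and to recognise the integral contributions as $(c_f)'\bigl(\int g_u c_g\bigr)V_u u^{-1}+(c_g)'\bigl(\int f_u c_g+2c_f g_u-c_f c_g b\bigr)V_u u^{-1}$ is a tidy organising device the paper does not make explicit, but the underlying computation is the same.
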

\begin{proof}
We compute 
\begin{align*}
&U'=(\frac{f'Q(g)}{u}+\frac{fg}{u}-\frac{fQ(g)u'}{u^2})
+(\frac{g'Q(f)}{u}
+\frac{gf}{u}
-\frac{gQ(f)u'}{u^2})\\
&- \frac{fQ(g) u'}{u^2}-\frac{Q(f)g u'}{u^2}-\frac{Q(f)Q(g) u''}{u^2}
+2 \frac{Q(f)Q(g) (u' )^2}{u^3}\\
&=[(f')_uc_{g}+2f_ug_u+(g')_uc_{f}
-2 f_u c_{g} b-2c_{f} g_u b
-c_{f} c_{g} (u'')_u
+2 c_{f} c_{g} b^2 ] u
\end{align*}
and
\begin{align*}
U_s&=(\frac{f_sQ(g)}{u}+\frac{fQ(g)_s}{u}-\frac{fQ(g)u_s}{u^2})+(\frac{g_sQ(f)}{u}+\frac{gQ(f)_s}{u}
-\frac{gQ(f)u_s}{u^2})\\
&- \frac{Q(f)_sQ(g) u'}{u^2}
-\frac{Q(f)Q(g)_s u'}{u^2}
-\frac{Q(f)Q(g) u'_s}{u^2}
+2 \frac{Q(f)Q(g) u' u_s}{u^3}\\
&=[(f_s)_uc_{g}
+f_uc_{g_s}
+c_{f}(g_s)_u
+c_{f_s} g_u
-f_uc_{g}g_u
-c_{f}g_u^2 \\
&- c_{f_s}c_{g} b
-c_{f}c_{g_s} b
-c_{f}c_{g} (g')_u
+2 c_{f}c_{g}b g_u] u\\
&+f_u\int g_u c_{g} V_u
+g_u\int g_u c_{f}V_u
- c_{g} b \int  g_u c_{f}V_u
-c_{f} b \int g_u c_{g}V_u.
\end{align*}
In the last equality, we use
\begin{equation}\label{Qst}
\begin{split}	
&Q(g)_t=Q(g_t)+\int  f_u c_{g} V_u,\quad Q(g)_s=Q(g_s)+\int  g_u c_{g} V_u,\\
&Q(f)_t=Q(f_t)+\int  f_u c_{f} V_u,\quad Q(f)_s=Q(f_s)+\int  g_u c_{f} V_u.
\end{split}
\end{equation}

We compute each term in $W$, using \eqref{LQDXY} and $g_t=f_s$. 
The term $-g_uQ(D_XY)$ is equal to
\begin{align*}
&=u(g_uc_{f_s}
-f_uc_{g}g_u
-c_{f}g_u^2
+ c_{f}c_{g}g_u b)
+g_u(\int f_uc_{g}
+c_{f}g_u
- c_{f}c_{g} b) V_u.
\end{align*}
While, $(-D_XY)c_{g}
=-c_{g}(-g_t+U')$ is
\begin{align*}
&=u[(f_s)_u c_{g}  
-(f')_uc_{g}^2-2f_ug_uc_{g}-(g')_uc_{f}c_{g}\\
&+2 f_u c_{g}^2b+2c_{f}g_uc_{g}b
+ c_{f}c_{g}^2 (u'')_u
-2 c_{f}c_{g}^2 b^2 ]
\end{align*}
and $c_{g} b Q(D_XY)$ becomes
\begin{align*}
&=u(-c_{f_s} c_{g} b
+f_uc_{g}^2b+c_{f}g_uc_{g}b- c_{f}c_{g}^2 b^2)
+c_{g} b(\int -f_uc_{g}-c_{f}g_u+ c_{f}c_{g} b) V_u.
\end{align*} 
The confusion follows by collecting the like terms together.
\end{proof}

\subsubsection{Compute $D_XD_YY$}
We set
$
 \tilde U:=2\frac{gQ(g)}{u}- \frac{Q^2(g) u'}{u^2}.
$
 Then, we apply the formula $Q(\cdot)=L(\cdot) 
-[\int u^{-1} L(\cdot) ]  [\int u^{-1}]^{-1}$,
\begin{align*}
D_YY&=Y_s+\tilde U',\quad
L(D_YY)=L(Y_s)+\tilde U-\int \tilde U,\\
Q(D_YY)
&=Q(Y_s)
+u(2g_uc_{g}- c_{g}^2 b)
+(\int -2g_uc_{g}+ c_{g}^2 b ) V_u.
\end{align*}			
We set the auxiliary function
\begin{align*}
\tilde W:= f_uQ(-D_YY)+(-D_YY) c_{f}- c_{f}Q(-D_YY) b.
\end{align*}
Then $D_XD_YY=(D_YY)_t+P(X,D_YY)
			=Y_{st}+\tilde U_t' +\tilde W' $,
\begin{align*}
L(D_XD_YY)&=L(Y_{st})
+\tilde U_t+\tilde W-\int(\tilde U_t+\tilde W),\\
Q(D_XD_YY)&=Q(Y_{st})	+\tilde U_t+\tilde W
	 -[\int u^{-1}(\tilde U_t+\tilde W) ] V_u.
\end{align*}
As a result, $\mathcal G(D_XD_YY,X)
=\int Q(-D_XD_YY)c_{f}$ is
\begin{equation}\label{G(DXDYYX)}
\begin{split}
=-\int Q(Y_{st}) c_{f}
-\int (\tilde U_t+\tilde W) c_{f}.
\end{split}
\end{equation}

\begin{lem}
\begin{align*}
&(\tilde U_t+\tilde W)u^{-1}
=2(f_s)_uc_{g}
+2c_{f_s}g_u
+f_u c_{g_s}
+  c_{f} (g_s)_u 
\\
&-4f_ug_uc_{g}
-2 c_{f_s}c_{g} b
-(f')_u c_{g}^2 
  -2c_{f} (g')_u c_{g}
   -2c_{f} g_u^2 
   -c_{f} c_{g_s} b
\\
&+3f_u c_{g}^2 b 
+6c_{f} g_uc_{g}b
+c_{f} c_{g}^2 (u'')_u 
-3c_{f} c_{g}^2b^2 \\
&+[f_u(\int 2g_uc_{g}- c_{g}^2 b )
+c_{f} b (\int -2g_uc_{g}+ c_{g}^2 b ) \\
&+g_u (\int 2f_u c_{g})  
+ c_{g} b (\int -2f_u c_{g}) ]  V_u u^{-1}  .
\end{align*}
\end{lem}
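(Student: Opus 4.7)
\smallskip

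\noindent\textbf{Proof proposal.} The plan is to mimic the computation of $(U_s+W)u^{-1}$ in the preceding lemma, exploiting the symmetric roles of $(X,Y)$ under the swap $t\leftrightarrow s$, $f\leftrightarrow g$, with the caveat that $\tilde U$ depends on $g$ alone (not on $f$), so the $t$--derivative $\tilde U_t$ only hits $u$ and $Q(g)$.

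First I will expand $\tilde U$ in the form $\tilde U=u\,[2g_u c_{g}-c_{g}^2 b]$ (rewriting $\tilde U=2gQ(g)/u-Q^2(g)u'/u^2$ through the notations $g_u=g/u$, $c_g=Q(g)/u$, $b=u'/u$), then differentiate in $t$ term-by-term. The derivatives to be computed are $\p_t g_u$, $\p_t c_g$, $\p_t b$, using $u_t=f$ together with the identity \eqref{Qst} $Q(g)_t=Q(g_t)+\int f_u c_g\,V_u$. Since the geodesic is drawn from a two-parameter family $\vphi(t,s)$, the equality of mixed partials gives $g_t=f_s$, so the ``new'' time derivative terms produce precisely $(f_s)_u$, $c_{g_s}$ and $(g_s)_u$, with the correction $(\int f_u c_g)\,V_u$ in the $f_u$-channel arising from \eqref{Qst}; this yields the terms on the first two lines of the claimed formula that carry a time-$s$ subscript, plus the non-local piece $f_u (\int 2 g_u c_g - c_g^2 b)\,V_u$.

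Next I will compute $\tilde W$ by substituting $-D_YY=-Y_s-\tilde U'$ into $\tilde W=f_u Q(-D_YY)+(-D_YY)c_f - c_f Q(-D_YY)b$. Using the established formula for $Q(D_YY)$ just above the lemma, the local part $u\,[2g_u c_g - c_g^2 b]$ enters additively through $Q$, producing cubic-in-$(f,g,u')$ terms; the non-local part $(\int -2g_u c_g+c_g^2 b)V_u$ multiplies $f_u$ and $-c_f b$, explaining the last two lines of the claimed formula. The wedge term $(-D_YY)c_f$ contributes $-c_f(g_t\text{-part}) + c_f\tilde U'$; when expanded via the product rule applied to $\tilde U=2gc_g-c_g^2 u'$, this produces the $(g')_u$, $g_u^2$ and $(u'')_u$ terms on lines two and three.

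The final step is purely bookkeeping: add $\tilde U_t$ and $\tilde W$, factor out $u$, and match coefficients against the target expression. The main obstacle is not conceptual but combinatorial — one must correctly track the sign changes coming from $-D_YY$, the distribution of $b=u'/u$ coming from derivatives of $u^{-1}$ and $u^{-2}$, and the non-local corrections $\int(\cdots)V_u$ generated by each use of \eqref{Qst}. As a cross-check, I will verify that setting $f=g$ (i.e. $X=Y$) makes the expression agree with $(U_s+W)u^{-1}\big|_{f=g,\,s=t}$ up to the expected symmetry, since in that case $D_XD_YY$ and $D_YD_XY$ both reduce to $D_XD_XX$ and their difference (the curvature) must vanish on the diagonal — this confirms many cancellations and catches sign errors before they propagate into \eqref{G(DXDYYX)} and ultimately into the sectional curvature formula of \propref{Curvature body}.
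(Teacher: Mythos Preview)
Your approach is essentially identical to the paper's: it too expands $\tilde U_t$ directly via the product rule using $u_t=f$, $g_t=f_s$ and \eqref{Qst}, then computes $\tilde W$ as the sum of the three pieces $-f_uQ(D_YY)$, $-c_fD_YY=-c_f[Y_s+\tilde U']$, and $c_fbQ(D_YY)$ before collecting like terms. One small slip to fix when you execute: in the middle piece you wrote ``$-c_f(g_t\text{-part})+c_f\tilde U'$'', but since $-D_YY=-Y_s-\tilde U'=g_s-\tilde U'$ the contribution is $c_f(g_s)_u\cdot u - c_f\tilde U'$, i.e.\ the subscript is $s$ (not $t$) and the sign on $\tilde U'$ is negative.
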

\begin{proof}
We compute directly, with the help of \eqref{Qst} and $g_t=f_s$,
\begin{align*}
\tilde U'
&=u[2(g')_u c_{g}+2g_u^2-4g_uc_{g}b
-c_{g}^2 (u'')_u+2c_{g}^2b^2],\\
\tilde U_t
&=(2\frac{g_tQ(g)}{u} +2\frac{gQ(g)_t}{u}-2\frac{gQ(g)f}{u^2})
-2 \frac{Q(g)Q(g)_t u'}{u^2}-\frac{Q^2(g) f'}{u^2}+2\frac{Q^2(g) u' f}{u^3}\\
&=u[2(f_s)_uc_{g}
+2c_{f_s} g_u
-2f_ug_uc_{g}
-2 c_{f_s}c_{g} b
-(f')_u c_{g}^2
+2f_u c_{g}^2 b] \\
&+2g_u \int f _uc_{g} V_u 
-2 c_{g} b \int f_u c_{g} V_u .
\end{align*}
We compute $\tilde W$. The first one $-f_uQ(D_YY)$ equals to
\begin{align*}
&=u( f_u c_{g_s}   -  2f_ug_uc_{g} + f_uc_{g}^2 b)
+f_u (\int 2g_uc_{g}- c_{g}^2 b )V_u.
\end{align*}
The second one $- c_{f} D_YY=- c_{f}[Y_s+\tilde U']$ is
\begin{align*}
&=  u [c_{f} (g_s)_u -2c_{f}(g')_u c_{g}  -2c_{f}g_u^2 
+4c_{f}g_uc_{g}b
+c_{f}c_{g}^2 (u'' )_u
-2c_{f}c_{g}^2b^2 ].
\end{align*}
The third one $c_{f}b Q(D_YY) $ becomes
\begin{align*}
&=u[-c_{f} c_{g_s} b 
+2c_{f}g_uc_{g} b
- c_{f}c_{g}^2b^2 ]
+c_{f} b (\int -2g_uc_{g}+ c_{g}^2 b)V_u.
\end{align*}
Adding up these terms, we have the required formula.
\end{proof}

\subsubsection{Compute $K(X,Y)$: proof of \thmref{Curvature body}}
We denote $$Z:=(U_s+W)-(\tilde U_t+\tilde W).$$ 
We combine the computation in the previous two subsections.
\begin{align*}
\frac{Z}{V_u}
&=f_u [\int -g_uc_{g}+ c_{g}^2 b ] 
+c_{f} b [\int g_uc_{g}-  c_{g}^2 b ] \\
&+ g_u [\int -f_uc_{g}
+2 c_{f}g_u 
-  c_{f}c_{g} b ] 
+ c_{g} b [\int  f_uc_{g} -2 c_{f}g_u + c_{f}c_{g} b ] .
\end{align*}

We denote 
$\b_1:=\int f_uc_{g},\quad \b_2:=\int c_{f}g_u,\quad \b_3:=\int g_uc_{g}.$ Then by integration by parts
\begin{align*}
&\b_4:=\int  c_{f}c_{g} b  =3\b_4-(\b_1+\b_2)\text{ gives } \b_4=\frac{\b_1+\b_2}{2},\\
&\b_5:=\int c_{g}^2 b =3\b_5-2\b_3 \text{ implies } \b_5=\b_3.
\end{align*}
Thus, we arrive at
$
\frac{Z}{V_u}
= (\b_1-2\b_2+\b_4)[ c_{g} b - g_u ] .
$
We compute $\b_1-2\b_2+\b_4=\frac{3(\b_1-\b_2)}{2}$ and
\begin{align*}
\int  \frac{Z}{V_u} c_{f}
&=\frac{3(\b_1-\b_2)}{2}(\b_4-\b_2) =\frac{3(\b_1-\b_2)^2}{4} .
\end{align*}
Therefore, the sectional curvature $K(X,Y)$ is obtained from putting \eqref{G(DYDXY,X)}\eqref{G(DXDYYX)} into \eqref{Sectional curvature K},
$
K(X,Y)
=\int Z c_{f}.
$

	
\section{Two constructions}\label{Two constructions}
Now we are given a hyper-symplectic triple
	\begin{align}\label{hypers simple}
		\underline\om:=(\om_1,\om_2,\om_3);\quad \om_i:=\om_i^K+(u_1-1)dx^{0i}.
	\end{align}

	
\subsection{Concavity and formal Riemannian structure}\label{concavity}
The closed $G_2$ structure \eqref{vphi} determined by $\underline\om_i$ in \eqref{HS vector} is 
	\begin{align}\label{vphi simple}
		\vphi_i =\vphi^K-(u_i-1)d\theta^i\wedge dx^{0i},
	\end{align}
which forms a triple $\underline\vphi:=(\vphi_1,\vphi_2,\vphi_3)$. Note that in \eqref{vphi simple}, the index $i$ is not taken from the Einstein summation.
\begin{defn}\label{defn:space 3}
Let $\mathcal{M}_3$ be the space of the closed $G_2$ structures $\underline\vphi:=(\vphi_1,\vphi_2,\vphi_3)$, each component of the form \eqref{vphi simple}, in the class $[\underline{\vphi}^K]$ on $\mathbb{T}^7$.
\end{defn}
We further construct the Riemannian structure for $\mathcal{M}_3$. The idea is to project the triple $\underline\om$ to each component $i$ and use the results from the previous sections.

We adapt the constructions from Section \ref{HS and G2} to the hyper-symplectic structure \eqref{hypers simple}.
We write 
\begin{equation}\label{vAf simple}
\begin{split}
&\omega_i=dx^0\wedge dx^{i-1}+u_idx^0\wedge dx^i+dx^0\wedge dx^{i+1}-dx^{i-1}\wedge dx^{i+1}.
\end{split}
\end{equation}
\begin{itemize}
\item
The metrics $g^i_{3}= u_i^{\frac{2}{3}} (d\theta^i)^2+u_i^{-\frac{1}{3}}  [(d\theta^{i-1})^2+(d\theta^{i+1})^2]$,
\begin{align*}
	& g^i_{4}=u_i^{\frac{2}{3}}[ (dx^0)^2+ (dx^i)^2]+u_i^{-\frac{1}{3}}  [(dx^{i-1})^2+ (dx^{i+1})^2] .
\end{align*} 
The volume element 
$\vol_{\vphi_i}=u_i^{\frac{1}{3}} dx^{0123}\wedge d\theta^{123}.$
\item The torsion form $\tau=\sum_i \tau_id\theta^i\wedge d x^i$ \eqref{eq:tau} satisfies that
$\tau_1=\tau_2=-\frac{1}{3}u_i^{-\frac{5}{3}}u_i'$,  $\tau_3=\frac{2}{3}u_i^{-\frac{2}{3}}u_i'$.
\item
The tangent vector is also a triple $-\underline{f}$, each component is 
\begin{align*}
- f_i d\theta^i\wedge dx^{0i} , \quad  f_i\in C_0^\infty(S^1).
\end{align*}
\item 
The $Q$ operator in Definition \ref{operator L and Q} now becomes
$
Q_i(g_i):= L(g_i)+C_1(g_i)$, where $C_1(g_i):= -[\int_{S^1}A_iL(g_i) ][\int_{S^1}A_i]^{-1}
$ and $A_i:= u_i^{-1}$.

\item The Dirichlet metric \eqref{metric} (see \eqref{eq:Dmetric}) is
\begin{equation}\label{eq:Dmetric simple}
\begin{split}
\underline{\mathcal G}_{\underline\vphi}(Y,Z)
&:= \sum_i \mathcal G_{\vphi_i},\quad \mathcal G_{\vphi_i}:= \int_{S^1} A_i Q_i(g_i)Q_i(h_i).
\end{split}
\end{equation}

\item The Levi-Civita connection (see Definition \ref{defn:con}) is $D_XY:= Y_t+\sum_i U_i ' d\theta^i\wedge dx^{0i}$, where 
\begin{equation}\label{LP simple}
\begin{split}
U_i&:= 
\frac{f_iQ(g_i)+g_iQ(f_i)}{2u_i}+ \frac{u_i}{2}\bigg[\frac{Q_i(f_i)Q_i(g_i)}{u_i^2}\bigg]'\\&=\frac{f_i Q_i(g_i)+g_i Q_i(f_i)}{u_i}-\frac{Q_i(f_i) Q_i(g_i) u_i'}{u_i^2} .
\end{split}
\end{equation}

\item 
The geodesic equation (see \eqref{geodesic 1} in \lemref{equations geodesic}) is automatically
\begin{equation}\label{Lp abc simple}
\begin{split}
(f_i)_t= \tilde U_i',\quad \tilde U_i&=
(2a_ic_i
-b_ic_i^2) u_i,
\end{split}
\end{equation}			
with the notions
	$a_i:= f_iu_i^{-1}$, $b_i:= u_i'u_i^{-1}$, $c_i:= Q_i(f_i)u_i^{-1}$.
\item The gradient flow for the $\underline\chi$ functional \eqref{3 weighted volume functional} is
	\begin{align}\label{weighted gradient flow 3}
	(u_i)_t=-(\chi_i'' u_i' u_i)',\quad i=1,2,3.
	\end{align}
\end{itemize}	
	

Following the same routine of the proofs in the previous sections, we achieve the same Riemannian structure for $(\mathcal{M}_3,\underline{\mathcal G})$.
	
\begin{thm}\label{M3 structure}
	The space $(\mathcal M_3,\underline{\mathcal G})$ has formal Riemannian structure:
\begin{enumerate}
	\item
the connection \eqref{LP simple} is the Levi-Civita connection for the Dirichlet metric $\underline{\mathcal G}$ \eqref{eq:Dmetric simple} defined in $\mathcal{M}_3$;
		\item
	The $\underline\chi$ volume functional \eqref{3 weighted volume functional} is geodesically concave.
	In fact, if we let $\underline\vphi(t)$ be the geodesic \eqref{Lp abc simple}, then\begin{align*}
\p_t^2\Vol_{\underline \chi}&=\sum_i\int \chi_i''  K_i^2, \quad K_i[(u_i)_t]:=u_i(a_i-b_ic_i );
\end{align*}
\item the $\underline\chi$ flow  \eqref{weighted gradient flow 3} decreases the distance;
\item the sectional curvature is nonnegative.
\end{enumerate}
\end{thm}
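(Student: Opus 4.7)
The plan is to exploit the fact that the entire formal Riemannian structure on $\mathcal{M}_3$ splits as a direct sum over the three components indexed by $i=1,2,3$, so that $(\mathcal{M}_3,\underline{\mathcal{G}})$ behaves like a Riemannian product of three copies of $(\mathcal{M}_1,\mathcal{G})$ with (generally distinct) weights $\chi_i$. By \eqref{eq:Dmetric simple} the metric is the sum of the component metrics $\mathcal{G}_{\vphi_i}$; by \eqref{LP simple} the candidate connection $D_X Y = Y_t + \sum_i U_i'\, d\theta^i\wedge dx^{0i}$ is the diagonal extension of the connection from Proposition \ref{defn:con} applied to each factor; and by \eqref{3 weighted volume functional} the volume functional is $\Vol_{\underline{\chi}} = \sum_i \Vol_{\chi_i}$. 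The decoupling reflects that a tangent vector $-\underline{f}$ splits into three mutually independent pieces $-f_i d\theta^i\wedge dx^{0i}$ with $f_i\in C_0^\infty(S^1)$, each of which varies only the factor $\vphi_i$, so that none of the relevant tensorial expressions contain cross-terms between distinct indices.

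For part (1), torsion-freeness of $D$ reduces to the symmetry of each $U_i$ in $(f_i,g_i)$, which is immediate from \eqref{LP simple}, while metric compatibility reduces to the three separate identities $\partial_t\mathcal{G}_{\vphi_i}(\beta_i,\gamma_i) = \mathcal{G}_{\vphi_i}(D^i_{\alpha_i}\beta_i,\gamma_i) + \mathcal{G}_{\vphi_i}(\beta_i,D^i_{\alpha_i}\gamma_i)$, each of which is supplied by Proposition \ref{defn:con}. For part (2), the geodesic equation $D_X X = 0$ decouples into three independent scalar equations $(f_i)_t = \tilde U_i'$ matching \eqref{1d geodesic} in each factor, so along such a geodesic
\begin{equation*}
\partial_t^2 \Vol_{\underline{\chi}} = \sum_i \partial_t^2 \Vol_{\chi_i}.
\end{equation*}
Each summand is the weighted Hessian already computed in Proposition \ref{distance HS flow}, $\partial_t^2 \Vol_{\chi_i} \sim \int \chi_i'' K_i^2$ with $K_i = (u_i)_t - u_i' Q_i((u_i)_t)/u_i = u_i(a_i - b_i c_i)$; non-positivity of each term follows from concavity of the corresponding $\chi_i$.

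For part (3), given a one-parameter family $\underline{\vphi}(t,s)$ of $\underline{\chi}$ flows, the squared norm $\|\underline{\vphi}_s\|^2_{\underline{\mathcal{G}}} = \sum_i\|(\vphi_i)_s\|^2_{\mathcal{G}_{\vphi_i}}$ decreases termwise by Proposition \ref{distance HS flow}, and integrating in $s$ yields the length contraction. For part (4), the diagonal structure of both $\underline{\mathcal{G}}$ and $D$ forces the curvature tensor $\underline{R}$ to split as well: sectional curvatures of $2$-planes lying entirely in a single $i$-th factor coincide with those of Proposition \ref{Curvature body} and are therefore non-negative, mixed sectional curvatures of planes spanned by vectors from two different factors vanish, and the general sectional curvature is a non-negative convex combination of these by the standard product-geometry identity. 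The most delicate step --- and the one I expect to require the most care --- is verifying this curvature decoupling rigorously; concretely, one must check that the cross-component entries of $D_X D_Y Z - D_Y D_X Z - D_{[X,Y]} Z$ vanish identically, which relies on the key structural observation that the connection coefficient $U_i$ in \eqref{LP simple} is built only from $(f_i,g_i,u_i)$ and is independent of the data from the other two components.
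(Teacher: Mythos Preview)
Your proposal is correct and matches the paper's own approach, which simply states that the result follows ``the same routine of the proofs in the previous sections'' via the evident product structure $(\mathcal{M}_3,\underline{\mathcal{G}}) \cong \prod_i (\mathcal{M}_1,\mathcal{G})$. One minor imprecision: for part~(2) you cite Proposition~\ref{distance HS flow}, but that formula (with its factor of~$2$) is derived along the $\chi$-flow, not along the geodesic; the geodesic Hessian $\partial_t^2\Vol_{\chi_i}=\int \chi_i'' K_i^2$ instead follows from the direct computation $\int(\chi_{uu}u_t^2+\chi_u u_{tt})=\int\chi_{uu}u^2(a-bc)^2$ after inserting the geodesic equation \eqref{Lp abc simple} and integrating by parts, generalising Proposition~\ref{geodesic concavity body} to arbitrary concave weights.
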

\begin{cor}\label{geodesically concave 3}
Let $\chi_i(\cdot)=(\cdot)^{\frac{1}{3}}$. Then, the volume functional $\Vol_{\frac{1}{3}}=\sum_i\int u_i^{\frac{1}{3}}=\sum_i\Vol(\vphi_i)$ is geodesically concave in $(\mathcal M_3,\underline{\mathcal G})$.
\end{cor}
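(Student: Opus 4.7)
The plan is to specialize Theorem~\ref{M3 structure}(2) to the specific weights $\chi_i(u) = u^{1/3}$. First I would verify the admissibility of this choice: with $\tilde\chi_i \equiv 1$, the decomposition $\chi_i(u) = u^{1/3} \tilde\chi_i(u)$ places $\chi_i$ squarely in the class of weights introduced in Definition~\ref{weighted volume functional 1}, and the resulting functional $\sum_i \int u_i^{1/3} \vol_E = \sum_i \Vol(\vphi_i)$ is precisely Hitchin's volume summed over the three components of $\underline\vphi$.

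Next I would compute the second derivative of the weight directly: $\chi_i'(u) = \tfrac{1}{3} u^{-2/3}$ and $\chi_i''(u) = -\tfrac{2}{9} u^{-5/3}$. Since each $u_i$ is a strictly positive function on $\mathbb{T}^7$ (being the determinant $\det(q_{kl})$ of the positive-definite matrix associated to the hyper-symplectic triple, cf.\ \eqref{Qij} and \eqref{uq3}), the quantity $\chi_i''(u_i)$ is strictly negative pointwise.

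Plugging this into the Hessian formula of Theorem~\ref{M3 structure}(2) along the canonical geodesic \eqref{Lp abc simple} yields
\begin{equation*}
\p_t^2\Vol_{\frac{1}{3}} \;=\; \sum_i \int \chi_i''(u_i)\, K_i^2 \;=\; -\frac{2}{9}\sum_i \int u_i^{-5/3}\, K_i^2 \;\leq\; 0,
\end{equation*}
which is the claimed geodesic concavity. There is no serious obstacle: the entire analytic content, namely the identification of the Hessian along canonical geodesics in $(\mathcal M_3,\underline{\mathcal G})$ together with the explicit factorisation of the integrand through $\chi_i''$ and the nonnegative quantity $K_i^2$, has already been established in Theorem~\ref{M3 structure}. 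This corollary merely records the consequence for the unweighted Hitchin volume, which is the case most directly relevant to the uniqueness questions raised in the introduction.
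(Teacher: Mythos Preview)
Your proposal is correct and matches the paper's approach exactly: the corollary is stated without proof in the paper, being an immediate specialization of Theorem~\ref{M3 structure}(2) upon noting that $\chi_i''(u_i)=-\tfrac{2}{9}u_i^{-5/3}<0$. The only cosmetic remark is that in the $\mathcal{M}_3$ setting the functions $u_i$ depend only on $x_0\in S^1$ (Section~\ref{HS and OT}), so positivity is simply the non-degeneracy requirement on $\om_i$ from \eqref{hypers} rather than the determinant formula \eqref{uq3}; this does not affect the argument.
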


\subsection{A counterexample: non-concavity}\label{non-concavity}
We present a counterexample that violates the geodesic concavity of the volume functional along the canonical geodesic. We follow the construction in Section \ref{HS and G2}.

Given the hyper-symplectic triple $\underline{\om}$, there is another associated closed $G_2$ structure  
	\begin{align}\label{vphi simple simple}
		\vphi=d\theta^{123}-\sum_i d\theta^i\wedge \omega_i =\vphi^K-\sum_i (u_i-1)d\theta^i\wedge dx^{0i}.
	\end{align}
\begin{defn}\label{defn:space 3 tilde}
	Let $\widetilde{\mathcal{M}}_3$ be the space of the closed $G_2$ structures on $\mathbb{T}^7$, which is of the form \eqref{vphi simple simple} in the class $[\vphi^K]$.
\end{defn}
\begin{itemize}
\item The tangent space $T\widetilde{\mathcal{M}}_3=\big\{-\underline{f}\mid  \underline{f}:=\sum_i f_i d\theta^i\wedge dx^{0i}, \  f_i\in C_0^\infty(S^1)\big\}$.
		
\item The volume element 
$\vol_\vphi=w \cdot \vol_E$, $w:=(u_1u_2u_3)^{\frac{1}{3}}$.
We write $v_i:= u_i w^{-1}$, the associated metrics 
$g_{4}=diag\{w^2,v_1,v_2,v_3\}$ and
$g_{3}=diag\{v_1,v_2,v_3\}$.
\item The torsion form $\tau=\sum_i \tau_id\theta^i\wedge dx^i$, $
\tau_i=v_i'w^{-1}.$

\item	We write $f_i=(wv_i)_t$ and use the notations
			\begin{align}\label{A and B simple}
			A_i:= wv_i^{-2},\quad B_i(\underline{f}):= -\sum_j \epsilon_{ij} f_j(v_i^2v_j)^{-1},\quad i,j=1,2,3.
			\end{align} 
			Here $\epsilon_{ij}=1, i=j$; and $\epsilon_{ij}=-1, i\neq j$.
		It holds $(A_i)_t=B_i(\underline{f})$,
		\begin{align*}
		w_t=3^{-1}\sum_i f_iv_i^{-1},\quad (v_i)_t =-v_i(3w)^{-1}\sum_jf_jv_j^{-1}+f_iw^{-1}.
		\end{align*}

\item The Hodge Laplacian operator has the following expression
$\Delta \underline{h}=-d\big\{\sum_i \big[\big(h_i(wv_i^2)^{-1}\big)' A_i^{-1}\big]dx^i\wedge d\theta^i \big\}$.
\item The $Q$ operator is
$Q_i(g_i):= L(g_i)+C_1(g_i)$, where the constant $C_1(g_i):= -[\int_{S^1}A_iL(g_i) ][\int_{S^1}A_i]^{-1}$.
\item Let $C_2:= -\big\{\int_{S^1}wv^2_i\cdot L\big[A_i Q_i(g_i)\big] \big\}[\int_{S^1}wv^2_i]^{-1}$, the Green operator is
$G\underline{g}=-\sum_i wv^2_i  \big\{L\big[A_i Q_i(g_i)\big]+C_2\big\}d\theta^i\wedge dx^{0i} $.
\item
The Dirichlet metric \eqref{metric} becomes
$$\mathcal G_\vphi(Y,Z)
=\sum_i \int_{S^1} A_iQ_i(g_i)Q_i(h_i).$$
\item
The Levi-Civita connection is $D_XY:= Y_t+ U_i '  d\theta^i\wedge dx^{0i}$, where 
\begin{equation*}
\begin{split}
U_i:=\frac{-1}{2A_i}& \bigg[B_i(\underline{f}) Q_i(g_i)+B_i(\underline{g}) Q_i(f_i)
			- \sum_j\epsilon_{ij}\left( \frac{Q_j(f_j)Q_j(g_j) u}{u_iu_j^2}\right)' \bigg].
\end{split}
\end{equation*}
\end{itemize}

\begin{lem}The geodesic equation satisfies that
\begin{equation}\label{Lp abc simple simple}
\begin{split}
(f_i)_t= \tilde U_i',\quad \tilde U_i=\sum_j\epsilon_{ij} 
\big[a_jc_i+a_jc_j+\frac{c_j^2 }{2}\sum_{k\neq i} b_k
-c_j^2b_j\big] u_i 
\end{split}
\end{equation}		
with the notions
$	a_i:= \frac{f_i}{u_i}$, $b_i:= \frac{u_i'}{u_i}$, $c_i:= \frac{Q_i(f_i)}{u_i}$, $b_{ij}:= b_i \cdot c_j.$
\end{lem}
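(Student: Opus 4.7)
The plan is to derive this geodesic equation by specialising the Levi-Civita connection on $\widetilde{\mathcal{M}}_3$ (already listed in the bullet-point summary above the lemma) to the case $Y=X$, and then to simplify the resulting expression using the shorthand notation $a_i,b_i,c_i$ together with the key fact that $Q_j(f_j)'=f_j$.

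First, I would use the geodesic condition $D_X X=0$ directly. Since $X=-\underline{f}=-\sum_i f_i\, d\theta^i\wedge dx^{0i}$, the formula $D_X Y = Y_t + \sum_i U_i'\, d\theta^i\wedge dx^{0i}$ with $Y=X$ produces, component by component, the identity $(f_i)_t = \tilde U_i'$, where $\tilde U_i$ denotes $U_i$ with $g_j$ replaced by $f_j$. The sign check is the same as in Lemma \ref{equations geodesic} for the cohomogeneity-one case. So the task reduces to verifying that
\[
U_i\big|_{g=f}
= \sum_j \epsilon_{ij}\Big[a_j c_i + a_j c_j + \tfrac{c_j^2}{2}\sum_{k\ne i} b_k - c_j^2 b_j\Big] u_i .
\]

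Second, I would unfold the prefactors. Because $w=(u_1u_2u_3)^{1/3}$ and $v_i=u_iw^{-1}$, one computes
\[
A_i = \frac{u}{u_i^2}, \qquad B_i(\underline{f}) = -\sum_j \epsilon_{ij}\, \frac{f_j\, u}{u_i^2 u_j},
\]
where $u:=u_1u_2u_3$. The first chunk of $U_i$ simplifies painlessly:
\[
\frac{-1}{2A_i}\cdot 2B_i(\underline f)Q_i(f_i)
= \sum_j \epsilon_{ij}\,\frac{f_j Q_i(f_i)}{u_j}
= u_i c_i \sum_j \epsilon_{ij} a_j ,
\]
which accounts for the $a_j c_i$ contribution.

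Third, I would handle the remaining second chunk, namely $\frac{u_i^2}{2u}\sum_j \epsilon_{ij}\big(\frac{Q_j^2(f_j)u}{u_i u_j^2}\big)'$, by expanding the derivative using $Q_j(f_j)'=f_j$ (immediate from $Q_j(f_j)=L(f_j)+\text{const}$ and $L(f_j)'=f_j$) and $u'=u\sum_k b_k$. A direct calculation gives
\[
\Big(\frac{Q_j^2(f_j)\,u}{u_i u_j^2}\Big)'
= \frac{u}{u_i}\Big[2 a_j c_j + c_j^2\Big(\sum_{k\ne i} b_k - 2 b_j\Big)\Big],
\]
so multiplying by $\frac{u_i^2}{2u}\epsilon_{ij}$ and summing in $j$ yields exactly $u_i\sum_j \epsilon_{ij}\big[a_j c_j + \tfrac{c_j^2}{2}\sum_{k\ne i} b_k - c_j^2 b_j\big]$. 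Adding this to the first chunk reproduces the stated $\tilde U_i$.

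The only real obstacle is bookkeeping: getting the three-fold product $u=u_1u_2u_3$ and its logarithmic derivative $\sum_k b_k$ to play correctly against the $\frac{1}{u_i u_j^2}$ factor (whose logarithmic derivative is $-b_i-2b_j$, producing the $-b_i+\sum_k b_k=\sum_{k\ne i} b_k$ combination) and against the additional $-2b_j$ coming from the $u_j^{-2}$ factor. Once this bookkeeping is done, the identification of terms with $\tilde U_i$ is straightforward, and the geodesic equation $(f_i)_t=\tilde U_i'$ follows.
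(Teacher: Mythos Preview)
Your proposal is correct and follows essentially the same approach as the paper: both substitute $g=f$ into the Levi-Civita connection formula for $\widetilde{\mathcal{M}}_3$, rewrite $A_i=u/u_i^2$ and $B_i$ in terms of $u:=u_1u_2u_3$, and then expand the derivative using $Q_j(f_j)'=f_j$ together with $u'/u=\sum_k b_k$. Your organisation into the two ``chunks'' (the $B_i$-part giving $a_jc_i$ and the derivative-part giving the remaining three terms) is slightly cleaner than the paper's single large expansion of $U_i$, but the underlying computation is identical.
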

\begin{proof}
We further compute $\frac{u'}{u}=\sum_k\frac{u_k'}{u_k}$ and 
\begin{align*}
&U_i=-\frac{1}{2}\frac{u_i^2}{u}
			\bigg[ \sum_j -\epsilon_{ij}\frac{f_j u}{u_i^2u_j}\cdot Q_i(g_i)
			+\sum_j -\epsilon_{ij}\frac{g_j u}{u_i^2u_j}\cdot Q_i(f_i)\\
			&+\sum_j-\epsilon_{ij}
			 \bigg( \frac{f_j\cdot Q_j(g_j) u}{u_iu_j^2}
			+\frac{Q_j(f_j)\cdot g_j u}{u_iu_j^2}
			+\frac{Q_j(f_j)\cdot Q_j(g_j) u'}{u_iu_j^2}\\
			&
			 -\frac{Q_j(f_j)\cdot Q_j(g_j) u u_i'}{u_i^2u_j^2}
			 -2\frac{Q_j(f_j)\cdot Q_j(g_j) u u_j'}{u_iu_j^3}\bigg)\bigg]\\
			 &=\frac{1}{2}\sum_j \epsilon_{ij}
			\bigg[\frac{f_j}{u_j}\cdot Q_i(g_i)
			+\frac{g_j}{u_j}\cdot Q_i(f_i)
			+ \frac{f_j\cdot Q_j(g_j) u_i}{u_j^2}
			+\frac{Q_j(f_j)\cdot g_j u_i}{u_j^2}\\
			&+\frac{Q_j(f_j)\cdot Q_j(g_j) u_i u'}{u u_j^2}
			 -\frac{Q_j(f_j)\cdot Q_j(g_j) u_i'}{u_j^2}
			 -2\frac{Q_j(f_j)\cdot Q_j(g_j)u_i u_j'  }{u_j^3}\bigg].
			 \end{align*}	
\end{proof}	
	

\begin{lem}\label{weighted volume}Let $\vphi(t)$ be the geodesic \eqref{Lp abc simple simple}, $r=\chi_{uu}  $, $s=\chi_{uu}  +  \chi_{u} u^{-1}$. Then along $\vphi(t)$, we have
\begin{align*}
\p_t^2\Vol_\chi&=\int u^2 \big\{r\sum_i a_i^2+2s \sum_{i<j}a_ia_j
-\sum_i [r b_i + s \sum_{l\neq i} b_l  ] \frac{\tilde U_i}{u_i}\big\} .
\end{align*}
\end{lem}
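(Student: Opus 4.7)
The plan is to differentiate $\Vol_\chi=\int\chi(u)\vol_E$ twice along the geodesic $\vphi(t)$ and then integrate by parts in $x_0$ on the circle $S^1$ to eliminate the derivative on $\tilde U_i$. Since $u=u_1u_2u_3$, the algebraic content reduces to the single identity $us=u\chi_{uu}+\chi_u$, or equivalently $\chi_u=u(s-r)$, which I will use repeatedly to convert between $\chi_u$, $\chi_{uu}$ and the shorthand $r,s$.

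First I would compute the time derivatives of $u$. Logarithmic differentiation of $u=u_1u_2u_3$ together with $(u_i)_t=f_i$ gives $u_t=u\sum_i a_i$, hence
\begin{equation*}
u_t^2=u^2\Bigl[\sum_i a_i^2+2\sum_{i<j}a_ia_j\Bigr].
\end{equation*}
For $u_{tt}$ I would substitute the geodesic equation $(f_i)_t=\tilde U_i'$ from \eqref{Lp abc simple simple} into $u_{tt}=\p_t(u\sum_i a_i)=u_t\sum_i a_i+u\sum_i(a_i)_t$; the $\sum_i a_i^2$ contributions cancel between $(\sum_i a_i)^2$ and $(a_i)_t=\tilde U_i'/u_i-a_i^2$, leaving
\begin{equation*}
u_{tt}=u\Bigl[2\sum_{i<j}a_ia_j+\sum_i\tilde U_i'/u_i\Bigr].
\end{equation*}

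Plugging both expressions into $(\Vol_\chi)_{tt}=\int[\chi_{uu}u_t^2+\chi_u u_{tt}]\vol_E$ immediately yields the diagonal $ru^2\sum_i a_i^2$ contribution, while the two mixed quadratic parts $2ru^2\sum_{i<j}a_ia_j$ and $2u\chi_u\sum_{i<j}a_ia_j$ combine through $ur+\chi_u=us$ into $2u^2s\sum_{i<j}a_ia_j$. The remaining piece $\int u\chi_u\sum_i\tilde U_i'/u_i\,\vol_E$ is handled by one integration by parts in $x_0$: expanding
\begin{equation*}
(u\chi_u/u_i)'=(u\chi_u)'/u_i-u\chi_u\,u_i'/u_i^2,
\end{equation*}
using $(u\chi_u)'=u'(\chi_u+u\chi_{uu})=u^2sb$ with $b=u'/u=\sum_l b_l$, and rewriting the second summand via $\chi_u=u(s-r)$ gives $(u^2/u_i)[sb-(s-r)b_i]=(u^2/u_i)[rb_i+s\sum_{l\ne i}b_l]$. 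No boundary terms appear thanks to periodicity in $x_0$, and summing over $i$ reproduces the final term in the claimed formula.

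The only real difficulty is the algebraic bookkeeping: every trace of $\chi_u$ and $\chi_{uu}$ must be routed through the relation $\chi_u=u(s-r)$, and every summation over $l$ must be split into the $l=i$ term and the $l\ne i$ terms so that $s b-(s-r)b_i$ collapses correctly to $rb_i+s\sum_{l\ne i}b_l$. Analytically there is nothing delicate, since the computation is pointwise on $S^1$ and the derivatives are one-dimensional.
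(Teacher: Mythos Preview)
Your proposal is correct and follows essentially the same approach as the paper's own proof: both start from $(\Vol_\chi)_{tt}=\int(\chi_{uu}u_t^2+\chi_u u_{tt})$, compute $u_t=u\sum_i a_i$ and $u_{tt}=u[(\sum_i a_i)^2-\sum_i a_i^2+\sum_i\tilde U_i'/u_i]$, then integrate the last term by parts and expand $(\chi_u u/u_i)'$ using $u'=u\sum_l b_l$ to obtain the coefficient $(u^2/u_i)[rb_i+s\sum_{l\ne i}b_l]$. Your use of the shorthand $\chi_u=u(s-r)$ is a slightly cleaner way of organising the same algebra the paper carries out directly.
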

\begin{proof}
According to \lemref{eq:2nd_vol chi general},  the second variation of the $\chi$ volume functional is
$
\p_t^2\Vol_\chi=\int \chi_{uu} u_t^2+\chi_{u} u_{tt}
$
and by the notions \eqref{Lp abc simple simple},
\begin{align*}
u_t=u\sum_i a_i,\quad 
u_{tt}=u[(\sum_i a_i)^2-\sum_i a_i^2+\sum_i\frac{(u_i)_{tt}}{u_i}].
\end{align*}
Then, integrate by parts and plug the geodesic $(u_i)_{tt}=\tilde U_i'$ \eqref{Lp abc simple simple},
\begin{align*}
&\int \chi_{u} u \sum_i\frac{(u_i)_{tt}}{u_i}
=\int\sum_i\frac{ \chi_{u} u }{u_i}  \tilde U_i'
=-\int\sum_i (\frac{ \chi_{u} u }{u_i})'  \tilde U_i.
\end{align*}
We further compute that $u'=u\sum_k b_k$,
\begin{align*}
(\frac{ \chi_{u} u }{u_i})' u_i 
&=[\frac{ \chi_{uu} u' u }{u_i}+ \frac{ \chi_{u} u' }{u_i}-\frac{ \chi_{u} u u_i'}{u^2_i}]u_i
= \chi_{uu} u' u +  \chi_{u} u' - \chi_{u} u b_i\\
&= (\chi_{uu} u^2 +  \chi_{u} u)\sum_l b_l - (\chi_{u} u) b_i
= (r u^2 ) b_i+ (s u^2 )\sum_{l\neq i} b_l .
\end{align*}
Add them together, we conclude the lemma.
\end{proof}

\begin{thm}\label{general weight}
Along the geodesic \eqref{Lp abc simple simple}, $\p_t^2\Vol_\chi=\int u^2 Q$, where
\begin{align*}
Q&=
r[a_1^2+a_2^2+a_3^2]+2s[ a_1a_2+a_2a_3+a_1a_3]
+(r-s)[b_{11}^2 +b_{22}^2 +b_{33}^2]\\
&+[-2(r-s)a_1b_{11}+sa_1b_{12}+sa_1b_{13}]
+[(r-s)a_1b_{21}+ra_1b_{22}+sa_1b_{23}]\\
&+[(r-s)a_1b_{31}+sa_1b_{32}+ra_1b_{33}]
+[ra_2b_{11}+(r-s)a_2b_{12}+sa_2b_{13}]\\
&+[sa_2b_{21}-2(r-s)a_2b_{22}+sa_2b_{23}]
+[sa_2b_{31}+(r-s)a_2b_{32}+ra_2b_{33}]\\
&+[ra_3b_{11}  +sa_3b_{12}   +(r-s)a_3b_{13}]
+[sa_3b_{21}+ra_3b_{22}+(r-s)a_3b_{23}]\\
&+[sa_3b_{31}+sa_3b_{32}-2(r-s)a_3b_{33}]
+(r-s)[-b_{11}b_{21}
-b_{12}b_{22}
+b_{13}b_{23}]\\
&+(r-s)[-b_{11}b_{31}
+b_{12}b_{32}
-b_{13}b_{33} ]
+(r-s)[b_{21}b_{31}
-b_{22}b_{32}
-b_{23}b_{33 }].
\end{align*}
The matrix $Q$ is not negative-semidefinite.

 In particular, all $\chi$ volume functional, including the volume functional i.e. $\Vol(\vphi)=\int (u_1u_2u_3)^{\frac{1}{3}},$ is not concave along the canonical geodesic \eqref{Lp abc simple simple} in the space $(\widetilde{\mathcal{M}}_3,\mathcal G)$.
\end{thm}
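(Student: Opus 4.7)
The plan is to prove the theorem in two stages: derive the closed form of the quadratic form $Q$ by direct substitution, and then exhibit a concrete family of initial data witnessing non-concavity for every weighted volume.

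For the first stage, I would substitute the geodesic equation \eqref{Lp abc simple simple} into the Hessian formula of \lemref{weighted volume},
\begin{align*}
\p_t^2\Vol_\chi = \int u^2\Bigl\{r\sum_i a_i^2 + 2s\sum_{i<j}a_ia_j - \sum_i\Bigl[rb_i + s\sum_{l\neq i}b_l\Bigr]\frac{\tilde U_i}{u_i}\Bigr\},
\end{align*}
using $\tilde U_i/u_i = \sum_j\epsilon_{ij}[a_jc_i + a_jc_j + \tfrac12 c_j^2\sum_{k\neq i}b_k - c_j^2b_j]$. Each contribution to $Q$ arises from multiplying by $-(rb_i + s\sum_{l\neq i}b_l)$, distributing, and inserting $b_{ij} := b_ic_j$. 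The assignment of the coefficient ($r$, $s$, or $r-s$) in each of the twelve rows is dictated by whether the $b$-index in $\tilde U_i/u_i$ coincides with the $b_i$ in the prefactor or not; the alternating $\pm$-signs in Lines 12--14 record the $\epsilon_{ij}$-signature. This is routine but delicate bookkeeping.

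For non-concavity, I would pass to the slice $f_1 \equiv f_2 \equiv 0$, $u_3 \equiv 1$. There $a_1 = a_2 = c_1 = c_2 = 0$ identically and $b_3 = b_{33} = 0$ pointwise, so only $b_{13} = b_1c_3$ and $b_{23} = b_2c_3$ survive among the $b_{ij}$. The surviving lines collapse, via the identity $a_3(b_{13}+b_{23}) + b_{13}b_{23} = (a_3+b_{13})(a_3+b_{23}) - a_3^2$ together with $r-(r-s) = s$, to
\begin{align*}
Q = s\,f_3^2 + (r-s)\,(f_3 + b_1 c_3)\,(f_3 + b_2 c_3).
\end{align*}
Now I would choose $u_1(x) = e^{\alpha\cos x}$ and $u_2(x) = e^{-\alpha\cos x}$ (renormalised by a constant to fix the cohomology class; this does not alter $b_i$ or the sign of the integral), together with $f_3(x) = \sin 2x$. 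Then $b_1 = -b_2 = -\alpha\sin x$ and $c_3 = Q_3(\sin 2x) = -\tfrac12\cos 2x$, while $u = u_1u_2u_3$ is a strictly positive constant, so $r$ and $s$ are constants. The compact form reduces to
\begin{align*}
Q = r\sin^2 2x + (s-r)\,\tfrac{\alpha^2}{4}\,\sin^2 x\,\cos^2 2x,
\end{align*}
and using $\int_0^{2\pi}\sin^2 2x\,dx = \pi$ and $\int_0^{2\pi}\sin^2 x\,\cos^2 2x\,dx = \pi/2$ yields
\begin{align*}
\p_t^2\Vol_\chi = C\cdot \pi\Bigl[\,r + (s-r)\tfrac{\alpha^2}{8}\,\Bigr]
\end{align*}
for a strictly positive constant $C$ (absorbing $u^2$ and the $\mathbb T^6$-volume). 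Since $r<0$ and $s-r = \chi_u u^{-1} > 0$ for every concave weight with $\chi_u > 0$, the bracket becomes strictly positive whenever $\alpha^2 > 8|r|/(s-r)$. This threshold is finite (equal to $16/3$ for Hitchin's volume), so non-concavity holds uniformly for all $\Vol_\chi$.

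The main obstacle in the execution is the careful bookkeeping in the first stage, where one must correctly pair twelve distinct monomial types and track the $\epsilon_{ij}$-signs produced by the geodesic. By contrast, the second stage integrates in closed form and carries no further analytic subtlety.
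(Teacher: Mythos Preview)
Your first stage (deriving the explicit form of $Q$ by substituting the geodesic \eqref{Lp abc simple simple} into Lemma~\ref{weighted volume}) is exactly what the paper does. For the second stage, however, the paper proceeds differently: it records $Q$ as a $12\times12$ symmetric matrix $A$ in the variables $(a_1,a_2,a_3,b_{11},\dots,b_{33})$ (displayed in the section on partitioned matrices) and reads off that $A$ has eigenvalues $\tfrac{s-r}{2}$ and $\tfrac{r-s}{2}$; hence negative semi-definiteness would force $s=r$, i.e.\ $\chi_u\equiv 0$, contradicting positivity and non-constancy of the concave weight. No explicit test functions are produced.

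Your route via the slice $f_1=f_2=0$, $u_3\equiv1$ and the concrete choice $u_1=e^{\alpha\cos x}$, $u_2=e^{-\alpha\cos x}$, $f_3=\sin 2x$ is a genuine alternative, and it is correct: the reduction of $Q$ to $r\sin^2 2x+(s-r)\tfrac{\alpha^2}{4}\sin^2 x\cos^2 2x$ and the resulting integral $C\pi[r+(s-r)\alpha^2/8]$ check out. This buys something the paper's linear-algebra argument does not make explicit: an actual family of functions $(u_i,f_i)$ on $S^1$ for which $\int u^2 Q>0$, together with an explicit threshold $\alpha^2>8|r|/(s-r)$ (equal to $16/3$ for Hitchin's weight). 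Strictly speaking, the paper's pointwise eigenvalue statement does not by itself produce such a family, since the twelve entries of $(a_i,b_{ij})$ are not free parameters but arise from global functions; your construction closes that gap directly. The paper's approach, on the other hand, avoids any integration and makes the obstruction to concavity transparent as a spectral fact about~$A$. One small caveat: you write $r<0$, but concavity only gives $r\le 0$; when $r=0$ your bracket is still $(s-r)\alpha^2/8>0$, so the argument survives unchanged.
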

\begin{proof}
The formula of $Q$ is obtained from expanding \lemref{weighted volume}. We have added the corresponding matrix $A$ in Section \ref{Matrix}. The trace of $A$ is equal to $12r$. From the left top corner of $A$, we can see that the matrix $A$ has eigenvalues including
$\frac{s-r}{2}$, $\frac{r-s}{2}.$ That means if $Q$ is negative semi-definite, $s=r\leq 0$. It infers that the weight $\chi$ must have a vanishing first derivative and a semi-negative second derivative, which is impossible for a positive concave function.
\end{proof}

\begin{rem}
Compared with \corref{geodesically concave 3}, the canonical geodesic we constructed are different.
\end{rem}


\subsubsection{Partitioned Matrices}\label{Matrix}
We partition the matrix \(A\) into four sub-matrices:
\(A=\begin{bmatrix}A_{11}&A_{12}\\A_{21}&A_{22}\end{bmatrix}\), where
$
A_{11} = 
\begin{bmatrix}
r & s & s \\
s & r & s \\
s & s & r
\end{bmatrix}
$, $A_{21}=A^T_{12}$,
\[
A_{12} = 
\begin{bmatrix}
-(r - s) & \frac{s}{2} & \frac{s}{2} & \frac{r - s}{2} & r/2 & \frac{s}{2} & \frac{r - s}{2} & \frac{s}{2} & \frac{r}{2} \\
\frac{r}{2} & \frac{r - s}{2} & \frac{s}{2} & \frac{s}{2} & -(r - s) & \frac{s}{2} & \frac{s}{2} & \frac{r - s}{2} & \frac{r}{2} \\
\frac{r}{2} & \frac{s}{2} & \frac{r - s}{2} & \frac{s}{2} & \frac{r}{2} & \frac{r - s}{2} & \frac{s}{2} & \frac{s}{2} & -(r - s)
\end{bmatrix}
\]

\[
A_{22} = 
\begin{bmatrix}
0 & 0 & 0 & -\frac{r - s}{2} & 0 & 0 & -\frac{r - s}{2} & 0 & 0 \\
0 & 0 & 0 & 0 & -\frac{r - s}{2} & 0 & 0 & -\frac{r - s}{2} & 0 \\
0 & 0 & 0 & 0 & 0 & \frac{r - s}{2} & 0 & 0 & -\frac{r - s}{2} \\
-\frac{r - s}{2} & 0 & 0 & 0 & 0 & 0 & \frac{r - s}{2} & 0 & 0 \\
0 & -\frac{r - s}{2} & 0 & 0 & 0 & 0 & 0 & -\frac{r - s}{2} & 0 \\
0 & 0 & \frac{r - s}{2} & 0 & 0 & 0 & 0 & 0 & -\frac{r - s}{2} \\
\frac{r - s}{2} & 0 & 0 & \frac{r - s}{2} & 0 & 0 & 0 & 0 & 0 \\
0 & -\frac{r - s}{2} & 0 & 0 & -\frac{r - s}{2} & 0 & 0 & 0 & 0 \\
0 & 0 & -\frac{r - s}{2} & 0 & 0 & -\frac{r - s}{2} & 0 & 0 & 0
\end{bmatrix}.
\]


	\begin{bibdiv}
		\begin{biblist}
			\bib{MR916718}{article}{
				author={Bryant, Robert L.},
				title={Metrics with exceptional holonomy},
				journal={Ann. of Math. (2)},
				volume={126},
				date={1987},
				number={3},
				pages={525--576},
				issn={0003-486X},
				review={\MR{916718}},
				doi={10.2307/1971360},
			}
			
			\bib{MR2282011}{article}{
				author={Bryant, Robert L.},
				title={Some remarks on $G_2$-structures},
				conference={
					title={Proceedings of G\"{o}kova Geometry-Topology Conference 2005},
				},
				book={
					publisher={G\"{o}kova Geometry/Topology Conference (GGT), G\"{o}kova},
				},
				date={2006},
				pages={75--109},
				review={\MR{2282011}},
			}
%

\bib{MR3412344}{article}{
   author={Calamai, Simone},
   author={Zheng, Kai},
   title={The Dirichlet and the weighted metrics for the space of K\"{a}hler
   metrics},
   journal={Math. Ann.},
   volume={363},
   date={2015},
   number={3-4},
   pages={817--856},
   issn={0025-5831},
   review={\MR{3412344}},
   doi={10.1007/s00208-015-1188-x},
}

			\bib{MR1701920}{article}{
   author={Donaldson, Simon},
   title={Moment maps and diffeomorphisms},
   note={Sir Michael Atiyah: a great mathematician of the twentieth
   century},
   journal={Asian J. Math.},
   volume={3},
   date={1999},
   number={1},
   pages={1--15},
   issn={1093-6106},
   review={\MR{1701920}},
   doi={10.4310/AJM.1999.v3.n1.a1},
}
			\bib{MR2313334}{article}{
   author={Donaldson, Simon},
   title={Two-forms on four-manifolds and elliptic equations},
   conference={
      title={Inspired by S. S. Chern},
   },
   book={
      series={Nankai Tracts Math.},
      volume={11},
      publisher={World Sci. Publ., Hackensack, NJ},
   },
   date={2006},
   pages={153--172},
   review={\MR{2313334}},
}

			\bib{MR3702382}{article}{
				author={Donaldson, Simon},
				title={Adiabatic limits of co-associative Kovalev-Lefschetz fibrations},
				conference={
					title={Algebra, geometry, and physics in the 21st century},
				},
				book={
					series={Progr. Math.},
					volume={324},
					publisher={Birkh\"{a}user/Springer, Cham},
				},
				date={2017},
				pages={1--29},
				review={\MR{3702382}},
			}
			\bib{MR3959094}{article}{
   author={Donaldson, Simon},
   title={An elliptic boundary value problem for $G_2$-structures},
   language={English, with English and French summaries},
   journal={Ann. Inst. Fourier (Grenoble)},
   volume={68},
   date={2018},
   number={7},
   pages={2783--2809},
   issn={0373-0956},
   review={\MR{3959094}},
   doi={10.5802/aif.3225},
}
			\bib{MR3932259}{article}{
				author={Donaldson, Simon},
				title={Boundary value problems in dimensions 7, 4 and 3 related to
					exceptional holonomy},
				conference={
					title={Geometry and physics. Vol. I},
				},
				book={
					publisher={Oxford Univ. Press, Oxford},
				},
				date={2018},
				pages={115--134},
				review={\MR{3932259}},
			}

			\bib{MR3966735}{article}{
				author={Donaldson, Simon},
				title={Some recent developments in K\"{a}hler geometry and exceptional
					holonomy},
				conference={
					title={Proceedings of the International Congress of
						Mathematicians---Rio de Janeiro 2018. Vol. I. Plenary lectures},
				},
				book={
					publisher={World Sci. Publ., Hackensack, NJ},
				},
				date={2018},
				pages={425--451},
				review={\MR{3966735}},
			}

						\bib{MR3881202}{article}{
				author={Fine, Joel},
				author={Yao, Chengjian},
				title={Hyper-symplectic 4-manifolds, the $G_2$-Laplacian flow, and
					extension assuming bounded scalar curvature},
				journal={Duke Math. J.},
				volume={167},
				date={2018},
				number={18},
				pages={3533--3589},
				issn={0012-7094},
				review={\MR{3881202}},
				doi={10.1215/00127094-2018-0040},
			}
			\bib{arXiv:2404.15016}{article}{
				author={Fine, Joel},
				author={He, Weiyong},
				author={Yao, Chengjian},
				title={Convergence of the hypersymplectic flow on $T^4$ with $T^3$-symmetry},
				journal={arXiv:2404.15016},
				volume={},
				date={},
				number={},
				pages={},
				issn={},
				review={},
				doi={},
			}
\bib{MR4085670}{article}{
   author={Fine, Joel},
   author={Yao, Chengjian},
   title={A report on the hypersymplectic flow},
   journal={Pure Appl. Math. Q.},
   volume={15},
   date={2019},
   number={4},
   pages={1219--1260},
   issn={1558-8599},
   review={\MR{4085670}},
   doi={10.4310/PAMQ.2019.v15.n4.a7},
}

			\bib{MR1863733}{article}{
				author={Hitchin, Nigel},
				title={The geometry of three-forms in six dimensions},
				journal={J. Differential Geom.},
				volume={55},
				date={2000},
				number={3},
				pages={547--576},
				issn={0022-040X},
				review={\MR{1863733}},
			}
			\bib{MR3873112}{article}{
				author={Huang, Hongnian},
				author={Wang, Yuanqi},
				author={Yao, Chengjian},
				title={Cohomogeneity-one $G_2$-Laplacian flow on the 7-torus},
				journal={J. Lond. Math. Soc. (2)},
				volume={98},
				date={2018},
				number={2},
				pages={349--368},
				issn={0024-6107},
				review={\MR{3873112}},
				doi={10.1112/jlms.12137},
			}

			\bib{MR1424428}{article}{
				author={Joyce, Dominic D.},
				title={Compact Riemannian $7$-manifolds with holonomy $G_2$. I, II},
				journal={J. Differential Geom.},
				volume={43},
				date={1996},
				number={2},
				pages={291--328, 329--375},
				issn={0022-040X},
				review={\MR{1424428}},
			}
			\bib{MR1787733}{book}{
				author={Joyce, Dominic D.},
				title={Compact manifolds with special holonomy},
				series={Oxford Mathematical Monographs},
				publisher={Oxford University Press, Oxford},
				date={2000},
				pages={xii+436},
				isbn={0-19-850601-5},
				review={\MR{1787733}},
			}
\bib{MR3992720}{article}{
   author={Krom, Robin S.},
   author={Salamon, Dietmar A.},
   title={The Donaldson geometric flow for symplectic four-manifolds},
   journal={J. Symplectic Geom.},
   volume={17},
   date={2019},
   number={2},
   pages={381--417},
   issn={1527-5256},
   review={\MR{3992720}},
   doi={10.4310/JSG.2019.v17.n2.a3},
}

\bib{MR1842429}{article}{
   author={Otto, Felix},
   title={The geometry of dissipative evolution equations: the porous medium
   equation},
   journal={Comm. Partial Differential Equations},
   volume={26},
   date={2001},
   number={1-2},
   pages={101--174},
   issn={0360-5302},
   review={\MR{1842429}},
   doi={10.1081/PDE-100002243},
}

\bib{MR3098204}{article}{
   author={Salamon, Dietmar},
   title={Uniqueness of symplectic structures},
   journal={Acta Math. Vietnam.},
   volume={38},
   date={2013},
   number={1},
   pages={123--144},
   issn={0251-4184},
   review={\MR{3098204}},
   doi={10.1007/s40306-012-0004-x},
}

\bib{MR2459454}{book}{
   author={Villani, C\'{e}dric},
   title={Optimal transport},
   series={Grundlehren der mathematischen Wissenschaften [Fundamental
   Principles of Mathematical Sciences]},
   volume={338},
   note={Old and new},
   publisher={Springer-Verlag, Berlin},
   date={2009},
   pages={xxii+973},
   isbn={978-3-540-71049-3},
   review={\MR{2459454}},
   doi={10.1007/978-3-540-71050-9},
}

			\bib{MR4732956}{article}{
				author={Xu, Pengfei},
				author={Zheng, Kai},
				title={The space of closed $G_2$-structures. I. Connections},
				journal={Q. J. Math.},
				volume={75},
				date={2024},
				number={1},
				pages={333--390},
				issn={0033-5606},
				review={\MR{4732956}},
				doi={10.1093/qmath/haae004},
			}

		\end{biblist}
	\end{bibdiv}

	\end{document}